\colorlet{shadecolor}{blue!15}
\newtheorem{thm}{Theorem}[section]
\newtheorem{cor}[thm]{Corollary}
\newtheorem{lem}[thm]{Lemma}
\newtheorem{prop}[thm]{Proposition}
\newtheorem{definition}[thm]{Definition}
\newtheorem{rem}[thm]{Remark}
\numberwithin{equation}{section}
\renewenvironment{proof}[1][\relax]
  {\paragraph{Proof\ifx#1\relax\else~of #1\fi}}%
  {~\hfill$\square$\par\bigskip}
\newcommand{\calC}{\mathcal{C}}
\newcommand{\calD}{\mathcal{D}}
\newcommand{\calE}{\mathcal{E}}
\newcommand{\calF}{\mathcal{F}}
\newcommand{\calH}{\mathcal{H}}
\newcommand{\calK}{\mathcal{K}}
\newcommand{\calS}{\mathcal{S}}
\newcommand{\bbC}{\mathbb{C}}
\newcommand{\bbE}{\mathbb{E}}
\newcommand{\bbH}{\mathbb{H}}
\newcommand{\bbL}{\mathbb{L}}
\newcommand{\bbN}{\mathbb{N}}
\newcommand{\bbP}{\mathbb{P}}
\newcommand{\bbR}{\mathbb{R}}
\newcommand{\bbZ}{\mathbb{Z}}
\newcommand{\eps}{\varepsilon}
\newcommand{\rk}[1]{\bgroup\color{red}%
  \par\medskip\hrule\smallskip%
  \noindent\textbf{#1}%
  \par\smallskip\hrule\medskip\egroup}
\newcommand{\lra}{\leftrightarrow}
\newcommand{\xlra}{\xleftrightarrow}
\newcommand\concel[2]{\ooalign{$\hfil#1\mkern0mu/\hfil$\crcr$#1#2$}}  
\newcommand\nxlra[1]{\mathrel{\mathpalette\concel{\xleftrightarrow{#1}}}}
\renewcommand{\int}{\mathrm{in}}
\newcommand{\1}{\mathbbm{1}}
\newcommand{\La}{\Lambda}
\newcommand{\la}{\lambda}
\renewcommand{\colon}{\,:\,}
\setlist[itemize]{itemsep=1pt, topsep=4pt}
\setlist[enumerate]{itemsep=1pt, topsep=4pt}
\title{Structure of Gibbs measures for planar FK-percolation and Potts models}
\date{\today}
\author{
	Alexander Glazman
    	\thanks{Faculty of Mathematics,
    	University of Vienna,
	    Oskar-Morgenstern-Platz 1,
	    Vienna, 
	    Austria.
	    \url{alexander glazman@univie.ac.at}}
    ~and Ioan Manolescu
        \thanks{D\'epartement de Math\'ematiques, 
        Universit\'e de Fribourg, 
        23 Chemin du Mus\'ee, 
        CH-1700 Fribourg, 
        Switzerland.
	    \url{ioan.manolescu@unifr.ch}}
	}
\begin{document}
\maketitle

\begin{abstract}
	We prove that all Gibbs measures of the $q$-state Potts model on $\bbZ^2$ are linear combinations of the extremal measures obtained as thermodynamic limits under free or monochromatic boundary conditions.
	In particular all Gibbs measures are invariant under translations.
	This statement is new at points of first-order phase transition, that is at $T=T_{c}(q)$ when $q>4$.
	In this case the structure of Gibbs measures is the most complex in the sense that there exist $q+1$ distinct extremal measures.
	
	Most of the work is devoted to the FK-percolation model on $\bbZ^{2}$ with $q\geq 1$, where we prove that every Gibbs measure is a linear combination of the free and wired ones.
	The arguments are non-quantitative and follow the spirit of the seminal works of
	Aizenman~\cite{Aiz80} and Higuchi~\cite{Hig81}, which established the Gibbs structure for the two-dimensional Ising model.
	Infinite-range dependencies in FK-percolation (i.e., a weaker spatial Markov property) pose serious additional difficulties compared to the case of the Ising model. 
	For example, it is not automatic, albeit true, that thermodynamic limits are Gibbs. 
	The result for the Potts model is then derived using the Edwards--Sokal coupling and auto-duality. 
	The latter ingredient is necessary since applying the Edwards--Sokal procedure to a Gibbs measure for the Potts model does not automatically produce a Gibbs measure for FK-percolation.
	
	Finally, the proof is generic enough to adapt to the FK-percolation and Potts models on the triangular and hexagonal lattices and to the loop $O(n)$ model in the range of parameters for which its spin representation is positively associated.	
\end{abstract}

\tableofcontents

\section{Introduction}

The Potts model was introduced in 1952~\cite{Pot52} and has since become one of the most studied models of statistical mechanics. 
Each vertex of a graph is assigned one of $q$ spins (colours), with spins of adjacent vertices interacting with a strength depending on the temperature $T>0$ of the system. The classical Ising model is obtained when $q=2$. As in the Ising model, the Potts model becomes increasingly ordered as the temperature decreases, and a phase transition occurs on lattices $\bbZ^{d}$ with $d \geq 2$. The type of the transition depends on $q$ and $d$, which renders the Potts model an archetype of statistical mechanics model with both continuous and discontinuous phase transitions.

The phase transition and its type can be described by the structure of the set of {\em Gibbs measures}.
These are infinite-volume measures which satisfy the equation of Dobrushin~\cite{Dob72} and Lanford--Ruelle~\cite{LanRue69} (DLR): 
for every finite domain, conditionally on the configuration on its exterior, the measure inside the domain is given by the Potts specification with boundary conditions provided by the outside configuration.
The set of Gibbs measures is convex and is thus determined by its extremal points.

This work is restricted to the planar case, which we discuss henceforth.
In terms of Gibbs measures, the phase diagram when $q\geq 2$ may be described as follows:
for $T>T_{c}(q)$ the Gibbs measure is unique and exhibits exponential decay of correlations ({\em free measure}); 
when $T<T_{c}(q)$ there are exist $q$ distinct ordered Gibbs measures obtained as thermodynamic limits under constant-spin ({\em monochromatic}) boundary conditions~\cite{CoqDumIofVel14}.
The value $T_c(q)$ for which the phase transition occurs was determined in~\cite{BefDum12}.
At $T=T_{c}(q)$, when  $2 \leq q \leq 4$ the Gibbs measure is unique~\cite{DumSidTas17} ({\em continuous phase transition}); 
when $q>4$ the phase transition is discontinuous ~\cite{DumGagHar16,RaySpi20}, and there exist at least $q+1$ distinct Gibbs measures: 
one free measure obtained as a limit as $T\searrow T_{c}(q)$ and $q$ monochromatic measures obtained as limits as $T\nearrow T_{c}(q)$.
The goal of the present paper is to prove that in this last case, the above-mentioned Gibbs measures are the only extremal ones. 

For the Ising model, the set of Gibbs measures was determined in the celebrated works of Aizenman~\cite{Aiz80} and Higuchi~\cite{Hig81}: 
every such measure was shown to be a linear combination of the plus and minus states; the two being equal for $T \geq T_c(2)$. 
The original proofs relied on the Griffiths' inequality and other properties specific to the Ising model.
The later proof of Georgii--Higuchi~\cite{GeoHig00} is more robust, but still does not apply to the Potts model,
as it relies on positive association and symmetry of phases. 

In the present work, these obstacles are circumvented by considering the graphical representation of the Potts model 
known as Fortuin--Kasteleyn (FK) percolation or random-cluster model. 
In this model, configurations are assignments of open and closed states to each edge, 
and their probability depends on the numbers of open edges and of connected components.
The model has two parameters: an edge-intensity $p \in [0,1]$ akin to the temperature of the Potts model and a cluster-weight $q \geq 1$.
When $q \geq 2$ is an integer, the Edwards--Sokal coupling~\cite{EdwSok88} relates FK-percolation to the $q$-state Potts model. 

While FK-percolation satisfies positive association and exhibits a natural symmetry between the primal and dual models, 
the proofs previously developed for the Ising model do not apply as the interactions are not nearest-neighbour.
We prove nevertheless that all Gibbs measures for FK-percolation are linear combinations of the wired and the free measures.
Our arguments rely on the self-duality of the square grid, but may be extended to the triangular and hexagonal lattices via the star--triangle transformation.

The typical example of an extremal Gibbs measure which is neither free nor monochromatic is one where several regions with monochromatic behaviour coexist.
The interfaces between these regions need then to have bounded variations; such interfaces are called {\em localised}.
It was conjectured by Dobrushin \cite{Dob68} that interfaces in two-dimensional models are never localised. 
When the surface tension is positive, the Ornstein--Zernike theory~\cite{CamIofVel03, Iof98, IofVel10} 
may be used to prove Gaussian behaviour of interfaces. This strategy was successfully implemented for the Potts model with $T<T_{c}(q)$~\cite{CamIofVel08,CoqDumIofVel14, CoqVel12}, and showed that the only extremal Gibbs measures are the monochromatic ones. 
Unfortunately this approach is still beyond reach for the Potts and FK-percolation models at the point of a discontinuous phase transition. 
While it is known in these situations that the correlation length is finite, its relation to the surface tension is not sufficiently well understood to allow the development of the Ornstein--Zernike theory.
It should nevertheless be mentioned that for $q$ very large, the Pirogov-Sinai theory applies to the critical FK-percolation and Potts models~\cite{LaaMesMir91} and may be used~\cite{DobShl85} to characterise the structure of Gibbs measures.

It is worth mentioning that in higher dimensions, interfaces may be localised: at low temperatures the Ising model was shown to exhibit a so-called Dobrushin state~\cite{Dob72} (see also~\cite{Bei75} for a simpler proof).

In closing, let us mention that the approach developed here for FK-percolation also applies to the loop $O(n)$ model on the hexagonal lattice~\cite{DomMukNie81}, 
in the regime of positive association. 
The configurations here are collections of non-intersecting simple cycles (loops) and their probabilities depend on two parameters: the loop-weight $n$ and the edge-weight $x$. When $n\geq 1$ and $nx^{2}\leq 1$, the spin representation of the model was shown to be positively associated~\cite{DumGlaPel21}, 
and the system exhibits either exponential tails for the lengths of loops ({\em small loops}), or scale-invariant behaviour ({\em macroscopic loops}). 
These regimes are reminiscent of the discontinuous and continuous phase transitions of FK-percolation, respectively.
In the former case, our approach shows that every Gibbs measure for the spin representation is a linear combination of the plus and the minus states;
in the latter case, there exists a unique Gibbs measure for the spin representation.
Since the loops are invariant under global sign flip of the spins, the Gibbs measure for the loop $O(n)$ model is unique for all $n\geq 1$ and $x\leq 1/\sqrt n$.
	
\subsection{Results for the Potts model}

We work on the square lattice, with vertex set $\bbZ^2$ and edge-set $E(\bbZ^{2})$ connecting vertices at euclidian distance $1$. 
For a finite subgraph $D = (V,E)$ of $\bbZ^2$ (also referred to as {\em domain} henceforth), 
write $\partial D = \{v \in V \,: \, d_D(v) < 4\}$ and $D^{c} = \{v\in \bbZ^{2}\setminus V\}$.
Let $q\geq 1$ be integer, $T>0$ and $\tau \in \{0,1,\dots,q\}^{\partial D}$.
The $q$-state Potts model on $D$ at temperature $T$ with boundary conditions $\tau$ is the probability measure $\mu_{D,T,q}^\tau$
supported on the set of $\sigma\in \{1,2,\dots,q\}^{V}$ such that $\sigma_{u} = \tau_{u}$ for all $u\in \partial D$ with $\tau_{u}\neq 0$, 
and defined by
\begin{equation}\label{eq:def-potts}
	\mu_{D,T,q}^\tau[\sigma]=\frac{1}{Z^\tau(D,T,q)} \cdot \exp \big(-\tfrac{1}{T}\cdot \sum_{u\sim v} \1_{\{\sigma_{u} \neq \sigma_{v}\}}\big),
\end{equation}
where the sum is taken over pairs of adjacent vertices $u,v \in V$ and  
the normalising constant $Z^\tau(D,T,q)$ is called the {\em partition function}.

If $\tau$ takes a constant non-zero value on $\partial D$, the boundary conditions are called {\em monochromatic}.
For $i=1,\dots,q$, we use the notation $\mu_{D,T,q}^i$ for the measure $\mu_{D,T,q}^\tau $ with $\tau \equiv i$.
If $\tau$ is constant zero on $\partial D$, the boundary conditions are called {\em free}.
We denote the corresponding measure by $\mu_{D,T,q}^{\rm free}$.

Next we consider infinite-volume measures, that is probability measures on $\{1,\dots,q\}^{\bbZ^{2}}$.

\begin{definition}
    A probability  measure $\mu$ on $\{1,\dots,q\}^{\bbZ^{2}}$ is called {\em Gibbs} for the Potts model with parameters $T >0$ and $q \geq2$ if,  
    for any finite $D\subset \bbZ^{2}$ and for every $\tau \in \{1,\dots,q\}^{\partial D}$
    for which the conditioning is not degenerate
      \begin{equation}\label{eq:DLR-Potts}\tag{DLR-Potts}
    	\mu[. \, | \, \sigma_{|\partial D} = \tau \text{ and } \calF(D^{c})] =\mu_{D,T,q}^\tau,
    \end{equation}
    where $\calF(D^{c})$ is the sigma-algebra of events depending only on the spins in $D^{c}$
    and the above refers to the restriction of the left-hand side to $D$. 
\end{definition}

A probability measure $\mu$ on $\{1,\dots,q\}^{\bbZ^{2}}$ is called {\em tail trivial} if for any event $A$ that does not depend on any finite number of spins, either $\mu(A) = 0$ or $\mu(A) = 1$.
The set of Gibbs measures for the Potts model with fixed $T$ and $q$ form a convex subset of probability measures on $\{1,\dots,q\}^{\bbZ^{2}}$. 
The extremal points of this convex set are exactly the tail trivial Gibbs measures~\cite[Thm. 7.7]{Geo11}.

The monochromatic and free infinite-volume measures are defined as thermodynamic limits of the finite-volume measures under the corresponding boundary conditions.
Indeed, let $D_{n}$ be an increasing sequence of finite subgraphs of $\bbZ^{2}$ that eventually cover all vertices and edges of $\bbZ^{2}$; we say that $(D_{n})_{n\geq 1}$ increases to $\bbZ^2$.
It is standard (see~\cite[Prop 6.9]{GeoHagMae01}) that, as $n$ tends to infinity, the measures defined by monochromatic or by free boundary conditions have weak limits that do not depend on $D_{n}$ . We will use the following notation for them:
\[
	\mu_{T,q}^i := \lim_{n\to \infty}\mu_{D_{n},T,q}^i \quad \text{and} \quad \mu_{T,q}^{\rm free} := \lim_{n\to \infty}\mu_{D_{n},T,q}^{\rm free}
\]
As thermodynamic limits, $\mu_{T,q}^i$ and $\mu_{T,q}^{\rm free}$ satisfy~\eqref{eq:DLR-Potts} and hence are Gibbs measures.
More generally, it is an immediate consequence of the Spatial Markov property~\eqref{eq:SMP}  of the Potts model
that the weak limit of any converging sequence of measures $\mu_{D_{n},T,q}^{\tau_n}$ is Gibbs. 

It is standard that the monochromatic measures $\mu_{T,q}^i$ ($i = 1,\dots,q$) are always extremal (see Section~\ref{sec:ES}), though they may all be equal. 
The free measure is not always extremal, e.g. in the case $T < T_c$ it is the uniform convex combination of the distinct monochromatic ones (see Remark~\ref{rem:potts-known} below). 

The next theorem is the main result of this work and it shows that when $q\geq 2$, every Gibbs measure for the Potts model is a linear combination of the (possibly coinciding) $q+1$ measures defined above: $q$ wired and one free.

\begin{thm}\label{thm:mainPotts}
	Let $\mu$ be a Gibbs measure for the Potts model on $\bbZ^2$ with $q\geq 2$ colours at temperature $T>0$. 
	Then there exist $\la_0,\dots, \la_q  \geq 0$ summing up to $1$ such that
	\begin{align*}
	\mu = \la_0 \mu_{T,q}^{\rm free} +  \la_1 \mu_{T,q}^1 +\dots +  \la_q \mu_{T,q}^q.
	\end{align*}
\end{thm}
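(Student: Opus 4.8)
The plan is to reduce the Potts statement to the corresponding result for FK-percolation (which the introduction promises: every Gibbs measure for FK-percolation with $q\geq 1$ is a linear combination of the free and wired measures), and then transfer this back through the Edwards--Sokal coupling. The first step is to take a Gibbs measure $\mu$ for the Potts model and construct a coupling of $\mu$ with an edge-configuration: conditionally on the spins $\sigma$, declare each edge $uv$ open with probability $1-e^{-1/T}$ if $\sigma_u=\sigma_v$ and closed otherwise, independently across edges. Call the resulting joint law $\mathbb P$ and let $\omega$ be the edge-marginal and $\nu$ its law. One checks, using the DLR equations \eqref{eq:DLR-Potts} for $\mu$ together with the finite-volume Edwards--Sokal identity, that $\nu$ satisfies the DLR equations for FK-percolation on $\bbZ^2$ with parameters $p=1-e^{-1/T}$ and the given integer $q$; this is where the excerpt's remark that ``applying Edwards--Sokal to a Gibbs Potts measure does not automatically produce a Gibbs FK measure'' must be handled --- the subtlety is that wired-type boundary interactions through infinite clusters can spoil the naive argument, and the excerpt flags auto-duality as the tool that repairs this. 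So the core of this step is: show $\nu$ is an FK-Gibbs measure, invoking self-duality of $\bbZ^2$ to rule out the pathological infinite-cluster effects.

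The second step is to apply the FK-percolation theorem to $\nu$: there are $\lambda_0,\lambda_1\geq 0$ with $\lambda_0+\lambda_1=1$ and $\nu = \lambda_0 \phi^{\rm free} + \lambda_1 \phi^{\rm wired}$, where $\phi^{\rm free},\phi^{\rm wired}$ are the free and wired FK measures at $(p,q)$. The third step is to reconstruct the Potts measure from $\nu$: condition on $\omega$ and colour each finite cluster uniformly and independently among $\{1,\dots,q\}$. Under $\phi^{\rm free}$ there is no infinite cluster (at any $T$, by the self-dual point analysis and the fact that free FK has no infinite cluster below criticality, and at criticality one uses that the wired measure carries the infinite cluster and the free one does not), so this colouring of $\phi^{\rm free}$ yields exactly $\mu_{T,q}^{\rm free}$. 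Under $\phi^{\rm wired}$ there is a unique infinite cluster $\nu$-a.s.; colouring it by colour $i$ and all finite clusters uniformly yields $\mu_{T,q}^i$. The decomposition of $\nu$ then pushes forward, but one must track how $\mu$ itself (not just $\nu$) decomposes: this is where one uses that $\mathbb P$ is a coupling, so $\mu$ is the spin-marginal, and the conditional law of $\sigma$ given $\omega$ is the uniform-colouring kernel regardless of which FK measure $\omega$ came from. Hence $\mu = \lambda_0 \mu_{T,q}^{\rm free} + \sum_{i=1}^q \tfrac{\lambda_1}{q}\,\mu_{T,q}^i$, after averaging over the choice of colour for the infinite cluster, which is forced to be uniform by the symmetry of the colouring kernel --- but $\mu$ need not be symmetric, so more care is needed: one should instead let $\lambda_i$ be the $\mathbb P$-probability that the infinite cluster has colour $i$ (these are determined by $\mu$ via tail events), giving $\mu = \lambda_0 \mu_{T,q}^{\rm free} + \sum_{i=1}^q \lambda_i\, \mu_{T,q}^i$ with $\lambda_i\geq 0$ summing to $1$.

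The main obstacle I expect is the first step: proving that the Edwards--Sokal image $\nu$ of an arbitrary Potts-Gibbs measure $\mu$ is genuinely FK-Gibbs. The difficulty is precisely the long-range dependence in FK-percolation --- conditioning on the edge configuration outside a finite domain $D$ does not determine the FK boundary condition on $\partial D$ in a local way, because connections can be realised through arbitrarily far-away open paths, and these connections are themselves correlated with the spins inside $D$ through $\mu$. The resolution must be to use auto-duality: dual-connections in the complement certify the absence of primal connections, and on $\bbZ^2$ every pair of boundary vertices is either connected outside $D$ or separated by a dual path outside $D$, so the ``boundary partition'' induced by the exterior is measurable and the finite-volume FK specification can be applied. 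One then checks that, after integrating out the spins, the resulting conditional law inside $D$ is exactly $\phi_{D,p,q}^\xi$ with $\xi$ the exterior-induced boundary condition. I would expect this to require the detailed structure already developed earlier in the paper for FK-Gibbs measures, and to be the place where ``most of the work'' (in the words of the abstract) is actually invoked rather than redone.
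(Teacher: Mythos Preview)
Your overall strategy is the natural one, but the first step contains a genuine gap that the paper explicitly warns against and does \emph{not} resolve in the way you suggest. You claim that the edge-marginal $\nu$ of the Edwards--Sokal coupling $\mathbb P$ satisfies \eqref{eq:DLR-FK}, and that the obstruction is the measurability of the induced boundary partition, to be repaired by planar duality. That misidentifies the difficulty. Measurability is not the issue; the issue is that, conditionally on $\omega$ outside a finite domain $D$ \emph{and on the colours of the exterior clusters touching $\partial D$}, the law of $\omega$ inside $D$ is the FK measure $\phi_{D,p,q}^\xi$ \emph{conditioned on the event that boundary vertices of different colours are not connected to one another inside $D$}. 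After integrating out the exterior colours, one obtains a mixture of such conditioned measures, which is in general not $\phi_{D,p,q}^\xi$. The paper states this explicitly in the remark opening Section~\ref{sec:Potts_proof}: ``this measure \emph{a priori} might not be Gibbs for FK-percolation: its restriction to finite domains is an FK-percolation measure \emph{conditioned} that boundary vertices of different colours are disconnected.'' Nothing in your sketch addresses this conditioning, and the dual-path observation you make is irrelevant to it.

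The paper's route is quite different and proceeds by contradiction. Assuming $\mu$ is extremal and not one of $\mu^{\rm free},\mu^1,\dots,\mu^q$, Lemma~\ref{lem:Phi2} (whose proof \emph{does} invoke Corollary~\ref{cor:every-limit-is-gibbs}, i.e.\ the thermodynamic-limit consequence of Theorem~\ref{thm:mainFK}) shows that the Edwards--Sokal configuration has at least two infinite clusters of distinct colours. One then explores a random domain $\calD$ from the outside so that the boundary conditions are controlled, and proves (Lemmas~\ref{lem:p1}--\ref{lem:p3}) that for each colour $i$ the law of $\omega^i$ inside $\calD$ is dominated by an FK measure $\phi_\calD^{\zeta^i}$ which, up to a bounded Radon--Nikodym factor, is close to $\lambda_i\phi^1+(1-\lambda_i)\phi^0$ on a fixed inner box. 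The role of auto-duality is \emph{here}, in Lemma~\ref{lem:p3}: for $i\neq j$ the dual of $\phi_\calD^{\xi^i}$ stochastically dominates $\phi_\calD^{\xi^j}$, which forces $\lambda_i+\lambda_j\leq 1+O(\delta)$ (Corollary~\ref{cor:lambdas}). Hence at most one of the $\lambda_i$ can be close to $1$, so at most one colour can percolate, contradicting the existence of two differently coloured infinite clusters.

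Your later steps also contain a smaller issue: even granting that $\nu$ were FK-Gibbs, you need to know that under $\mathbb P$ the conditional law of $\sigma$ given $\omega$ assigns uniform independent colours to all \emph{finite} clusters; this is true (and is discussed in Section~\ref{sec:ES}) but deserves a sentence. The handling of the infinite-cluster colour via tail events is fine in spirit, though it presupposes uniqueness of the infinite cluster under $\nu$, which in your scheme would follow only once $\nu$ is known to be FK-Gibbs.
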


\begin{rem}\label{rem:potts-known}
	The result above is only new for the case $T = T_c(q)$ and $q$ such that the phase transition is discontinuous. 
	Indeed, for  $T > T_{c}(q)$ as well as for $T = T_{c}(q)$ for a continuous phase transition, the Gibbs measure is unique. 
	When $T < T_{c}(q)$, the extremal Gibbs measures were identified in~\cite{CoqDumIofVel14}:
	these are the $q$ monochromatic measures $(\mu_{T}^i)_{i =  1,\dots, q}$. 
	In particular $\mu_{T,q}^{\rm free} = \sum_{i = 1}^q \tfrac1q \mu_{T,q}^i$.
	
	As discussed below, at the point of a discontinuous phase transition, there exist exactly $q + 1$ extremal measures, 
	namely  $\mu_{T,q}^{\rm free}$ and the $q$ monochromatic measures $\mu_{T,q}^1,\dots, \mu_{T,q}^q$. 
\end{rem}

As mentioned in the introduction, for $q = 2$ (which corresponds to the Ising model), 
Theorem~\ref{thm:mainPotts} was proven in~\cite{Aiz80,Hig81} and again in ~\cite{GeoHig00}; previously \cite{GalMir72,MesMir75} treated the case of translation invariant measures for the Ising model.
Unfortunately, these arguments do not generalise to higher values of $q$, which is why we will prove Theorem~\ref{thm:mainPotts} using the FK representation. 
We should however mention that the works mentioned above served as inspiration for our proof of Theorem~\ref{thm:mainFK} below.

Finally, Theorem~\ref{thm:mainPotts} adapts to the Potts model on the triangular and hexagonal lattices with only minor modifications. See Section~\ref{sec:Hex} for details.

\subsection{Results for FK-percolation}
\label{sec:intro-fk}

Let $D = (V,E)$ be a subgraph of $\bbZ^2$.
A percolation configuration $\omega$ on $D$ is an element of $\{0,1\}^{E}$. 
An edge $e$ is said to be {\em open} (in $\omega$) if $\omega_e=1$, otherwise it is {\em closed}. 
When speaking of connections and connected components in $\omega$, we view it as a spanning subgraph of $D$ with edge-set $\{e\in E:\omega_e=1\}$.
For sets of vertices $A$ and $B$, we say that $A$ is connected to $B$ if there exists a path of edges of $\omega$ that connects a vertex of $A$ to a vertex of $B$. This event is denoted by $A \xlra{} B$ or $A \xlra{\omega} B$.
We also speak of connections in a set of edges~$C$ if all the edges of the path are contained in~$C$; we denote this by $\xlra{C}$. 

A boundary condition $\xi$ on $ D$ is a partition of $\partial D$.
Vertices of $\partial D$ that are in the same component of $\xi$ are said to be wired together. 
When all vertices are wired together, the conditions are called {\em wired} and are denoted by $1$. 
When no two vertices are wired together, the conditions are called {\em free} and are denoted by $0$.

When $D$ is finite, the FK-percolation measure on $D$ with edge-intensity $p \in (0,1)$, 
cluster-weight $q\geq 1$ and boundary conditions $\xi$ is the probability measure 
supported on percolation configurations on $D$ given by
\begin{equation}\label{eq:RCM_def1}
	\phi_{D,p,q}^\xi[\omega]=\frac{1}{Z^\xi(D,p,q)} (\tfrac{p}{1-p})^{|\omega|}q^{k(\omega^{\xi})},
\end{equation}
where $|\omega|= \# \{e\in E \colon \omega_{e} = 1\}$ is the number of open edges in $\omega$,
$\omega^{\xi}$ is the graph obtained from $\omega$ by identifying boundary vertices that are wired together in $\xi$, 
and $k(\omega^{\xi})$ is the number of connected components in $\omega^{\xi}$; 
the normalising constant $Z^\xi(D,p,q)$ is called the {\em partition function}.

We now consider percolation configurations defined on the entire square grid, that is $\omega  \in \{0,1\}^{E(\bbZ^{2})}$.
For a finite domain $D$, such $\omega$ induces boundary conditions $\xi$ on $\partial D$ defined as follows.
We say that two vertices $u,v\in\partial D$ are wired together in $\xi$ if and only if at least one of the following occurs: either $u\xlra{\omega\setminus E} v$, or $u\xlra{\omega\setminus E} \infty$ and $v\xlra{\omega\setminus E} \infty$.
We use the notation $u \xlra{\omega\setminus E} \infty$ for the event that $u$ belongs to an infinite cluster in $\omega\setminus E$.

\begin{definition}
    A measure $\phi$ on $\{0,1\}^{E(\bbZ^{2})}$ is called {\em Gibbs} for FK-percolation with parameters $p \in (0,1)$ and $q \geq 1$ if,  
    for any finite subgraph $D = (V,E)$ of $\bbZ^{2}$ and for every boundary condition $\xi$ on $D$
    for which the conditioning is not degenerate
    \begin{equation}\label{eq:DLR-FK}\tag{DLR-FK}
    	\phi[. \, | \, \omega \text{ induces $\xi$ and } \calF(E^{c})] =\phi_{D,p,q}^\xi,
    \end{equation}
    where $\calF(E^{c})$ is the sigma-algebra of events depending only on the edges in $E(\bbZ^2) \setminus E$
    and the above refers to the restriction of the left-hand side to $E$. 
\end{definition}

\begin{rem}\label{rem:DLR}
	In our definition of FK-percolation, if $\omega$ exhibits several infinite clusters, these are considered as a single one -- that is, they are wired at infinity.
	This is an arbitrary choice, but a posteriori bares no importance. 
	Indeed,  with our convention, we will prove that for any Gibbs measure there exists a.s. at most one infinite cluster.
	
	A more general definition would be to weight infinite clusters by some $q' \in [1,q]$. 
	With any such convention, Lemma~\ref{lem:Kcross} shows that the number of infinite clusters is uniformly bounded.
	Thus such a change of convention affects the measure by a bounded Radon-Nikodym derivative. 
	Hence, the uniqueness of the infinite cluster remains a.s. true, and the change of convention has no importance. 

	We mention nevertheless that with our convention, the dual of a Gibbs measure is not a priori Gibbs. 
\end{rem}

Let $(D_{n})_{n\geq 1}$ be a sequence of domains increasing to $\bbZ^{2}$.
The monotonicity properties of FK-percolation imply in a standard way (see Section~\ref{sec:fk-background}) 
that the sequences of measures $\phi_{D_n,p,q}^0$ and $\phi_{D_n,p,q}^1$ converge weakly;
their respective limits $\phi^0_{p,q}$ and  $\phi^1_{p,q}$ are called the free and wired infinite-volume measures. 
They are the minimal and maximal, respectively, Gibbs measures with parameters $(p,q)$ for the stochastic ordering, and in particular they are tail trivial.
Indeed, for any FK-percolation Gibbs measure $\phi$ with parameters $p\in (0,1)$ and $q \geq 1$,
\begin{align}\label{eq:0phi1}
	\phi_{p,q}^0 \leq_{\rm st}\phi \leq_{\rm st}\phi_{p,q}^1.
\end{align}
The next theorem states that $\phi^0_{p,q}$ and  $\phi^1_{p,q}$ are the only extremal Gibbs measures for FK-percolation.  

\begin{thm}\label{thm:mainFK}
	Let $\phi$ be a Gibbs measure for FK-percolation on $\bbZ^2$ with edge-intensity $p \in (0,1)$ and cluster-weight $q \geq 1$. 
	Then there exists $\la \in [0,1]$ such that 
	\begin{align*}
		\phi = \la \phi_{p,q}^0 + (1-\lambda)  \phi_{p,q}^1.
	\end{align*}
\end{thm}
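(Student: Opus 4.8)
The plan is to show that an arbitrary Gibbs measure $\phi$ for FK-percolation is squeezed, in a strong enough sense, between $\phi^0_{p,q}$ and $\phi^1_{p,q}$ on macroscopic scales, and then to use tail triviality of the latter two together with a $0$–$1$ law to decompose $\phi$. Concretely, I would first reduce to the case where $\phi$ is \emph{extremal}, hence tail trivial: a general Gibbs measure is a mixture of extremal ones (Choquet/Krein–Milman on the simplex of Gibbs measures, cf.\ the cited~\cite{Geo11}), so it suffices to prove that every extremal Gibbs measure equals either $\phi^0_{p,q}$ or $\phi^1_{p,q}$. By~\eqref{eq:0phi1} we already know $\phi^0_{p,q}\leq_{\rm st}\phi\leq_{\rm st}\phi^1_{p,q}$, so the real content is a rigidity statement: there is no extremal Gibbs measure strictly between the two.

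The heart of the argument, following the Aizenman–Higuchi philosophy adapted to FK-percolation, is a geometric dichotomy on the occurrence of large circuits. Fix an extremal, hence tail trivial, Gibbs measure $\phi$. Consider the events $O_\infty$ = ``there is an infinite open cluster'' and its dual $O^*_\infty$ = ``there is an infinite dual cluster''; by tail triviality each has $\phi$-probability $0$ or $1$, and by planar duality one cannot have both infinite primal and infinite dual clusters coexisting with positive probability in a translation-friendly way — but since we do not yet know $\phi$ is translation invariant, the correct formulation is in terms of the existence of arbitrarily large open circuits around the origin versus arbitrarily large dual circuits around the origin. The plan is: (i) if $\phi$-a.s.\ there exist arbitrarily large open circuits surrounding any fixed box, then conditioning on such a circuit and using~\eqref{eq:DLR-FK} with the induced (wired) boundary conditions, a limiting argument forces $\phi=\phi^1_{p,q}$ on any finite box, hence everywhere; (ii) dually, if $\phi$-a.s.\ there are arbitrarily large dual circuits around any box, then $\phi=\phi^0_{p,q}$; (iii) the remaining case — neither primal nor dual large circuits occur a.s. — must be excluded. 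Step (iii) is where the planar topology and self-duality of $\bbZ^2$ enter: if with positive probability there are only finitely many open circuits and only finitely many dual circuits around the origin, then far from the origin one sees, in every annulus of a suitable scale, neither an open crossing circuit nor a dual one, which is topologically impossible in an annulus of $\bbZ^2$ (a ``blocking'' argument: an annulus is crossed either the long way by open edges forming a circuit, or the long way in the dual by closed edges forming a dual circuit). This is essentially Lemma~\ref{lem:Kcross}-type input on the boundedness of the number of infinite/crossing clusters; by tail triviality the a.s.\ dichotomy then holds and only cases (i) and (ii) survive. To upgrade ``positive probability'' to ``probability one'' and to handle the possible lack of translation invariance a priori, I would invoke the ergodic decomposition with respect to the shift only after establishing translation invariance as a consequence — or, more robustly, run the circuit argument directly for every box simultaneously using tail triviality, which is shift-free.

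The technical obstacle I expect to dominate the work is the \emph{infinite-range dependence} flagged in the abstract: unlike the Ising model, conditioning FK-percolation on the configuration outside a domain $D$ does not localise to $\partial D$ — the induced boundary condition $\xi$ depends on connections through the infinite cluster, so ``conditioning on a large open circuit around $D$'' does not cleanly give wired boundary conditions on $D$, because wirings of $\partial D$ can also be realised by paths going out to infinity. Overcoming this requires showing that, once a large open circuit surrounds $D$, the \emph{additional} long-range wiring it could carry does not matter in the limit; this is a monotonicity-plus-finite-energy estimate: the measure inside the circuit, given the circuit is present, is sandwiched between the free and wired measures on the region bounded by the circuit, and as the circuit is pushed to infinity both bounds converge to $\phi^1_{p,q}$ on $D$ (respectively $\phi^0_{p,q}$ in the dual case). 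One must also check that thermodynamic limits along such random circuits are genuinely Gibbs and that the relevant circuit events have the claimed a.s.\ behaviour — precisely the subtlety the authors signal (``it is not automatic, albeit true, that thermodynamic limits are Gibbs''). So the skeleton is: extremal reduction $\Rightarrow$ tail triviality $\Rightarrow$ circuit dichotomy via planar duality and boundedness of crossing clusters $\Rightarrow$ in each branch, squeeze $\phi$ between $\phi^0_{p,q}$ and $\phi^1_{p,q}$ on every finite box using monotonicity and~\eqref{eq:0phi1} $\Rightarrow$ identify $\phi$ with one extreme $\Rightarrow$ conclude the general decomposition $\phi=\la\phi^0_{p,q}+(1-\la)\phi^1_{p,q}$ with $\la$ the mass the ergodic decomposition of $\phi$ puts on the ``dual-circuit'' branch.
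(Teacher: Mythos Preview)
Your reduction to extremal (hence tail-trivial) measures is correct, and cases (i) and (ii) actually work with no serious difficulty: an outermost open circuit in $\La_N$ is explorable from outside and the circuit itself already wires all of the boundary together, so~\eqref{eq:DLR-FK} gives exactly the wired boundary condition inside --- no long-range information is needed --- and combining with $\phi\leq_{\rm st}\phi^1_{p,q}$ forces $\phi=\phi^1_{p,q}$. The infinite-range obstacle you anticipate is not located here.

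The real gap is in step (iii). Your parenthetical claim --- ``an annulus is crossed either the long way by open edges forming a circuit, or the long way in the dual by closed edges forming a dual circuit'' --- is false. Planar duality says an annulus carries a primal circuit iff it has no dual \emph{radial} crossing, and symmetrically; it is perfectly possible to have both a primal and a dual radial crossing (on opposite sides of the annulus, say) and hence \emph{neither} type of circuit. Lemma~\ref{lem:Kcross} bounds the number of crossing clusters but produces no circuits. Case (iii) is exactly the scenario of a hypothetical Dobrushin-type state with coexisting infinite primal and dual clusters separated by one or two localised interfaces, and ruling it out is the entire content of the theorem at $p=p_c$, $q>4$. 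The paper's route is quite different from yours: rather than a circuit dichotomy, it first proves that every extremal Gibbs measure is \emph{translation invariant} (Proposition~\ref{prop:Gibbs=trans_inv}) via a duplication argument comparing $\phi$ to an arbitrary translate $\phi'$, constructing random domains bounded by ``shielding arcs'' (Proposition~\ref{prop:shield}) that are $\omega'$-open or $\omega^*$-open along entire segments so that the induced FK boundary conditions --- not merely the edge states on the boundary --- can be compared. This is where the new half-plane percolation estimate of Proposition~\ref{prop:hp_perco} enters and where the infinite-range dependence genuinely bites. Only once translation invariance is established does Proposition~\ref{prop:translation_inv} (Burton--Keane together with~\cite{DumRaoTas19}) exclude coexistence and finish the proof.
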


\begin{rem}
	Except when $p = p_c(q) = \frac{\sqrt q}{1 + \sqrt q}$ and $q > 4$, it is known that $\phi_{p,q}^0 =  \phi_{p,q}^1$,
	and it follows directly from \eqref{eq:0phi1} that the infinite-volume measure is unique  
\end{rem}

This result should be compared to its counterpart for the Ising model~\cite{Aiz80, Hig81,GeoHig00}. 
The main reason why these proofs do not extend directly is the appearance of the number of clusters in the definition of the measure, which induces infinite-range dependencies. More precisely, the spatial Markov property in FK-percolation is significantly weaker than that of the Ising model: the distribution on a finite subgraph $D$ is not determined by the set of open/closed edges on the boundary of $D$ --- instead the boundary condition is determined by the configuration on the entire complement of $D$. Nevertheless, similarly to the Ising model, FK-percolation on $\bbZ^{2}$ is positively associated and self-dual at $p=p_{c}(q)$~\cite{BefDum12}. The latter property replaces the symmetry between $+$ and $-$ spins that was used in the previous works. 

Our approach follows the ideas of~\cite{GeoHig00}; we sketch it below so as to stress the difference with the Ising model.
First note that the result may be obtained quite easily for translation invariant measures (see Section~\ref{sec:translation_inv}). 

Let us proceed by contradiction and assume the existence of an extremal Gibbs measure $\phi$, which is different from $\phi^0$ and $\phi^1$. 
Then $\phi$ is tail trivial and must exhibit at least one infinite primal cluster and one infinite dual cluster. The proof of~\cite{GeoHig00} extends to show that in each half-plane there exists at most one infinite primal cluster and one infinite dual one. This implies the existence of one or two bi-infinite interfaces separating primal and dual clusters in the plane. These interfaces must have bounded fluctuations and thus visit a vicinity of $0$. The goal is then to show that the measure is invariant under translations, thus achieving a contradiction. 

Denote by $\phi'$ a shift of $\phi$ by an arbitrary integer vector. 
As in~\cite{GeoHig00}, we consider a duplicate system, which consists of two independent samples: $\omega$ from $\phi$ and $\omega'$ from $\phi'$. The core of the argument is the construction of arbitrarily large domains $D$ which are measurable in terms of the configurations outside of them and such that the boundary conditions on $D$ induced by $\omega'$ dominate those induced by $\omega$.
It then follows that $\phi'$ stochastically dominates $\phi$. The opposite bound is obtained by symmetry, and thus $\phi = \phi'$, 
which is to say that $\phi$ is invariant under translations, as required.

In~\cite[Lemma 5.5]{GeoHig00}, the construction of domains $D$ as above is quite direct, 
since in the Ising model it suffices to ensure that $\omega' \geq \omega$ on $\partial D$.
For FK-percolation however, that  $\omega' \geq \omega$ on the boundary of $D$ does not imply that the boundary conditions on $D$ induced by $\omega'$ dominates those induced by $\omega$.
Our construction is significantly more involved and requires a sophisticated exploration argument that allows to compare boundary conditions. The exploration relies on a new result for  percolation in the half-plane (Proposition~\ref{prop:hp_perco}), which may be of independent interest.

Due to the presence of long-range interactions in FK-percolation, it is not {\em a priori} clear that every thermodynamic limit is a Gibbs measure. 
However, this statement can be derived from Theorem~\ref{thm:mainFK} and Lemma~\ref{lem:Kcross} below.

\begin{cor}\label{cor:every-limit-is-gibbs}
	Let $p\in (0,1)$, $q\geq 1$, $(D_{n})_{n \geq1}$ be a sequence of domains increasing to $\bbZ^{2}$ 
	and $(\xi_{n})_{n \geq1}$ be a sequence of boundary conditions on $(D_{n})_{n \geq1}$.
	Assume that the sequence of FK-percolation measures $\phi_{D_{n},p,q}^{\xi_{n}}$ has a weak limit.
	Then the limiting measure is a Gibbs measure for FK-percolation with parameters $(p,q)$.
	Moreover, it is a linear combination of the free and the wired measures.
\end{cor}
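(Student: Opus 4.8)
The plan is to show that the limiting measure $\phi:=\lim_n\phi_{D_n,p,q}^{\xi_n}$ is itself a Gibbs measure for FK-percolation with parameters $(p,q)$; the final assertion of the corollary then follows at once from Theorem~\ref{thm:mainFK}. First I would record the trivial reduction: by monotonicity of FK-percolation in the boundary conditions, $\phi_{D_n}^0\leq_{\rm st}\phi_{D_n}^{\xi_n}\leq_{\rm st}\phi_{D_n}^1$ for each $n$, and passing to the limit gives $\phi_{p,q}^0\leq_{\rm st}\phi\leq_{\rm st}\phi_{p,q}^1$, exactly as in~\eqref{eq:0phi1}. Since $\phi_{p,q}^0=\phi_{p,q}^1$ unless $p=p_c(q)$ and $q>4$, in all those cases $\phi=\phi_{p,q}^0$ and there is nothing to prove; so from now on I assume $p=p_c(q)$ and $q>4$, so that $\phi_{p,q}^0$ and $\phi_{p,q}^1$ are distinct, tail-trivial, and mutually singular --- the former having no infinite cluster and the latter a unique infinite one.

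To verify \eqref{eq:DLR-FK} for $\phi$, fix a finite domain $\Delta=(V_\Delta,E_\Delta)\subset\bbZ^2$, an increasing event $A\in\calF(E_\Delta)$ --- it suffices to treat such $A$, since a probability measure on a finite product space is determined by its values on increasing events --- and an increasing sequence of boxes $\Lambda_N\uparrow\bbZ^2$ with $\Delta\subset\Lambda_N$. For $n$ large enough that $\Lambda_N\subset D_n$, the spatial Markov property of $\phi_{D_n}^{\xi_n}$, together with monotonicity in the boundary conditions, gives for every local event $B$ depending only on the edges of $\Lambda_N\setminus E_\Delta$:
\[
	\phi_{D_n}^{\xi_n}\big[\phi_{\Delta,p,q}^{\zeta_N^{\mathrm{f}}(\omega)}[A]\,\1_B\big]\;\leq\;\phi_{D_n}^{\xi_n}[\1_A\1_B]\;\leq\;\phi_{D_n}^{\xi_n}\big[\phi_{\Delta,p,q}^{\zeta_N^{\mathrm{w}}(\omega)}[A]\,\1_B\big],
\]
where $\zeta_N^{\mathrm{f}}(\omega)$ (resp.\ $\zeta_N^{\mathrm{w}}(\omega)$) is the boundary condition induced on $\partial\Delta$ by the restriction of $\omega$ to $\Lambda_N\setminus E_\Delta$ with free (resp.\ wired) conditions on $\partial\Lambda_N$. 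The key is that these are \emph{local} functions of $\omega$, and that the boundary condition on $\Delta$ actually governing the conditional law inside $\Delta$ under $\phi_{D_n}^{\xi_n}$ lies between them for the refinement order on partitions. Letting $n\to\infty$ and invoking weak convergence (the relevant integrands being local) yields the same inequalities with $\phi$ in place of $\phi_{D_n}^{\xi_n}$. Finally, as $N\to\infty$ one has $\zeta_N^{\mathrm{w}}(\omega)\downarrow\eta(\omega)$, the boundary condition induced on $\partial\Delta$ by $\omega$ on $E_\Delta^c$ under our wiring-at-infinity convention, while $\zeta_N^{\mathrm{f}}(\omega)\uparrow\eta^{\mathrm{fin}}(\omega)$, which coincides with $\eta(\omega)$ except that vertices of $\partial\Delta$ lying in distinct infinite clusters of $\omega\setminus E_\Delta$ are left unwired; bounded convergence then gives
\[
	\phi_{\Delta,p,q}^{\eta^{\mathrm{fin}}(\omega)}[A]\;\leq\;\phi\big[\1_A\,\big|\,\calF(E_\Delta^c)\big]\;\leq\;\phi_{\Delta,p,q}^{\eta(\omega)}[A]\qquad\phi\text{-a.s.}
\]

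The crux is to show that the two outer terms coincide $\phi$-almost surely, that is $\eta^{\mathrm{fin}}(\omega)=\eta(\omega)$ for every finite $\Delta$ --- equivalently, that $\phi$-a.s.\ no finite set of edges disconnects the infinite cluster(s) of $\omega$ into two infinite pieces. This is exactly where the long-range nature of FK-percolation bites and where Lemma~\ref{lem:Kcross} is indispensable: it bounds uniformly the number of infinite clusters (and of clusters crossing a given annulus), so that $\phi_{\Delta,p,q}^{\eta^{\mathrm{fin}}}$ and $\phi_{\Delta,p,q}^{\eta}$ differ only by a Radon--Nikodym factor bounded away from $0$ and $\infty$; in particular $\phi$ satisfies the modified DLR equation in which clusters reaching infinity receive a weight $q'\in[1,q]$, as discussed in Remark~\ref{rem:DLR}. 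A further argument, again using Lemma~\ref{lem:Kcross} together with the self-duality of $\bbZ^2$ and the stochastic sandwiching above, shows that under such a $\phi$ the infinite cluster is in fact a.s.\ unique and not disconnected by any finite edge-set, so that the modified and the genuine specifications agree and $\phi$ is a genuine Gibbs measure. An equivalent, and perhaps cleaner, bookkeeping is to split $\phi$ along the tail event $I=\{\text{there is an infinite cluster}\}$: on $I^c$ the conditions $\eta^{\mathrm{fin}}$ and $\eta$ automatically coincide, so $\phi[\,\cdot\mid I^c]$ is Gibbs and, being carried by $I^c$, equals $\phi_{p,q}^0$ by Theorem~\ref{thm:mainFK}; on $I$ one uses Lemma~\ref{lem:Kcross} to exclude that a finite edge-set disconnects the infinite cluster, whence $\phi[\,\cdot\mid I]$ is Gibbs and equals $\phi_{p,q}^1$, so that $\phi=\phi[I^c]\,\phi_{p,q}^0+\phi[I]\,\phi_{p,q}^1$. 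Either way, once $\phi$ is known to be Gibbs, Theorem~\ref{thm:mainFK} concludes; the only genuinely new input, and the main obstacle, is the control of the infinite clusters afforded by Lemma~\ref{lem:Kcross}.
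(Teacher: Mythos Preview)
Your sandwiching argument is correct and natural: passing to the limit in
\[
\phi\big[\phi_{\Delta}^{\zeta_N^{\mathrm f}(\omega)}[A]\,\1_B\big]\le\phi[\1_A\1_B]\le\phi\big[\phi_{\Delta}^{\zeta_N^{\mathrm w}(\omega)}[A]\,\1_B\big]
\]
and then letting $N\to\infty$ does yield $\phi_{\Delta}^{\eta^{\mathrm{fin}}(\omega)}[A]\le\phi[\1_A\mid\calF(E_\Delta^c)]\le\phi_{\Delta}^{\eta(\omega)}[A]$ for increasing $A$. The gap is precisely at the step you call ``the crux'': showing $\eta^{\mathrm{fin}}(\omega)=\eta(\omega)$ $\phi$-a.s., i.e.\ that removing any finite edge-set leaves at most one infinite cluster. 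Lemma~\ref{lem:Kcross} does \emph{not} do this: it only shows that the number of clusters crossing $\La_n\to\partial\La_{2n}$ is at most some fixed $K$, hence that $\phi$ has at most $K$ infinite clusters; it gives no mechanism to reduce $K$ to $1$. Your claim that the sandwich implies $\phi$ satisfies a ``modified DLR equation with weight $q'\in[1,q]$'' is also unjustified: being trapped between two FK specifications for increasing events does not mean the conditional law is itself an FK measure for some intermediate weight. And Remark~\ref{rem:DLR} cannot be invoked here, since its conclusion (uniqueness of the infinite cluster) relies on the full proof of Theorem~\ref{thm:mainFK}, which in turn assumes the Gibbs property you are trying to establish. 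The split along $I=\{\exists\text{ infinite cluster}\}$ is fine on $I^c$, but on $I$ you are back to exactly the same unproven statement.

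The paper circumvents this circularity by working at the \emph{finite-volume} level rather than with the limit $\phi$. The key estimate (Lemma~\ref{lem:every-limit-is-gibbs3}, via Lemmas~\ref{lem:every-limit-is-gibbs1}--\ref{lem:every-limit-is-gibbs2}) is that
\[
\sup_{\xi}\phi_{\La_{2r}}^{\xi}\big[0\xlra{}\partial\La_{2r}\text{ and }(1,0)\xlra{*}\partial\La_{2r}\big]\xrightarrow[r\to\infty]{}0,
\]
uniformly over all boundary conditions $\xi$. This directly controls the probability under $\phi_{D_n}^{\xi_n}$ that the true induced boundary condition on $\Delta$ differs from the local one $\zeta_R(\omega)$, \emph{uniformly in $n$}, and hence survives the limit. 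The argument for Lemma~\ref{lem:every-limit-is-gibbs1} (one wired component) is where the work is: one explores the cluster of the wired arc, after which the remainder carries free boundary conditions, so a long connection disjoint from it is exponentially unlikely by~\eqref{eq:exp_decay}. Lemma~\ref{lem:Kcross} is used only to reduce general $\xi_n$ to boundary conditions with boundedly many wired components. This is the missing idea in your proposal.
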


Finally, let us mention that the results of this section adapt to FK-percolation on the triangular and hexagonal lattices as described in Section~\ref{sec:Hex}.

\subsection{Loop $O(n)$ model}

Let ${\rm Hex}$ denote the hexagonal lattice of Figure~\ref{fig:tri}. 
A loop configuration on ${\rm Hex}$ is any even subgraph of ${\rm Hex}$.
Then, each loop configuration consists of disjoint cycles (loops) and bi-infinite paths.
Let $D$ be a finite subgraph of the hexagonal lattice and let $\xi$ be a loop configuration on ${\rm Hex}$.
For $n,x>0$, the loop $O(n)$ model on $D$ with boundary conditions $\xi$ and edge-weight $x>0$ is 
the probability measure supported on loop configurations that coincide with $\xi$ on $D^{c}$ and defined by:
\[
	{\sf Loop}_{D,n,x}^{\xi}(\omega) = \frac{1}{Z^{\xi}(D,n,x)}\cdot n^{\# \text{loops}}x^{\# \text{edges}},
\]	
where $\# \text{edges}$ stands for the number of edges in $\omega \cap D$
and ${\# \text{loops}}$ for the number of finite loops of $\omega$ that intersect $D$.
Note that the number of bi-infinite paths intersecting $D$ is the same for all $\omega$ in the support ${\sf Loop}_{D,n,x}^{\xi}$, 
and we omit it from the count for convenience.

Gibbs measures for the loop $O(n)$ model are defined via the DLR condition in a straightforward fashion. 
As for FK-percolation, it is not immediate that thermodynamic limits are Gibbs.
Moreover, it is \emph{a priori} not clear that any Gibbs measures exist. 
The following result settles these questions in the regime of positive association for the spin representation of the model 
(see Section~\ref{sec:loop-background} for details on the spin representation). 

\begin{thm}\label{thm:loop}
	Let $n\geq 1$ and $0< x \leq \tfrac{1}{\sqrt{n}}$. 
	Then, there exists a unique Gibbs measure for the loop $O(n)$ model with edge-weight $x$.
	This measure is translation invariant and tail trivial.
	Moreover, it is the weak limit of ${\sf Loop}_{D_{k},n,x}^{\xi_{k}}$,
	for any sequence of domain $D_{k}$ increasing to ${\rm Hex}$ and any sequence $\xi_{k}$ of loop configurations.
\end{thm}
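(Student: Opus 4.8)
The plan is to reduce everything to the structure theorem for the spin representation and then transfer it back to loops. Recall (Section~\ref{sec:loop-background}) that the spin representation associates to a loop configuration on ${\rm Hex}$ a $\pm1$-spin configuration on the faces of ${\rm Hex}$ (equivalently, on the vertices of the triangular lattice), with loops being the interfaces between $+$ and $-$ regions; in the regime $n\geq1$, $nx^{2}\leq1$ this spin measure is positively associated, and the generic machinery behind Theorem~\ref{thm:mainFK} applies to it. So the first step is to state the analogue of Theorem~\ref{thm:mainFK} for the spin representation: every Gibbs measure for the spin model is a linear combination $\la\,\mathrm{spin}^{+}+(1-\la)\,\mathrm{spin}^{-}$ of the plus and minus states, obtained as thermodynamic limits under $+$ and $-$ boundary conditions. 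This is exactly what the paper claims its proof is ``generic enough'' to yield, so I would invoke it.

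The second step is to show that the plus and minus spin states induce the \emph{same} loop measure. This is the key input specific to loops: the loop configuration is a function of the spin configuration that is invariant under the global sign flip $\sigma\mapsto-\sigma$, and the global flip maps $\mathrm{spin}^{+}$ to $\mathrm{spin}^{-}$ (by symmetry of the finite-volume specifications under relabelling $+\leftrightarrow-$, and passing to the limit). Hence the pushforward of $\mathrm{spin}^{+}$ under ``take the loops'' equals the pushforward of $\mathrm{spin}^{-}$; call this common measure $\mathrm{Loop}_{n,x}$. Combined with step one, the pushforward of \emph{any} spin Gibbs measure is $\mathrm{Loop}_{n,x}$, since $\la\,\mathrm{spin}^{+}+(1-\la)\,\mathrm{spin}^{-}$ pushes forward to $\la\,\mathrm{Loop}_{n,x}+(1-\la)\,\mathrm{Loop}_{n,x}=\mathrm{Loop}_{n,x}$ regardless of $\la$.

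The third step is the correspondence between loop Gibbs measures and spin Gibbs measures. Given a Gibbs measure $\mathrm{Loop}$ for the loop $O(n)$ model, one lifts it to a spin measure: condition on (or fix) the spin of one reference face, and then the spins everywhere are determined by the loop configuration up to the global flip; averaging over the two lifts with weights $\tfrac12,\tfrac12$ produces a flip-invariant spin measure $\nu$ whose loop-pushforward is $\mathrm{Loop}$. One must check that $\nu$ is Gibbs for the spin model: the loop-DLR equation on a domain $D$, together with the fact that the spin specification on $D$ is precisely the loop specification reweighted by the (bijective, flip-ambiguous) spin$\leftrightarrow$loop dictionary on $D\cup\partial D$, gives the spin-DLR equation for $\nu$. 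By steps one and two, the loop-pushforward of $\nu$—which is $\mathrm{Loop}$—equals $\mathrm{Loop}_{n,x}$. Hence the Gibbs measure for the loop model is unique and equals $\mathrm{Loop}_{n,x}$. Translation invariance and tail triviality of $\mathrm{Loop}_{n,x}$ follow from the corresponding properties of $\mathrm{spin}^{\pm}$ (which are tail trivial and, by Theorem~\ref{thm:mainFK}-type arguments, translation invariant) pushed through the equivariant, finitely-dependent loop map; and convergence of $\mathrm{Loop}_{D_{k},n,x}^{\xi_{k}}$ for arbitrary $\xi_{k}$ follows because every subsequential weak limit is a loop Gibbs measure (the loop analogue of Corollary~\ref{cor:every-limit-is-gibbs}, using the boundedness of crossing clusters as in Lemma~\ref{lem:Kcross} to ensure the limit satisfies DLR), hence equals the unique $\mathrm{Loop}_{n,x}$.

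The main obstacle I expect is not steps two and three, which are soft once the dictionary is set up, but the \emph{transfer of the whole FK-style argument to the spin representation}—i.e. justifying the ``generic enough'' claim. The spin model on the triangular lattice has the same weak spatial Markov property issues as FK-percolation (the specification depends on connectivity structure, not just boundary spins), so one needs: positive association (known, \cite{DumGlaPel21}), a self-duality or a $+/-$ symmetry to replace primal/dual duality (here the global flip symmetry plays this role), a half-plane one-arm/uniqueness statement analogous to Proposition~\ref{prop:hp_perco}, and the exploration argument comparing induced boundary conditions in the duplicated system. A secondary subtlety is the existence of a Gibbs measure at all (flagged in the text): this is resolved only a posteriori, since one first produces $\mathrm{spin}^{+}$ as a thermodynamic limit, checks it is Gibbs—which for the spin model, unlike a priori for loops, is standard from monotonicity—and then transfers. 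Care is also needed that the lifting in step three is measurable and that the two global lifts are genuinely the only ones (connectedness of ${\rm Hex}$'s face adjacency graph ensures this).
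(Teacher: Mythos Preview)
Your proposal is correct and follows essentially the same route as the paper: reduce to a structure theorem for the spin representation (the paper's Proposition~\ref{prop:GibbsSpin}), lift an arbitrary loop Gibbs measure to a spin Gibbs measure by averaging over the two coherent spin configurations (the paper's Lemma~\ref{lem:LtoS}), and then use the global flip symmetry ${\sf Spin}^{+}\leftrightarrow{\sf Spin}^{-}$ to conclude that the loop pushforward is independent of~$\la$. Your identification of the main obstacle---transferring the duplication/shielding argument to the spin model, with the $+/-$ flip replacing primal/dual duality---matches the paper's remark that this is where the nontrivial work lies. One minor point: you treat the ``moreover'' clause (convergence of arbitrary finite-volume loop measures) more explicitly than the paper's own proof does, via a loop analogue of Corollary~\ref{cor:every-limit-is-gibbs}; this is the natural way to fill that in and is in the intended spirit.
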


	When $n=1, x\in [0,1]$, the loop $O(n)$ model is in correspondence with the ferromagnetic Ising model on the triangular lattice and the statement is known~\cite{Aiz80,Hig81}.
	
	The statement was recently proven also in the following two ranges of parameters: when $n\in [1,2]$ and $x=x_{c}(n) := \tfrac{1}{\sqrt{2+\sqrt{2-n}}}$~\cite{DumGlaPel21}; when $n\in [1,1+\eps)$ and $x\in (\tfrac{1}{\sqrt{n}}-\eps,\tfrac{1}{\sqrt{n}}]$. In fact a stronger statement was shown there: strong Russo--Seymour--Welsh (RSW) estimates. 
	Such estimates are expected to hold for all $n\in [0,2]$ and $x \geq x_{c}(n)$.
	
	The authors of the present paper proved~\cite{GlaMan18} the uniqueness of the Gibbs measure at $n=2$ and $x=1$, a result which is not covered by Theorem~\ref{thm:loop}.
	However, it is still unproved in this case that the weak limit of any converging sequence of finite volume measures is Gibbs.	 

When $n$ is large, the behaviour is different and is better understood through the Pirogov--Sinai theory and similar techniques.
More precisely, for some $c,C >0$, if $n > C$ and $nx^6 <c$, the work of Dobrushin--Shlosman~\cite{DobShl85} implies the existence of a unique Gibbs measure, whereas when both $n$ and $nx^6$ are larger than $C$,~\cite{DumPelSamSpi17} proves the existence of at least three distinct extremal Gibbs measures --- these approximate three different ground states and are translations of each other.
Moreover, it is likely that~\cite[Thm~1.8]{DumPelSamSpi17} may be combined with the technique of~\cite{DobShl85}
to prove that these are the only extremal Gibbs measures for the loop $O(n)$ model for parameters as above.

Further background on the loop $O(n)$ model is given in Section~\ref{sec:loop-background}, while the proof of Theorem~\ref{thm:loop} is discussed in Section~\ref{sec:loop_proof}.
This is achieved by adapting the proof of Theorem~\ref{thm:mainFK} to the spin representation of the loop $O(n)$, then transferring the result to the actual loop model.

\paragraph{Organisation of the paper.}
In Section~\ref{sec:background} we give a quick overview of the properties of the three models of interest: FK-percolation, the Potts and the loop $O(n)$ models. 
The following two sections deal with the proof of the main result for FK-percolation: Section~\ref{sec:FK_preparation} sets up the context for the duplication argument, 
which is then performed in Section~\ref{sec:FK_duplication}. 
Corollary~\ref{cor:every-limit-is-gibbs} is proved in Section~\ref{sec:every-limit-is-gibbs}. 

Section~\ref{sec:Potts_proof} contains the proof of the main result for the Potts model, namely Theorem~\ref{thm:mainPotts}. 
Finally, Section~\ref{sec:extensions} contains two extensions of Theorem~\ref{thm:mainFK}: the same result for the triangular and hexagonal lattices and the proof of Theorem~\ref{thm:loop}.

\paragraph{Acknowledgements}
The authors would like to thank Yinon Spinka for sharing his ideas regarding the exploration argument 
and Sebastien Ott for helpful discussions and background.

A significant part of this work was conducted in Fribourg, and the first author is grateful to the University of Fribourg for its hospitality and to the Swiss National Science Foundation for financing these visits.
The second author is supported by the Swiss National Science Foundation and is a member of the NCCR SwissMAP.

\section{Background}\label{sec:background}

For the rest of the paper we will work with the lattice $\bbL = (V(\bbL),E(\bbL))$ whose vertices are 
$\{(x,y) \in \bbZ^2: x+y \text{ even}\}$ and which contains edges between vertices at euclidian distance $\sqrt 2$.

Write $\bbH$ for the half-plane $\bbR \times [0,+\infty)$ and $\bbH^-$ for its vertical reflection $\bbR \times (-\infty,0]$.
For $n \geq 1$, let the box of size $n$ be defined by $\La_n := [-n,n]^2$. 
We will routinely identify these regions to the graphs spanned by the edges of $\bbL$ contained in them.  

Everywhere in this work we assume $q \geq 1$ and $p \in (0,1)$.

\subsection{FK-percolation}\label{sec:fk-background}

We will use standard properties of FK-percolation. 
They can be found in~\cite[Chapters 3 and 4]{Gri06}, and we only recall them briefly below. 
Fix a finite subgraph~$D = (V,E)$ of~$\bbL$.

\paragraph{Monotonic properties.} 
The partial order on $\{0,1\}^{E(\bbL)}$ is defined by setting that $\omega\geq \omega'$ if $\omega_{e}\geq \omega'_{e}$, for all $e\in E(\bbZ^{2})$.
An event~$A$ is called {\em increasing} if for any~$\omega\ge\omega'$, $\omega'\in A$  implies that~$\omega\in A$.
For two boundary conditions $\xi$ and $\xi'$ on $D$ we write $\xi\ge\xi'$ if any two boundary vertices wired together in $\xi'$ are also wired together in $\xi$. 

Fix~$q\ge1$,~$p \in (0,1)$, and boundary conditions~$\xi\ge\xi'$. Then, for every  increasing events~$A$ and~$B$,
\begin{align*}
	\phi_{D,p,q}^\xi[A\cap B]&\ge \phi_{D,p,q}^\xi[A]\, \phi_{D,p,q}^{\xi}[B],  \tag{PosAssoc}\label{eq:FKG} \\
	\phi_{D,p,q}^{\xi}[A]&\ge \phi_{D,p,q}^{\xi'}[A].   	\tag{CBC}\label{eq:CBC} 
\end{align*}
The inequalities above will respectively be referred to as the {\em positive association} and the {\em comparison between boundary conditions}. 
The second inequality may also be stated as $\phi_{D,p,q}^{\xi}$ dominates stochastically $\phi_{D,p,q}^{\xi'}$, which we write 
$\phi_{D,p,q}^{\xi}\ge_{\rm st} \phi_{D,p,q}^{\xi'}$. 
We direct the reader to~\cite[Ch.~4]{Gri10} for details on stochastic domination.

\paragraph{Spatial Markov property.} 
Let $D'$ be a subgraph of $D$ and let $\zeta$ be a percolation configuration on $\bbZ^2$.
As discussed above (Section \ref{sec:intro-fk}), $\zeta$ defines boundary conditions on $D$ and $D'$~--- denote them by $\xi$ and $\xi'$, respectively. Then,
\begin{equation}\tag{SMP}\label{eq:SMP} 
	\phi_{D,p,q}^{\xi}[\, \cdot \, \,|\,\omega_e=\zeta_e,\forall e\notin D'] = \phi_{D',p,q}^{\xi'}[\,\cdot\,].
\end{equation}

\paragraph{Finite energy property.} 
There exists $\eps = \eps (p,q) > 0$ such that the following holds. 
For any subgraph $D = (V,E)$ of $\bbZ^{2}$, any boundary conditions $\xi$, any $E' \subset E$, and any percolation configuration $\omega_0$,
\[
	\phi^{\xi}_{D,p,q}[\omega = \omega_0 \text{ on $E'$}] \geq \eps^{|E'|}.
\] 

\paragraph{Infinite-volume limits.} 
Due to~\eqref{eq:CBC} and~\eqref{eq:SMP}, for any growing sequence of subgraphs $D_{n}$ increasing to $\bbZ^{2}$, 
the measures  $\phi_{D_{n},p,q}^0$ form a stochastically increasing sequence. 
Hence their weak limit exists and is independent of the sequence $D_{n}$. 
The limit is called the {\em free infinite-volume measure for FK-percolation} and is denoted by $\phi_{p,q}^0$.
Similarly, the sequence $\phi_{D_{n},p,q}^1$ is stochastically decreasing, its weak limit is called the {\em wired infinite-volume measure for FK-percolation} and is denoted by $\phi_{p,q}^1$.
The measures $\phi_{p,q}^0$ and $\phi_{p,q}^1$ are invariant under translations and ergodic, and $\phi^0_{p,q} \leq_{\rm st}\phi^1_{p,q}$. 
Depending on the values of $p$ and $q$, these two measures may be equal or not.

\paragraph{Dual model.} 
Define the dual lattice of $\bbL$ as $\bbL^* = \bbL + (1,0)$, that is its translate to the right by one unit. 
Notice that each vertex of $\bbL^*$ corresponds to a face of $\bbL$ 
and each edge $e$ of $\bbL$ is crossed by a unique edge $e^*$ of $\bbL^*$. 
We will call the lattice $\bbL$ and its edges {\em primal}, while those of $\bbL^*$ are called {\em dual}. 
For a subgraph $D = (V,E)$ of $\bbL$, the dual graph~$D^*=(V^*,E^*)$ of~$D$ is the subgraph of $\bbL^*$  spanned by the edges dual to those of~$D$.

Given a subgraph configuration~$\omega$, construct its dual configuration~$\omega^*$ on~$D^*$ by
declaring any edge of the dual graph to be open (resp.\ closed) if the
corresponding edge of the primal lattice is closed (resp.\ open) for $\omega$. 
Connections in $\omega^*$ are written $\xlra{*}$ or $\xlra{\omega^*}$. 

When $\omega$ is sampled according to $\phi_{D,p,q}^\xi$ for some boundary conditions $\xi$, 
its dual configuration $\omega^*$ follows an FK-percolation distribution
with the same parameter~$q$, dual parameter~$p^*$ satisfying~
\begin{align}
	\frac{p^*p}{(1-p^*)(1-p)}=q, \label{eq:pp*}
\end{align}
and dual boundary conditions $\xi^*$. 

We refer to~\cite{Gri06} for a full description of dual boundary conditions
and simply mention that if $D$ is simply connected and if $\xi$ is induced by a configuration $\omega$ on $\bbL \setminus D$ 
with no infinite cluster, then $\xi^*$ are the boundary conditions induced by $\omega^*$. 
In particular, the dual of free boundary conditions are the wired ones. 
As a consequence of this last fact, if $\omega$ is sampled according to the infinite-volume measure $\phi^0_{p,q}$, then $\omega^*$ has law
$\phi^1_{p^*,q}$.

For each fixed $q$, the unique value of $p$ for which $p = p^*$ is called the {\em self-dual} point. 
It may be derived from~\eqref{eq:pp*} and was shown to be the critical point in~\cite{BefDum12b}, so that
\begin{align*}
	p_{c}(q) = p_{\rm sd}(q) = \frac{\sqrt q}{1 + \sqrt q}.
\end{align*}
When $p\neq p_{c}(q)$, by~\cite{BefDum12b}, the large-scale behaviour of the model is trivial and $\phi_{p,q}^{0} = \phi_{p,q}^{1}$.

\paragraph{Dichotomy at the self-dual point.}
It was proved in~\cite{DumSidTas17} (see also~\cite{DumTas19} for a more general proof) that when $\phi_{p_{c}(q),q}^{0} \neq \phi_{p_{c}(q),q}^{1}$, the lengths of interfaces between clusters have exponential tails: for some $c>0$
\begin{align}\label{eq:exp_decay}
	\phi_{p,q}^0[0\lra \partial \La_n] \leq e^{-cn} \quad \text{ and }\quad 
	\phi_{p,q}^1[(1,0)\xlra{*}\partial \La_n] \leq e^{-cn} 
	\qquad \text{ for all $n\geq 1$}.
\end{align}
Moreover, it was proved in ~\cite{DumSidTas17} that $\phi_{p_{c}(q),q}^{0}=\phi_{p_{c}(q),q}^{1}$ when $1 \leq q \leq 4$. Conversely, when $q>4$ it was proved in~\cite{DumGagHar16} (see also~\cite{RaySpi20} for a shorter proof) that $\phi_{p_{c}(q),q}^{0} \neq \phi_{p_{c}(q),q}^{1}$.

\paragraph{Structure of Gibbs measures.}
It is well-known (see eg.~\cite[Theorem 7.26]{Geo11}) that every Gibbs measure is a linear combination of extremal Gibbs measures.
Thus, in order to describe the set of Gibbs measures of the model, it is enough to characterise its extremal measures, 
which are exactly those that are tail trivial~\cite[Thm. 7.7]{Geo11}.
It will be crucial for our arguments that tail trivial Gibbs measures are positively associated (as opposed to other Gibbs measures, which may lack this property).
This fact is well-known but we include the proof for completeness.

\begin{lem}\label{lem:extremal-fkg}
	Let $\phi$ be a tail-trivial measure for FK-percolation with parameters $p\in (0,1)$ and $q\geq 1$.
	Then $\phi$ is positively associated, that is for any two increasing events $A,B\in \{0,1\}^{E(\bbL)}$, one has~\eqref{eq:FKG}.
\end{lem}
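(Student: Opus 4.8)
The plan is to realise $\phi$ as a mixture, indexed by its tail, of finite-volume FK-percolation measures --- each positively associated by \eqref{eq:FKG} in finite volume --- and then to use tail triviality to collapse that mixture in the infinite-volume limit. The first step is a reduction: it suffices to prove \eqref{eq:FKG} when $A$ and $B$ are increasing events depending on finitely many edges, say on the edges of $\Lambda_N$; the passage to general increasing events is postponed to the end.

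So fix such local increasing $A,B$. For $n\geq N$ let $\mathcal G_n:=\mathcal F(E(\bbL)\setminus E(\Lambda_n))$ be the $\sigma$-algebra generated by the edges outside $\Lambda_n$. The $\mathcal G_n$ decrease in $n$, and since every finite set of edges is contained in some $\Lambda_n$, their intersection $\mathcal G_\infty:=\bigcap_{n\geq N}\mathcal G_n$ is exactly the tail $\sigma$-algebra, hence $\phi$-trivial by hypothesis; consequently $\phi[\,\cdot\mid\mathcal G_\infty]=\phi[\,\cdot\,]$ $\phi$-a.s. Next I would observe that the boundary condition $\xi_n$ induced on $\Lambda_n$ by a configuration depends only on the edges outside $\Lambda_n$, so it is $\mathcal G_n$-measurable and \eqref{eq:DLR-FK} can be rephrased as $\phi[\,\cdot\mid\mathcal G_n]=\phi^{\xi_n}_{\Lambda_n,p,q}[\,\cdot\,]$ $\phi$-a.s. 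Since $A$, $B$ and $A\cap B$ are events on the finite graph $\Lambda_n$, the finite-volume positive association \eqref{eq:FKG} applied to $\phi^{\xi_n}_{\Lambda_n,p,q}$ gives
\[
\phi[A\cap B\mid\mathcal G_n]\ \geq\ \phi[A\mid\mathcal G_n]\,\phi[B\mid\mathcal G_n]\qquad\phi\text{-a.s.}
\]
This is where the argument really turns: a mixture of positively associated measures need not be positively associated, so it is essential that the mixing is over the \emph{tail} and that the $\mathcal G_n$-conditional expectations will converge to constants.

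To finish the local case, pass to the limit $n\to\infty$. By Lévy's downward (reverse) martingale convergence theorem, $\phi[A\mid\mathcal G_n]\to\phi[A\mid\mathcal G_\infty]=\phi[A]$ $\phi$-a.s.\ and in $L^1$, and likewise for $B$ and for $A\cap B$; the products $\phi[A\mid\mathcal G_n]\,\phi[B\mid\mathcal G_n]$, being bounded by $1$, then converge to $\phi[A]\,\phi[B]$ in $L^1$ as well. Integrating the displayed inequality against $\phi$ (the left-hand side integrates to $\phi[A\cap B]$ for every $n$) and letting $n\to\infty$ yields $\phi[A\cap B]\geq\phi[A]\,\phi[B]$.

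For arbitrary increasing events I would argue by approximation. Given increasing Borel $A$, inner regularity of $\phi$ gives a closed $F\subseteq A$ with $\phi[A\setminus F]$ arbitrarily small; replacing $F$ by its up-set keeps it closed (by compactness of $\{0,1\}^{E(\bbL)}$) and, since $A$ is increasing, still inside $A$, so one may take $F$ increasing and closed. Every increasing closed set is a decreasing intersection of increasing local events (e.g.\ the events ``some element of $F$ lies below $\omega$ on $E(\Lambda_n)$''), so the local case together with continuity of $\phi$ along decreasing sequences gives \eqref{eq:FKG} for increasing closed sets, and then for all increasing events after letting the approximation error tend to $0$. The only delicate point in the whole proof is the one flagged above --- that tail triviality, through the reverse martingale theorem, is precisely what lets the finite-volume FKG inequalities survive the infinite-volume limit; the measure-theoretic reduction to local events is routine.
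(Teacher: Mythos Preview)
Your proof is correct and follows essentially the same route as the paper: apply \eqref{eq:DLR-FK} to write the conditional law on $\Lambda_n$ as a finite-volume FK measure, use finite-volume \eqref{eq:FKG} to get the conditional inequality, then invoke reverse martingale convergence together with tail triviality to pass to the limit. The only difference is that you first reduce to local events and then recover the general case by approximation, whereas the paper works directly with arbitrary increasing $A,B$: conditionally on $\mathcal G_n$ the sections $A_\omega,B_\omega$ on $\Lambda_n$ are still increasing, so the finite-volume FKG applies immediately and your closing paragraph is not needed.
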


\begin{proof}
	For $\omega$ sampled from $\phi$, denote by $\xi_{n}$ the boundary conditions induced by $\omega$ on $\La_{n}$.
	Fix two increasing events $A,B$.
	Then, by \eqref{eq:DLR-FK} and \eqref{eq:FKG}, $\phi$-a.s., for any $n>0$,
	\[
		\phi [A\cap B \, | \, \xi_{n}] \geq \phi [A \, | \, \xi_{n}]\, \phi [B \, | \, \xi_{n}].
	\]
	When regarded as functions of $n$, each probability above is a bounded backward martingale, and therefore converges. 
	The limit is given by the probability of the corresponding event under $\phi [. \, | \, \calF_\infty]$, where $\calF_\infty$ is the tail sigma-algebra. 
	By tail triviality, all three limits are a.s. constant, and equal to the unconditional probabilities of the corresponding events.
\end{proof}

\paragraph{A bound on the number of large clusters.} 
The following result is a direct consequence of~\eqref{eq:exp_decay}, and will come in handy at different times. 

\begin{lem}\label{lem:Kcross}
	Let $p\in (0,1)$ and $q\geq 1$ be such that $\phi_{p,q}^{0} \neq \phi_{p,q}^{1}$.
	There exists $K \geq 1$ and $c > 0$ such that,
	for any $n$ and any boundary conditions $\xi$ on $\La_{2n}$, 
	\begin{align}\label{eq:maxK}
		\phi_{\La_{2n},p,q}^\xi[\exists K \text{ disjoint clusters intersecting both $\La_n$ and $\partial \La_{2n}$}] \leq e^{-cn}.
	\end{align}
\end{lem}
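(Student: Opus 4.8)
Since the dichotomy recalled above shows that $\phi^{0}_{p,q}\neq\phi^{1}_{p,q}$ can only happen at the self-dual point $p=p_{c}(q)$ with $q>4$, both estimates in \eqref{eq:exp_decay} are at our disposal: the one-arm estimate for the primal model under free boundary conditions, and — because $p_{c}$ is self-dual — the same statement for the dual model under free (dual) boundary conditions. The plan is to use this together with planar duality: if many disjoint clusters cross the annulus $A:=\La_{2n}\setminus\La_{n}$, they must be separated by an equal number of disjoint \emph{dual} crossings of $A$, so that the configuration contains many interleaved primal and dual crossings, and it is this abundance that is exponentially unlikely.

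The combinatorial input is the following. On the event in \eqref{eq:maxK}, restricting the $K$ disjoint crossing clusters to $A$ produces $K$ pairwise vertex-disjoint open crossings $\gamma_{1},\dots,\gamma_{K}$ of $A$, which I order cyclically. Inside the topological quadrilateral bounded by $\gamma_{i}$, $\gamma_{i+1}$ and the two intervening boundary arcs of $A$ there is no open path from $\gamma_{i}$ to $\gamma_{i+1}$ (it would merge two of the clusters), so planar duality gives a dual crossing $\delta_{i}$ of $A$ inside that quadrilateral; hence one also obtains $K$ disjoint dual crossings, and the $2K$ paths $\gamma_{1},\delta_{1},\gamma_{2},\delta_{2},\dots$ alternate around $A$. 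Moreover $\delta_{i}$ runs between the two open primal paths $\gamma_{i},\gamma_{i+1}$, so the primal edges along its two lateral sides are closed — i.e. $\delta_{i}$ sees free dual boundary conditions there — and, symmetrically, each $\gamma_{i+1}$ (for $1\le i\le K-1$) runs between the closed-primal paths $\delta_{i},\delta_{i+1}$ and sees free primal boundary conditions along two of its sides.

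It remains to convert this picture into an exponential bound, uniformly over the arbitrary boundary conditions $\xi$ on $\partial\La_{2n}$; this is the main obstacle, precisely because for $q>4$ at $p_{c}$ a crossing of a box under arbitrary boundary conditions need \emph{not} be exponentially unlikely (no RSW is available), so the flanking open/closed paths produced above must really be exploited. I would cut $A$ into of order $n$ concentric sub-annuli of bounded width, in each of which the alternating structure of $2K$ primal and dual crossings persists after restriction, and then peel these sub-annuli off from the outside inwards, using the spatial Markov property \eqref{eq:SMP} and comparison of boundary conditions \eqref{eq:CBC} to reduce, in each sub-annulus, to the worst boundary conditions induced from outside. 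One is then left to bound the local event — roughly, that a bounded-width annulus carries $K$ disjoint primal and $K$ disjoint dual crossings in alternating cyclic order — by some $\rho<1$ uniformly in the boundary conditions, once $K$ is taken large. This last step is where \eqref{eq:exp_decay} enters: a union bound over the of-order-$n$ possible locations of a crossing flanked on both sides by paths of the opposite type, each such flanked crossing of a bounded-width box being dominated via \eqref{eq:CBC} by a crossing under free boundary conditions on the flanking sides and hence controlled by \eqref{eq:exp_decay} (for the primal or, dually, for the dual model). Multiplying the uniform bound $\rho$ over the of-order-$n$ sub-annuli then yields \eqref{eq:maxK} with a possibly smaller constant $c$. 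The delicate point — and the right place to spend effort — is making the local estimate genuinely uniform over the adversarial boundary conditions; this is the FK-percolation counterpart of the corresponding step in the Ising arguments of Aizenman and Higuchi, and is harder here because of the long-range (non-nearest-neighbour) nature of the interactions.
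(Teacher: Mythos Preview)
Your combinatorial observation --- that $K$ disjoint crossing clusters force $K$ alternating dual crossings of the annulus --- is correct, but the plan to extract an exponential bound from it has a genuine gap at the ``local estimate'' step. You want to show that in each bounded-width sub-annulus the probability of carrying $2K$ alternating primal/dual crossings is at most some $\rho<1$, uniformly in the boundary conditions, and you invoke \eqref{eq:exp_decay} for this. But \eqref{eq:exp_decay} is a \emph{large-scale} estimate: it bounds the probability of connecting $0$ to $\partial\La_m$ by $e^{-cm}$, which says nothing about crossings of a box of bounded width. Moreover, a sub-annulus of bounded width still has circumference of order $n$, hence of order $n$ boundary edges, so the influence of the adversarial boundary conditions on it is not local in any useful sense. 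Under wired boundary conditions, for instance, many disjoint primal crossings of a thin annulus can coexist with many short dual separators between them, and your sketch gives no mechanism to rule out $\rho=1$. The peeling from outside in does not help either: the boundary conditions induced on each sub-annulus by what has been explored are again arbitrary.

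The paper's argument sidesteps this with a much cruder but effective device. It first proves the bound under \emph{free} boundary conditions on $\La_{2n}$, conditioned on all edges incident to $\partial\La_n$ being closed: one explores the clusters of the points of $\partial\La_n$ one after another; after each exploration the unexplored region still carries free boundary conditions, so by \eqref{eq:CBC} and \eqref{eq:exp_decay} each new crossing to $\partial\La_{2n}$ has conditional probability at most $e^{-c'n}$, and $K$ disjoint crossings cost at most $e^{-Kc'n}$. One then removes the conditioning on $\partial\La_n\equiv 0$ at a finite-energy cost of order $\eps^{-n}$, and replaces the free boundary condition by an arbitrary $\xi$ at a Radon--Nikodym cost of order $C^{n}$ (since $|\partial\La_{2n}|=O(n)$). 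Choosing $K$ large enough that $e^{-Kc'n}$ beats $(C/\eps)^{n}$ finishes the proof. The key point you are missing is that the global effect of the boundary condition can be absorbed into a single exponential-in-$n$ factor and then overwhelmed by taking $K$ large, rather than neutralised scale by scale.
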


\begin{proof}
	We start by proving a version of the statement for $\xi = 0$ and when all edges between $\La_{n-1}$ and $\partial \La_n$ are conditioned to be closed. Write $\partial \La_n \equiv 0$ for the latter event.
	By~\eqref{eq:exp_decay}, for some $c'>0$,
	\[
		\phi^{0}_{\La_{2n}, p,q}[\exists \text{ a crossing from $\La_n$ to $\partial\La_{2n}$} \, | \, \partial \La_n \equiv 0] \leq e^{-c'n}.
	\]
	Order the points on $\partial\La_{n}$ in an arbitrary order and explore the primal clusters containing them, one after the other.
	Conditionally on $\partial \La_n \equiv 0$ and on the first $k$ explored clusters, 
	the measure in the unexplored part of $\La_{2n}\setminus \La_{n}$ has free boundary conditions. 
	Thus, applying~\eqref{eq:CBC} and the above bound, we find that for any $k\geq 0$,
	\begin{align*}
		&\phi^{0}_{\La_{2n}, p,q}[\exists \text{ $k+1$ disjoint clusters intersecting both $\La_n$ and $\partial \La_{2n}$}\, | \, \partial \La_n \equiv 0] \\
		&\qquad \leq e^{-c'n}\,\phi^{0}_{\La_{2n}, p,q}[\exists \text{ $k$ disjoint clusters intersecting both $\La_n$ and $\partial \La_{2n}$}\, | \, \partial \La_n \equiv 0].
	\end{align*}
	Apply this bound repeatedly to conclude that, for any $K\geq 1$, 
	\[			
		\phi^{0}_{\La_{2n}, p,q}[\exists \text{ $K$ disjoint clusters intersecting both $\La_n$ and $\partial \La_{2n}$}\, | \, \partial \La_n \equiv 0] \leq e^{-Kc'n}.
	\]

	We turn our attention to the actual proof of~\eqref{eq:maxK}.
	By the finite energy property, conditionally on the existence of 
	$K$ disjoint clusters crossing from $\La_n$ to $\partial \La_{2n}$, the event $\partial \La_n \equiv 0$ occurs with a probability of at least $\eps^n$, for some constant $\eps>0$
	independent of $n$ or $K$. Moreover, changing the boundary conditions from free to $\xi$ affects the measure on $\La_{2n}$ by a Radon-Nikodym derivative bounded between $C^{-n}$ and $C^{n}$, for some constant $C$ independent of $n$ or $K$. 
	Thus
	\[
		\phi^{\xi}_{\La_{2n}, p,q}[\exists \text{ $K$ disjoint clusters intersecting both $\La_n$ and $\partial \La_{2n}$}] \leq (C/\eps)^{n}e^{-Kc'n}.
	\]
	The statement then follows by choosing $K$ sufficiently large.
\end{proof}

\begin{rem}
	The proof of Theorem~\ref{thm:mainFK} presented below aims to be generic and to apply to a variety of models with the properties above;
	as illustrated by the loop $O(n)$ model. 
	We will not attempt to describe here the minimal assumption needed for our proof, 
	but mention that positive association, some form of self-duality and of the spatial Markov property are necessary. 
	The exponential decay of~\eqref{eq:exp_decay} is also used extensively, 
	but this is not limiting as the dichotomy theorem of~\cite{DumTas19} applies to models with the properties above
	and yields~\eqref{eq:exp_decay} whenever the Gibbs measure is not unique. 
\end{rem}

\subsection{Potts model: background and Edwards-Sokal coupling}\label{sec:ES}

The Potts model and FK-percolation are related via the Edwards--Sokal coupling~\cite{EdwSok88}.
We start by stating the coupling and then explain its implications.

Fix an integer $q \geq 2$. For $T > 0$, let $p(T) = 1 - e^{-1/T}$. 

\paragraph{Finite volume.}
For a finite domain $D = (V,E)$, 
we will define a probability measure $\Phi$ on $\{0,1\}^E \times \{1,\dots, q\}^V$ whose marginals are 
the FK-percolation with parameters $(p(T),q)$ and the Potts model with parameters $(T,q)$, respectively.
The measure under boundary conditions $\tau : \partial D \to \{0,1,\dots, q\}$ is defined by
\begin{align}\label{eq:PhiD}
    \Phi_D^\tau [(\omega,\sigma)] = \tfrac{1}{Z_D^{\tau}}\big(\tfrac{p}{1-p}\big)^{|\omega|}
    \1_{\{\sigma(u) = \tau(u): \, \forall u\in \partial D\text{ with }\tau(u) \neq 0\}}
    \1_{\{\sigma(u) = \sigma(v): \, \forall uv \in \omega\}},
\end{align}
for all $(\omega,\sigma) \in \{0,1\}^E \times \{1,\dots, q\}^V$, where $Z_D^{\tau}$ is a normalising constant. 
Call a pair of configuration $(\omega,\sigma)$ {\em compatible} if any open edge of $\omega$ connects vertices of equal spin in $\sigma$, 
which is to say that the second indicator above is equal to $1$.  
Call $\sigma$ compatible with $\tau$ if the first indicator above is equal to $1$. 
Then $\Phi_D^\xi$ is supported on the set of compatible pairs of configurations with $\sigma$ compatible with $\tau$. 

We start by describing why the marginal on $\sigma$ of $\Phi_D^\tau$ is a Potts measure. 
Fix a spin configuration $\sigma \in \{1,\dots,q\}^{V}$ coherent with the boundary conditions $\tau$.
Summing $ \Phi_D^\tau [(\omega,\sigma)] $ over all $\omega$ compatible with $\sigma$, 
we observe that every pair of adjacent $u,v\in D$ contributes $1$ if $\sigma_{u}\neq \sigma_{v}$ (since $\omega_{uv} = 0$) 
and $1+ \tfrac{p}{1-p}$ if $\sigma(u) =\sigma(v)$ (since $\omega_{uv}$ may take values $0$ or $1$).
Due to the choice of $p(T)$, $1+ \tfrac{p(T)}{1-p(T)}  = e^{1/T}$.
Hence
\begin{align*}
	\sum_\omega \Phi_D^\tau [(\omega,\sigma)] = \tfrac{1}{Z_D^{\tau}}\exp \big(\tfrac{1}{T}\cdot \sum_{u\sim v} \1_{\{\sigma_{u} = \sigma_{v}\}}\big),
\end{align*}
which coincides with the definition of the $q$-state Potts model given by~\eqref{eq:def-potts}.

As a consequence of the above, we also deduce that the law of $\omega$ conditionally on $\sigma$ is obtained by opening and closing edges independently, with edges $uv$ with $\sigma(u) =\sigma(v)$ having probability $p(T)$ to be open, while all other edges have probability $0$ to be open. \smallskip 

We now discuss the marginal on $\omega$.
Fix an edge configuration $\omega\in \{0,1\}^{E}$.
If there exist vertices $u,v \in \partial D$ which are connected in $\omega$ and such that $\tau(u)$ and $\tau(v)$ are distinct and both non-zero, then there exists no $\sigma$ which is coherent with both $\omega$ and $\tau$, and $\Phi_D^\tau[\omega] = 0$. 

Assume next that for any two $u,v \in\partial D$ which are connected by a path in $\omega$, 
either $\tau_{u}=\tau_{v}$, or $\tau_{u}= 0$, or $\tau_{v}= 0$.
The spin configurations $\sigma$ compatible with $\omega$ 
are obtained by attributing the same spin to all vertices in each connected component of $\omega$. 
Moreover, to ensure that $\sigma$ is coherent with $\tau$, any connected component of $\omega$ that contains a vertex $v \in \partial D$ with $\tau(v) \neq 0$ is necessarily attributed spin $\tau(v)$. 
Summing over all spin configurations $\sigma$ compatible with $\omega$ we find
\begin{align*}
	\sum_\sigma \Phi_D^\tau [(\omega,\sigma)] = \tfrac{1}{Z_D^{\tau}}\big(\tfrac{p}{1-p}\big)^{|\omega|}q^{k^{\tau}(\omega)},
\end{align*}
where $k^{\tau}(\omega)$ is the number of connected components of $\omega$ that contain no vertex $v \in \partial D$ with $\tau(v) \neq 0$.

In particular, if $\tau$ takes only values $0$ and at most one other value, say $1$, then 
$\omega$ has the law of FK-percolation on $D$ with wired boundary conditions on $\{u \in \partial D: \sigma(u) = 1\}$
and free on the rest of $\partial D$. 
When $\tau$ takes more than one non-zero value,  
$\omega$ has the law of FK-percolation on $D$ with wired boundary conditions on each of $\{u \in \partial D: \tau(u) = i\}$ for $i = 1,\dots,n$, 
conditioned that each of these parts of $\partial D$ are disconnected from each other. 

Finally, from the above it is immediate that, conditionally on $\omega$, $\sigma$ is uniform among configurations coherent with both $\omega$ and $\tau$.

\paragraph{Infinite volume.}
The Edwards--Sokal coupling in infinite volume may be constructed similarly to that in finite volume.
Let $\mu$ be a Gibbs measure for the Potts model on $\bbL$.
Define a measure $\Phi$ on pairs of edge and spin configurations $(\omega,\sigma)\in \{0,1\}^{E(\bbL)} \times \{1,\dots, q\}^{V(\bbL)}$
by sampling $\sigma$ according to $\mu$, then sampling $\omega$ by opening each edge $uv$ with probability $p(T)$ if $\sigma(u) =\sigma(v)$ and $0$ otherwise, independently.

Defined in this way, the measure $\Phi$ satisfies a version of the DLR condition: 
for any finite domain $D = (V,E)$, conditionally on $\omega$ on $E(\bbZ^2) \setminus E$ and on $\sigma$ on $V(\bbZ^2) \setminus V$, 
the restriction of $\Phi$ to $D$ is $\Phi_D^\tau$, 
where $\tau(u) = 0$ for vertices $u \in \partial D$ which are disconnected in $\omega \cap E^c$ from $V^c$ 
and $\tau(u) = i$ for all vertices $u \in \partial D$ that are connected in $\omega \cap E^c$ to a vertex $v \in V^c$ with $\sigma(v) = i$.
Note that for the conditioning to be non-degenerate, all vertices $v \in V^c$ connected in $\omega \cap E^c$ to some $u \in \partial D$ need to have same spin. 
Observe also that $\tau$ depends only on $\omega$ for the edges of $E^c$ incident to $\partial D$ and 
on $\sigma$ for the vertices of~$V^c$ neighbouring vertices of $\partial D$.

\paragraph{Sampling  Potts from FK.}
The marginal on $\omega$ of a measure $\Phi$ constructed as above is not {\em a priori} a Gibbs measure for FK-percolation.
However, this is indeed the case when $\Phi$ was constructed from one of the monochromatic Potts measures or from the free one.

It is indeed easy to see that $\mu_{T,q}^{i}$ leads to the wired FK-percolation measure $\phi^{1}_{p(T),q}$ for any $i \in \{1,\dots, q\}$.
Moreover, the procedure can be reversed: $\mu_{T,q}^{i}$ may be sampled from $\phi^{1}_{p(T),q}$ by colouring each finite cluster uniformly in one of the $q$ colours, independently from other clusters, and colouring the unique infinite cluster (if such exists) in colour $i$.
In this way, the tail triviality of $\mu_{T,q}^{i}$ may be deduced from that of $\phi^{1}_{p(T),q}$.

In addition, $\mu_{T,q}^{\mathrm{free}}$ can be sampled from $\phi^{0}_{p(T),q}$ by colouring every (finite or infinite) cluster uniformly and independently of all other clusters.
Thus, $\mu_{T,q}^{\mathrm{free}}$ is tail trivial whenever $\phi^{0}_{p(T),q}$ exhibits no infinite cluster, 
and it equals $\tfrac{1}{q}(\mu_{T,q}^{1} + \dots + \mu_{T,q}^{q})$ when $\phi^{0}_{p(T),q}$ exhibits a unique infinite cluster.

\subsection{Loop $O(n)$ model}\label{sec:loop-background}

It is straight-forward that the loop-measure ${\sf Loop}_{D,n,x}$ itself satisfies neither positive association nor the finite energy property due to hard-core constraints: no vertex can have degree one or three.
Fortunately, the spin representation of the loop $O(n)$ model that was introduced in~\cite{DumGlaPel21} (not to be confused with the spin $O(n)$ model) does satisfy these properties when $n\geq 1$ and $x\leq \tfrac{1}{\sqrt{n}}$.

\paragraph{Spin representation.}
Let $F({\rm Hex})$ denote the set of faces of the hexagonal lattice.
For a loop configuration $\omega$ on ${\rm Hex}$, call a spin configuration $\sigma \in \{-1,1\}^{F({\rm Hex})}$ coherent with $\omega$ if the spins at any two neighbouring faces differ if and only if the edge common to the two faces belongs to $\omega$.
This determines $\sigma$ up to a global spin flip and allows to define the push-forward of loop measures to spin configurations.
Under boundary conditions $\tau  \in \{-1,1\}^{F({\rm Hex})}$, it is supported on spin configurations $\sigma \in \{-1,1\}^{F({\rm Hex})}$ that agree with $\tau$ on faces outside of $D$ and may be written as:
\begin{equation}\label{eq:def-spin-rep}
	{\sf Spin}_{D,n,x}^{\tau}(\sigma) = \frac{1}{Z^{\tau}_{\sf spin}(D,n,x)}\cdot n^{k_{\pm}(\sigma)}x^{\#\{u \sim v \colon \sigma_{u}\neq \sigma_{v}\}},
\end{equation}
where $k_{\pm}(\sigma)$ is the total number of finite spin-clusters in $\sigma$ (both of plus and minus spins) that intersect $D$ 
and the exponent for $x$ is the number of pairs of adjacent faces with different spins at least one of which is in $D$.
The number of finite loops intervening in ${\sf Loop}_{D,n,x}^{\xi}(\omega)$ appears here as $k_{\pm}(\sigma)$, since each such loop is the external boundary of a finite spin-cluster intersecting $D$;  see~\cite{DumGlaPel21} for details on this correspondence.
As in the definition of ${\sf Loop}_{D,n,x}^{\xi}$,  $k_{\pm}(\sigma)$ may be modified to include the infinite spin-clusters that intersect $D$ with no effect on the definition of ${\sf Spin}_{D,n,x}^{\tau}$.

We note that, written in this way, the loop $O(n)$ model may be viewed as a mixture of the Ising model and FK-percolation. 
In particular, the {\em spatial Markov} and {\em finite energy} properties hold. 
Notice that the boundary conditions induced by $\tau$ depend only on the spins of $\tau$ on the faces neighbouring $D$
and on connectivities induced on the boundary faces when $\tau$ outside of $D$ is viewed as a face percolation configuration.

\paragraph{Monotonic properties.}
As for FK-percolation, define the partial order on $\sigma \in \{-1,1\}^{F({\rm Hex})}$ pointwise and say that an event $A \subset \{-1,1\}^{F({\rm Hex})}$ is increasing if $\1_{A}$ is an increasing function with respect to this order.
It was shown in~\cite{DumGlaPel21} that when $n\geq 1$ and $0<x\leq \tfrac{1}{\sqrt{n}}$, the measure ${\sf Spin}_{D,n,x}^{\tau}$ is positively associated and the measures are stochastically ordered with respect to boundary conditions. That is, for any increasing events $A,B$ and any boundary conditions $\tau \geq \tau'$,
\begin{align}\label{eq:On_mono}
	{\sf Spin}_{D,n,x}^{\tau}[A\cap B] \geq {\sf Spin}_{D,n,x}^{\tau}[A] \,{\sf Spin}_{D,n,x}^{\tau}[B] \quad \text{and} \quad 
	{\sf Spin}_{D,n,x}^{\tau}[A] \geq {\sf Spin}_{D,n,x}^{\tau'}[A].
\end{align}

\paragraph{Infinite-volume measures.}
The notion of Gibbs measures for both the loop model and for the spin representation may be defined using the DLR property. 
For the spin model, a choice needs to be made in the way infinite spin-clusters are counted.
We make an arbitrary choice, similarly to that for FK-percolation: all infinite plus spin-clusters are considered wired at infinity, while the infinite minus spin-clusters are counted separately. This choice will ultimately turn out to be irrelevant.

As in FK-percolation, the monotonicity properties imply the existence of the infinite-volume limits with plus and minus boundary conditions
and that these are extremal Gibbs measures. We denote them by ${\sf Spin}_{n,x}^{+}$ and ${\sf Spin}_{n,x}^{-}$, respectively.
If these two measures coincide, then the Gibbs measure for the spin representation of the loop $O(n)$ model  (and hence for the model itself) is unique.

\paragraph{Dichotomy.}
Using the monotonicity properties~\eqref{eq:On_mono}, a dichotomy result for the loop~$O(n)$ model similar to that of FK-percolation~\cite{DumSidTas17} was proved in ~\cite{DumGlaPel21}:
for any $n\geq 1$ and $x\leq \tfrac{1}{\sqrt{n}}$, the model either satisfies RSW estimates and exhibits {\em macroscopic loops} or 
it exhibits {\em exponential decay} of loop lengths.
The dichotomy result is expected to extend to all $n \geq 0$ and $x > 0$.
Moreover, the macroscopic loops regime is conjectured~\cite{Nie82,BloNie89} to hold for $n\in [0,2]$ and $x\geq x_{c}(n):= \tfrac{1}{\sqrt{2+\sqrt{2-n}}}$, and the model is then expected to exhibit a conformally invariant scaling limit~\cite{KagNie04,Smi06}. 
For all other parameters, the model is expected to be in the exponential decay regime. 

At the moment, the phase diagram is understood only in several regions of parameters.
For recent results on the two types of behaviour, we direct the reader to the recent works~\cite{CraGlaHarPel20,DumGlaPel21, GlaMan18, GlaMan18b}.

\section{FK-percolation: preparation}\label{sec:FK_preparation}

\subsection{Case of translation invariant measures}\label{sec:translation_inv}

We call a measure $\phi$ on $\{0,1\}^{E(\bbL)}$ translation invariant if it is invariant under the translations by two non-collinear vectors. 

\begin{prop}\label{prop:translation_inv}
	Let $\phi$ be a Gibbs measure for FK-percolation. 
	Assume that it is translation invariant and tail trivial.
	Then 
	\begin{align*}
	\phi[0\lra \infty] = 0 \quad  \text{or} \quad  	\phi[(1,0)\xlra{*} \infty] = 0.
	\end{align*}
	As a consequence, $\phi = \phi_{p,q}^0$ or $\phi =  \phi_{p,q}^1$.
\end{prop}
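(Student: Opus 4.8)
The plan is to use the classical coexistence argument for two-dimensional percolation, adapted to the FK setting via self-duality, to rule out the simultaneous existence of an infinite primal cluster and an infinite dual cluster. First I would record that, since $\phi$ is translation invariant and tail trivial, it is ergodic, so by the Burton--Keane argument (which applies here because $\phi$ is positively associated by Lemma~\ref{lem:extremal-fkg} and has finite energy) the number of infinite primal clusters is $\phi$-a.s.\ an element of $\{0,1,\infty\}$, and likewise for infinite dual clusters in $\omega^*$. The next step is to exclude the value $\infty$: the finite-energy trick of Burton--Keane produces a vertex of triple-or-higher local branching with positive probability, contradicting ergodicity together with translation invariance, so in fact the number of infinite primal clusters is $0$ or $1$, and the same for dual clusters.

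The heart of the argument is then to show that one cannot have both an infinite primal cluster and an infinite dual cluster almost surely. Suppose for contradiction that $\phi[0\lra\infty]>0$ and $\phi[(1,0)\xlra{*}\infty]>0$; by ergodicity both infinite clusters exist $\phi$-a.s.\ and each is unique. Using translation invariance and the Burton--Keane-type uniqueness, I would run the standard Zhang-style square-root trick: in a large box $\La_n$, the event that the infinite primal cluster reaches the box from, say, the top side has probability bounded below uniformly, and by the square-root trick combined with positive association one deduces that with probability bounded away from $0$ the top side is connected to $\infty$ in the exterior, and simultaneously (applying the same reasoning to the dual model on the other pair of sides) the left and right sides are connected to $\infty$ in the dual exterior. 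Gluing these four events using positive association forces, with positive probability, the coexistence inside $\La_n$ of a primal crossing and a dual crossing that must intersect — contradiction, since a primal edge and the dual edge crossing it cannot both be open. This yields $\phi[0\lra\infty]=0$ or $\phi[(1,0)\xlra{*}\infty]=0$.

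To conclude, suppose $\phi[(1,0)\xlra{*}\infty]=0$, i.e.\ $\omega^*$ has no infinite cluster $\phi$-a.s. Then for any box $\La_n$, the boundary conditions induced by $\omega$ on $\La_n$ are wired with high probability as $n$ grows — more precisely, the complement of the box is, off an event of probability tending to $0$, connected into a single piece surrounding the box since there is no dual infinite cluster (and no large dual circuit can separate pieces of the infinite primal cluster). Applying \eqref{eq:DLR-FK} and \eqref{eq:CBC}, one gets $\phi[A]\geq \phi^1_{\La_n,p,q}[A] - o(1)$ for every increasing local event $A$, whence $\phi\geq_{\rm st}\phi^1_{p,q}$ after passing to the limit; combined with $\phi\leq_{\rm st}\phi^1_{p,q}$ from \eqref{eq:0phi1}, this gives $\phi=\phi^1_{p,q}$. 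Symmetrically, if $\phi[0\lra\infty]=0$ then $\phi=\phi^0_{p,q}$.

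\textbf{Main obstacle.} The delicate point is the coexistence step: in FK-percolation the Zhang argument needs a uniqueness input for the infinite dual cluster, which in turn requires a Burton--Keane argument valid despite the weak (infinite-range) spatial Markov property, and one must be careful that conditioning to separate the box from infinity by a dual circuit is legitimate and does not destroy positive association — this is exactly where tail triviality (hence genuine positive association of $\phi$) and finite energy are essential. Handling the ``at most one infinite cluster per half-plane'' bookkeeping rigorously, rather than invoking it as folklore, is where most of the care will be needed.
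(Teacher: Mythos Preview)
Your outline matches the paper's: ergodicity from translation invariance and tail triviality, then $0/1/\infty$ for the number of infinite clusters via finite energy, then Burton--Keane to exclude $\infty$, then rule out coexistence of infinite primal and dual clusters. The paper simply cites \cite[Thm.~1.5]{DumRaoTas19} for the coexistence step and leaves the final identification $\phi\in\{\phi^0_{p,q},\phi^1_{p,q}\}$ implicit; your argument for that last identification (no dual infinite cluster $\Rightarrow$ wired boundary conditions on large boxes $\Rightarrow$ $\phi\geq_{\rm st}\phi^1_{p,q}$) is correct and more explicit than the paper.

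Where you diverge is in doing the Zhang argument by hand, and there are two genuine issues with your sketch. First, the square-root trick as you describe it needs some reflection or rotation symmetry to promote ``one side of $\La_n$ is connected to $\infty$'' to ``\emph{each} side is connected to $\infty$ with high probability''; you only have translation invariance, which does not by itself equate the probabilities for the four sides. This is precisely the subtlety that \cite{DumRaoTas19} handles, and it is why the paper cites that result rather than reproducing Zhang. Second, the contradiction in Zhang's argument is not that a primal crossing and a dual crossing of $\La_n$ meet inside the box; the four events you assemble live \emph{outside} $\La_n$, and the actual contradiction is topological: the dual connections from the left and right sides to infinity separate the primal connections from the top and bottom sides, forcing two distinct infinite primal clusters and violating uniqueness (after a finite-energy modification inside the box).

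Finally, your ``main obstacle'' about half-plane uniqueness is misplaced here: that machinery (Corollary~\ref{cor:infinite_touch} and the surrounding results) is developed for the later, harder part of the paper where one must handle non-translation-invariant extremal measures. For the present proposition nothing about half-planes is needed; Burton--Keane on the full plane plus the no-coexistence input suffices.
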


\begin{proof}
    It is standard (see eg.~\cite[Theorem 14.15]{Geo11}) that translation invariance and extremality imply ergodicity.
    For every integer $N\geq 0$, the event that there exist exactly $N$ infinite clusters is translation invariant and hence has probability $0$ or $1$. 
    Due to the finite energy property, any two infinite clusters may be merged into a single one with positive probability.
    It follows that the number of infinite clusters of $\phi$ is either $0$, $1$ or infinity. 
    The case of infinitely many infinite clusters may be excluded by the classical argument of Burton and Keane~\cite{BurKea89}, 
    or more directly by Lemma~\ref{lem:Kcross} when $\phi_{p,q}^{0} \neq \phi_{p,q}^{1}$ and by the explicit unique infinite-volume measure when $\phi_{p,q}^{0}=\phi_{p,q}^{1}$.
    
    As the same argument applies to the dual model, we conclude that $\phi$ contains at most one infinite primal cluster and one infinite dual cluster. 
    Finally,~\cite[Thm.~1.5]{DumRaoTas19} excludes the coexistence of unique infinite primal and dual clusters.
\end{proof}

\subsection{No percolation in half-plane with free boundary conditions}

Denote by $\tau_{1/0}$ the percolation configuration on $\bbL$ in which all edges of the upper half-plane are open and all other edges are closed. 
For a finite domain $D$, the Dobrushin boundary conditions are those induced by $\tau_{1/0}$, 
and the measure with these boundary conditions is denoted by $\phi_{D,p,q}^{1/0}$.
The measures on the domains $[-N,N]\times [0,N]$ with Dobrushin boundary conditions form a decreasing sequence, 
and therefore converge weakly to a measure on configurations on $\bbH$, which we denote by $\phi_{\bbH, p,q}^{1/0}$.

The goal of this section is the following statement.

\begin{prop}\label{prop:no_hp_perco}
	Let $q\geq 1$. Then, $\phi_{\bbH, p_{c}(q), q}^{1/0}[0\lra \infty] =0$. 
\end{prop}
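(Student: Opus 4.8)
The plan is to exploit the self-duality at $p=p_c(q)$ together with the exponential decay of interface lengths from~\eqref{eq:exp_decay}, which holds precisely when $\phi^0_{p_c,q}\neq\phi^1_{p_c,q}$, i.e.\ when $q>4$; for $1\le q\le 4$ the statement is immediate since then even the full-plane wired measure has no infinite cluster. So assume $q>4$. The dual of $\phi_{\bbH,p_c,q}^{1/0}$ is an FK-percolation measure on the dual half-plane $\bbH^-$ (shifted by $(1,0)$) with dual parameter $p_c^*=p_c$ and Dobrushin boundary conditions as well: along the boundary line the primal edges are open (wired arc) and so the dual edges crossing them are closed; this means the dual configuration below the line sees free-type boundary behaviour on the real axis and wired above. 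The key symmetry is that reflecting $\bbH^-$ across the real axis maps $\phi_{\bbH^-,p_c,q}^{0/1}$ precisely onto $\phi_{\bbH,p_c,q}^{1/0}$ again (up to the $(1,0)$ shift), so the primal and dual pictures in the half-plane are isomorphic. In particular, $\phi_{\bbH,p_c,q}^{1/0}[0\lra\infty]>0$ would force, by duality and this reflection symmetry, the \emph{coexistence} of an infinite primal cluster touching the axis and an infinite dual cluster in the same half-plane.

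First I would set up the duality carefully: describe $\phi_{\bbH,p_c,q}^{1/0}$ as a limit of $\phi^{1/0}_{[-N,N]\times[0,N],p_c,q}$, take duals of each finite-volume measure, and pass to the limit to identify the dual of $\phi_{\bbH,p_c,q}^{1/0}$. The point is that Dobrushin boundary conditions are self-dual under reflection, so the dual measure is again (a reflection/translate of) $\phi_{\bbH,p_c,q}^{1/0}$. Next I would argue that an infinite primal cluster from $0$ together with the infinite dual cluster coming from the dual picture cannot both exist: an infinite primal path starting near $0$ and an infinite dual path both living in $\bbH$ and both attached near the axis must cross, which is impossible (primal and dual edges don't cross). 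More precisely, the wired arc along the axis is itself an infinite primal cluster of $\phi_{\bbH,p_c,q}^{1/0}$ sitting on the boundary, and dually the dual of that arc is the empty set along the axis, so the dual picture has a ``free'' axis; by the reflection symmetry the dual measure, read in $\bbH$, has the axis playing the role of a free boundary, and $0\lra\infty$ in the primal is dual to a separating dual circuit — but in a half-plane such a circuit would have to touch the axis on both sides of $0$ and hence enclose part of the wired arc, a contradiction with the wired arc being connected to infinity on \emph{both} ends.

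Concretely, here is the cleanest route I would try. Suppose for contradiction $\phi_{\bbH,p_c,q}^{1/0}[0\lra\infty]=c>0$. Consider the event $A_n$ that $0\lra\partial\La_{2n}$ within $\bbH$. By duality, on the complement of $A_n$ (inside a suitable finite box with the right boundary conditions) there is a dual path in $\bbH$ separating $0$ from $\partial\La_{2n}$; such a dual path must connect two boundary points of the box, and in the half-plane geometry it must in fact connect a point of the wired axis-arc on the left of $0$ to one on the right of $0$ — equivalently it disconnects, along the axis, the left wired arc from the right wired arc. But the axis is entirely wired (all primal edges on the axis open), so ``left arc'' and ``right arc'' are the same primal cluster; a dual path disconnecting a primal cluster from itself is impossible. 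Hence $\phi_{\bbH,p_c,q}^{1/0}[0\xlra{\bbH}\partial\La_{2n}]\to 1$ would be forced in a way incompatible with — no, rather, the correct conclusion is the reverse: combine this with the exponential decay~\eqref{eq:exp_decay}. Under the wired full-plane measure $\phi^1_{p_c,q}$ we have $\phi^1_{p_c,q}[(1,0)\xlra{*}\partial\La_n]\le e^{-cn}$; by \eqref{eq:CBC} the dual of $\phi_{\bbH,p_c,q}^{1/0}$, having boundary conditions dominated by the full-plane wired ones, also has exponentially small dual-connection probabilities, so the dual cluster of the face at $(1,0)$ is a.s.\ finite. Reflecting, the primal cluster of $0$ under $\phi_{\bbH,p_c,q}^{1/0}$ is a.s.\ finite, which is exactly $\phi_{\bbH,p_c,q}^{1/0}[0\lra\infty]=0$.

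The main obstacle I anticipate is making the duality at the level of infinite-volume half-plane measures rigorous: one must check that taking duals commutes with the weak limit defining $\phi_{\bbH,p_c,q}^{1/0}$ (straightforward since duality is a homeomorphism on configurations), and, more delicately, that the dual boundary conditions of Dobrushin conditions on a half-strip are again of the controllable ``wired-arc / free-arc'' type so that \eqref{eq:CBC} can be applied against the full-plane wired measure — here one needs that the dual of the wired arc on the axis introduces \emph{no} extra wiring in $\bbH^-$, which is exactly the statement that dual of wired is free. Handling the self-dual relation $p_c^*=p_c$ is what pins everything to $p=p_c(q)$; away from criticality the statement is trivial from $\phi^0=\phi^1$ with no infinite cluster. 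The second, more arithmetic, obstacle is simply invoking \eqref{eq:exp_decay} with the correct orientation (primal vs.\ dual, and in a half-plane rather than the full plane), which is legitimate because half-plane boundary conditions are dominated by full-plane wired ones and \eqref{eq:exp_decay} is monotone in that domination.
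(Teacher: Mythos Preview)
Your argument for $1\le q\le 4$ is fine. For $q>4$, however, the ``cleanest route'' has a genuine gap: the monotonicity goes the wrong way. You use $\phi_{\bbH}^{1/0}\le_{\rm st}\phi^1$ together with the dual decay in~\eqref{eq:exp_decay} to bound dual connections under $\phi_{\bbH}^{1/0}$. But $\{(1,0)\xlra{*}\partial\La_n\}$ is a \emph{decreasing} event, so \eqref{eq:CBC} yields
\[
\phi_{\bbH}^{1/0}\big[(1,0)\xlra{*}\partial\La_n\big]\ \ge\ \phi^1\big[(1,0)\xlra{*}\partial\La_n\big],
\]
which is useless. Equivalently, the dual measure of $\phi_{\bbH}^{1/0}$ has a wired arc (the dual of the free bottom), so it dominates $\phi^0$ rather than being dominated by it; you cannot import the primal decay of $\phi^0$ either. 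In short, $\phi^0\le_{\rm st}\phi_{\bbH}^{1/0}\le_{\rm st}\phi^1$ sandwiches the Dobrushin half-plane measure between two measures with opposite behaviours, and neither inequality transfers the relevant decay. The reflection-duality symmetry you identify is correct and gives $\phi_{\bbH}^{1/0}[0\lra\infty]=\phi_{\bbH}^{1/0}[(1,0)\xlra{*}\infty]$ (up to the usual shift), but this equality alone does not exclude both being positive: in a half-plane an infinite primal path from $0$ and an infinite dual path from $(1,0)$ can escape towards different boundary half-lines without crossing.

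The paper's proof avoids~\eqref{eq:exp_decay} altogether. It passes to a full-plane, translation-invariant Gibbs measure $\phi$ obtained as the increasing limit of vertical shifts of $\phi_{\bbH}^{1/0}$, and then invokes the (already established) translation-invariant case, Proposition~\ref{prop:translation_inv}, to get that $\phi$ or $\phi\circ\rho$ has no primal infinite cluster. The reflection-duality map $\rho$ you describe is used, but the decisive extra input is an order-of-limits argument showing $\phi\le_{\rm st}\phi\circ\rho$; combined with $\phi[0\lra\infty]=(\phi\circ\rho)[(1,0)\xlra{*}\infty]$ this forces $\phi[0\lra\infty]=0$, hence $\phi_{\bbH}^{1/0}[0\lra\infty]\le\phi[0\lra\infty]=0$. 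The missing idea in your attempt is precisely this reduction to a translation-invariant full-plane measure; without it, the half-plane Dobrushin measure is not controlled by either $\phi^0$ or $\phi^1$ in the direction you need.
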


\begin{proof}
	Below we omit $q$ and $p_{c}(q)$ from the notation for brevity.
	
	By construction $\phi_{\bbH}^{1/0}$ is invariant under horizontal translations and is tail trivial. 
	Let $\phi$ be the increasing limit of $\phi_{\bbH}^{1/0}\circ \sigma_{N}$, where $\sigma_{N}$ is the vertical translation by $N$
	($\phi_{\bbH}^{1/0}\circ \sigma_{N}$ is the measure on the half-plane $\bbR \times [-N,\infty)$, with free boundary conditions on the boundary of said half-plane).
	Then $\phi$ is an infinite-volume Gibbs measure which is also invariant under vertical translations, and therefore invariant under all translations. 
	
	Now, due to Proposition~\ref{prop:translation_inv}, $\phi$ contains either no primal infinite cluster or no dual infinite cluster. 
	Let $\rho$ be the automorphism of $\{0,1\}^{E(\bbL)}$, where $\rho(\omega)$ is the configuration obtained
	by shifting $\omega^*$ by $(-1,0)$ and reflecting it with respect to the horizontal axis.

	Then $\phi\circ \rho$ is also a Gibbs measure and we claim that 
	\begin{equation}\label{eq:no-hp-perc-reflection}
		\phi  \leq_{\rm st}\phi\circ \rho.
	\end{equation}
	Indeed, by the finite energy property, the weak limit
	\[
		 \phi_{\bbR\times [-M, N]}^{1/0} := \lim_{K\to \infty} \phi_{[-K,K]\times [-M, N]}^{1/0}
	\]
	exists. Moreover, the measures $\phi_{\bbR\times [-M, N]}^{1/0}$ are decreasing in $N$ and increasing in $M$.  
	Finally, due to the monotonicity properties of $\phi_{[-K,K]\times [0, N]}^{1/0}$,
	\[
		\phi =\lim_{M\to\infty}\, \lim_{N\to\infty}\, \phi_{\bbR\times [-M, N]}^{1/0} \quad \text{and} \quad  
		\phi\circ \rho = \lim_{N\to\infty}\, \lim_{M\to\infty}\, \phi_{\bbR\times [-M, N]}^{1/0},
	\]
	and~\eqref{eq:no-hp-perc-reflection} follows readily. 

	Now, by the choice of $\rho$, we have $\phi [0\lra \infty] = (\phi\circ\rho) [(1,0) \xlra{*} \infty]$. 
	Furthermore, by~\eqref{eq:no-hp-perc-reflection}, $\phi [0\lra \infty] \leq (\phi\circ\rho) [0\lra \infty]$. 
	Finally, Proposition~\ref{prop:translation_inv} applied to $\phi\circ \rho$
	proves that at least one of the events $\{(1,0) \xlra{*} \infty\}$ and $\{0\lra\infty\}$ has probability $0$. 
	Thus,
	\begin{align*}
		\phi_{\bbH}^{1/0}[0\lra \infty] \leq \phi [0\lra \infty] =0.
	\end{align*} 
\end{proof}

\begin{cor}\label{cor:infinite_touch}
	For $q \geq 1$ and $p = p_c(q)$, let $\phi$ be a tail trivial Gibbs measure for FK-percolation.
	Let $\omega$ be a random percolation configuration sampled from $\phi$.
	Then there exists at most one infinite primal cluster in the restriction of $\omega$ to $\bbH$. 
	If the infinite cluster exists, then it a.s. contains infinitely many points of $\bbR \times \{0\}$.
	The same applies to dual clusters and to all other half-planes.
\end{cor}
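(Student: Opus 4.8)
The plan is to deduce this corollary from Proposition~\ref{prop:no_hp_perco} together with the comparison between boundary conditions and the strong spatial Markov property. First I would treat the uniqueness statement. Suppose $\phi$ is tail trivial, hence (by Lemma~\ref{lem:extremal-fkg}) positively associated, and suppose with positive probability $\omega\cap\bbH$ contains two distinct infinite primal clusters. By tail triviality and positive association the number of infinite clusters in $\bbH$ is a.s.\ constant; I claim it must be $0$ or $1$. To see this, note that an infinite cluster in $\bbH$ which does \emph{not} touch $\bbR\times\{0\}$ infinitely often may be ``cut off'' from the boundary line, and the resulting configuration on the half-plane is then stochastically dominated by $\phi_{\bbH,p,q}^{1/0}$: indeed, conditionally on any configuration outside a large box $[-N,N]\times[0,N]$, the boundary conditions induced on that box are dominated by the Dobrushin boundary conditions $\tau_{1/0}$, and letting $N\to\infty$ and using the convergence of $\phi^{1/0}_{[-N,N]\times[0,N]}$ to $\phi^{1/0}_{\bbH,p,q}$ gives the domination. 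Since $\phi^{1/0}_{\bbH,p,q}[0\lra\infty]=0$ by Proposition~\ref{prop:no_hp_perco}, no such cluster can exist: every infinite primal cluster in $\bbH$ must meet $\bbR\times\{0\}$, and in fact must meet it infinitely often (a cluster meeting the line only finitely often has an infinite portion contained in an upper sub-half-plane $\bbR\times[m,\infty)$ disjoint from the line, which is excluded by the same translated argument).

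For the uniqueness itself, I would then argue as follows. If there were two infinite primal clusters $\calC_1,\calC_2$ in $\bbH$, each meets $\bbR\times\{0\}$ infinitely often; choose two far-apart points $u\in\calC_1$, $v\in\calC_2$ on the line. By the finite-energy property, conditionally on $\omega\setminus E$ for a suitable finite $E$ joining $u$ to $v$, there is uniformly positive probability that $\calC_1$ and $\calC_2$ merge. Hence the event ``exactly two infinite clusters in $\bbH$'' cannot have probability $1$ (it is incompatible with an event of positive conditional probability that reduces the count), and it has probability $0$ by tail triviality; the same reasoning excludes any finite number $\geq 2$ and, via Lemma~\ref{lem:Kcross} when $\phi^0_{p,q}\neq\phi^1_{p,q}$ (and the explicit measure otherwise), excludes infinitely many. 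So there is at most one infinite primal cluster in $\bbH$, and if it exists it touches $\bbR\times\{0\}$ infinitely often.

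The dual statement follows by running the identical argument for $\omega^*$ in place of $\omega$; here one uses that the dual of $\phi^{1/0}_{\bbH,p,q}$ is again a half-plane measure with free boundary conditions on the line (by the duality of Dobrushin boundary conditions described in Section~\ref{sec:fk-background}), so Proposition~\ref{prop:no_hp_perco} applies verbatim to $(1,0)\xlra{*}\infty$ in $\bbH$. For a general half-plane $\bbH^-$, a horizontal or vertical reflection (or, for the vertically reflected half-plane, the automorphism $\rho$ of the proof of Proposition~\ref{prop:no_hp_perco}) maps $\bbH$ to the desired half-plane and $\phi$ to another tail trivial Gibbs measure, so the statement transfers; since $\phi$ itself need not be reflection-invariant, one should phrase this as applying the already-proven $\bbH$-statement to $\phi\circ(\text{reflection})$ and then reading it back. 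The main obstacle I anticipate is the stochastic-domination step: one must set up carefully that, conditionally on the configuration outside $[-N,N]\times[0,N]$, the induced boundary conditions really are dominated by $\tau_{1/0}$ and that the resulting bound survives passing to the limit $N\to\infty$ — in particular that the event ``an infinite $\bbH$-cluster avoids the line'' can genuinely be tested inside finite boxes (it cannot directly, so one works with the finite-volume proxy ``$0$ is connected to $\partial\La_N$ inside $\bbH$ without touching $\bbR\times\{0\}$'' and takes $N\to\infty$). Everything else is a routine combination of finite energy, positive association, and tail triviality.
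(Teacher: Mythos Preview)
Your overall plan is right, but the stochastic-domination step on which everything rests is wrong as written. You claim that ``conditionally on any configuration outside a large box $[-N,N]\times[0,N]$, the boundary conditions induced on that box are dominated by the Dobrushin boundary conditions $\tau_{1/0}$.'' This is false: the Dobrushin conditions are \emph{free} on the bottom segment $[-N,N]\times\{0\}$, whereas the boundary conditions induced by $\omega$ can be wired there (think of $\phi=\phi^1$, or any measure for which the bottom vertices are connected through $\bbH^-$). So there is no direct domination of $\phi$ by $\phi_{\bbH}^{1/0}$ on such boxes, and your argument for ``no infinite cluster in $\bbH$ touching the line only finitely often'' does not go through. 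You correctly flag this as the main obstacle, but you do not resolve it.

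The paper's fix is to \emph{explore} rather than to compare on a fixed box. Define $\calD$ by removing from $\bbH$ all primal clusters that intersect $(\bbR\setminus\{0\})\times\{0\}$. This exploration is measurable from outside $\calD$, and on $\partial\calD\setminus\partial\La_n$ the induced boundary conditions are genuinely free (the edges bordering the removed clusters are closed by construction). On $\calD\cap\La_n$ one then has boundary conditions dominated by ``wired on $\partial\La_n$, free elsewhere,'' which in turn is dominated by the Dobrushin conditions on $\bbH\cap\La_n$. Now Proposition~\ref{prop:no_hp_perco} applies and kills the event $\{0\xlra{\bbH}\infty,\ 0\nxlra{\bbH}(\bbR\setminus\{0\})\times\{0\}\}$. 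Without this random-domain step, the argument has a real gap.

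For the uniqueness part, your merging-via-finite-energy argument is workable but heavier than necessary: you must separately rule out infinitely many clusters (your appeal to Lemma~\ref{lem:Kcross} via DLR is fine) and then argue carefully that the ``exactly $k$'' events are tail and incompatible with merging. The paper instead observes that two infinite dual clusters in $\bbH$ both touching the line would be separated by a primal infinite cluster touching the line only finitely often, which was just excluded; finite energy then upgrades ``two infinite dual clusters'' to ``two touching the line.'' This planar-separation route avoids any case analysis on the number of clusters.
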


\begin{proof}
	Write $\calE$ for the event that in $\bbH$, the origin is connected to infinity but not to $(\bbR\setminus \{0\}) \times \{0\}$.
	We first prove that	$\phi (\calE)=0$.
	
	Define the random domain $\calD \subset \bbH$ by removing from $\bbH$ 
	all primal clusters that intersect $(\bbR\setminus\{0\}) \times \{0\}$.
	In addition, for $n>0$ and a realisation $D$ of $\calD$, define $D_{n}:= D\cap \La_{n}$.
	Denote by $\xi_{n}$ the boundary condition on $D_{n}$ in which $\partial D_n \cap \partial \La_n$ is wired and the rest of $\partial D_n$ is free.
	Due to the construction of $\calD$, when $\calD = D$, the boundary conditions induced on $D_n$ by the configuration outside of $D_n$ dominate $\xi_n$. 
	
	Notice that if $\calE$ occurs, $0$ is contained in $D_n$ and is connected to $\partial \La_n$ inside $D_n$. 
	Thus
	\[
		\phi [\calE \, | \, \calD = D]
		\leq \phi_{D_{n}, p_{c}(q),q}^{\xi_{n}} [0 \lra \partial \La_{n}] 
		\leq \phi_{\bbH\cap \La_{n}, p_{c}(q),q}^{1/0} [0 \lra \partial \La_{n}].
	\]
	Letting $n$ tend to infinity, averaging over $D$ and applying Proposition~\ref{prop:no_hp_perco}, we obtain
	\[
		\phi[\calE] \leq	 \phi_{\bbH, p_{c}(q),q}^{1/0}[0 \lra \infty] = 0,
	\]
	as desired. 
	
	By the finite energy property, we deduce more generally that a.s. there exists no infinite cluster in $\bbH$ which intersects $\bbR \times \{0\}$ finitely many times. 
		In particular, there exists a.s. at most one dual infinite cluster in $\bbH$ intersecting $\bbR \times \{0\}$, 
	since any two such clusters would be separated by a primal infinite cluster which intersects $\bbR \times \{0\}$ finitely many times. 
	Moreover, the finite energy property states that if there exist with positive probability two infinite dual clusters in $\bbH$, 
	they also have positive probability to both intersect $\bbR \times \{0\}$. As a consequence, we conclude that $\bbH$ contains a.s. at most one infinite dual cluster. 
	
	The same argument may be applied to the primal model, and we conclude that $\bbH$ contains at most one infinite primal cluster and that, 
	if such a cluster exists, it necessarily intersects $\bbR \times \{0\}$ infinitely many times. 
\end{proof}

\subsection{Half-plane percolation in symmetric domains}
\newcommand{\Slit}{{\rm Slit}}

For even integers $M,N\geq 0$, denote by~$\Slit(M,N)$ the graph obtained by gluing $\bbH$ and $\bbH^-$ along the segment $[-M,N] \times \{0\}$.
More precisely, $\Slit(M,N)$ is obtained from $\bbL$ by splitting every vertex in $ \big((-\infty,-M) \cup (N,\infty)\big)\times \{0\}$ into two vertices: one is incident only to the edges in $\bbH$ and the other is incident to the edges in $\bbH^{-}$.
In particular, the edges of $\Slit(M,N)$ are in bijection with those of $\bbL$.
The boundary of $\Slit(M,N)$ is formed of $\partial \bbH\setminus ([-M,N] \times \{0\})$ and  $\partial \bbH^-\setminus ([-M,N] \times \{0\})$. For simplicity, we refer to these as $\partial \Slit(M,N)^+$ and  $\partial \Slit(M,N)^-$, respectively. 
When no ambiguity is possible, we omit~$M$ and~$N$ from the notation.

Write $\phi_{\Slit,p,q}^{1/0}$ for the measure on $\Slit$ 
with wired boundary conditions on~$\partial \Slit^+$ and free ones on  $\partial \Slit^-$. 
Such a measure may be constructed as a limit of measures in $\La_{R} \cap \Slit$, 
with the boundary conditions on $\partial \La_R$ becoming irrelevant as $R \to \infty$. 
See Lemma~\ref{lem:perc-hp-inf-domain} below for a proof of this fact and Figure~\ref{fig:slit} for an illustration.
There is a slight abuse of notation here, since $\Slit$ and $\La_R \cap \Slit$ are not formally subgraphs of $\bbL$.
We will allow it as the structure of these graphs is sufficiently clear and the theory of FK-percolation on general graphs is standard.

We emphasise that $\phi_{\Slit,p,q}^{1/0}$  is not symmetric with respect to primal/dual edges: 
the wired conditions are one step closer on both endpoints of $[-M,N]\times \{0\}$ than the free ones.

The main results of this section are the proposition below and its corollary. 

\begin{prop}\label{prop:hp_perco}
	Let $p\in (0,1)$ and $q\geq 1$. If $\phi_{p,q}^{0} \neq \phi_{p,q}^{1}$, 
	then there exists a constant $c = c(p,q) > 0$ such that for any $M,N\geq 0$ 
	\begin{align*}
		\phi_{\Slit,p,q}^{1/0}[0 \xlra{\bbH} \partial \Slit^+ ] > c.
	\end{align*}
\end{prop}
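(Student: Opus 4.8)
The plan is to prove the statement by contradiction, using a renormalisation/ergodicity argument in the spirit of Proposition~\ref{prop:no_hp_perco}. Suppose that $\inf_{M,N}\phi_{\Slit(M,N),p,q}^{1/0}[0\xlra{\bbH}\partial\Slit^+]=0$. By monotonicity (increasing $M$ or $N$ shrinks the wired part relative to the slit, which — one should check the direction carefully using \eqref{eq:CBC} — makes the crossing less likely, or at worst the infimum is attained in a limit), one extracts a limiting measure $\phi$ on the full plane $\bbL$, obtained by letting $M,N\to\infty$ in $\phi_{\Slit(M,N),p,q}^{1/0}$. This limit is a Gibbs measure: away from the line $\bbR\times\{0\}$ the \eqref{eq:SMP} passes to the limit directly, and on the line one argues that the slit becomes invisible as $M,N\to\infty$ (here one uses that, on one side, $\bbR\times\{0\}$ carries wired conditions, so splitting vertices there changes nothing; on the free side the finite-energy property controls the effect). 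The limit $\phi$ is invariant under horizontal translations and, being a decreasing limit of the monotone family, is tail trivial, hence extremal and ergodic.

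Next I would upgrade horizontal translation invariance to full translation invariance. As in Proposition~\ref{prop:no_hp_perco}, vertical translates $\phi\circ\sigma_N$ form a monotone sequence — pushing the wired half-plane down makes configurations larger — with a limit $\phi_\infty$ that is then invariant under all translations, tail trivial, and Gibbs. By Proposition~\ref{prop:translation_inv}, $\phi_\infty$ has no infinite primal cluster or no infinite dual cluster. Since $\phi\leq_{\rm st}\phi_\infty$, if $\phi_\infty$ has no infinite primal cluster then $\phi$ has none either, and then certainly $\phi[0\xlra{\bbH}\infty]=0$, which is consistent but not yet a contradiction — so the real work is to rule out the dual-free alternative. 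Here the asymmetry emphasised in the paper is crucial: $\phi_{\Slit}^{1/0}$ is tilted towards the primal side (the wired arc on $\bbR\times\{0\}$ is one step closer than the dual arc). Reflecting through $\bbR\times\{0\}$ and dualising, as with the automorphism $\rho$ in Proposition~\ref{prop:no_hp_perco}, should give $\phi_\infty\leq_{\rm st}\phi_\infty\circ\rho$ with $\rho$ a dual reflection, forcing $\phi_\infty[0\xlra{}\infty]\le(\phi_\infty\circ\rho)[(1,0)\xlra{*}\infty]$, and combined with Proposition~\ref{prop:translation_inv} this pins down $\phi_\infty$ to be $\phi_{p,q}^0$ or $\phi_{p,q}^1$.

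The contradiction is then reached as follows. If $\phi_\infty=\phi_{p,q}^1$, then by the sandwich $\phi=\phi_{p,q}^1$ as well (since $\phi\le_{\rm st}\phi_\infty=\phi_{p,q}^1\le_{\rm st}\phi$ — the lower bound coming from $1/0$-type conditions dominating free and a RSW-free comparison; one must check $\phi\ge_{\rm st}\phi_{p,q}^1$ genuinely holds, which is where the wired side of the slit is used). But $\phi_{p,q}^1$ has an infinite cluster that touches $\bbR\times\{0\}$ densely (Corollary~\ref{cor:infinite_touch}), hence $\phi_{p,q}^1[0\xlra{\bbH}\partial\Slit^+]$ is bounded below — contradiction with the infimum being $0$. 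The complementary case $\phi_\infty=\phi_{p,q}^0$ is excluded using the hypothesis $\phi_{p,q}^0\ne\phi_{p,q}^1$ together with \eqref{eq:exp_decay}: the free measure has exponentially small one-arm probabilities, and via the dual reflection argument above this would force the dual connection probabilities in the slit to also be large, i.e.\ a dual infinite cluster touching the line, which with Corollary~\ref{cor:infinite_touch} (applied to the dual) again yields a lower bound on a primal crossing by planar duality of the crossing event in $\bbH$. Either way $\inf_{M,N}\phi_{\Slit}^{1/0}[0\xlra{\bbH}\partial\Slit^+]>0$.

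I expect the main obstacle to be the \emph{Gibbsianity and identification of the limit $\phi$}: verifying that the $M,N\to\infty$ limit of the slit measures is genuinely Gibbs despite the infinite-range dependence of FK-percolation (the slit is removed "at infinity", but one must argue the boundary conditions it induces on any fixed box degenerate correctly), and then correctly tracking the primal/dual asymmetry through the reflection $\rho$ so that the one-sided stochastic comparison points the right way. A secondary subtlety is confirming the \emph{reverse} stochastic domination $\phi\ge_{\rm st}\phi_{p,q}^1$ (or its dual analogue) needed to collapse the sandwich — this is exactly the place where the "one step closer" convention in the definition of $\phi_{\Slit}^{1/0}$ earns its keep, and it should be handled by an explicit finite-volume comparison of boundary conditions plus \eqref{eq:CBC}.
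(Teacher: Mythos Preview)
Your approach has a fundamental gap that cannot be patched: the measures $\phi_{\Slit(M,N)}^{1/0}$ are \emph{not} stochastically ordered in $M$ and $N$. Enlarging $N$ both extends the wired arc $\partial\Slit^+$ (which pushes the measure up) and extends the free arc $\partial\Slit^-$ (which pushes it down); there is no monotone limit to take. The paper flags exactly this point: ``The main difficulty stems from the fact that the measures $\phi_{\Slit(M,N)}^{1/0}$ for different $M,N$ are not stochastically ordered.'' Consequently your claim that the limit is ``a decreasing limit of the monotone family, hence tail trivial'' fails, and with it the reduction to Proposition~\ref{prop:translation_inv}. A subsequential weak limit along a sequence $(M_k,N_k)$ realising the infimum would exist by compactness, but you have no reason to expect it to be tail trivial or even Gibbs, and without those properties the rest of the argument collapses.

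A second, independent error is the sandwich $\phi\ge_{\rm st}\phi_{p,q}^1$. This cannot hold: the slit measure has genuinely free boundary conditions on $\partial\Slit^-$, so it is dominated by, not dominating, the fully wired measure. The ``one step closer'' asymmetry is far too weak to reverse a global stochastic comparison; it only gives $\phi_{\Slit}^{1/0}\circ\rho\le_{\rm st}\phi_{\Slit}^{1/0}$ for the dual-reflection $\rho$, which is how the paper uses it in Lemma~\ref{lem:perc-hp-some-good}.

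For contrast, the paper's proof avoids any limit in $M,N$ altogether. It works entirely at fixed $M,N$ via a bootstrap on the density of ``$c$-good'' points $z\in[-M,N]\times\{0\}$ (those with $\phi_{\Slit}^{1/0}[z\xlra{\bbH}\partial\Slit^+]\ge c$). First a surgery argument combined with~\eqref{eq:exp_decay} shows a positive proportion of points are $c_0$-good (Lemma~\ref{lem:perc-hp-some-good}); then conditioning on two good points and comparing with a smaller slit shows that between them a positive proportion are $\beta c_0^2$-good (Lemma~\ref{lem:perc-hp-between-good}); iterating drives the proportion of good points to $1$ (Lemma~\ref{lem:perc-hp-many-good}). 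A final pigeonhole argument transfers goodness to the origin uniformly in $M,N$. None of these steps requires comparing slit measures at different $M,N$.
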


\begin{rem}
	That $\phi_{\Slit,p,q}^{1/0}[0 \xlra{} \partial \Slit^+]$ is uniformly positive
	is a simple consequence of duality and of the hypothesis of the proposition; 
	this appeared  already in~\cite[Lem~2.3]{GeoHig00}. 
	The innovation of the above is that the percolation occurs in the half-plane. 
\end{rem}

Before giving the proof of Propositon~\ref{prop:hp_perco}, let us state the following consequence.

\begin{cor}\label{cor:hp_perco}
	Let $\phi$ be a Gibbs measure for FK-percolation with $p\in (0,1)$ and $q\geq 1$ that satisfy $\phi_{p,q}^{0} \neq \phi_{p,q}^{1}$. 
	Then,
	\begin{align*}
		\phi[0\xlra{\bbH} \infty] > 0\quad \text{ or }\quad	\phi[(1,0)\xlra{\omega^* \cap \bbH} \infty] > 0.
	\end{align*}
\end{cor}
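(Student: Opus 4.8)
The plan is to deduce Corollary~\ref{cor:hp_perco} from Proposition~\ref{prop:hp_perco} by relating the finite-volume measure $\phi_{\Slit,p,q}^{1/0}$ to an arbitrary Gibbs measure $\phi$. The first step is to observe that a Gibbs measure $\phi$ that satisfies $\phi_{p,q}^0 \neq \phi_{p,q}^1$ may be taken, after decomposing into extremal components, to be tail trivial and hence positively associated by Lemma~\ref{lem:extremal-fkg}; since the events $\{0\xlra{\bbH}\infty\}$ and $\{(1,0)\xlra{\omega^*\cap\bbH}\infty\}$ are each monotone, it suffices to prove the statement for tail trivial $\phi$. By Corollary~\ref{cor:infinite_touch}, $\bbH$ contains a.s. at most one infinite primal cluster and at most one infinite dual cluster in $\omega$, and similarly for the reflected half-plane $\bbH^-$; moreover any infinite cluster in a half-plane touches the boundary line infinitely often.

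The core of the argument is to realise $\phi_{\Slit,p,q}^{1/0}$ (or a measure dominating it) as a conditional law of $\phi$ on a suitable random domain, in the spirit of the proof of Corollary~\ref{cor:infinite_touch}. Concretely, I would argue by contradiction: suppose $\phi[0\xlra{\bbH}\infty] = 0$ and $\phi[(1,0)\xlra{\omega^*\cap\bbH}\infty] = 0$. The second assumption means that in $\bbH$ the dual configuration $\omega^*$ has no infinite cluster, so along the line $\bbR\times\{0\}$ the primal clusters of $\omega$ touching that line form, together with the dual configuration above, a structure where a bi-infinite primal path along/near the axis separates $\bbH$; in particular one can explore from the dual side and expose a random domain $\calD$ sitting inside a copy of $\Slit(M,N)$ whose boundary conditions, as induced by the configuration outside $\calD$, dominate the wired-above/free-below conditions of $\phi_{\Slit}^{1/0}$ on a neighbourhood of the origin. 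More precisely: using the absence of infinite dual clusters in $\bbH$, one finds (with probability bounded below, using finite energy to seed a wired arc just above the axis near $0$ and a dual arc just below) a finite $M,N$ and a domain $D_R = \calD\cap \La_R$ on which the induced boundary conditions dominate those of $\phi_{\Slit(M,N)\cap\La_R}^{1/0}$, while $\{0\xlra{\bbH}\infty\}$ forces $0\xlra{} \partial\La_R$ inside $D_R$. Then
\begin{align*}
	0 = \phi[0\xlra{\bbH}\partial\La_R \mid \calD = D] \geq \phi_{D_R,p,q}^{\xi_R}[0\xlra{\bbH}\partial\La_R] \geq \phi_{\Slit(M,N)\cap\La_R,p,q}^{1/0}[0\xlra{\bbH}\partial\La_R],
\end{align*}
and letting $R\to\infty$ (using Lemma~\ref{lem:perc-hp-inf-domain}) contradicts Proposition~\ref{prop:hp_perco}. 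The finite-energy insertion step handles the mismatch that $\phi$ may not a priori have the required local wired/dual arcs; one only needs a positive-probability event on which the exploration produces a $\Slit$-type domain.

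The main obstacle I anticipate is the exploration/comparison step: making precise the claim that the boundary conditions induced on the random domain $\calD$ dominate the $1/0$ conditions on $\Slit$. Because FK boundary conditions are determined by connectivity in the entire complement (not just adjacent edges), one must choose $\calD$ so that its complement in $\bbH$ contains a primal connection wiring together the whole top boundary of $\calD$ — this is exactly where the assumption ``no infinite dual cluster in $\bbH$'' is used, since it guarantees that the region just above the axis is occupied by a single macroscopic primal structure, or can be forced to be by finite energy — while simultaneously the complement does not accidentally wire the bottom boundary. The absence of an infinite primal cluster in $\bbH$ (the first contradiction hypothesis) is what licenses us to assume $0$ is not yet connected to $\infty$, so that the event $\{0\xlra{\bbH}\infty\}$ genuinely reduces to a crossing event inside $D_R$ whose probability must vanish. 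Carefully organising the two hypotheses so that one controls the top/dual side and the other the event at the origin, and checking that the domain $\calD$ is measurable with respect to the configuration outside it, is the delicate part; the rest is a routine limit argument combined with Proposition~\ref{prop:hp_perco}.
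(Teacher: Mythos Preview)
Your high-level strategy matches the paper's: use the absence of an infinite dual cluster in $\bbH$ to produce a primal arc above the origin, and then compare the conditional law below that arc to $\phi_{\Slit(M,N)}^{1/0}$ via Proposition~\ref{prop:hp_perco}. However, the execution contains a genuine error and several unnecessary complications.

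\medskip
\noindent\textbf{The error.} Your displayed line
\[
0 = \phi\big[0\xlra{\bbH}\partial\La_R \mid \calD = D\big] \geq \phi_{D_R,p,q}^{\xi_R}\big[0\xlra{\bbH}\partial\La_R\big] \geq \phi_{\Slit(M,N)\cap\La_R,p,q}^{1/0}\big[0\xlra{\bbH}\partial\La_R\big]
\]
is wrong: the hypothesis $\phi[0\xlra{\bbH}\infty]=0$ says nothing about connection to $\partial\La_R$ for finite $R$, which is always positive by finite energy. You cannot place a ``$0=$'' on the left for fixed $R$, and if you instead intend to take $R\to\infty$ first, the right-hand side does \emph{not} tend to zero (by Proposition~\ref{prop:hp_perco} it stays bounded below by $c$), so the chain collapses rather than yielding a contradiction. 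The inequality is pointing the wrong way for the argument you describe.

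\medskip
\noindent\textbf{How the paper organises it.} The paper avoids this trap by not arguing by contradiction from both hypotheses. It assumes only $\phi[(1,0)\xlra{\omega^*\cap\bbH}\infty]=0$ and \emph{proves} the other inequality directly. The absence of infinite dual clusters in $\bbH$ gives, for each fixed $n$, some $K$ with $\phi[\La_n\xlra{\omega^*\cap\bbH}\La_K^c]<1/2$; on the complement, the outermost open primal path $\Gamma\subset\bbH\cap\La_K$ separating $\La_n$ from infinity exists, with endpoints $(-M,0),(N,0)$. Conditioning on $\Gamma=\gamma$ and applying \eqref{eq:FKG}, \eqref{eq:CBC} gives
\[
\phi\big[0\xlra{\bbH}\partial\La_n \mid \Gamma=\gamma\big] \ \geq\ \phi_{\Slit(M,N)}^{1/0}\big[0\xlra{\bbH}\partial\La_n\big] \ \geq\ c,
\]
the last step because $\{0\xlra{\bbH}\partial\Slit^+\}\subset\{0\xlra{\bbH}\partial\La_n\}$ (since $M,N\geq n$). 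Averaging and sending $n\to\infty$ yields $\phi[0\xlra{\bbH}\infty]\geq c/2$. No finite-energy ``seeding'' and no dual arc in $\bbH^-$ are needed; the only exploration is of the outermost primal circuit, which is standard. Your proposal can be repaired along exactly these lines, but as written the key inequality is false.
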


\begin{figure}
\begin{center}
\includegraphics[width = 0.8\textwidth]{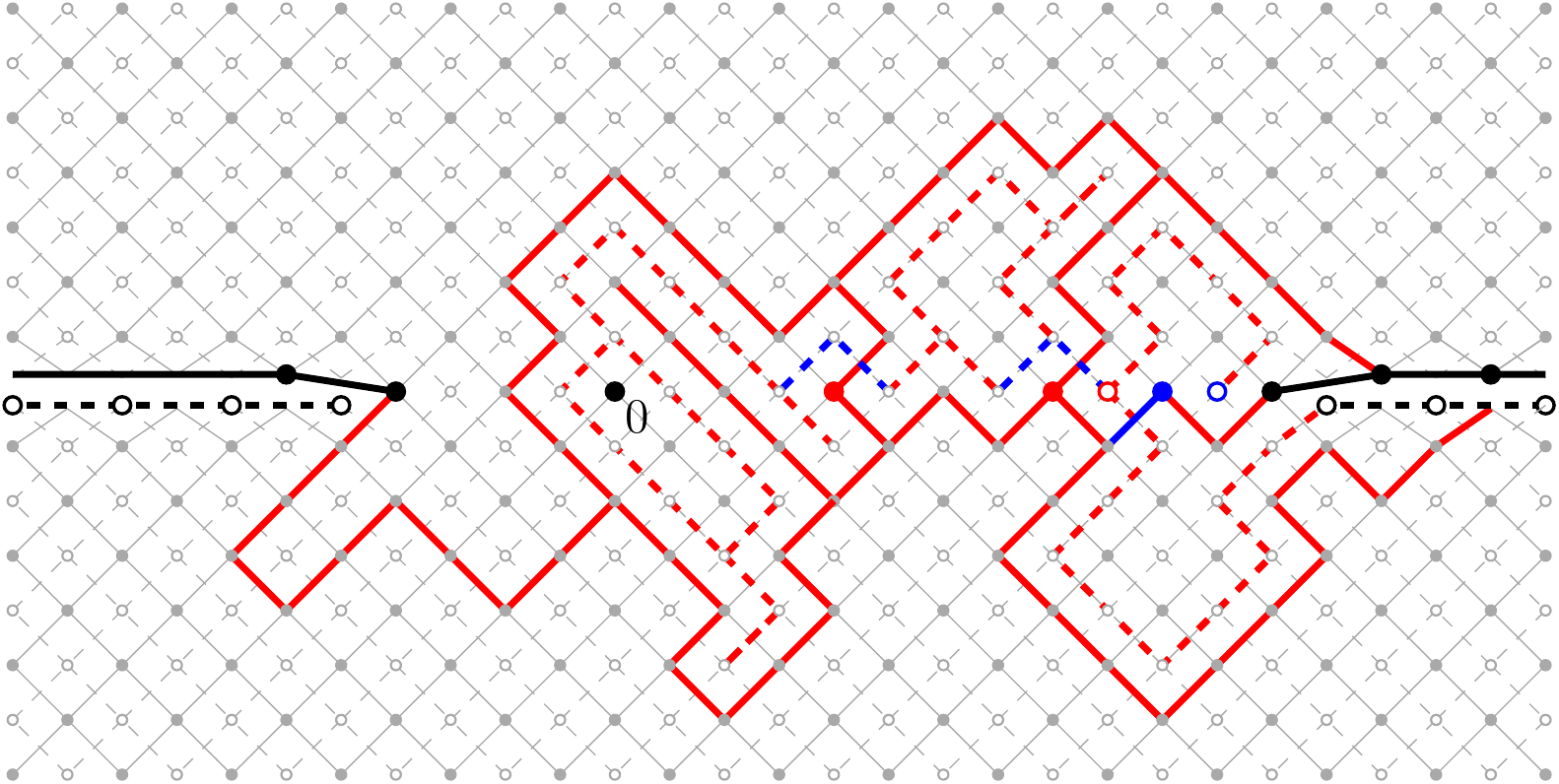}
\caption{The graph $\Slit$ with $\partial \Slit^+$ marked by solid black lines (wired) and  $\partial \Slit^-$ marked by dashed lines (free). 
For a configuration $\omega$ (in red), modifying the blue edges as indicated produces $\tilde \omega$. 
Notice that in $\tilde \omega$ the dual cluster of $(N-1,0)$ has diameter at least $N$ and does not intersect $\partial \Slit^-$.}
\label{fig:slit}
\end{center}
\end{figure}

Fix for the rest of this section $p\in (0,1), q\geq 1$ such that $\phi_{p,q}^{0} \neq \phi_{p,q}^{1}$ and omit them from the notation.

\begin{proof}[Corollary~\ref{cor:hp_perco}]
	Assume that $\phi[(1,0)\xlra{\omega^* \cap\bbH} \infty] =0$. 
	By the finite energy property, there exists a.s. no infinite dual cluster in $\bbH$. 
	Fix some $n>0$ and consider $K>0$ such that, 
	\[
		\phi [\La_{n}\xlra{\omega^* \cap\bbH} \La_{K}^{c}] <1/2.
	\]
	This means that, with probability at least $1/2$, there exists an open path in $\La_K$ that separates $\La_{n}$ from infinity.
	Let $\Gamma$ be the outermost such path and write $-M$ and $N$ for the first coordinates of the endpoints of $\Gamma$ on $\partial \bbH$.
	If no such path exists, write $\Gamma = \emptyset$. 
	By~\eqref{eq:FKG} and~\eqref{eq:CBC}, we have
	\begin{align*}
		\phi[0 \xlra{\bbH} \partial \La_n \, | \, \Gamma = \gamma] 
		\geq \phi^{1/0}_{\Slit(M,N)}[0 \xlra{\bbH} \partial \La_n] \geq c,
	\end{align*}
	for any realisation of $\gamma \neq \emptyset$ of $\Gamma$, 
	where $c >0$ is given by Proposition~\ref{prop:hp_perco} and is independent of $n$. 
	Averaging over $\gamma$ and using that $n$ is arbitrary, we find $\phi[0 \xlra{\bbH} \infty] \geq 	c /2 > 0$.
\end{proof}

The rest of the section is dedicated to proving Proposition~\ref{prop:hp_perco}. 
For $M,N\geq 0$ and $c > 0$, call a point~$z\in [-M,N]\times \{0\}$ $c$-{\em good} if 
\[
	\phi_{\Slit(M,N)}^{{1/0}} [ z \xlra{\bbH}  \partial \Slit(M,N)^+] \geq c.
\]
Our goal is then to show that, for some $c>0$ and every $M,N$, every point is $c$-good. 
The main difficulty stems from the fact that the measures $\phi_{\Slit(M,N)}^{{1/0}}$ for different $M,N$ are not stochastically ordered.
The proof is split into several steps: first we use the finite energy and local surgeries to show that there is a positive proportion of good points (Lemma~\ref{lem:perc-hp-some-good}); 
then we increase this proportion by changing $c$ (Lemma~\ref{lem:perc-hp-between-good}). 
Thus the proportion of $c$-good points may be rendered arbitrarily close to $1$ by choosing $c$ small enough (Lemma~\ref{lem:perc-hp-many-good}). 
Finally we use a counting argument to derive from the above that, for some $c>0$, every point is good.

We start with showing that $\phi_{\Slit}^{{1/0}}$ is well-defined. 

\begin{lem}[Dobrushin boundary conditions on $\Slit$]\label{lem:perc-hp-inf-domain}
	Fix $M,N\geq 0$ and write $\Slit = \Slit(M,N)$.
	Let $(\xi_{k})_{k \geq 1}$ be a sequence of boundary conditions on~$\La_k \cap \Slit$
	which are wired on~$\partial\bbH$ and free on~$\partial\bbH^-$. 
	Then the weak limit of $(\phi_{\La_k \cap \Slit}^{\xi_k})_{k\geq 1}$ exists and does not depend on the choice of~$(\xi_{k})_{k \geq 1}$.
\end{lem}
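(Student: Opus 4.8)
The plan is to follow the standard recipe for constructing infinite-volume FK-measures as monotone limits, adapting it to the fact that $\Slit$ has two ``sides'' with opposite boundary conditions, so that the relevant monotonicity is in one direction on $\partial\bbH$ and the opposite direction on $\partial\bbH^-$. First I would fix a distinguished cofinal choice of boundary conditions, say $\zeta_k$ = wired on $\partial\bbH \cup (\partial\La_k)$ and free on $\partial\bbH^-$ (the ``most wired'' Dobrushin condition compatible with the prescription), and $\eta_k$ = wired on $\partial\bbH$ and free on $\partial\bbH^- \cup (\partial\La_k)$ (the ``most free'' one). By \eqref{eq:CBC}, every admissible $\xi_k$ satisfies $\eta_k \le \xi_k \le \zeta_k$ as boundary conditions on $\La_k\cap\Slit$, hence
\begin{align*}
	\phi_{\La_k\cap\Slit}^{\eta_k} \le_{\rm st} \phi_{\La_k\cap\Slit}^{\xi_k} \le_{\rm st} \phi_{\La_k\cap\Slit}^{\zeta_k}.
\end{align*}
So it suffices to show that the two extreme sequences converge and share a common limit.

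Next I would establish that $(\phi_{\La_k\cap\Slit}^{\zeta_k})_k$ is stochastically decreasing and $(\phi_{\La_k\cap\Slit}^{\eta_k})_k$ is stochastically increasing in $k$. This is the usual argument: to compare level $k$ with level $k+1$, use \eqref{eq:SMP} to realize $\phi_{\La_{k+1}\cap\Slit}^{\zeta_{k+1}}$ restricted to $\La_k\cap\Slit$ as an average of $\phi_{\La_k\cap\Slit}^{\xi}$ over boundary conditions $\xi$ induced by the configuration on the annular region $(\La_{k+1}\setminus\La_k)\cap\Slit$; every such induced $\xi$ is dominated by $\zeta_k$ (it can wire $\partial(\La_k\cap\Slit)$ at most as much as the fully wired outer boundary does, because the outer boundary $\partial\La_{k+1}$ is wired together with $\partial\bbH$ in $\zeta_{k+1}$, and free boundary on $\partial\bbH^-$ stays free), so \eqref{eq:CBC} gives $\phi_{\La_{k+1}\cap\Slit}^{\zeta_{k+1}} \le_{\rm st} \phi_{\La_k\cap\Slit}^{\zeta_k}$ on $\La_k\cap\Slit$; since increasing events in $\La_k\cap\Slit$ exhaust the cylinder events as $k\to\infty$, monotone convergence of the sequences follows, with limits I will call $\phi^{\zeta}_\Slit$ and $\phi^{\eta}_\Slit$ and which satisfy $\phi^{\eta}_\Slit \le_{\rm st}\phi^{\zeta}_\Slit$.

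The crux is then to show $\phi^{\eta}_\Slit = \phi^{\zeta}_\Slit$, since every admissible sequence is squeezed between them. Here I would invoke the hypothesis of the section indirectly — or rather, argue it unconditionally: the two limits are both Gibbs (they satisfy \eqref{eq:DLR-FK} by the standard consistency argument, passing \eqref{eq:SMP} to the limit), both are invariant under horizontal translations by construction, and the difference between $\eta$ and $\zeta$ is confined to the boundary segments on $\partial\bbH$ and $\partial\bbH^-$ "at infinity". Concretely, I would show that for any edge $e$ and any $\delta>0$, choosing $k$ large and then comparing $\phi_{\La_k\cap\Slit}^{\zeta_k}$ and $\phi_{\La_k\cap\Slit}^{\eta_k}$: conditioning on a circuit argument is awkward here because the domain is unbounded, so instead I expect the cleanest route is to note that both $\phi^\eta_\Slit$ and $\phi^\zeta_\Slit$ project, via restriction to $\bbH$ and to $\bbH^-$ respectively, onto measures dominated by $\phi^1_{\bbH}$-type objects and dominating $\phi^0$-type ones; the equality $\phi^0_{p,q}=\phi^1_{p,q}$ in the half-plane is not available in general. \textbf{This is the main obstacle}, and I anticipate the honest resolution is that $\Slit$ with Dobrushin conditions does not genuinely have a boundary "at infinity" that matters: the free side $\partial\bbH^-$ and wired side $\partial\bbH$ are glued along all of $\mathbb{R}\times\{0\}$ outside $[-M,N]$, so the induced boundary condition on any fixed $\La_k\cap\Slit$ coming from the configuration in $(\La_K\setminus\La_k)\cap\Slit$ stabilizes as $K\to\infty$ by the finite-energy/uniqueness-of-induced-bc argument, up to events of probability tending to $0$. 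I would therefore prove: for fixed $k$, the boundary condition induced on $\partial(\La_k\cap\Slit)$ by $\phi_{\La_K\cap\Slit}^{\xi_K}$ converges in distribution as $K\to\infty$ to a limit that does not depend on the tail choice $\xi_K$, because any discrepancy forces a long primal or dual crossing in the annulus which, by \eqref{eq:CBC} and comparison with $\phi^1$ or $\phi^0$ off-criticality (or \eqref{eq:exp_decay} at criticality), has vanishing probability. Feeding this back through \eqref{eq:SMP} shows the three squeezed sequences have the same limit, completing the proof.
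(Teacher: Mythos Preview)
Your skeleton matches the paper's: sandwich any admissible $\xi_k$ between the two extreme Dobrushin conditions $\zeta_k^{0,0}$ (free on $\partial\La_k$) and $\zeta_k^{1,1}$ (wired on $\partial\La_k$), observe that these give monotone sequences with limits $\phi^\eta_\Slit \le_{\rm st} \phi^\zeta_\Slit$, and then argue equality. The monotonicity steps are fine.

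The gap is precisely where you flag ``the main obstacle''. Your proposed resolution --- that a discrepancy between the two limits forces a long primal or dual crossing, which you then kill by comparison with $\phi^0$ or $\phi^1$ and~\eqref{eq:exp_decay} --- does not go through. The point is that the $\Slit$ measure with Dobrushin conditions is \emph{not} comparable to either $\phi^0$ or $\phi^1$ in the direction you need. For instance, to show that the free boundary on $\partial\La_k\cap\bbH$ is irrelevant, you need a primal blocking path in $\bbH$ separating $\La_r$ from $\partial\La_k$, i.e.\ absence of a long dual crossing in $\bbH$. Under $\zeta_k^{0,0}$ the upper half has wired $\partial\Slit^+$ but free $\partial\La_k\cap\bbH$; this is neither $\ge_{\rm st}\phi^1$ (which would suppress dual crossings via~\eqref{eq:exp_decay}) nor usefully $\le_{\rm st}\phi^0$. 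The estimate~\eqref{eq:exp_decay} for the full-plane free and wired measures simply does not control dual crossings in a half-plane with mixed boundary.

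What the paper actually uses is Proposition~\ref{prop:no_hp_perco}: under the half-plane Dobrushin measure $\phi_{\bbH}^{1/0}$ there is no percolation from the free side. This is a genuinely stronger input than~\eqref{eq:exp_decay} (its proof goes through Proposition~\ref{prop:translation_inv}). The paper then proceeds in two steps, passing through the intermediate condition $\zeta_k^{0,1}$: first compare $\zeta_k^{1,1}$ to $\zeta_k^{0,1}$ by finding (via Proposition~\ref{prop:no_hp_perco} applied to the dual in $\bbH$) a primal blocking path in $\bbH$; then compare $\zeta_k^{0,1}$ to $\zeta_k^{0,0}$ by the symmetric argument in $\bbH^-$. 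Changing one half-plane at a time is what makes the half-plane no-percolation result applicable. Your proof would be complete if you replace the appeal to~\eqref{eq:exp_decay} by this two-step argument with Proposition~\ref{prop:no_hp_perco}.
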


\begin{proof}
	For $k \geq \max(M,N)$, define the following four boundary conditions on $\La_k \cap \Slit$ 
	denoted by $\zeta_k^{0,0}$, $\zeta_k^{1,0}$, $\zeta_k^{0,1}$ and $\zeta_k^{1,1}$. 
	All of them are wired on $\partial\bbH$ and free on $\partial\bbH^{-}$. In addition 
	$\zeta_k^{0,0}$ and $\zeta_k^{0,1}$ are free on $\partial \La_k \cap \bbH$, while	$\zeta_k^{1,0}$ and $\zeta_k^{1,1}$ are wired on this part of the boundary;
	$\zeta_k^{0,0}$ and $\zeta_k^{1,0}$ are free on $\partial \La_k \cap \bbH^-$ and $\zeta_k^{0,1}$ and $\zeta_k^{1,1}$ are wired on $\partial \La_k \cap \bbH^-$.	Then 
	\begin{align*}
		\phi_{\La_k \cap \Slit}^{\zeta_k^{0,0}} \leq_{\rm st}	\phi_{\La_k \cap \Slit}^{\xi_k} \leq_{\rm st}	\phi_{\La_k \cap \Slit}^{\zeta_k^{1,1}}.
	\end{align*}
	Thus, the statement follows by proving that 
	\begin{align}\label{eq:slit1111}
		\lim_{k \to \infty}	\phi_{\La_k \cap \Slit}^{\zeta_k^{0,0}} = 	\lim_{k \to \infty}	\phi_{\La_k \cap \Slit}^{\zeta_k^{1,1}}.
	\end{align}
	Notice that the two weak limits exist, with the first being increasing, while the second is decreasing. 
	
	Fix $\eps > 0$, $r \geq  \max(M,N)$ and an increasing event $A$ depending only on the edges in $\La_r \cap \Slit$. 
	Then, due to~\eqref{eq:SMP} and~\eqref{eq:CBC}, 
	\begin{align*}
		\phi_{\La_k \cap \Slit}^{\zeta_k^{1,1}}[A] \leq \phi_{\La_k \cap \Slit}^{\zeta_k^{0,1}}[A \,|\, \La_r \nxlra{\omega^* \cap \bbH} \partial \La_k].
	\end{align*}
	Moreover, due to the finite energy property,
	\begin{align*}
		\phi_{\La_k \cap \Slit}^{\zeta_k^{0,1}}[\La_r \xlra{\omega^* \cap \bbH} \partial \La_k]
		\leq c_0 \phi_{\Lambda_{k}\cap \bbH}^{0/1}[(1,0) \xlra{\omega^* \cap \bbH} \partial \La_k]
	\end{align*}
	for some $c_0 = c_0(p,q,r,M,N)$. By  Proposition~\ref{prop:no_hp_perco}, the above may be rendered smaller than $\eps$ by taking $k$ large enough.
	Thus, we find 
	\begin{align*}
		\phi_{\La_k \cap \Slit}^{\zeta_k^{1,1}}[A] \leq  \phi_{\La_k \cap \Slit}^{\zeta_k^{0,1}}[A] + \eps
	\end{align*}
	for $k$ large enough. 	
	The same reasoning applies in $\bbH^-$, and we find 
	\begin{align*}
		\phi_{\La_k \cap \Slit}^{\zeta_k^{0,1}}[A] \leq  \phi_{\La_k \cap \Slit}^{\zeta_k^{0,0}}[A] + \eps
	\end{align*}
	for $k$ large enough. 
	
	Combining the last two displays and keeping in mind that $\eps > 0$ is arbitrary, we conclude that 
	\begin{align*}
		\lim_{k \to \infty}	\phi_{\La_k \cap \Slit}^{\zeta_k^{0,0}}[A] = 	\lim_{k \to \infty}	\phi_{\La_k \cap \Slit}^{\zeta_k^{1,1}}[A]
	\end{align*}
	for any increasing event $A$ depending on finitely many edges. Finally,~\eqref{eq:slit1111} may be deduced from the above by the monotone class theorem.
\end{proof}

\begin{lem}[Positive proportion of good points]\label{lem:perc-hp-some-good}
	There exist~$\alpha,c_0>0$ such that, for any~$M,N >0$, the number of~$c_0$-good points to the left (resp. right) of the origin in $\Slit(M,N)$ 
	is greater or equal than~$\alpha M$ (resp. $\alpha N$).
\end{lem}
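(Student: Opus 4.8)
The plan is to bound, for each half of the segment separately, the expected number of its vertices connected to $\partial\Slit^+$ inside $\bbH$, and then to extract the announced deterministic count of $c_0$-good points by a Markov-type averaging: if the sum over the $\sim M$ vertices $z\in[-M,0]\times\{0\}$ of $\phi_{\Slit}^{1/0}[z\xlra{\bbH}\partial\Slit^+]$ is at least $cM$ while each term is at most $1$, then at least $\tfrac c2 M$ of these terms exceed $\tfrac c2$, so one may take $c_0=\alpha=c/2$. Thus I would aim to show that, writing $\mathcal C^+$ for the connected component of $\partial\Slit^+$ in $\omega\cap\bbH$, one has $\mathbb E_{\phi_{\Slit}^{1/0}}\big[\#(\mathcal C^+\cap[-M,0]\times\{0\})\big]\geq cM$, and symmetrically with $[0,N]\times\{0\}$ and $N$.

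The deterministic input would be planar duality in the half-plane: a vertex $z$ of the segment lies outside $\mathcal C^+$ precisely when some dual path inside $\bbH$ joins two vertices of the segment and separates $z$ from $\partial\Slit^+$. Two such ``tight'' dual paths over adjacent vertices share a dual vertex on $\mathbb R\times\{0\}$, so a run of $k$ consecutive vertices of the segment outside $\mathcal C^+$ produces a single dual cluster of diameter comparable to $k$ hugging the segment from above. When such a run abuts a hinge --- say $(N,0)$ --- the local surgery of Figure~\ref{fig:slit}, which alters only $O(1)$ edges and therefore changes the probability by at most a constant factor by the finite energy property, converts the configuration into one in which the fixed dual vertex $(N-1,0)$ lies in a dual cluster of diameter $\gtrsim k$ that avoids $\partial\Slit^-$. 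I would then bound the probability of this last event by $Ce^{-ck}$: conditioning on the dual cluster of $\partial\Slit^-$, the measure on its complement has wired-primal (equivalently free-dual) boundary conditions, so \eqref{eq:exp_decay} applies after noting that $\phi_{\Slit}^{1/0}$ restricted to $\bbH$ is stochastically dominated by $\phi^1_{p,q}$; crucially, since the offending dual vertex is fixed, this incurs no union bound over positions.

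To convert this into a linear lower bound on $\mathbb E[\#(\mathcal C^+\cap(\text{half-segment}))]$, I would first note that the $O(1)$ vertices nearest each hinge are $c_1$-good for an absolute constant $c_1$ --- one opens $O(1)$ edges to link them to $\partial\Slit^+$ inside $\bbH$ --- and then argue that a deficit of good vertices in $[0,N]\times\{0\}$ forces, by the pigeonhole principle applied to runs together with a first-moment estimate on $\#(\mathcal C^+\cap\text{segment})$, a long run of consecutive non-connected vertices adjacent to a hinge, with non-negligible probability; this contradicts the exponential bound above once $c_0$ is chosen small relative to $\alpha$ and $\alpha$ is a sufficiently small absolute constant.

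The hard part, I expect, is exactly this last passage. Combining the dual-arch picture with exponential decay only controls $\mathcal C^+$ up to a logarithmic factor, because a long dual arch separating a fixed portion of the segment from $\partial\Slit^+$ can be positioned in roughly $M+N$ ways; the role of anchoring the long dual object at a fixed dual vertex near a hinge (through the surgery of Figure~\ref{fig:slit}, which exploits the asymmetry between the wired boundary $\partial\Slit^+$ and the free boundary $\partial\Slit^-$) is precisely to remove this union bound, and the genuine combinatorial content of the lemma is the claim that too few good vertices force a long non-connection \emph{right at a hinge}.
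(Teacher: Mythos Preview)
Your plan contains a real gap at exactly the step you flag as hard, and the paper's argument does not resolve it the way you anticipate.

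First, you misread the surgery of Figure~\ref{fig:slit}: it does \emph{not} alter only $O(1)$ edges. The paper modifies edges at \emph{every} vertex of $[0,N]\times\{0\}$ that is either primally connected to $\partial\Slit^+$ in $\bbH$ or dually connected to $\partial\Slit^-$ in $\bbH^-$ --- that is, $O((\alpha+c_0)N)$ edges. The finite-energy cost is therefore $e^{-c_1(\alpha+c_0)N}$, and the non-injectivity of the surgery contributes an entropy factor $\sum_{k\le 4(\alpha+c_0)N}\binom{N}{k}$. Both are beaten by the exponential decay $e^{-cN}$ of the resulting long dual cluster once $\alpha+c_0$ is taken small. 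No anchoring at a hinge is ever needed, and your intermediate claim that few good vertices force a long non-connected run \emph{abutting a hinge} is neither proved nor used in the paper; I do not see how to establish it.

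Second, and more fundamentally, you only track primal connections to $\partial\Slit^+$ in $\bbH$. But the dual arch in $\bbH$ covering a non-connected run may descend through the glued segment into $\bbH^-$ and attach to $\partial\Slit^-$; it is then rooted on the free-dual side and no exponential decay applies to it. The missing idea is the automorphism $\rho$ (dual, shift by $(-1,0)$, reflect vertically): since $\phi_{\Slit}^{1/0}\circ\rho\le_{\rm st}\phi_{\Slit}^{1/0}$, a point $(u,0)$ failing to be $c_0$-good for primal connections in $\bbH$ forces $(u+1,0)$ to fail to be $c_0$-good for dual connections to $\partial\Slit^-$ in $\bbH^-$. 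One then works on the event that few vertices are connected in \emph{either} sense, and the global surgery simultaneously severs all primal connections in $\bbH$ and stitches a long primal path in $\bbH^-$. The outcome is analysed in two cases --- whether the resulting long dual cluster of $(N-1,0)$ meets $\partial\Slit^-$ or not --- with~\eqref{eq:exp_decay} applied to $\phi^1$ in one case and to $\phi^0$ (after one further $O(1)$ edge flip) in the other.
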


\begin{proof}
	Fix small constants~$\alpha, c_0 >0$; we will see below how small they need to be. 
	Fix $M>0$. 
	It suffices to prove the property only for the points to the right of the origin and for $N$ sufficiently large. 
	We will proceed by contradiction and suppose that for some large $N$ the number of $c_0$-good points to the right of the origin is smaller than $\alpha N$.
	
	Recall the automorphism $\rho$ of $\{0,1\}^{E(\bbL)}$ 
	obtained by shifting the dual configuration by $(-1,0)$ and reflecting it with respect to the horizontal axis $\bbR\times \{0\}$. 
	Then, $\phi_{\Slit}^{{1/0}} \circ \rho$ is dominated by $\phi_{\Slit}^{{1/0}}$ \footnote{The domination, rather than equality, comes from the asymmetry in the boundary of $\Slit$.}. 
	Thus, if a point $(u,0)$ is not~$c_0$-good, 
	then the probability that $(u+1,0)$ is linked to $\partial \Slit^-$ in $\omega^* \cap \bbH^-$ is also smaller than~$c_0$. 
	As a consequence, 
	\begin{align*}
		\phi_{\Slit}^{{1/0}} \big[\#\{u\in [0,N] \cap 2\bbZ \colon (u,0) \xlra{\bbH} \partial \Slit^+ \text{ or } (u+1,0) \xlra{\omega^* \cap\bbH^-}  \partial \Slit^-  \} \big] 
		\leq \alpha N + c_0N.
	\end{align*}
	Define the following event:
	\[
		A:= \big\{\#\{u\in [0,N] \cap 2\bbZ \colon (u,0) \xlra{\bbH} \partial \Slit^+ \text{ or }  (u+1,0) \xlra{\omega^* \cap\bbH^-}  \partial \Slit^-  \} \leq 2(\alpha + c_0)N\big\}.
	\]
	Applying the Markov inequality, we find that 
	\begin{align}\label{eq:AAaaA}
			\phi_{\Slit}^{{1/0}} [A] \geq \tfrac12.
	\end{align}
	Since $\phi_{\Slit}^{{1/0}}$-a.s. $\bbH$ contains no dual infinite cluster (see Proposition~\ref{prop:no_hp_perco} or the proof of Lemma~\ref{lem:perc-hp-inf-domain}), that $(u,0)$ is connected to infinity in $\bbH$ implies a.s. that it is also connected to $\partial \Slit^+$ in $\bbH$. The same applies in $\bbH^-$ for dual connections. 
	 
	For~$\omega \in A$, let $\tilde \omega$ be the configuration
	obtained from $\omega$ by 
	opening the two edges below each dual vertex of $[0,N-1] \times \{0\}$ that is connected to $\partial \Slit^-$ in $\omega^* \cap \bbH^-$
	and closing the two edges above each primal vertex of $[0,N-1] \times \{0\}$ that is connected to $\partial \Slit^+$ in $\omega\cap \bbH$.
	See Figure~\ref{fig:slit} for an illustration. 
 
 	For any configuration $\tilde \omega$ obtained as above, the number of vertices appearing in~\eqref{eq:AAaaA} is zero. 
	Thus, the two following events occur for $\tilde\omega$
	\begin{align*}
		(-\infty,0) \times \{0\} \xlra{\tilde\omega^* \cap \bbH} (N - 1,0) \quad \text{and} \quad (-\infty,0]\times \{0\} \xlra{\tilde\omega \cap \bbH^{-}} (N,0).
	\end{align*}
	Write $\tilde A$ for the intersection of the events above. 

	Next we will lower bound the probability of $\tilde A$ in terms of that of $A$.  
	Observe that $\omega$ and $\tilde\omega$ differ on at most $4(\alpha + c_0)N$ edges. 
	Thus, by the finite energy property, there exists a constant $c_1 = c_1 (p,q) > 0$ independent of $c_0,\alpha$ or $N$ such that 
	\begin{align*}
		\phi[\tilde\omega] \geq \exp[-c_1 (\alpha + c_0)N]\,\phi[\omega] \qquad \forall \omega \in A.
	\end{align*}
	The function $\omega \mapsto \tilde\omega$ is not one-to-one, but the number of pre-images of any given $\tilde\omega$ is at most 
	\begin{align*}
		S_N(\alpha + c_0) = \sum_{k = 0}^{4(\alpha + c_0)N} \binom{N}{k}.
	\end{align*}
	Combining the two equations above, we find
	\begin{align}\label{eq:Alb}
		\phi[\tilde A] \geq \frac{e^{-c_1(\alpha + c_0)N}}{S_N(\alpha + c_0)} \phi[A] 
		\geq \frac{e^{-c_1(\alpha + c_0)N}}{2S_N(\alpha + c_0)}.
	\end{align}
	
	Split now $\tilde A$ into the event $\tilde A^0$, for which $(N-1,0)$ is connected to $\partial \Slit^-$ in $\tilde \omega^*$, 
	and its complement  $\tilde A^1$.
	When $\tilde A^1$ occurs, the dual cluster of $(N-1,0)$ has diameter at least $N$ and is surrounded by open edges. 
	A standard argument involving conditioning on the dual cluster of $\partial \Slit^-$ and using~\eqref{eq:SMP} and~\eqref{eq:CBC} yileds
	\begin{align}\label{eq:Aub1}
		\phi_{\Slit}^{{1/0}} [\tilde A^{1}] 
		\leq \phi^{1} [(0,1) \xlra{*} \partial \La_N ] 
		\leq e^{-c N},
	\end{align}
	with $c >0$ given by~\eqref{eq:exp_decay}.

	Assume now that $\tilde\omega \in \tilde A^0$ and close the two edges below $(N,0)$.
	In the new configuration, both points $(N-1,-1)$ and $(N+1,-1)$ are disconnected from $\partial \Slit^+$ 
	and the cluster of one of them has diameter at least~$N$.
	Using the finite energy property and the same argument as in \eqref{eq:Aub1}, we conclude that 
	\begin{align}\label{eq:Aub0}
		\phi_{\Slit}^{{1/0}} [\tilde A^{0}] 
		\leq c_2 \,\phi^{0} [0 \lra{} \partial \La_N ] 
		\leq c_2 \, e^{-c N},
	\end{align}
	with $c >0$ given by~\eqref{eq:exp_decay} and $c_2 = c_2(p,q)$ a constant independent of $N$. 
	
	Combining~\eqref{eq:Aub1} and~\eqref{eq:Aub0} using the union bound and introducing the result in~\eqref{eq:Alb}, 
	we conclude that 
	\begin{align*}
		S_N(\alpha + c_0) 	\geq \tfrac{1}{2(1+c_2)}\exp[ cN - c_1(\alpha + c_0)N].
	\end{align*}
	This is contradictory for $\alpha + c_0$ small enough and $N$ large enough. 	
\end{proof}

\begin{lem}[Increase proportion of good points]\label{lem:perc-hp-between-good}
	There exist~$\alpha, \beta >0$ such that, for any~$c>0$ and any even $-M \leq u \leq v \leq N$ such that $(u,0)$ and $(v,0)$ are both $c$-good, 
	there exist at least~$\alpha (v-u \vee 0)$ points $(w,0)$ with $u\vee 0 \leq w \leq v$ that are~$\beta c^2$-good.
\end{lem}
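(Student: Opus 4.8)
Here is how I would attack Lemma~\ref{lem:perc-hp-between-good}. The plan combines positive association (to produce, at cost $c^2$, the event that both endpoints connect to $\partial\Slit^+$), a finite‑energy surgery that hooks an axis point onto the cluster of $\partial\Slit^+$ whenever that cluster passes close above it, and an application of Lemma~\ref{lem:perc-hp-some-good} inside the ``pockets'' that this cluster leaves above the axis.

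Write $E_w := \{(w,0)\xlra{\bbH}\partial\Slit^+\}$, an increasing event. As $\phi_{\Slit}^{1/0}$ is a monotone limit of FK‑measures it is positively associated, so $\phi_{\Slit}^{1/0}[E_u\cap E_v]\ge\phi_{\Slit}^{1/0}[E_u]\,\phi_{\Slit}^{1/0}[E_v]\ge c^2$. On $E_u\cap E_v$ the cluster $\calC$ of $\partial\Slit^+$ in $\omega\cap\bbH$ contains both $(u,0)$ and $(v,0)$; since $\bbH$ carries a.s. no infinite dual cluster under $\phi_{\Slit}^{1/0}$ (Proposition~\ref{prop:no_hp_perco} and the proof of Lemma~\ref{lem:perc-hp-inf-domain}), $\calC$ meets every vertical line $\{x=w\}$, $w\in[u,v]$, so $h(w):=\min\{y\ge0:(w,y)\in\calC\}$ is finite. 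I would reveal only the lower boundary $\Gamma$ of $\calC$ over $[u,v]$, by exploring $\calC$ from $\partial\Slit^+$ downwards; this exploration determines $h(\cdot)$, whether $E_u\cap E_v$ holds, and decomposes the region of $\bbH$ lying above $[u,v]\times\{0\}$ and below $\Gamma$ into bounded ``pockets'' $H_1,\dots,H_k$, each bounded above by a sub‑arc of $\calC$ reaching the axis at its two endpoints and below by a segment $F_i\subset[u,v]\times\{0\}$; the $F_i$ together with the set of axis columns lying on $\calC$ exhaust $[u,v]\cap2\bbZ$.

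Fix a large constant $H$. On $\{h(w)\le H\}$, opening the at most $H$ edges of the column from $(w,0)$ up to $(w,h(w))$ puts $(w,0)$ into $\calC$, so $E_w$ occurs; by finite energy this costs a factor at least $\eps^{H}$. Conditionally on the exploration of $\Gamma$, the configuration in each pocket $H_i$ is FK‑percolation with wired ceiling and free floor, and I would argue that the scheme of Lemma~\ref{lem:perc-hp-some-good} applies to such a domain and produces a proportion at least $\alpha$ of columns of $F_i$ that connect to the ceiling inside $H_i$ — hence to $\partial\Slit^+$ — with conditional probability at least $c_0$. Together with the columns of $[u,v]$ already lying on $\calC$ (for which $E_w$ holds outright), this yields a $\Gamma$‑measurable set $G$ with $|G\cap([u\vee0,v]\cap2\bbZ)|\ge\tfrac{\alpha}{2}(v-u\vee0)$ such that, on $E_u\cap E_v$, every $w\in G$ satisfies $\phi_{\Slit}^{1/0}[E_w\mid\Gamma]\ge c_0$. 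To turn this into a statement about a fixed set of deterministic columns, write $G_w:=\{w\in G\}\cap E_u\cap E_v$ (a $\Gamma$‑measurable event) for a fixed $w$; then
\begin{align*}
\phi_{\Slit}^{1/0}[E_w]\ \ge\ \bbE_{\phi_{\Slit}^{1/0}}\!\big[\1_{G_w}\,\phi_{\Slit}^{1/0}[E_w\mid\Gamma]\big]\ \ge\ c_0\,\phi_{\Slit}^{1/0}[G_w]\ =\ c_0\,\phi_{\Slit}^{1/0}[E_u\cap E_v]\;\phi_{\Slit}^{1/0}[\,w\in G\mid E_u\cap E_v\,].
\end{align*}
Summing over $w\in[u\vee0,v]\cap2\bbZ$ and using $\sum_w\phi_{\Slit}^{1/0}[w\in G\mid E_u\cap E_v]=\bbE_{\phi_{\Slit}^{1/0}}[\,|G\cap([u\vee0,v]\cap2\bbZ)|\mid E_u\cap E_v\,]\ge\tfrac{\alpha}{2}(v-u\vee0)$, together with the fact that there are $O(v-u\vee0)$ summands each at most $1$, a fixed proportion $\alpha'$ of these columns satisfy $\phi_{\Slit}^{1/0}[w\in G\mid E_u\cap E_v]\ge\alpha'$; for each such (deterministic) $w$, the display above and $\phi_{\Slit}^{1/0}[E_u\cap E_v]\ge c^2$ give $\phi_{\Slit}^{1/0}[E_w]\ge c_0\alpha'c^2$. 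After relabelling the constants, this exhibits $\alpha(v-u\vee0)$ columns that are $\beta c^2$‑good, with $\beta:=c_0\alpha'$. (When $u<0<v$ one restricts throughout to $w\in[0,v]$; the one‑sided bookkeeping simply matches the conventions of Lemma~\ref{lem:perc-hp-some-good}.)

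The main obstacle is the step where I invoke Lemma~\ref{lem:perc-hp-some-good} inside the pockets $H_i$: these are bounded, lens‑shaped domains rather than the unbounded symmetric domain $\Slit$, so one must check carefully that the local surgeries, the reflection/duality trick, and the counting argument of that lemma rely only on features shared by the $H_i$ — a wired boundary arc meeting the free axis at two endpoints, finite energy, and exponential decay of dual connections under free boundary conditions inside $H_i$ — and not on the specific unbounded geometry of $\Slit$; in particular the reflection $\rho$ used in Lemma~\ref{lem:perc-hp-some-good} has to be replaced by an argument that is internal to $H_i$. A secondary subtlety, which the last paragraph is designed to handle, is that the pocket decomposition, and hence the ``good'' columns, depend on the random exploration $\Gamma$: transferring the estimate to the conditional probability $\phi_{\Slit}^{1/0}[\,\cdot\mid E_u\cap E_v]$, which is of order $1$ rather than of order $c^2$, is precisely what lets one extract a fixed proportion of good deterministic columns without the counting step eating a further factor of $c^2$.
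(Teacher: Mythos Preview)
Your approach has a genuine gap at precisely the point you flag: the proof of Lemma~\ref{lem:perc-hp-some-good} does not transfer to the bounded pockets $H_i$. That proof hinges on the automorphism $\rho$ which swaps $\bbH$ with $\bbH^-$ while exchanging primal and dual; this is what lets one deduce that if $(w,0)$ is unlikely to reach $\partial\Slit^+$ in $\bbH$, then the dual point $(w+1,0)$ is equally unlikely to reach $\partial\Slit^-$ in $\bbH^-$, after which the surgery and entropy--energy counting go through. A pocket $H_i$ lives entirely in $\bbH$, has a random curve for its wired ceiling and a segment for its free floor, and admits no such symmetry. There is no evident substitute for $\rho$ internal to $H_i$, and without it the whole mechanism of Lemma~\ref{lem:perc-hp-some-good} collapses. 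Your column--surgery with finite energy (opening $H$ edges when $h(w)\le H$) does not help here either, since it only handles columns where $\calC$ comes close to the axis, not the interiors of wide pockets.

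The paper sidesteps this by a much simpler exploration. Instead of revealing the entire lower boundary of $\calC$, reveal only the \emph{leftmost} open path $\gamma_L$ from $(u,0)$ to $\partial\Slit^+$ in $\bbH$ and the \emph{rightmost} such path $\gamma_R$ from $(v,0)$. These are explorable from the outside, and conditionally on them the measure in the region they enclose (together with all of $\bbH^-$) stochastically dominates $\phi_{\Slit(-u,v)}^{1/0}$: the wired arc of that region starts exactly at $(u,0)$ and $(v,0)$, and the free boundary is unchanged. In other words, one lands back on a genuine $\Slit$ domain, and Lemma~\ref{lem:perc-hp-some-good} applies to it \emph{verbatim}. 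This yields, for every even $w\in[u\vee 0,v]$,
\[
\phi_{\Slit(M,N)}^{1/0}\big[(w,0)\xlra{\bbH}\partial\Slit(M,N)^+\big]\ \ge\ c^2\,\phi_{\Slit(-u,v)}^{1/0}\big[(w,0)\xlra{\bbH}\partial\Slit(-u,v)^+\big],
\]
and Lemma~\ref{lem:perc-hp-some-good} applied to $\Slit(-u,v)$ supplies at least $\alpha(v-u\vee 0)$ values of $w$ for which the right-hand factor exceeds~$c_0$. Because this bound is deterministic and column-by-column, your averaging argument for passing from the random set $G$ to deterministic columns is unnecessary as well.
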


\begin{proof}
	Fix $c >0$ and $M$, $N$, $u$ and $v$ as above. 
	By the FKG inequality
	\[
		\phi_{\Slit}^{{1/0}} \big[(u,0) \xlra{\bbH} \partial \Slit^+ \text{ and }(v,0) \xlra{\bbH} \partial \Slit^+\big]\geq c^2.
	\]
	When the event above occurs, the measure in the domain bounded by the left-most crossing from~$u$ to $\partial \Slit^+$ in $\bbH$ 
	and the right-most crossing from~$v$ to $\partial \Slit^+$ in $\bbH$ 
	dominates~$\phi_{\mathrm{Slit}(u,v)}^{{1/0}}$.
	Therefore,  for each $u\vee 0 \leq w \leq v$ even, 
	\begin{align}
		&\phi_{\Slit}^{{1/0}} \big[(w,0) \xlra{\bbH} \partial \Slit^+\big]\nonumber\\
		&\qquad \geq\phi_{\Slit(-u,v)}^{{1/0}} \big[(w,0) \xlra{\bbH} \partial \Slit(-u,v)^+\big]
		\,\phi_{\Slit}^{{1/0}} \big[(u,0) \xlra{\bbH} \partial \Slit^+ \text{ and }(v,0) \xlra{\bbH} \partial \Slit^+\big]\nonumber\\
		&\qquad \geq c^2 \phi_{\Slit(-u,v)}^{{1/0}} \big[(w,0) \xlra{\bbH} \partial \Slit(-u,v)^+\big].\label{eq:SlitSlit}
	\end{align}
	Applying Lemma~\ref{lem:perc-hp-some-good} to $\Slit(-u,v)$ yields the result. 
	Formally, when $u \geq 0$, we apply Lemma~\ref{lem:perc-hp-some-good} to $\Slit(0,v-u)$.
\end{proof}

\begin{lem}[Large proportion of good points]\label{lem:perc-hp-many-good}
	For any~$\eps >0$, there exists~$c(\eps) >0$ such that, for any~$M,N \geq 0$ even, 
	the number of~$c$-good points to the right (resp. left) of the origin in $\Slit(M,N)$ 
	is greater or equal than~$(1-\eps)  N/2$ (resp. $(1-\eps) M/2$).
\end{lem}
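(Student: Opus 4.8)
The plan is to bootstrap from the previous three lemmas using the submultiplicative-type structure they provide. Recall Lemma~\ref{lem:perc-hp-some-good} gives an absolute proportion $\alpha$ of $c_0$-good points, and Lemma~\ref{lem:perc-hp-between-good} says that between any two $c$-good points there is a positive proportion (again $\alpha$) of $\beta c^2$-good points. Iterating the second lemma should push the proportion arbitrarily close to $1$ at the cost of deteriorating the constant $c$ through the map $c \mapsto \beta c^2$. More precisely, I would first apply Lemma~\ref{lem:perc-hp-some-good} to get a set $G_0$ of $c_0$-good points among the $N/2$ even points to the right of the origin, with $|G_0| \geq \alpha N$ (up to the obvious factor-of-$2$ bookkeeping between "points" and "even points", which I will keep track of carefully). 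Then I would define recursively $c_{k+1} = \beta c_k^2$ and let $G_{k+1}$ be the set of $c_{k+1}$-good points obtained by applying Lemma~\ref{lem:perc-hp-between-good} to all consecutive pairs of points in $G_k$ (together with the endpoints $-M$ wait — to the right of origin, so the interval $[0,N]$; I should include the origin and $N$ appropriately, or simply work inside the convex hull of $G_k$).

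The key quantitative step is to control how fast the complement $[0,N] \setminus G_k$ shrinks. If $G_k$ has $m_k$ points inside $[0,N]$, then the gaps between consecutive points of $G_k \cup \{0, N\}$ partition the remaining roughly $N/2 - m_k$ even points, and Lemma~\ref{lem:perc-hp-between-good} produces in each gap of length $\ell$ at least $\alpha \ell$ new good points. Summing over gaps, $m_{k+1} - m_k \geq \alpha (N/2 - m_k)$, hence $N/2 - m_{k+1} \leq (1-\alpha)(N/2 - m_k)$, so $N/2 - m_k \leq (1-\alpha)^k \cdot N/2$. Thus after $k(\eps) := \lceil \log \eps / \log(1-\alpha) \rceil$ steps, at least $(1-\eps) N/2$ points are $c_{k(\eps)}$-good, and setting $c(\eps) := c_{k(\eps)}$ — which depends only on $\eps$ (and on the fixed $p,q$ through $\alpha, \beta, c_0$), not on $M$ or $N$ — finishes the argument. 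The same reasoning applies to the left of the origin by symmetry of the construction.

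There are a couple of routine technical points to nail down rather than a single hard obstacle. First, Lemma~\ref{lem:perc-hp-between-good} requires both endpoints of a gap to be $c$-good for the \emph{same} $c$; since all points of $G_k$ are $c_k$-good this is automatic, but one must also make sure the endpoints $0$ and $N$ (or whichever reference points bound the construction) are themselves good at each stage, or else simply restrict attention to the interval spanned by $G_k$, which loses at most the two extreme gaps — a negligible $O(1)$-controlled loss that I would absorb by a cosmetic adjustment of $\eps$. Second, monotonicity in $c$: a $c$-good point is $c'$-good for $c' \leq c$, and $c_{k+1} = \beta c_k^2 < c_k$ once $c_0$ is small (which we may assume by shrinking $c_0$, since Lemma~\ref{lem:perc-hp-some-good} still holds), so the sets are consistent and a point counted at stage $k$ remains good at all later stages — this is what lets the recursion $m_{k+1} \geq m_k + \alpha(N/2 - m_k)$ make sense without double-counting issues. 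I expect the main subtlety, such as it is, to be this bookkeeping of which points are guaranteed good and ensuring the interval of reference does not shrink uncontrollably — but once framed via "the complement decays geometrically with ratio $1-\alpha$", the proof is short.
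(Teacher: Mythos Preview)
Your proposal is correct and follows essentially the same iterative scheme as the paper: set $c_{k+1}=\beta c_k^2$, apply Lemma~\ref{lem:perc-hp-between-good} to consecutive good points, and observe that the number of bad points contracts by a factor $(1-\alpha)$ at each step, so that $k(\eps)\sim \log\eps/\log(1-\alpha)$ iterations suffice.

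One small correction: your fallback claim that restricting to the convex hull of $G_k$ costs only an ``$O(1)$-controlled loss'' is not right --- the extreme gaps can have width $\Theta(N)$. The fix you also mention is the correct one: the right endpoint $(N,0)$ is $c$-good for a universal $c$ by finite energy (it is two edges away from $\partial\Slit^+$), and for the left side one uses a $c_0$-good point $u\leq 0$ furnished by Lemma~\ref{lem:perc-hp-some-good} applied on $[-M,0]$, exactly as the paper does when it writes ``the same holds between $0$ and the left-most $c_0$-good point right of $0$''. With the endpoints handled this way the recursion $N/2-m_{k+1}\leq(1-\alpha)(N/2-m_k)$ is clean.
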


Notice that in total there are $N/2$ primal points to the right of $0$. 
Thus, the above states that the proportion of $c$-good points may be rendered as close to $1$ as desired by choosing $c$ small enough. 

\begin{proof}
	Take~$\alpha,\beta,c_0>0$ such that both Lemmas~\ref{lem:perc-hp-some-good} and~\ref{lem:perc-hp-between-good} hold. 
	Fix~$M,N$. We estimate only the number of good points to the right of the origin. The number of good points to the left of the origin can be then estimated in the same way.
	
	By Lemma~\ref{lem:perc-hp-some-good} there are at least~$\alpha N$ points right of $0$ that are~$c_0$-good.
	By Lemma~\ref{lem:perc-hp-between-good}, between any two $c_0$-good points $(u,0)$ and $(v,0)$ right of $0$, 
	there exist at least~$\alpha (|u-v|-1)$ points that are~$\beta c_0^2$-good. 
	The same holds between $0$ and the left-most $c_0$-good point right of $0$. 
	Applying this for all pairs of consecutive~$c_0$-good points, 
	we deduce the existence of at most $(1-\alpha)^2 N$ points that are not~$\beta c_0^2$-good. 

	Iterating the procedure for $c_{k} = \beta c_{k-1}^2$, we find that there exist at most~$(1-\alpha)^kN$ points that are not~$c_k$-good. 
	The lemma follows by taking~$k$ large enough that~$(1-\alpha)^k<\eps$.
\end{proof}

\begin{proof}[Proposition~\ref{prop:hp_perco}]
	Fix~$\eps = 1/16$. Without loss of generality, assume that $M\geq N$.
	Take $c= c(\eps)>0$ such that the conclusion of Lemma~\ref{lem:perc-hp-many-good} holds.
	Denote by~$G_{-}$ and~$G_{+}$ the sets of first coordinates of $c$-good points in $[-M, -M+N/2]\times \{0\}$ 
	and in~$[N/2,N]\times \{0\}$, respectively, for $\Slit(M,N)$. 
	Then,
	\[
		|G_{-}|,|G_{+}| \geq \tfrac{1-\eps}4 N, \text{ whence }|G_{-}\times G_{+}| \geq \tfrac1{16} N^2(1- \eps)^2 
		\geq \tfrac1{32}N^2. 
	\]
	For any~$(u,v) \in G_{-}\times G_{+}$, we have~$v-u \in [M,M+N] \cap 2\bbZ$. 
	Thus, there exists at least one even integer~$L\in [M,M+N]$ such that
	there are at least $N/16$ pairs~$(u,v) \in G_{-}\times G_{+}$ which satisfy~$v - u = L$.
	Fix one such~$L$.
	Then the above implies existence of at least $N/16$ even integers $v\in [N/2,N]$ such that $(v-L,v) \in G_{-}\times G_{+}$.
		 
	Consider now the set $G_L$ of first coordinates of $c$-good points to the right of $0$ in $\Slit(L - N,N)$.
	By Lemma~\ref{lem:perc-hp-many-good}, $|G_L| \geq (1 - \eps)N$.
	Thus, there exists at least one $v \in [N/2,N]$ with $N-v \in G_L$ and $(v - L,v) \in G_{-}\times G_{+}$.
	Indeed, the sum of the cardinalities of these two sets is strictly larger than $N/2$. 
	
	Fix $v$ as above and write $u = v-L$ and $\Slit = \Slit(M,N)$. Then
	\begin{align*}
		\phi_{\Slit}^{{1/0}} \big[0 \xlra{\bbH} \partial\Slit^+\big] 
		&\geq c^2 \,\phi_{\Slit}^{{1/0}}\big[0 \xlra{\bbH} \partial\Slit^+ \big| 
					\,(u,0) \xlra{\bbH} \partial\Slit^+,\, (v,0) \xlra{\bbH} \partial\Slit^+\big] \\
	&\geq c^2 \,\phi_{\Slit(-u,v)}^{{1/0}} \big[0 \xlra{\bbH} \partial\Slit(-u,v)^+\big] \\
		&= c^2 \,\phi_{\Slit(L - N,N)}^{{1/0}} \big[(N-v,0) \xlra{\bbH} \partial\Slit(L - N,N)^+\big] 
		\geq c^3.
	\end{align*}
	The first inequality uses~\eqref{eq:FKG} and the fact that $(u,0)$ and $(v,0)$ are $c$-good for $\Slit(M,N)$; 
	the second one uses the same exploration argument as the one leading to~\eqref{eq:SlitSlit};
	the equality is obtained after shift by $N-v$;
	the third inequality uses that $(N-v,0)$ is $c$-good for $\Slit(L-N,N)$. 
	This concludes the proof. 
\end{proof}

\section{FK-percolation: duplication argument}\label{sec:FK_duplication}

We are finally ready to prove Theorem~\ref{thm:mainFK}. To do so, we will prove that all extremal Gibbs measures are translation invariant. 

\begin{prop}\label{prop:Gibbs=trans_inv}
	Let $\phi$ be an extremal Gibbs measure for FK-percolation. Then $\phi$ is translation invariant. 
\end{prop}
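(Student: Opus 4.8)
The plan is to deduce Proposition~\ref{prop:Gibbs=trans_inv} from a stochastic‑domination inequality between $\phi$ and its translates, proved by a double‑system argument in the spirit of~\cite{GeoHig00}. First I would dispose of the easy case: if $\phi_{p,q}^{0}=\phi_{p,q}^{1}$ then by~\eqref{eq:0phi1} the Gibbs measure is unique, hence equal to the translation‑invariant measure $\phi_{p,q}^{0}$, and there is nothing to prove. So assume $\phi_{p,q}^{0}\neq\phi_{p,q}^{1}$, which makes the exponential decay~\eqref{eq:exp_decay} and Lemma~\ref{lem:Kcross} available. Being extremal, $\phi$ is tail trivial, hence positively associated by Lemma~\ref{lem:extremal-fkg}; the same is true of every translate $\phi\circ\theta$. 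It then suffices to prove that $\phi\leq_{\rm st}\phi\circ\theta$ for every integer translation $\theta$: applying this to $\theta$ and to $\theta^{-1}$ and then translating the latter inequality by $\theta$ yields $\phi\circ\theta\leq_{\rm st}\phi$, hence $\phi=\phi\circ\theta$, i.e.\ $\phi$ is translation invariant.

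For the domination, write $\phi':=\phi\circ\theta$ and work in the duplicated system: let $\omega\sim\phi$ and $\omega'\sim\phi'$ be independent. The core of the argument is the construction of random regions $\Delta_n$ increasing to $\bbZ^{2}$ such that (i) the event $\{\Delta_n=D\}$ is measurable with respect to the restrictions of $\omega$ and of $\omega'$ to $D^{c}$, and (ii) on $\{\Delta_n=D\}$ the boundary condition induced on $D$ by $\omega'$ dominates the one induced on $D$ by $\omega$. The geometric input is Corollary~\ref{cor:infinite_touch}: in every half‑plane, both for $\omega$ and for $\omega'$, there is at most one infinite primal and one infinite dual cluster, each meeting the bounding line infinitely often, so the infinite clusters are organised along one or two bi‑infinite localised primal/dual interfaces, which by localisation visit a fixed neighbourhood of the origin. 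The region $\Delta_n$ is then revealed by an exploration proceeding outward along the dyadic annuli around $0$: from the outside in, one reveals the large clusters of $\omega$ and the large dual clusters of $\omega'$ crossing each annulus — Lemma~\ref{lem:Kcross} guarantees there are only boundedly many, so the exploration is well defined and terminates almost surely — and uses this information to carve out a frame along which the $\omega'$‑induced wiring is provably coarser than the $\omega$‑induced one. The quantitative ingredient making the recoupling possible is Corollary~\ref{cor:hp_perco} together with the uniform half‑plane crossing estimate of Proposition~\ref{prop:hp_perco} on the slit domains $\Slit(M,N)$, whose deliberate asymmetry between the wired and free sides is exactly what lets $\omega'$ dominate $\omega$ along a revealed interface at every scale.

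Granting such $\Delta_n$, the conclusion is routine. Fix an increasing event $A$ depending on finitely many edges and take $n$ so large that $\Delta_n$ contains the support of $A$. Conditionally on $(\omega,\omega')$ restricted to $\Delta_n^{c}$, the spatial Markov property~\eqref{eq:SMP} says that $\omega$ restricted to $\Delta_n$ has law $\phi_{\Delta_n,p,q}^{\xi_n}$ and $\omega'$ restricted to $\Delta_n$ has law $\phi_{\Delta_n,p,q}^{\xi'_n}$ with $\xi'_n\geq\xi_n$ by (ii); comparison between boundary conditions~\eqref{eq:CBC} then gives $\phi[A\mid\Delta_n,\,(\omega,\omega')|_{\Delta_n^{c}}]\leq\phi'[A\mid\Delta_n,\,(\omega,\omega')|_{\Delta_n^{c}}]$, and averaging over the conditioning yields $\phi[A]\leq\phi'[A]$. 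Since this holds for every increasing event depending on finitely many edges, $\phi\leq_{\rm st}\phi'$, which together with the symmetrisation of the first paragraph proves the proposition (and, via Proposition~\ref{prop:translation_inv}, Theorem~\ref{thm:mainFK}).

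The step I expect to be the main obstacle is property (ii): producing a frame, measurable from the outside, on which $\omega'$ induces a boundary condition dominating that of $\omega$. In the Ising model of~\cite{GeoHig00} it suffices to sandwich $\omega'\geq\omega$ along a circuit, but for FK‑percolation the induced partition of the boundary is governed by connectivity in the entire complement, so having more open edges does not yield a coarser partition. One must instead run an exploration that simultaneously controls the primal connections of $\omega$ and the dual connections of $\omega'$ across the revealed region, so that any two boundary arcs joined in $\omega$ are also joined in $\omega'$; arranging that this exploration terminates almost surely (via Lemma~\ref{lem:Kcross}) and that the induced boundary conditions genuinely compare is the technical heart of the matter, and it is precisely here that the new half‑plane percolation estimate, Proposition~\ref{prop:hp_perco}, enters.
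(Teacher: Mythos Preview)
Your overall skeleton is the paper's: dispose of $\phi^0=\phi^1$, set up the duplicated system $(\omega,\omega')$, build an externally measurable domain $\calD$ on which the $\omega'$-induced boundary condition dominates the $\omega$-induced one, and conclude $\phi\leq_{\rm st}\phi'$. But the central construction is described too loosely, and two concrete points are wrong.

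First, the roles of $\omega$ and $\omega'$ in your exploration are reversed. To obtain $\xi'\geq\xi$ on $\partial\calD$ you need arcs of $\partial\calD$ that are \emph{open in $\omega'$} (wired piece of $\xi'$) together with arcs that are \emph{dual-open in $\omega^*$} (free piece of $\xi$). One must therefore reveal primal connections of $\omega'$ and dual connections of $\omega$, not ``large clusters of $\omega$ and large dual clusters of $\omega'$''.

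Second, Lemma~\ref{lem:Kcross} is not used here at all; it appears only in the proof of Corollary~\ref{cor:every-limit-is-gibbs}. What replaces your ``carving out a frame'' is Proposition~\ref{prop:shield}: a case analysis in each half-plane (only primal infinite cluster / only dual / coexistence), and in the coexistence case a three-step exploration (left-most $\omega'$-path up to a carefully chosen height and horizontal median, its continuation to infinity in $\omega'$, then right-most $\omega^*$-path) producing a \emph{shielding arc} with at most two pieces. The crucial feature delivered by Proposition~\ref{prop:hp_perco} is that such an arc exists with \emph{uniformly positive probability between any prescribed endpoints} $x,y$; this is what allows shields in $\bbH$ and $\bbH^-$ to be glued along common endpoints (Corollary~\ref{cor:shield}). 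One then runs over a sequence of scales and uses tail triviality of the product $\bbP$ to find arbitrarily large successful scales; the \emph{largest} successful scale below some threshold yields a $\calD$ measurable from outside.

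Finally, when $\partial\calD$ has four arcs (coexistence in both half-planes with primal clusters on opposite sides), the two $\omega'$-open arcs are not a priori connected to each other. The paper arranges that each is connected to infinity in $\omega'\setminus\calD$ and invokes the convention in~\eqref{eq:DLR-FK} that all infinite clusters are wired at infinity; without this the comparison $\xi'\geq\xi$ fails. Your sketch has no mechanism for this step.
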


The above, combined with Proposition~\ref{prop:translation_inv}, immediately implies  Theorem~\ref{thm:mainFK}.

\begin{proof}[Theorem~\ref{thm:mainFK}]
	Let $\phi$ be an extremal Gibbs measure for FK-percolation. By Proposition~\ref{prop:Gibbs=trans_inv}, $\phi$ is translation invariant 
	and due to Proposition~\ref{prop:translation_inv}, $\phi$ is equal to $\phi^0$ or $\phi^1$.
	Since every Gibbs measure is a linear combination of extremal Gibbs measures, this yields the result.
\end{proof}

The rest of the section is dedicated to proving Proposition~\ref{prop:Gibbs=trans_inv}. 
The statement is obvious when $\phi_{p,q}^0=\phi_{p,q}^1$, and we assume henceforth that $p$ and $q$ are such that $\phi_{p,q}^0\neq \phi_{p,q}^1$. 

Write $\phi'$ for a translate of $\phi$.
To prove Proposition~\ref{prop:Gibbs=trans_inv}, we will show that $\phi = \phi'$. 
By symmetry it suffices to show that $\phi \leq_{\rm st} \phi'$, which we do next. 

Let $\bbP$ be the probability measure governing two independent samples $\omega$ and $\omega'$ with laws $\phi$ and $\phi'$, respectively. 
The goal will be to create arbitrarily large random domains $\calD$ measurable in terms of $\omega$ and $\omega'$ on $\calD^c$
such that the boundary conditions induced by $\omega'$ on $\partial \calD$ dominate those induced by $\omega$.

The main difference between our argument and that of~\cite{GeoHig00} appears below, in the way that $\calD$ is explored. 
Indeed, in~\cite{GeoHig00} (following the previous argument of ~\cite{Aiz80}), 
it is shown that $\bbP$-a.s. there exist infinitely many circuits around $0$ on which $\omega \leq \omega'$. 
Then one may take $\calD$ to be the domain delimited by the outer-most such circuit contained in a large box $\La_N$. 
For the Ising model, this suffices to conclude that the marginal for $\omega$ in $\calD$ is dominated by that for $\omega'$. 
For FK-percolation, boundary conditions $\xi$ and $\xi'$ induced by $\omega$ and $\omega'$, respectively, are not determined solely by the configurations on $\partial \calD$,
but also by the connections between points of $\partial \calD$ outside of $\calD$.
Thus, with the choice of $\calD$ as above, we may not conclude that $\xi \leq \xi'$. 

To solve this issue, we will explore $\calD$ so that its boundary is formed of one, two or four arcs, 
each of which is either open in $\omega'$ or closed in $\omega$.
Unfortunately, the exploration procedure for such a domain is considerably more complicated.

\subsection{Shielding arcs}

As proved in Corollary~\ref{cor:hp_perco}, each half-plane contains at least one infinite cluster, primal or dual. 
Corollary~\ref{cor:infinite_touch} implies that such a cluster is unique (if exists).
In light of the above, and of the tail triviality of $\phi$, each half-plane (below we will only refer to $\bbH$ and $\bbH^-$) 
is in one of the settings below: 
\begin{itemize}
	\item[(a)] contains a unique infinite primal cluster and no infinite dual cluster, $\phi$-a.s.;
	\item[(b)] contains a unique infinite dual cluster and no infinite primal cluster, $\phi$-a.s.;
	\item[(c-d)] contains both a unique infinite primal cluster and a unique infinite  dual cluster, $\phi$-a.s..
\end{itemize}
In the third case, recall from Corollary~\ref{cor:infinite_touch} that each infinite primal cluster as well as each infinite dual cluster 
intersects the horizontal axis infinitely many times. Thus, this case may further be split into two sub-cases: 
(c) the primal cluster intersects $(-\infty,0]\times \{0\}$ infinitely many times, while the infinite dual cluster touches  $[0,\infty)\times \{0\}$ infinitely many times, or (d) vice-versa. 
We then say that we have coexistence in $\bbH$ with the primal cluster to the left and the dual one to the right, or vice-versa. 

Our goal now is to create arcs $\calS$ that separate $0$ from $\infty$ in $\bbH$, 
which are explorable from above, and for which the boundary conditions induced by $\omega'$ dominate those induced by $\omega$. 
These paths will be called {\em shielding arcs}; a definition is given below and depends on the setting of $\phi$ in $\bbH$.

Consider a pair of configurations $(\omega, \omega')$.
For $n \geq0$ and $x <0$ and $y > 0$ even,
we say that a path $\varsigma$ in $\bbH$ is a shielding arc above $\La_n$ between $x$ and $y$ 
if it does not intersect $\La_n$ and one of the following holds:
\begin{itemize}
	\item[(a)] $\varsigma$ runs from $(x,0)$ to $(y,0)$ and is formed entirely of primal edges contained in~$\omega'$;
	\item[(b)] $\varsigma$ runs from $(x+1,0)$ to $(y+1,0)$ and is formed entirely of dual edges contained in~$\omega^*$;
	\item[(c)] $\varsigma$ runs from $(x,0)$ to $(y+1,0)$ and is 
		formed of an arc $\varsigma^{(1)}$ of primal edges all of which are contained in $\omega'$,
		followed by a single edge of $\bbZ^2$, followed by an arc $\varsigma^{(2)}$ of dual edges, all contained in $\omega^*$.
		Moreover, $\varsigma^{(1)}$ is connected to infinity in $\omega' \cap \bbH$ by a path that does not intersect~$\varsigma^{(2)}$;
	\item[(d)] $\varsigma$ runs from $(x+1,0)$ to $(y,0)$ and is 
		formed of a sequence of edges of $\omega^*$, followed by a single edge of $\bbZ^2$, followed by edges of $\omega'$.
		Moreover, the primal edges of $\varsigma$ are connected to $\infty$ in $\omega' \cap \bbH$ by a path that does not intersect the dual edges of~$\varsigma$.
\end{itemize}
In all cases, write ${\rm Above}(\varsigma)$ 
for the subgraph spanned by the edges of $\bbH$ separated from $[x,y] \times\{0\}$ by $\varsigma$ as well as those on $\varsigma$.  

\begin{prop}\label{prop:shield}
	There exists $c > 0$ such that the following holds for all $n\geq 1$. 
	For any $x \leq 0$ and $y \geq 0$ even, both sufficiently large,
	there exists a random variable $\calS = \calS(\omega,\omega')$ 
	that either takes value $\emptyset$ or is a shielding arc above $\La_n$ between $x$ and $y$ 
	and
	\begin{itemize}
		\item[(i)] if $\calS \neq \emptyset$, then 
		$\calS$ is measurable in terms of the edges in ${\rm Above}(\calS)$.
			In addition, $\calS = \emptyset$ is measurable in terms of the edges of $\bbH \setminus \La_n$;
		\item[(ii)] conditionally on $\omega'$, the event $\calS \neq \emptyset$ is decreasing in $\omega$;
		\item[(iii)] 
		if we call $\omega'$ {\em promising} when  $\bbP[\calS \neq \emptyset \,|\,\omega'] \geq c$, 
		then the event $\{\omega' \text{ is promising}\}$ is increasing in $\omega'$ and
		\[
			\bbP[\omega' \text{ is promising}] \geq c.
		\]
	\end{itemize}
\end{prop}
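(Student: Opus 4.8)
The plan is to build $\calS$ by an exploration carried out from ``outside'', i.e.\ from $\infty$ towards $\La_n$, designed so that the shielding arc it outputs depends only on the edges it has uncovered --- all of which lie in ${\rm Above}(\calS)$ --- and so that a failure to produce an arc occurs only after all of $\bbH\setminus\La_n$ has been uncovered. First I would fix which of the four settings (a)--(d) the measure $\phi$ is in with respect to $\bbH$: by Corollary~\ref{cor:hp_perco} together with tail triviality, and by Corollary~\ref{cor:infinite_touch}, $\bbH$ carries a.s.\ a unique infinite primal cluster, a unique infinite dual cluster, or both, each meeting $\bbR\times\{0\}$ infinitely often, so exactly one of (a)--(d) holds $\phi$-a.s.; since $\phi'$ is a translate of $\phi$ the same setting holds $\phi'$-a.s. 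I will describe case (a) in detail; case (b) is symmetric under exchanging primal/dual and the roles of $\omega/\omega'$, and the coexistence cases (c)/(d) are handled by running in parallel one exploration of the infinite $\omega'$-open primal cluster and one of the infinite $\omega^*$-open dual cluster, which under coexistence meet along the axis, and splicing their traces through a single connecting edge of $\bbZ^2$.

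In case (a), $\omega'$ a.s.\ has a unique infinite primal cluster in $\bbH$ and no infinite dual cluster, and I would first note that this cluster then meets both half-lines $(-\infty,0]\times\{0\}$ and $[0,\infty)\times\{0\}$ infinitely often --- otherwise, beyond the last contact point on one side, the axis would have to support an infinite dual cluster --- so that a.s.\ there is an $\omega'$-open arc in $\bbH\setminus\La_n$ joining the two half-lines and enclosing $\La_n$. Define $\calS$ to be the arc produced by an outside-in exploration that stops at the first $\omega'$-open path from $(x,0)$ to $(y,0)$ lying in $\bbH\setminus\La_n$ and enclosing $\La_n$, with $\calS=\emptyset$ if none is found. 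Since $\calS$ is a measurable function of $\omega'$ restricted to ${\rm Above}(\calS)$, and of $\omega'$ restricted to $\bbH\setminus\La_n$ on the event $\{\calS=\emptyset\}$, property~(i) holds; and since in case (a) the variable $\calS$ depends on $\omega'$ alone, $\{\calS\neq\emptyset\}$ is trivially decreasing in $\omega$, which is~(ii). Moreover $\{\calS\neq\emptyset\}$ equals the existence of an $\omega'$-open path of a prescribed kind, hence is increasing in $\omega'$, which is the monotonicity needed in~(iii). In cases (c)/(d) the valid-arc event involves, in addition, an $\omega^*$-open dual sub-arc and the connection of the primal sub-arc to $\infty$ in $\omega'\cap\bbH$: enlarging $\omega'$ only enlarges the families of admissible primal sub-arcs and of admissible connections, while shrinking $\omega$ (i.e.\ enlarging $\omega^*$) only enlarges the family of admissible dual sub-arcs, the geometric constraints (disjointness of the pieces, placement of the connecting edge, enclosure of $\La_n$) being unaffected; hence $\{\calS\neq\emptyset\}$ is again increasing in $\omega'$ and, conditionally on $\omega'$, decreasing in $\omega$.

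For the quantitative claim it suffices to prove $\bbP[\calS\neq\emptyset]\geq\delta$ for an absolute constant $\delta>0$, uniformly in $n$ and valid once $x\le 0$ and $y\ge 0$ have sufficiently large absolute value. Granting this, set $c:=\delta/2$; applying the reverse Markov inequality to the $[0,1]$-valued random variable $\bbP[\calS\neq\emptyset\mid\omega']$ yields $\bbP\big[\bbP[\calS\neq\emptyset\mid\omega']\geq c\big]\geq c$, which is exactly $\bbP[\omega'\text{ promising}]\geq c$; and ``$\omega'$ promising'' is an increasing event because $\omega'\mapsto\bbP[\calS\neq\emptyset\mid\omega']$ is increasing by the previous paragraph. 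The bound $\bbP[\calS\neq\emptyset]\geq\delta$ is obtained by combining the a.s.\ existence of the relevant infinite cluster(s) (Corollaries~\ref{cor:infinite_touch} and~\ref{cor:hp_perco}) with the \emph{uniform} half-plane crossing estimate of Proposition~\ref{prop:hp_perco}, used to connect the prescribed points $(x,0)$ and $(y,0)$ up to that cluster inside $\bbH\setminus\La_n$ with probability bounded away from $0$, and with the finite energy property to realise the handful of edges at the two endpoints; it is precisely the location-independence built into Proposition~\ref{prop:hp_perco} that makes $\delta$ independent of $n$ and explains why $x,y$ must be taken sufficiently large.

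The main obstacle is the construction of the exploration in the first two steps: it must be designed so that the arc it outputs simultaneously depends only on the edges of ${\rm Above}(\calS)$, carries the stated monotonicity in $\omega$ and $\omega'$, and occurs with probability bounded below uniformly in $n$ --- and, in the coexistence cases, so that this works for a genuinely \emph{composite} arc whose $\omega'$-open primal piece and $\omega^*$-open dual piece must be discovered jointly and joined without the exploration ever examining an edge lying below the arc (i.e.\ outside ${\rm Above}(\calS)$). Setting up this bookkeeping carefully, and verifying that ``the first valid arc found'' is well defined and is indeed reached before any edge of the complement of ${\rm Above}(\calS)$ is queried, is the technical heart of the proposition.
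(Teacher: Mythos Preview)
Your overall architecture is reasonable and parts of it match the paper (case split (a)--(d), exploration from the outside, the reverse Markov trick to pass from $\bbP[\calS\neq\emptyset]\ge\delta$ to ``promising''). However, in the coexistence cases (c)/(d) there is a genuine gap.

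You write that in (c)/(d) one runs ``in parallel one exploration of the infinite $\omega'$-open primal cluster and one of the infinite $\omega^*$-open dual cluster, which under coexistence meet along the axis, and splic[es] their traces''. This is where the argument breaks down. The primal infinite cluster lives in $\omega'$ and the dual one in $\omega^*$; these are \emph{independent} configurations. There is no reason the $\omega'$-interface and the $\omega$-interface start near each other, and no reason the $\omega'$-primal cluster of $(x,0)$ and the $\omega^*$-dual cluster of $(y+1,0)$ come within one lattice step of each other above $\La_n$ with probability bounded away from $0$ uniformly in $n$. Proposition~\ref{prop:hp_perco} alone (which you invoke) gives uniform one-point connection probabilities in the $\Slit$ geometry; it says nothing about where those connections land, and hence nothing about whether a primal piece in $\omega'$ and a dual piece in $\omega^*$ can be joined by a single edge. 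The paper fills exactly this gap with Lemma~\ref{lem:hp_perco_pt2}: one first fixes a height $h$ and then chooses $m$ as the \emph{median} horizontal position at which the interface $\Gamma$ first reaches height $h$. With this common $m$, the primal connection from $(x,0)$ reaches $[m,\infty)\times\{h\}$ and the dual connection from $(y+1,0)$ reaches $(-\infty,m]\times\{h\}$, each with probability $\ge c'$ (Lemma~\ref{lem:hp_perco_pt} feeds into this). Since the two events concern independent samples, both happen with probability $\ge (c')^2$, and on that event the two paths must cross topologically in the strip, which is what makes the splice possible. Without this ``same $m$'' device you have no mechanism forcing the two arcs to meet.

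A second, smaller point: you claim $\{\calS\neq\emptyset\}$ is increasing in $\omega'$, and then deduce that $\omega'\mapsto\bbP[\calS\neq\emptyset\mid\omega']$ is increasing. This is correct if $\calS$ is defined as ``the outermost shielding arc'', but note that the paper explicitly warns that for its own (sequential) construction $\{\calS\neq\emptyset\}$ is \emph{not} increasing in $\omega'$; this is why ``promising'' is defined there as success of Steps~1--2 rather than via the conditional probability. Your alternative definition of ``promising'' is fine \emph{provided} the monotonicity is really there for your $\calS$, but you must then also check property~(i) carefully: verifying that a candidate $\varsigma$ is a shielding arc of type (c) includes checking that $\varsigma^{(1)}$ is connected to infinity in $\omega'\cap\bbH$ without crossing $\varsigma^{(2)}$, and you should make sure this check (and the check that no strictly outer arc qualifies) uses only edges in ${\rm Above}(\varsigma)$, and only edges of $\bbH\setminus\La_n$ when the outcome is $\emptyset$.
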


The last point implies that $\bbP[\calS\neq\emptyset] \geq c^2$. 
In addition to this bound, we would like to say that $\{\calS\neq\emptyset\}$ is increasing in $\omega'$ and decreasing in $\omega$, but this is unfortunately not the case, 
hence the more complicated expressions in (ii) and (iii).
We should mention that the ingredients already present in~\cite{GeoHig00} suffice to prove the existence of arbitrarily large shielding arcs. 
The crucial difference between the above and what is available with the arguments of~\cite{GeoHig00} is that shielding arcs exist with uniformly positive probability {\em between any $x$ and $y$}. This property, which relies on Proposition~\ref{prop:hp_perco}, will allow us to glue shielding arcs in $\bbH$ and $\bbH^-$ with no interference (see Corollary~\ref{cor:shield}). 

The rest of the section is dedicated to proving Proposition~\ref{prop:shield}. 
When there is no coexistence in $\bbH$ the result is relatively easy; we will mostly focus on the situation when there is coexistence in $\bbH$. We start with two preparatory results.

When there is coexistence in $\bbH$, 
the primal and the dual infinite clusters are separated by a unique semi-infinite interface starting at $\partial \bbH$.
Denote it by $\Gamma$ and parametrise starting from $\partial\bbH$.	
Formally $\Gamma$ is an edge-simple path on the medial lattice $\bbZ^2 + (1/2,1/2)$ of $\bbL$ (see Fig.~\ref{fig:arcs}), 
but the formal details in the definition of $\Gamma$ are of little importance and we omit them here.
Note that $\Gamma$ intersects $\partial \bbH$ only at its starting point $\Gamma_0$,
which is a point of the form $(n +1/2,0)$, with $n$ even when the infinite primal cluster is on the left and odd when it is on the right.

\begin{lem}\label{lem:hp_perco_pt}
	Suppose that $\phi$ is such that there is coexistence in $\bbH$, with the primal cluster to the left. 
	For $c>0$ from  Proposition~\ref{prop:hp_perco} and any even integer~$x$ and odd integer~$y$
	\begin{align}\label{eq:hp_perco_pt}
		\phi\big[(x,0)\xlra{\bbH} \infty \,\big|\, \Gamma\big] \geq c \qquad \text{ and }\qquad 
		\phi\big[(y,0) \xlra{\omega^*  \cap \bbH} \infty\,\big|\, \Gamma\big] \geq c,
	\end{align}
	whenever $\Gamma_0$ is to the right of $(x,0)$ and to the left of $(y,0)$, respectively.
\end{lem}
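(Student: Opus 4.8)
The plan is to reveal the interface $\Gamma$, to recognise the region it cuts out of $\bbH$ as a half-plane domain with Dobrushin-type boundary conditions, and to invoke Proposition~\ref{prop:hp_perco}. By the symmetry between the primal and dual models (recall that $\phi_{p,q}^0\ne\phi_{p,q}^1$ forces $p=p_c(q)$, so the self-dual point is in force, and coexistence with the primal cluster to the left becomes coexistence with the dual cluster to the left after applying $\rho$), it suffices to prove the first inequality. So fix an even integer $x$ and a realisation $\gamma$ of $\Gamma$ with $\gamma_0=(n+\tfrac12,0)$ and $n\ge x$.

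First I would describe the conditional law of $\omega$ given $\Gamma=\gamma$. The interface $\gamma$ runs from $\partial\bbH$ to infinity, keeping the unique infinite primal cluster of $\omega\cap\bbH$ on its left and the unique infinite dual cluster on its right (these exist and are unique by the coexistence hypothesis and Corollary~\ref{cor:infinite_touch}). Let $\Omega_L=\Omega_L(\gamma)$ be the connected component of $\bbH\setminus\gamma$ on the primal side; its lower boundary consists of the primal vertices $(2k,0)$ with $2k\le n$, and the primal vertices of $\bbL$ bordering $\gamma$ on the left are joined by a connected open path which is revealed together with $\gamma$. Hence, conditionally on $\Gamma=\gamma$ and on the configuration in $\bbH^-$, the spatial Markov property shows that $\omega$ restricted to $\Omega_L$ is an FK-percolation measure on $\Omega_L$ with boundary conditions that are wired along $\gamma$ and that, on the axis, dominate the free ones; by \eqref{eq:CBC} I may replace the axis boundary conditions by the free ones at the cost of a lower bound only.

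Next I would identify the events. Since the infinite primal cluster of $\omega\cap\bbH$ is unique and borders $\gamma$, and since $(x,0)$ lies on the primal side of $\gamma$ (because $n\ge x$), on the event $\Gamma=\gamma$ the event $\{(x,0)\xlra{\bbH}\infty\}$ coincides with $\{(x,0)\xlra{\Omega_L}\gamma\}$. Writing $\phi_{\Omega_L}^{\gamma}$ for the FK measure on $\Omega_L$ with wired boundary conditions along $\gamma$ and free boundary conditions on the axis, the two preceding paragraphs give
\[
\phi\big[(x,0)\xlra{\bbH}\infty \,\big|\, \Gamma=\gamma\big]\ \ge\ \phi_{\Omega_L}^{\gamma}\big[(x,0)\xlra{}\gamma\big].
\]
It then remains to bound the right-hand side below by the constant $c$ of Proposition~\ref{prop:hp_perco}, uniformly in $\gamma$ and $x$. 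The domain $\Omega_L$, with a wired arc ($\gamma$) and a free arc (the part of the axis to the left of $\gamma_0$) meeting at $\gamma_0$, is of the same nature as the domains in Proposition~\ref{prop:hp_perco}: approximating $\Omega_L$ by $\Omega_L\cap\La_R$, using \eqref{eq:CBC} to adjust the boundary conditions on $\partial\La_R\cap\Omega_L$ and on the slit side, and comparing with $\phi_{\Slit(M,n)}^{1/0}$ for $M$ large, one sees that $(x,0)$ plays the role of a point of the glued segment and that $\{(x,0)\xlra{}\gamma\}$ dominates a translate of $\{0\xlra{\bbH}\partial\Slit^+\}$. Proposition~\ref{prop:hp_perco}, in the form that every point of the glued segment is $c$-good, then yields $\phi_{\Omega_L}^{\gamma}[(x,0)\xlra{}\gamma]\ge c$, which survives averaging over $\gamma$ and over the configuration in $\bbH^-$.

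I expect the main obstacle to be precisely this last comparison of boundary conditions: the measures $\phi_{\Slit(M,N)}^{1/0}$ are not monotone in $(M,N)$, and $\gamma$ is an arbitrary semi-infinite path rather than a straight piece of boundary, so matching $\Omega_L$ to a genuine $\Slit$ domain requires the same kind of exploration-and-surgery argument used in the proof of Corollary~\ref{cor:hp_perco}, carried out on the finite approximations $\Omega_L\cap\La_R$ and then passed to the limit. Everything else — the description of the conditional law through the interface, the identification of the two connection events, and the applications of \eqref{eq:CBC} — is standard.
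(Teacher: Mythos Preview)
Your framework matches the paper's: condition on the interface $\Gamma$, observe that the primal side $\Omega_L$ carries an FK measure with wired boundary conditions along $\gamma$ and at-least-free conditions on the axis, identify the target event with $\{(x,0)\xlra{\Omega_L}\gamma\}$, and appeal to Proposition~\ref{prop:hp_perco}. The identification of events and the stochastic domination you describe are correct.

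The gap is exactly where you flag it, and it is not closed by the gesture toward Corollary~\ref{cor:hp_perco}. The domain $\Omega_L$ has a wired arc $\gamma$ that runs from a point near $(n,0)$ \emph{upward to infinity}, whereas in every $\Slit(M,N)$ the wired boundary $\partial\Slit^+$ lies along the axis. No choice of boundary conditions on $\partial\La_R\cap\Omega_L$ produces a monotone comparison in the right direction: wired there gives an upper bound, free there cannot be matched to any $\Slit$ picture. The paper resolves this by revealing a second path. Given $\Gamma$, choose $N=N(\Gamma)$ so large that, with conditional probability at least $1-\eps$, the half-axis $(-\infty,x]\times\{0\}$ is connected to $\Gamma$ inside $\bbH\cap\La_N$; this holds because the unique infinite primal cluster of $\bbH$ borders $\gamma$ and touches $(-\infty,x]\times\{0\}$ infinitely often (Corollary~\ref{cor:infinite_touch}). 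Let $\Sigma$ be the \emph{exterior-most} such open path. Then $\Sigma$ together with the initial segment of $\Gamma$ forms a wired arc whose \emph{two endpoints both lie on the axis}, one to the left of $(x,0)$ and one at $\gamma_0$ to its right. The region ${\rm Below}(\Sigma)$ they enclose is now precisely of the shape to which Proposition~\ref{prop:hp_perco} applies (via the same comparison as in the proof of Corollary~\ref{cor:hp_perco}), giving
\[
\phi\big[(x,0)\xlra{\bbH}\Sigma\cup\Gamma \,\big|\, \Sigma,\Gamma\big]\ \ge\ c
\]
whenever $\Sigma\neq\emptyset$. Averaging over $\Sigma$ and sending $\eps\to0$ yields the lemma. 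The missing idea, then, is not surgery on $\Omega_L\cap\La_R$ but the introduction of $\Sigma$ to bring the second endpoint of the wired arc down onto the axis.
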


\begin{figure}
\begin{center}
\includegraphics[width = 0.6\textwidth]{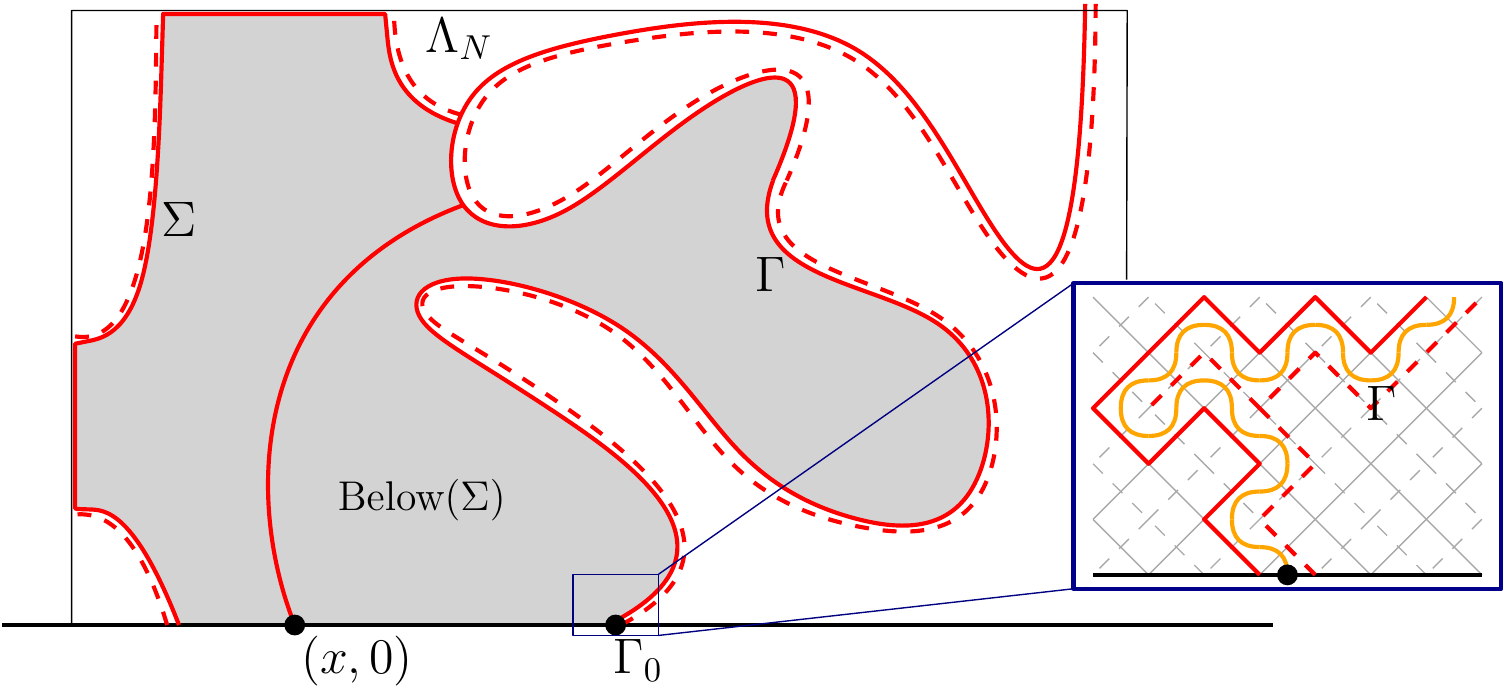}
\caption{The exploration path $\Gamma$ staring on the horizontal axis separates the infinite primal and dual clusters of $\bbH$. 
	For fixed $\La_N$, the outermost open path connecting $(-\infty,0] \times \{0\}$ to the left side of $\Gamma$ is denoted by $\Sigma$ and may be explored from the outside; the region ${\rm Below}(\Sigma)$ is marked in grey. Conditionally on $\Gamma$ and $\Sigma$, $(x,0)$ is connected to the wired arc above it with uniform positive probability.  }
	\label{fig:arcs}
\end{center}
\end{figure}

\begin{proof}
	We will focus on the first inequality and fix an even integer $x$. 
	Fix $\eps > 0$.
	
	Let $N = N (\Gamma) \geq 1$ be such that 
	\begin{align}\label{eq:yxz}
		\phi\big[(-\infty, x] \times \{0\} \xlra{\bbH \cap \La_N} \Gamma \,\big| \, \Gamma\big] \geq 1 - \eps,
	\end{align}
	whenever $\Gamma_0$ is to the right of $x$; otherwise set $N(\Gamma) = 0$. 
	
	Write $\Sigma$ for the exterior-most open primal path in $\bbH \cap \La_N$ that connects $(-\infty, x] \times \{0\}$ to $\Gamma$ if such a path exists;
	otherwise set $\Sigma = \emptyset$. 
	Denote by ${\rm Above}(\Sigma)$ the subgraph spanned by the edges of $\bbH$ separated from $(x,0)$ by $\Sigma \cup \Gamma$ as well as those on $\Sigma$ and those explored by $\Gamma$.
	Let ${\rm Below}(\Sigma)$ be the subgraph spanned by all the other edges. See Figure~\ref{fig:arcs} for an illustration. 
		
	Then, both $\Gamma$ and $\Sigma$  are measurable in terms of the edges in ${\rm Above}(\Sigma)$.
	It follows that, conditionally on $\Gamma$ and $\Sigma$, 
	the measure on ${\rm Below}(\Sigma)$ dominates the measure with wired boundary conditions on $\Sigma \cup \Gamma$ and free elsewhere.
	Using the above and Proposition~\ref{prop:hp_perco}, we find that 
	\begin{align*}
		\phi\big[(x,0)\xlra{\bbH} \Sigma \cup \Gamma \,\big|\, \Sigma, \Gamma \big]  \geq c,
	\end{align*}
	whenever $\Sigma \neq \emptyset$ and $\Gamma_0$ is to the right of $(x,0)$. 
	Notice that when the event above occurs, then $(x,0)$ is connected to infinity in $\bbH$, as it is connected to the primal side of the infinite path $\Gamma$, possibly via $\Sigma$. 
	
	Now, summing over all possible realisations of $\Sigma$ different from $\emptyset$  and using~\eqref{eq:yxz} we find 
	\begin{align*}
		\phi\big[(x,0)\xlra{\bbH} \infty \,\big|\, \Gamma \big]  \geq c (1- \eps).
	\end{align*}	
	Since $\eps > 0$ is arbitrary, we obtain the desired conclusion. 
\end{proof}

\begin{rem}\label{rem:hp_perco_primal}
	The same proof shows that if $\bbH$ contains no dual infinite cluster, 
	then for any even $x \in \bbZ$ 
	\begin{align}\label{eq:hp_perco_primal}
			\phi[(x,0)\xlra{\bbH} \infty] > c.
	\end{align}
	A similar statement holds for dual connections when $\bbH$ contains no primal infinite cluster.
\end{rem}

\begin{lem}\label{lem:hp_perco_pt2}
	Suppose that $\phi$ is such that there is coexistence in $\bbH$ with the primal infinite cluster to the left. 
	There exists $c' > 0$ such that, 
	for any $n \geq 1$ and sufficiently large $x \leq 0$ even and $y \geq 0$ odd,
	there exist $h \geq n$ and $m\in \bbZ$ such that  
	\begin{align}
		\phi\big[(x,0)\xlra{(\bbR\times [0,h]) \cap \La_n^c} [m, \infty) \times \{h\}\big] &> c' \qquad \text{and}\nonumber\\
		\phi\big[(y,0)\xlra{\omega^* \cap (\bbR\times [0,h]) \cap \La_n^c} (-\infty,m] \times \{h\}\big] &> c'. \label{eq:hp_perco_pt2}
	\end{align}
\end{lem}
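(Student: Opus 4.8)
The plan is to condition on the semi-infinite interface $\Gamma$ separating the infinite primal cluster of $\bbH$ (on its left) from the infinite dual cluster (on its right), and to read the common height $h$ and the crossover abscissa $m$ off $\Gamma$; for $(x,0)$ on the far left one runs the exploration of Lemma~\ref{lem:hp_perco_pt}, and its mirror image for the dual model at $(y,0)$ on the far right. The preliminary point is that, under coexistence in $\bbH$, the interface $\Gamma$ reaches arbitrarily large heights $\phi$-a.s.: were $\Gamma$ contained in $\bbR\times[0,k_0]$, the half-plane $\bbR\times[k_0+1,\infty)$ would lie on one side of $\Gamma$ and therefore contain an infinite cluster of only one type, whereas Corollaries~\ref{cor:hp_perco} and~\ref{cor:infinite_touch}, applied to the half-planes $\bbR\times[k,\infty)$ for all $k$ (legitimately, since $\phi_{p,q}^0\neq\phi_{p,q}^1$), force coexistence to propagate upward. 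Since $\Gamma$ starts on $\partial\bbH$, it then crosses every line $\bbR\times\{h\}$. Fix any $h>n$ (so that any such crossing lies outside $\La_n$), let $\mathfrak m(\Gamma)$ be the first coordinate of the lowest crossing of $\bbR\times\{h\}$ by $\Gamma$, and choose $m$ in the bulk of the law of $\mathfrak m(\Gamma)$, that is with $\phi[\mathfrak m(\Gamma)\geq m]\geq\delta$ and $\phi[\mathfrak m(\Gamma)<m]\geq\delta$; such an $m$ exists with $\delta>0$ uniform in $n$ because, by the finite energy property, no single value is carried by $\mathfrak m(\Gamma)$ with probability close to $1$.

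Now condition on $\Gamma$. Knowing $\Gamma$ amounts to knowing the states of the edges it separates, an edge-set disconnecting $\bbH$ into ${\rm Left}(\Gamma)$, containing the infinite primal cluster, and ${\rm Right}(\Gamma)$, containing the infinite dual cluster. On the event $\{\mathfrak m(\Gamma)\geq m\}$, I would condition further, inside ${\rm Left}(\Gamma)$, on the outermost open primal path $\Sigma$ joining $(-\infty,x]\times\{0\}$ to $\Gamma$; for $x$ negative enough this $\Sigma$ may be taken outside $\La_n$ and below height $h$, so the residual domain carries $(x,0)$ on its free boundary and a portion of $\Sigma\cup\Gamma$ reaching $\bbR\times\{h\}$ at abscissa $\geq m$ on its wired boundary, and sits inside $(\bbR\times[0,h])\cap\La_n^c$. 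As the infinite primal cluster is adjacent to $\Gamma$, a point of $[m,\infty)\times\{h\}$ lies on this wired arc, so Proposition~\ref{prop:hp_perco}, applied in a $\Slit$-type subdomain, gives $\phi\big[(x,0)\xlra{(\bbR\times[0,h])\cap\La_n^c}[m,\infty)\times\{h\}\,\big|\,\Gamma\big]\geq c$, with $c$ the universal constant of that proposition. Averaging over $\Gamma$ on $\{\mathfrak m(\Gamma)\geq m\}$ yields the first inequality with $c'=c\delta$. Running the mirror argument inside ${\rm Right}(\Gamma)$ for the dual model on the complementary event $\{\mathfrak m(\Gamma)<m\}$ — where now the infinite dual cluster reaches $\bbR\times\{h\}$ at abscissa $<m$ — gives the second inequality with the same $c'$.

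The step I expect to be the main obstacle is the exploration in the second paragraph: one must set up the outermost path $\Sigma$ and the $\Slit$-type subdomain so that the connection supplied by Proposition~\ref{prop:hp_perco} genuinely stays inside the strip $\bbR\times[0,h]$ with $\La_n$ removed and reaches $\bbR\times\{h\}$ on the prescribed side of $m$; this is the real geometric content and is precisely where the uniformity of Proposition~\ref{prop:hp_perco} over arbitrary boundary points is needed. A secondary difficulty is the upward-propagation claim of the first paragraph, which guarantees that the line $\bbR\times\{h\}$ with $h\geq n$ is reached at all.
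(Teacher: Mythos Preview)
Your overall plan coincides with the paper's: read $h$ and $m$ off the interface $\Gamma$ and then invoke the exploration underlying Lemma~\ref{lem:hp_perco_pt}. However, the step you correctly flag as the main obstacle does not go through as you describe it. You want the residual domain ${\rm Below}(\Sigma)$ to sit inside $(\bbR\times[0,h])\cap\La_n^c$, but it cannot: one boundary arc of ${\rm Below}(\Sigma)$ is the piece of $\Gamma$ between $\Gamma_0$ and the point where $\Sigma$ lands on $\Gamma$, and typically $\Gamma_0\in\La_n$ (this is precisely the event of~\eqref{eq:contour-through-lambda-n}). Hence ${\rm Below}(\Sigma)$ contains part of $\La_n\cap\bbH$, and the connection from $(x,0)$ to the wired arc supplied by Proposition~\ref{prop:hp_perco} may well cross $\La_n$; likewise, following the primal side of $\Gamma$ up to height $h$ may pass through $\La_n$. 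If instead you force the exploration to stay in $\La_n^c$, the boundary of the residual domain acquires a stretch of $\partial\La_n$ with uncontrolled boundary condition, and the comparison with a $\Slit$ domain breaks down.

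The paper sidesteps this by not attempting to confine the exploration at all. It invokes Lemma~\ref{lem:hp_perco_pt} as a black box to get $(x,0)\xlra{\bbH}\infty$ with conditional probability at least $c$ (connection in the \emph{full} half-plane, with no avoidance of $\La_n$ or restriction to a strip), intersects with $\{\Gamma_0\in\La_n,\ M_h\geq m\}$, and then \emph{subtracts} two error terms: the probability that the connection from $(x,0)$ to infinity cannot avoid $\La_n$ (sent to zero as $|x|\to\infty$ by~\eqref{eq:no_avoiding_La}), and the probability of the event $\calH_x(h)$ that $(x,0)$ and some $u\in\partial\La_n$ both reach $\bbR\times\{h\}$ in the strip minus $\La_n$ without merging (sent to zero as $h\to\infty$ by uniqueness of the infinite cluster in $\bbH$). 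Controlling this second error forces $h$ to be chosen \emph{after} $x,y,n$, the reverse of your order. Your side remark about upward propagation of coexistence is a genuine point that the paper glosses over, but it is secondary compared to the confinement issue.
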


\begin{proof}
	Let $\eps = c/12 > 0$, where $c$ is the constant from Proposition~\ref{prop:hp_perco} (and Lemma~\ref{lem:hp_perco_pt}).
	It suffices to prove the statement for $n$ large enough. 
	Fix $n \geq 1$ large enough that 
	\begin{equation}\label{eq:contour-through-lambda-n}
		\phi\big[\La_{n} \xlra{\bbH} \infty, \, \La_{n} \xlra{\omega^* \cap \bbH} \infty\big] > 1-\eps.
	\end{equation}
	
	To start, we claim that
	\begin{align}
    	\lim_{\substack{x \to - \infty\\ x \text{ even}}}\phi\big[(x,0)\xlra{\bbH} \infty \text{ but }(x,0)\nxlra{\bbH \cap \La_n^c } \infty\big] &= 0\quad  \text{ and} \nonumber\\
		\lim_{\substack{y \to \infty\\ y \text{ odd}}}\phi\big[(y,0)\xlra{\omega^*\cap\bbH} \infty \text{ but }(y,0)\nxlra{\omega^*\cap \bbH \cap \La_n^c } \infty\big] &= 0.\label{eq:no_avoiding_La}
    \end{align}
	The proof of these two equalities is identical, and we focus on the first one. 
	For $(x,0)$ to be connected to infinity in $\bbH$ but not in $\bbH \setminus \La_n$ 
	there needs to exist a dual path in $\bbH$ from $\La_n$ to $(-\infty, x] \times \{0\}$. 
	The existence of infinitely many such paths is incompatible with the existence of a primal cluster intersecting $(-\infty, x] \times \{0\}$ infinitely many times, and therefore has probability $0$. 
	Thus we deduce that only finitely many such dual paths exist a.s., and the Borel--Cantelli lemma implies~\eqref{eq:no_avoiding_La}. 
    	
	Fix now $x$ and $y$ as in the statement, large enough so that 
	\begin{equation*}
		\phi[(x,0)\xlra{\bbH} \infty \text{ but }(x,0)\nxlra{\bbH \cap \La_n^c } \infty] < \eps \text{ and }
		\phi[(y,0)\xlra{\omega^*\cap\bbH} \infty \text{ but }(y,0)\nxlra{\omega^*\cap \bbH \cap \La_n^c } \infty] < \eps.
	\end{equation*}
	
	For $h > n$ let $\calH_x(h)$ be the event that there exists $u \in \partial \La_n$
	such that both $u$ and $(x,0)$ are connected to  $\bbR \times \{h\}$ in $(\bbR \times [0,h]) \setminus \La_n$, but not to each other. 
	Define $\calH_y(h)$ similarly for $y$ and involving dual connections. 
	Due to the uniqueness of the primal and dual infinite clusters in $\bbH$, we may pick $h = h(n,x,y)$ large enough such that
	\begin{align*}
		\phi[\calH_x(h)] \leq \eps \qquad \text{ and }\qquad \phi[\calH_y(h)] \leq \eps.
	\end{align*}

	Recall the definition of the infinite interface $\Gamma$ separating the primal and dual infinite clusters in $\bbH$.
	Define $T_{h}$ as the first time when $\Gamma$ intersects $\bbR\times \{h\}$ and
	let $M_{h}$ be the even integer closest to the first coordinate of $\Gamma_{T_{h}}$.
	Define $m = m(n,h,x,y) \in \bbZ$ as the median value for $M_{h}$, so that
	\begin{equation}\label{eq:median-point-at-height-h}
		\phi(M_{h} \geq m) \geq \tfrac{1}{2} \quad \text{and} \quad \phi(M_{h} \leq m) \geq \tfrac{1}{2}.
	\end{equation}
	
	Now that the quantities $h$ and $m$ are fixed, let us prove~\eqref{eq:hp_perco_pt2}.
	We will focus on the bound for $x$. 
	By Lemma~\ref{lem:hp_perco_pt}, 
	\begin{equation*}
		\phi\big[(x,0)\xlra{\bbH} \infty \,\big|\, \Gamma_0 \in \La_n \text{ and }M_{h} \geq m \big] \geq c.
	\end{equation*}
	Now, due to~\eqref{eq:contour-through-lambda-n} and the choice $m$, the event in the conditioning has probability at least $1/2 - \eps$, 
	and we find 
	\begin{equation*}
		\phi\big[(x,0)\xlra{\bbH} \infty ,\, \Gamma_0 \in \La_n \text{ and }M_{h} \geq m \big] \geq c(1/2 - \eps).
	\end{equation*}	
	Finally, we have 
	\begin{align*}
		&\phi\big[(x,0)\xlra{(\bbR\times [0,h]) \cap \La_n^c} [m, \infty) \times \{h\}\big] \\
    	&\quad\geq \phi\big[(x,0)\xlra{(\bbR\times [0,h]) \cap \La_n^c } \Gamma \text{ and }M_{h} \geq m \big] \\
    	&\quad\geq \phi\big[(x,0)\xlra{\bbH} \infty, \,  \Gamma_0 \in \La_n \text{ and }M_{h} \geq m \big] 
    	-\phi\big[(x,0)\xlra{\bbH} \infty \text{ but } (x,0)\nxlra{\bbH \cap \La_n^c} \infty] 
		-\phi\big[\calH_x(h)] \\
		&\quad\geq  c/2 - 3\eps.
	\end{align*}
	Due to our choice of $\eps$, we conclude that the above has at least probability $c' = c/4 > 0$. 
	
	The same argument applies to $y$ instead of $x$, {\em with the same value of $m$}. 
	Notice that when $M_h \leq m$, the first dual vertex on $\bbR \times \{h\}$ visited by $\Gamma$ may be $(m+1,h)$.
	In order to yield a $\omega^{*}$-connection from $(y,0)$ to $(-\infty,m]\times \{h\}$, we may modify the configuration in the neighbourhood of $(m,h)$ by the finite energy property.
	\end{proof} 
	
\begin{proof}[Proposition~\ref{prop:shield}]
	Fix $n$. For simplicity, let us assume that $\phi'$ is the translate of $\phi$ by $(2,0)$; other translates may be treated in a similar way. 
	We split the proof into several cases. 
	\medskip 
	
	\noindent (a) If $\bbH$ contains no dual infinite cluster $\phi$-a.s., then the same is valid for $\phi'$. 
	Due to Remark~\ref{rem:hp_perco_primal},
	\begin{align*}
		\liminf_{x \to -\infty }\phi'[(x,0)\xlra{\bbH \setminus \La_n} \infty] \geq c \quad \text{ and }\quad 
		\liminf_{y \to \infty } \phi'[(y,0)\xlra{\bbH \setminus \La_n} \infty] \geq c.
	\end{align*} 
	Indeed, the absence of an infinite dual cluster in $\bbH$ ensures that
	connections to infinity for far enough points $(x,0),(y,0) \in 2\bbZ \times \{0\}$ may be chosen to avoid $\La_n$ 
	(see also the proof of~\eqref{eq:no_avoiding_La} for a similar argument). 
	
	Moreover, due again to the absence of a dual infinite cluster in $\bbH$ and to~\eqref{eq:FKG},
	\begin{align*}
		\phi'[(x,0)\xlra{\bbH \setminus \La_n}(y,0)] 
		\geq \phi'[(x,0)\xlra{\bbH \setminus \La_n}\infty]\, \phi'[(y,0)\xlra{\bbH \setminus \La_n}\infty] 
		\geq c^2/2,
	\end{align*}
	for all $x < 0$ and $y > 0$ even and large enough. 
	
	Fix now $R = R(n,x,y)$ such that the above connection between  $(x,0)$ and $(y,0)$  occurs in $\La_R$ with probability at least $c^2/4$. 
	Then define $\calS$ as the exterior-most path contained in $\omega'$, 
	with endpoints $(x,0)$ and $(y,0)$ and which is contained in $\bbH \cap \La_R \setminus \La_n$. 
	If no such connection exists, set $\calS = \emptyset$. 
	
	It is immediate that $\calS$ is a shielding arc and that it is measurable in terms of the edges in ${\rm Above}(\calS)$.
	Moreover, the existence of $\calS$ is increasing in $\omega'$, is independent of $\omega$ 
	and occurs with probability at least $c^2/4$. 
	\medskip
	
	\noindent 
	(b) If $\bbH$ contains no primal infinite cluster $\phi$-a.s., apply the same construction as above to $\omega^*$ instead of $\omega'$. 
	\medskip
	
	\noindent (c) If there is coexistence in $\bbH$ with the primal cluster to the left, proceed as follows. 
	Let $x < 0$ and $y+1 > 0$ be even and odd, respectively, and large enough that Lemma~\ref{lem:hp_perco_pt2} applies to them;
	let $h,m$ be the integers given by  Lemma~\ref{lem:hp_perco_pt2}.

	\begin{figure}
	\begin{center}
	\includegraphics[width = 0.6\textwidth]{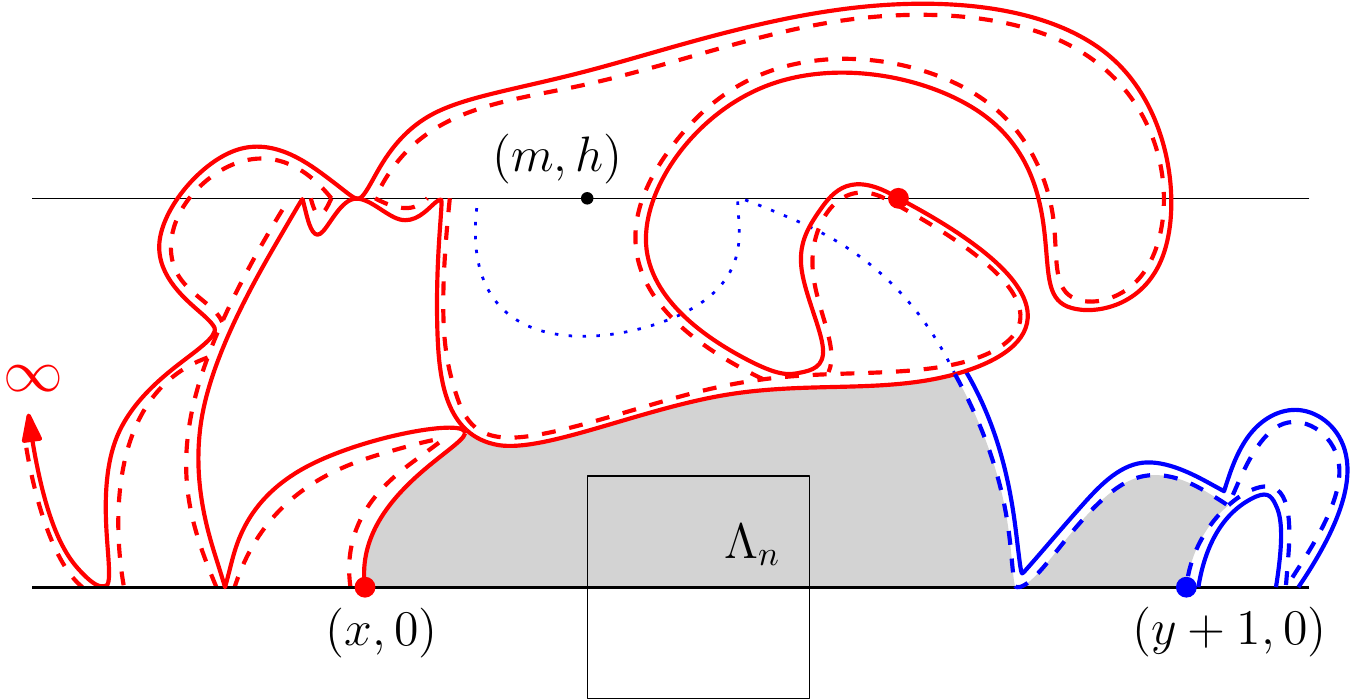}
	\caption{The blue lines refer to $\omega$ and the red ones to $\omega'$; the solid lines are open primal paths, while the dashed ones are dual. 
	In the first step of the exploration, the red interface is discovered, starting at $(x,0)$ and up to the first time it hits $[m,\infty) \times \{h\}$.
	The rest of the red interface is explored in the second step and the blue interface in the third step.
	For the last step to be successful it suffices that $(y+1,0)$ is connected to $(-\infty,m] \times \{h\}$ in $\omega^* \cap (\bbR \times[0,h]) \cap  \La_n^c$.
	The path $\calS$ is then the upper boundary of the grey region. }
	\label{fig:shield}
	\end{center}
	\end{figure}
	
	We will now describe an exploration algorithm that, if successful, will create $\calS$. If the algorithm fails, we set $\calS = \emptyset$. 
	See Figure~\ref{fig:shield} for an illustration.
	\smallskip 
	
	\noindent {\bf Step 1:}
	Explore the left-most connection in $\omega'$ between $(x,0)$ and $[m,+\infty) \times \{h\}$ which is contained in $(\bbR \times[0,h]) \setminus \La_n$. 
	If no such connection is found, the algorithm fails.
	\smallskip 
	
	\noindent {\bf Step 2:}
	From the tip of the previous exploration, continue exploring the left-most infinite path in $\omega' \cap (\bbH \setminus \La_n)$. 
	If no such path is found, the algorithm fails.
	\smallskip 
	
	\noindent {\bf Step 3:}
	Explore the right-most path in $\omega^*$ starting at $(y+1,0)$, ending at a previously explored edge and contained in  $(\bbR \times[0,h]) \setminus \La_n$. 
	If no such connection is found, the algorithm fails.
	\medskip 
	
	When the algorithm is successful, let $D$ be the connected component of $[x,y+1] \times \{0\}$ in the set of unexplored edges of $\bbH$
	and let $\calS$ be the curve forming the boundary of $D$ other than  $[x,y+1] \times \{0\}$.
	Then $\calS$ is a shielding arc. 
	Indeed, after Step 2, the connected component of $(y+1,0)$ in the unexplored edges of $\bbH$ is bounded entirely by explored edges contained in $\omega'$. 
	Thus, the $\omega^*$-open path explored in Step 3 forms $\calS^{(2)}$, while $\calS^{(1)} := \calS \setminus \calS^{(2)}$ is formed of edges of $\omega'$.
	Moreover, the construction of $\calS$ shows that $\calS$ is measurable in terms of the edges in ${\rm Above}(\calS)$.
	
	Now let us argue that points $(ii)$ and $(iii)$ are satisfied. 
	Note that $\omega'$ is promising if and only if both steps 1 and 2 are successful (otherwise $\calS = \emptyset$ for any $\omega^*$). 
	That Step 1 and Step 2 are successful are both increasing events depending only on $\omega'$. 
	Lemmas~\ref{lem:hp_perco_pt} and~\ref{lem:hp_perco_pt2} provide uniformly positive lower bounds for the probabilities for each of these events. 
	Applying positive association, we conclude that $\omega'$ is promising with uniformly positive probability.
	Finally, as an intersection of increasing events, $\{\omega' \text{ promising}\}$ is also increasing. 

	Fix a promising $\omega'$.
	It is immediate that $\{\calS \neq \emptyset\}$ is decreasing in $\omega$ (for fixed $\omega'$).
	Moreover, for Step 3 to be successful it suffices (but is not generally necessary) that $(y+1,0)$ 
	be connected  to $(-\infty,m] \times \{h\}$ by a $\omega^*$-open path contained in $(\bbR \times[0,h]) \setminus  \La_n$.
	This event occurs with uniformly positive probability due to Lemma~\ref{lem:hp_perco_pt2}.
	Thus, $\calS \neq \emptyset$ has uniformly positive probability conditionally on any promising $\omega'$.
	\medskip
	
	\noindent (d) Apply the horizontal reflection of the construction of point (c). 
\end{proof}

\subsection{Proof of Proposition~\ref{prop:Gibbs=trans_inv}}

To start, let us state a simple consequence of Proposition~\ref{prop:shield}.
Notice that the equivalent of Proposition~\ref{prop:shield} also applies in $\bbH^-$.

\begin{cor}\label{cor:shield}
	There exists $c > 0$ such that the following holds for all $n\geq 1$. 
	For any $x \leq 0$ and $y \geq 0$, both sufficiently large,
	if we write $\calS$ and $\calS^-$ for the shielding arcs provided by Proposition~\ref{prop:shield} applied in $\bbH$ and $\bbH^-$,
	respectively, then
    \begin{align*}
    	\bbP[\calS \neq \emptyset,\, \calS^- \neq \emptyset ] \geq c.
    \end{align*}
\end{cor}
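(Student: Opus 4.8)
The plan is to condition on the sample $\omega'$ and to apply positive association twice. The key preliminary observation is that both $\phi$ and $\phi'$ are positively associated: by assumption $\phi$ is an extremal, hence tail-trivial, Gibbs measure, so Lemma~\ref{lem:extremal-fkg} applies; and a translate of an extremal Gibbs measure is again an extremal Gibbs measure, so the same holds for $\phi'$. I will fix one constant $c_0>0$ for which the conclusion of Proposition~\ref{prop:shield} holds both for the construction in $\bbH$ and for its reflection in $\bbH^-$ (taking the smaller of the two constants), and call $\omega'$ \emph{doubly promising} if it is promising, in the sense of Proposition~\ref{prop:shield}(iii), for both half-planes simultaneously.

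First I would fix $n$ and $x\le 0$, $y\ge 0$ large enough that Proposition~\ref{prop:shield} and its $\bbH^-$-analogue both apply, and condition on a doubly promising $\omega'$. Since $\omega$ and $\omega'$ are independent, the conditional law of $\omega$ is still $\phi$. By Proposition~\ref{prop:shield}(ii), applied in $\bbH$ and in $\bbH^-$, the events $\{\calS\neq\emptyset\}$ and $\{\calS^-\neq\emptyset\}$ are each decreasing in $\omega$; hence positive association of $\phi$ gives
\begin{align*}
	\bbP[\calS\neq\emptyset,\ \calS^-\neq\emptyset \mid \omega']\ \ge\ \bbP[\calS\neq\emptyset \mid \omega']\cdot\bbP[\calS^-\neq\emptyset \mid \omega']\ \ge\ c_0^2,
\end{align*}
the last inequality because $\omega'$ is doubly promising.

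Then I would estimate the probability that $\omega'$ is doubly promising. By Proposition~\ref{prop:shield}(iii) and its reflection, the events ``$\omega'$ is promising in $\bbH$'' and ``$\omega'$ is promising in $\bbH^-$'' are both increasing in $\omega'$ and each has probability at least $c_0$, so positive association of $\phi'$ yields $\bbP[\omega'\text{ doubly promising}]\ge c_0^2$. Integrating the previous display over the doubly promising configurations then gives
\begin{align*}
	\bbP[\calS\neq\emptyset,\ \calS^-\neq\emptyset]\ \ge\ c_0^2\cdot\bbP[\omega'\text{ doubly promising}]\ \ge\ c_0^4,
\end{align*}
which is the claim with $c=c_0^4$.

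Since all three items of Proposition~\ref{prop:shield} are used as black boxes, there is no serious obstacle. The only points worth spelling out are that $\phi'$ inherits positive association from $\phi$ (so that FKG can be applied to $\omega'$ as well as to $\omega$), and that the monotonicity directions in Proposition~\ref{prop:shield}(ii)--(iii) are compatible with merging the $\bbH$- and $\bbH^-$-constructions: both arc-existence events $\{\calS\neq\emptyset\}$, $\{\calS^-\neq\emptyset\}$ are decreasing in $\omega$ and both ``promising'' events are increasing in $\omega'$, which is exactly what the double application of FKG needs. One may also note that $\calS$ depends only on the edges of $\bbH$ and $\calS^-$ only on the disjoint set of edges of $\bbH^-$, though this is not needed once FKG for decreasing events is invoked directly.
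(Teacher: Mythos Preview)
Your proof is correct and follows essentially the same approach as the paper's: first use positive association of $\phi'$ on the two increasing ``promising'' events to lower-bound $\bbP[\omega'\text{ doubly promising}]$, then, conditionally on such an $\omega'$, use positive association of $\phi$ on the two decreasing events $\{\calS\neq\emptyset\}$, $\{\calS^-\neq\emptyset\}$; the only cosmetic difference is the order in which the two FKG steps are presented.
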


\begin{proof}
	Let $c>0$ be the constant given by Proposition~\ref{prop:shield}. 
	Fix $n$ and $x < 0$ and $y > 0$ even and large enough for Proposition~\ref{prop:shield} to apply. 
		
	Since the events $\{\omega' \text{ is promising for $\bbH$}\}$ and $\{\omega' \text{ is promising for $\bbH^-$}\}$ 
	are both increasing in $\omega'$, we conclude by positive association that
	\begin{align*}
		\bbP[\omega' \text{ promising for both $\bbH$ and $\bbH^-$}] \geq c^2. 
	\end{align*}
	Now, for any such $\omega'$, using the positive association of $\phi$ and property (ii) of  Proposition~\ref{prop:shield},
	\begin{align*}
		\bbP[\calS \neq \emptyset \text{ and }\calS^- \neq \emptyset \,|\, \omega'] \geq 
		\bbP[\calS \neq \emptyset \,|\, \omega']\,\bbP[\calS^- \neq \emptyset \,|\, \omega'] \geq c^2. 
	\end{align*}
	The conclusion follows by combining the two inequalities above and by adjusting $c$. 	
\end{proof}

\begin{proof}[Proposition~\ref{prop:Gibbs=trans_inv}]
	We will show that 
	\begin{align*}
		\phi'[A] \geq \phi[A]
	\end{align*}
	for any increasing event $A$ that depends on a finite set of edges.
	Then the opposite inequality holds by symmetry, and we deduce that $\phi$ and $\phi'$ are equal for all increasing cylinder events. 
	The monotone class theorem allows then to deduce that $\phi = \phi'$. 
	For the rest of the proof we focus on showing the inequality above 
	
	Fix such an event $A$ and let $n_{0}$ be such that $A$ depends only on the edges in $\La_{n_{0}}$. 
	Define an increasing sequence of scales $(n_k)_{k \geq 0}$ as follows. 
	Assuming $n_0,\dots n_{k-1}$ are defined, 
	let $x_k<0<y_k$ be sufficiently large for Corollary~\ref{cor:shield} to apply with $n = n_{k-1}$. 
	Let $\calS_{k}$ and $\calS_{k}^{-}$ be the shielding arcs produced by Corollary~\ref{cor:shield} with $x=x_k$, $y=y_k$, $n= n_{k-1}$.
    Then, we may choose $n_{k} > n_{k-1}$ such that
    \begin{align}\label{eq:GG'}
    	\bbP[\calS_{k} \neq \emptyset,\, \calS_{k}^- \neq \emptyset \text{ and } \calS_{k},\calS_{k}^- \subset \La_{n_k} ] \geq c/2,
	\end{align}
	where $c > 0$ is the constant of Corollary~\ref{cor:shield}.
	We will say that $n_k$ is a {\em successful scale} if the event above holds. 
	
	As a product of two tail trivial measures, $\bbP$ is also tail trivial by Fubini's theorem. 
	Combined with~\eqref{eq:GG'}, the tail triviality of $\bbP$ implies that there exists a.s. infinitely many successful scales. 
	
	For $\eps > 0$, let $K = K(n_0,\eps)$ be such that
	\begin{align*}
		\bbP[\exists j \in [1, K] \text{ such that $n_j$ is successful}] \ge 1-\eps. 
	\end{align*}
	
	Let $J \in [1,K]$ be maximal such that $n_J$ is successful. 
	Write $\calS$ and $\calS^-$ for the paths generated by Corollary~\ref{cor:shield} applied to $x_J,y_J$ and $n = n_{J-1}$. 
	Then $\calS \cup \calS^-$ delimits a finite domain $\calD \subset \La_{n_J}$ which contains $\La_{n_0}$. 
	When no scale $n_j$ with $1 \leq j\leq K$ is successful, set $\calS = \calS^{-} =  \calD = \emptyset$.
	 
	We now show that $\calS$ and $\calS^-$ are measurable in terms of the edges outside of $\calD$.
	Indeed, for any fixed $j$, $J = j$ is the intersection of the event that $n_j$ is successful and of 
	the event that no scale $n_k$ with $j < k \leq K$ is successful. 
	The latter depends only on the edges outside of $\La_{n_j}$, and therefore outside of $\calD$. 
	The former depends on the shielding arcs $\calS_{j}$ and $\calS_{j}^-$, which were shown in Proposition~\ref{prop:shield} to depend only on the edges outside of $\calD$.  
	
	Let $\xi$ and $\xi'$ be the boundary conditions induced on $\partial \calD$ by $\omega \cap \calD^c$ and $\omega'\cap \calD^c$, respectively. 
	Next we claim that $\xi \leq \xi'$. 
	
	The proof of this fact depends on the situations in $\bbH$ and $\bbH^-$. 
	It is always the case that $\partial \calD$ may be split into at most four arcs $\calS^{(1)},\dots, \calS^{(4)}$
	of alternating type: $\calS^{(1)}$ and $\calS^{(3)}$ are formed of primal edges which are in $\omega'$, 
	while  $\calS^{(2)}$ and $\calS^{(4)}$ are formed of dual edges which are in $\omega^*$; 
	these arcs are joined by single edges of $\bbZ^2$. 
	When the boundary of $\calD$ is formed of only one or two arcs, we set $\calS^{(3)} = \calS^{(4)} = \emptyset$ 
	and potentially $\calS^{(1)} = \emptyset$ or $\calS^{(2)} = \emptyset$.
	
	If both $\bbH$ and $\bbH^-$ contain no infinite primal cluster, then $\partial \calD$  is a circuit in $\omega^*$,
	and $\xi$ is the free boundary condition. Since the free boundary condition is minimal, $\xi \leq \xi'$.
	Similarly, when both $\bbH$ and $\bbH^-$ contain no infinite dual cluster, then $\partial \calD$ is a circuit of primal edges of $\omega'$, and again  $\xi \leq \xi'$.
	The former case is realised when $\phi = \phi^{0}$ and the latter one when $\phi = \phi^{1}$.
	In retrospect, all other cases are excluded. 
	
	The boundary of $\cal\calD$ is formed of two arcs in most other cases:
	when $\bbH$ and $\bbH^-$ each contain a single infinite cluster but of different types, 
	when one of $\bbH$ and $\bbH^-$ contains a single infinite cluster and the other has coexistence,
	or when the both $\bbH$ and $\bbH^-$ have coexistence, with the primal cluster on the same side. 
	In all of these cases $\xi'$ is wired on $\calS^{(1)}$ and is arbitrary on the rest of $\partial \calD$, 
	while $\xi$ is free on $\calS^{(2)}$ and arbitrary on $\calS^{(1)}$. It follows directly that $\xi' \geq \xi$. 
	
	Finally, for $\partial \calD$ to have four arcs, both  $\bbH$ and $\bbH^-$ need to have coexistence, 
	with the primal cluster on opposite  sides. 
	Notice then that our construction of shielding arcs guarantees that $\calS^{(1)}$ and $\calS^{(3)}$
	are both connected to infinity in $\omega' \setminus \calD$. 
	Due to our convention for accounting for infinite clusters in~\eqref{eq:DLR-FK}, the arcs $\calS^{(1)}$ and $\calS^{(3)}$ are wired together in $\xi'$. 
	As such, $\xi'$ dominates any boundary condition that is free on $\calS^{(2)}$ and $\calS^{(4)}$. 
	Thus we conclude that $\xi \leq \xi'$ also in this case.
	 
	Now, due to~\eqref{eq:DLR-FK} applied to $\phi$ and $\phi'$ and to the fact that $\calD$ is explored from the outside, 
	we find
	\begin{align*}
		\phi'[A]-\phi[A]
		\geq \bbE\big[(\phi_{\calD}^{\xi'}[A] - \phi_{\calD}^{\xi}[A])\1_{\{\calD \neq \emptyset\}} \big] -\bbP[\calD =  \emptyset]
		\geq -\eps.
	\end{align*}
	Since $\eps$ is arbitrary, we conclude that $\phi'[A] \geq\phi[A]$, which was our goal. 
\end{proof}

\section{Thermodynamical limits for FK-percolation: proof of Corollary~\ref{cor:every-limit-is-gibbs}}
\label{sec:every-limit-is-gibbs}

In this section we prove Corollary~\ref{cor:every-limit-is-gibbs}. 
The proof is immediate when $\phi_{p,q}^{0} = \phi_{p,q}^{1}$, since for any finite subgraph $D$ of $\bbL$ and any boundary conditions $\xi$
\begin{align*}
	\phi^0_{D,p,q} \leq_{\rm st}\phi^\xi_{D,p,q} \leq_{\rm st}\phi^1_{D,p,q},
\end{align*}
and the free and wired measures on $D$ have the same limit as $D$ increases to $\bbL$.
Henceforth we focus on $p\in (0,1)$ and $q\geq 1$ for which $\phi_{p,q}^0\neq \phi_{p,q}^1$; the proof uses the following lemmas.

For $\xi$ a boundary condition on some finite subgraph $D = (V,E)$ of $\bbL$, 
a wired component of $\xi$ is an element of the partition $\xi$ that contains at least two vertices. 
We start by proving Corollary~\ref{cor:every-limit-is-gibbs} for sequences of boundary conditions with at most one wired component. 

\begin{lem}\label{lem:every-limit-is-gibbs1}
	Let $p\in (0,1)$ and $q\geq 1$ be such that $\phi_{p,q}^0\neq \phi_{p,q}^1$.
	Let $(D_{n})_{n\geq 1}$ be a sequence of domains increasing to $\bbL$ 
	and $(\xi_{n})_{n\geq 1}$ be a sequence of boundary conditions containing at most one wired component.
	Assume that the sequence of FK-percolation measures $\phi_{D_{k},p,q}^{\xi_{k}}$ has a weak limit $\phi$.
	Then $\phi$ is a Gibbs measure for FK-percolation with parameters $p,q$, and 
	\begin{align}\label{eq:noinf_primal_dual}
		\phi[0\xlra{} \infty \text{ and }(1,0)\xlra{*} \infty] = 0.
	\end{align}
\end{lem}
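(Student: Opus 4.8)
The plan is to establish \eqref{eq:noinf_primal_dual} first, and then to derive from it — together with the spatial Markov and finite energy properties — that $\phi$ is Gibbs. Before that, dispose of the degenerate case: if $\xi_n$ is free for all but finitely many $n$, then $\phi=\phi_{p,q}^0$, which is Gibbs as a thermodynamic limit, and since $\phi_{p,q}^0\neq\phi_{p,q}^1$ forces $p=p_c(q)$ at a discontinuous transition, \eqref{eq:exp_decay} gives $\phi_{p,q}^0[0\lra\infty]=0$, so \eqref{eq:noinf_primal_dual} holds. I therefore assume henceforth that each $\xi_n$ has a single nonempty wired component $W_n$; in particular $p=p_c(q)$ and \eqref{eq:exp_decay} is available.

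For \eqref{eq:noinf_primal_dual} I would reduce to the uniform finite-volume estimate
\[
	\phi_{D_n,p,q}^{\xi_n}\big[\,0\xlra{\La_N}\partial\La_N\ \text{and}\ (1,0)\xlra{\omega^*\cap\La_N}\partial\La_N\,\big]\le e^{-cN},
\]
valid for all $N$ and all $n$ with $D_n\supseteq\La_N$: since the event in brackets is a cylinder event decreasing in $N$ to $\{0\lra\infty\}\cap\{(1,0)\xlra{*}\infty\}$, letting $n\to\infty$ (weak convergence) and then $N\to\infty$ yields \eqref{eq:noinf_primal_dual}. To prove the estimate I would reveal the primal cluster $C_n$ of $W_n$ together with the dual clusters of the wired components of the dual boundary condition $\xi_n^*$ (which are the dual counterparts of the free arc(s) of $\partial D_n$); by \eqref{eq:SMP}, conditionally on this information the configuration in each unrevealed region is FK-percolation that is free on the primal side along $\partial C_n$ and free on the dual side along the revealed dual clusters. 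Then split into cases: if $0\notin C_n$ and the cluster of $0$ reaches $\partial D_n$ along a free-bounded part, \eqref{eq:CBC} and \eqref{eq:exp_decay} bound $\{0\xlra{\La_N}\partial\La_N\}$ by $e^{-cN}$; symmetrically, if $(1,0)$ lies outside the revealed dual clusters and is confined by free dual boundary conditions, \eqref{eq:exp_decay} bounds $\{(1,0)\xlra{\omega^*\cap\La_N}\partial\La_N\}$ by $e^{-cN}$; the residual case — in which $0$ lies in the primal cluster anchored at $W_n$ while $(1,0)$ lies in a dual cluster anchored at the complementary arc of $\partial D_n$ — forces a disjoint primal and a dual arm across a macroscopic annulus with the relevant free boundary condition on one of its sides, and a further application of \eqref{eq:exp_decay}, after revealing one more interface, controls it. Summing the cases gives the displayed bound.

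For Gibbsianity, fix a finite $D=(V,E)$ and a boundary condition $\zeta$ for which \eqref{eq:DLR-FK} is non-degenerate; it is enough to verify \eqref{eq:DLR-FK} when $D$ runs over the boxes $\La_M$, the general case then following from \eqref{eq:SMP} applied inside $\La_M$. For $k$ large, \eqref{eq:SMP} gives $\phi_{D_k,p,q}^{\xi_k}[\,\cdot\mid\omega\text{ on }E(D_k)\setminus E\,]=\phi_{D,p,q}^{\zeta_k}$, where $\zeta_k$ is the boundary condition induced on $\partial D$ by the configuration in $D_k\setminus D$ together with $\xi_k$. Since $\partial D$ is finite and the connections inside $D_k\setminus D$ only grow with $k$, $\zeta_k$ stabilises $\phi$-almost surely to the boundary condition $\zeta_\infty$ wiring $u,v\in\partial D$ exactly when $u\xlra{\omega\setminus E}v$, and passing to the weak limit gives $\phi[\,\cdot\mid\calF(E^c)]=\phi_{D,p,q}^{\zeta_\infty}$. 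It then remains to identify $\zeta_\infty$ with the boundary condition $\zeta$ of \eqref{eq:DLR-FK}, which additionally wires together all infinite clusters of $\omega\setminus E$: these differ only on the event that $\omega\setminus E$ has two disjoint infinite clusters meeting $\partial D$, and if that had positive probability, planar duality would place an infinite cluster of the opposite type within a bounded neighbourhood of $D$, whence the finite energy property would connect, with positive probability, an infinite primal cluster to $0$ and an infinite dual cluster to $(1,0)$ — contradicting \eqref{eq:noinf_primal_dual}. Hence $\zeta_\infty=\zeta$ a.s. and $\phi$ is Gibbs.

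The main obstacle is the uniform estimate behind \eqref{eq:noinf_primal_dual}: correctly tracking which free/wired and primal/dual boundary conditions are exposed by the successive explorations — especially in the residual case, and when $W_n$ is spread out rather than a single boundary arc — is where essentially all the work lies. By comparison, the passage to the limit in the DLR identity, although it needs some care (uniform smallness of $\{\zeta_k\neq\zeta_\infty\}$ along the approximating measures, which follows from \eqref{eq:0phi1}), is fairly standard once $\zeta_k$ is seen to stabilise, and the concluding finite-energy/duality step uses only \eqref{eq:noinf_primal_dual}.
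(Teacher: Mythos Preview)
Your plan reverses the paper's order: you try to prove \eqref{eq:noinf_primal_dual} first by a direct finite-volume estimate, and then deduce Gibbsianity. The paper does the opposite, and for good reason.

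The uniform bound
\[
	\phi_{D_n,p,q}^{\xi_n}\big[\,0\xlra{\La_N}\partial\La_N\ \text{and}\ (1,0)\xlra{\omega^*\cap\La_N}\partial\La_N\,\big]\le e^{-cN}
\]
is the heart of your argument, and your ``residual case'' is precisely where it breaks. Take $D_n=\La_{2N}$ with Dobrushin conditions (one wired arc on top): if the interface passes between $0$ and $(1,0)$, then $0$ lies in the primal cluster of $W_n$ and $(1,0)$ in the dual cluster of the free arc, and both arms to $\partial\La_N$ occur. Your sketch says this ``forces a disjoint primal and a dual arm across a macroscopic annulus with the relevant free boundary condition on one of its sides'', but there is no such free side: once you reveal the cluster of $W_n$, the unexplored region carries \emph{free primal} (i.e.\ wired dual) boundary, so a long dual arm from $(1,0)$ is not penalised at all. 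Revealing ``one more interface'' does not create a free dual boundary either. In fact, showing that the probability of this residual configuration tends to $0$ as $N\to\infty$ is essentially the delocalisation statement of Theorem~\ref{thm:mainFK}; an exponential rate is stronger still and is not available from \eqref{eq:exp_decay} alone.

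The paper avoids this by proving a \emph{different} estimate, namely \eqref{eq:bcbc}: for a fixed small window $G$, two points of $\partial G$ that both reach $\partial\La_R$ in $G^c$ but are \emph{disconnected in $\omega^{\xi_n}\setminus G$} are exponentially unlikely. This works because disconnection forces at least one of the two arms to miss the cluster of the single wired component $W_n$, and that arm then lives in a region with genuinely free (primal) boundary, where \eqref{eq:exp_decay} bites. With \eqref{eq:bcbc} in hand, the paper introduces a \emph{local} surrogate $\zeta_R(\omega)$ for the boundary condition on $G$ (connections inside $\La_R$ with $\partial\La_R$ wired), compares it both to the true $\zeta(\omega^{\xi_n})$ and to the DLR $\zeta(\omega)$, and passes the cylinder event $\{\zeta_R(\omega)=\chi\}$ through the weak limit. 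Only \emph{after} Gibbsianity is established does \eqref{eq:noinf_primal_dual} follow, as a corollary of Theorem~\ref{thm:mainFK}.

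A secondary issue: your ``$\zeta_k$ stabilises'' step is not well-posed. The boundary condition $\zeta_k$ depends on $\omega$ sampled from $\phi_{D_k}^{\xi_k}$ \emph{and} on the moving wired set $W_k$; there is no single $\omega$ along which to take $k\to\infty$, and conditional laws do not pass through weak limits without a uniform estimate of exactly the type \eqref{eq:bcbc} provides. The sandwich \eqref{eq:0phi1} alone does not supply it.
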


\begin{proof}
	Fix $p\in (0,1)$ and $q\geq 1$ for which $\phi_{p,q}^0\neq \phi_{p,q}^1$ and omit them from notation for the rest of the proof. 
	Let $(D_{n}, \xi_{n})_{n\geq 1}$ and $\phi$ be as in the statement. 
	We will prove that $\phi$ is a Gibbs measure for FK-percolation; 
	\eqref{eq:noinf_primal_dual} then follows from Theorem~\ref{thm:mainFK}.
	Fix a finite subgraph $G$ of $\bbL$. 
	
	To start, we show that there exists $C_0 = C_0(G) > 0$ such that for any $R$ and $n$ such that  $G \subset \La_R \subset D_n$, 
	\begin{align}\label{eq:bcbc}
		\phi_{D_n}^{\xi_n}\big[	\exists x,y \in \partial G \text{ s.t. } 
		x\nxlra{\omega^{\xi_n} \setminus G} y \text{ but } x\xlra{ G^c} \partial \La_R \text{ and } y\xlra{ G^c} \partial \La_R \big]
		\leq C_0 e^{-c R},
	\end{align}
	where $c >0$ is given by~\eqref{eq:exp_decay}.
	
	\begin{figure}
	\begin{center}
	\includegraphics[width = 0.6\textwidth]{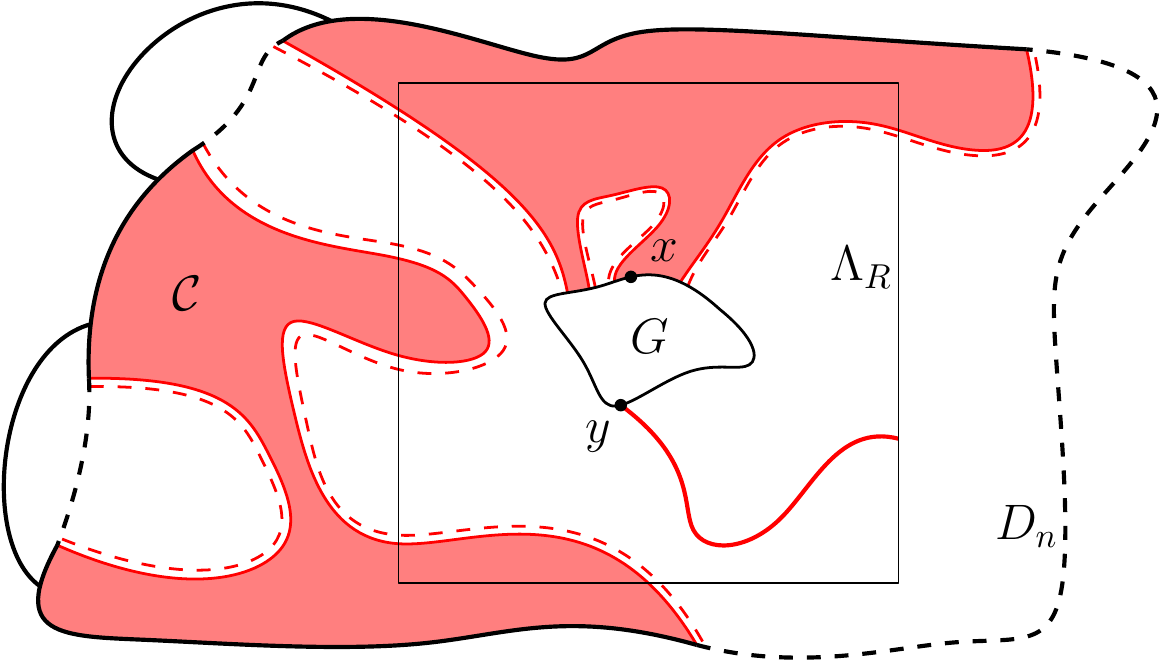}
	\caption{A domain $D_n$ with boundary conditions containing a single wired component; the wired component is formed of three arcs which are wired together. 
	If $x$ and $y$ are both connected to $\partial \La_R$, but not to each other, then at least one of them is connected to $\partial \La_R$,
	but not to the wired component of $\partial D_n$; in the figure, $y$ is such.  
	After exploring the cluster of the wired component of $\partial D_n$ (in red), $y$ is (almost) surrounded by free boundary conditions.  }
	\label{fig:one_wired_comp}
	\end{center}
	\end{figure}
	
	Fix $R$ and $n$ as above and write $\calC$ for the connected component of the wired component of $\xi_n$ in $\omega \setminus G$.  
	Then~\eqref{eq:SMP} implies that, for any realisation  ${\rm C}$ of $\calC$, 
	the restriction of $\phi_{D_n}^{\xi_n}[.|\calC = {\rm C}]$ to $D_n \setminus {\rm C}$ is $\phi_{D_n \setminus {\rm C}}^{\zeta}$, 
	where $\zeta$ is free at all vertices of $\partial (D_n \setminus {\rm C})$ not contained in $G$ and wired on $\partial (D_n \setminus {\rm C})\cap G$. See Figure~\ref{fig:one_wired_comp} for an illustration. 
	
	Now, for the event in~\eqref{eq:bcbc} to occur, it is necessary that at least one vertex of $\partial G$ 
	is connected to $\partial \La_{R}$ in  $D_n \setminus {\rm C}$. 
	Indeed, for $x$ and $y$ to be disconnected in $\omega^{\xi_n} \setminus G$, at least one of them needs to not be contained in ${\rm C}$. 
	Thus, 
	\begin{align*}
		\phi_{D_n}^{\xi_n}\big[	\exists x,y \in \partial G \text{ s.t. } &
		x\nxlra{\omega^{\xi_n} \setminus G} y \text{ but } x\xlra{ G^c} \partial \La_R \text{ and } y\xlra{ G^c} \partial \La_R \big]\\
		&\leq \sum_{{\rm C}} \phi_{D_n \setminus {\rm C}}^{\zeta}[G\lra \partial \La_{R}] \,\phi_{D_n}^{\xi_n}[\calC = {\rm C}]\\
		&\leq \sum_{{\rm C}} { C}_1 \phi_{D_n \setminus {\rm C}}^{0}[G\lra \partial \La_{R}] \,\phi_{D_n}^{\xi_n}[\calC = {\rm C}]\\
		&\leq \sum_{{\rm C}} { C}_1 \phi^{0}[G\lra \partial \La_{R}] \,\phi_{D_n}^{\xi_n}[\calC = {\rm C}]\\
		&\leq { C}_1 \, |\partial G| \, \exp\big[-c (R -{\rm rad}(G))\big].
	\end{align*}	
	where the sums are over all possible realisations ${\rm C}$ of $\calC$.
	The second inequality follows by modifying the boundary conditions on $\partial (D_n \setminus {\rm C}) \cap G$, 
	which affect the probability of the event by a constant $C_1$ depending only on $|\partial G|$.   
	The third inequality uses~\eqref{eq:CBC}.
	The last inequality is simply~\eqref{eq:exp_decay}; 
	${\rm rad}(G)$ stands for the minimal $r$ such that $G \subset \La_r$. 
	This concludes the proof of~\eqref{eq:bcbc}. \medskip
	
	We turn to the proof of~\eqref{eq:DLR-FK} for $\phi$.
	Fix a boundary condition $\chi$ on $G$ and an event $B \in \calF(G^c)$.
	Assume $\phi[\{\omega \text{ induces $\chi$ on $G$}\}\cap B] >0$. 
	Our goal is to prove that 
	\begin{align}\label{eq:zsup0}
		    \phi[A \cap \{\omega \text{ induces $\chi$ on $\partial G$}\} \cap B] =
		    \phi_{G}^\chi[A] \phi[\{\omega \text{ induces $\chi$ on $\partial G$}\} \cap B],
	\end{align}
	for any event $A \in \calF(G)$. 
	By the monotone class theorem, it suffices to prove the above for events $B$ that depend on finitely many edges. 
	We assume this henceforth.
		
	Fix some $\eps > 0$ and let $R \geq 1$ be such that
	$G \subset \La_R$, $B \in \calF(\La_R\setminus G)$ and for any $n$ such that $\La_{R} \subset D_n$, 
	\begin{align}\label{eq:zsup}
		\phi_{D_n}^{\xi_n}\big[	\exists x,y \in \partial G \text{ s.t. } &
		x\nxlra{\omega^{\xi_n} \setminus G} y \text{ but } x\xlra{ G^c} \partial \La_R \text{ and } y\xlra{ G^c} \partial \La_R \big] < \eps.
    \end{align} 
    The existence of such an $R$ follows from our choice of $B$ and~\eqref{eq:bcbc}.
    Notice that, due to the convergence, the above also holds for $\phi$. 
       
	Fix now $n$ large enough that $\La_{R} \subset D_n$ and 
	\begin{align}\label{eq:zsup2}
		\sup_{C \in \calF(\La_{R})}\big|\phi_{D_n}^{\xi_n}[C] - \phi[C]\big| < \eps.
	\end{align}
	The existence of such an $n$ is a consequence of the notion of weak convergence for the product topology. 	
	
	For $\omega$ a configuration on $\bbL$ or $D_n$,	
	write $\zeta(\omega)$ for the boundary conditions on $\partial G$ induced by $\omega$
	and $\zeta_R(\omega)$ for those induced by $\omega \cap \La_R$, where the boundary of $\La_R$ is considered wired. 
	That is, $x,y \in \partial G$ are wired in $\zeta_R(\omega)$ if they are connected to each other in $\omega \cap \La_R \cap G^c$, 
	or if they are both connected to $\partial \La_R$ in $\omega \cap G^c$. 
	Then, for any event~$A \in \calF(G)$,
	\begin{align}
        \phi[A \cap \{\zeta(\omega) = \chi\} \cap B] 
        &= \phi[A \cap \{\zeta_R(\omega) = \chi\} \cap B]  + \delta_1, \nonumber\\
        &= \phi_{D_n}^{\xi_n}[A \cap \{\zeta_R(\omega) = \chi\} \cap B]  + \delta_1 + \delta_2 \nonumber\\
        &= \phi_{D_n}^{\xi_n}[A \cap \{\zeta(\omega^{\xi_n})= \chi \} \cap B]  + \delta_1 + \delta_2 +\delta_3 \nonumber\\
        &= \phi_{G}^{\chi}[A]\, \phi_{D_n}^{\xi_n}[\{\zeta(\omega^{\xi_n})= \chi\} \cap B]  + \delta_1 + \delta_2 +\delta_3.
        \label{eq:zsup5}
	\end{align}
	where $\delta_1$, $\delta_2$ and $\delta_3$ are quantities defined by the successive differences of the probabilities above
	and the last equality is the application of~\eqref{eq:SMP} for $\phi_{D_n}^{\xi_n}$.
	We bound $\delta_1$, $\delta_2$ and $\delta_3$ as follows. 
	For $\zeta_R(\omega)$ and $\zeta(\omega^{\xi_n})$ to differ, the event in~\eqref{eq:zsup} needs to occur.
	Thus $|\delta_3|\leq \eps$. Since any two infinite clusters are considered to be wired together, the same argument can be applied to $\phi$ to show that $|\delta_1 | \leq \eps$.
	Lastly, by~\eqref{eq:zsup2}, $|\delta_2 | \leq \eps$.

	Applying ~\eqref{eq:zsup5} to the full event, we find 
	\begin{align*}
       \big| \phi[\{\zeta(\omega) = \chi\} \cap B] - 
       \phi_{D_n}^{\xi_n}[\{\zeta(\omega^{\xi_n})= \chi\} \cap B] \big| \leq 3\eps.
	\end{align*}
	Finally, combining the above with~\eqref{eq:zsup5}, yields
	\begin{align*}
        \big| \phi[A \cap \{\zeta(\omega) = \chi\} \cap B] 
        - \phi_{G}^{\chi}[A]\, \phi[\{\zeta(\omega) = \chi\} \cap B]\big| \leq 6\eps.
	\end{align*}
	Since $\eps$ is arbitrary, the above implies~\eqref{eq:zsup0}, and the proof is complete. 
\end{proof}

\begin{lem}\label{lem:every-limit-is-gibbs2}
		Fix $p\in (0,1)$ and $q\geq 1$ for which $\phi_{p,q}^0\neq \phi_{p,q}^1$.
		For any $K \geq 1$ and $\eps >0$, there exists $N = N(K,\eps)\geq 1$ such that,
		for any domain $D$ containing $\La_N$ and any boundary conditions $\xi$ on $D$ with at most $K$ wired components,  
    	\[
    		\phi_{D,p,q}^\xi[0\xlra{} \partial D \text{ and }(1,0)\xlra{*} \partial D ] < \eps.
    	\]
\end{lem}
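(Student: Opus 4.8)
The hypothesis $\phi_{p,q}^{0}\neq\phi_{p,q}^{1}$ forces $p=p_{c}(q)=p^{*}$ (and $q>4$), so \eqref{eq:exp_decay} is available and we may pass freely between primal and dual. Write $E_{D}$ for the event in the statement. Fix a base scale $R_{0}$, put $R_{j}=2^{j}R_{0}$, and consider the pairwise disjoint annuli $A_{i}:=\La_{R_{2i}}\setminus\La_{R_{2i-1}}$, $1\leq i\leq m$, which lie inside $\La_{R_{2m}}$ and are separated by the ``buffer'' annuli $\La_{R_{2i+1}}\setminus\La_{R_{2i}}$, each of width $R_{2i}$. As soon as $N\geq R_{2m}$ and $D\supseteq\La_{N}$, the event $E_{D}$ forces the cluster $\calC_{0}$ of $0$ and the dual cluster $\calC^{*}$ of $(1,0)$ to reach $\partial D$, hence to cross each $A_{i}$ radially; consequently $A_{i}$ contains no primal circuit around $\La_{R_{2i-1}}$ (which would block the dual crossing) and no dual circuit around $\La_{R_{2i-1}}$ (which would block the primal crossing). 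Denoting this last event by $B_{i}$, we have $E_{D}\subseteq\bigcap_{i=1}^{m}B_{i}$; moreover, since a primal and a dual circuit around the hole cannot coexist, $B_{i}^{c}$ is the disjoint union of $\{A_{i}\text{ has a primal circuit around }\La_{R_{2i-1}}\}$ and $\{A_{i}\text{ has a dual circuit around }\La_{R_{2i-1}}\}$.

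The heart of the proof is a uniform one-annulus bound: there is $\delta=\delta(p,q,K)>0$ such that for every $r$, every annulus-shaped region $A=\La_{2r}\setminus\La_{r}$, and every boundary condition $\zeta$ on $A$ whose restriction to the outer boundary $\partial\La_{2r}$ has at most $K$ wired components,
\[
\phi_{A}^{\zeta}[A\text{ has a primal circuit around }\La_{r}]+\phi_{A}^{\zeta}[A\text{ has a dual circuit around }\La_{r}]\geq\delta.
\]
The reason is that $A$ cannot carry simultaneously a primal and a dual radial crossing with probability close to $1$: if the outer boundary condition is wired-like then, by \eqref{eq:exp_decay} and \eqref{eq:CBC}, a dual radial crossing --- which needs a dual path of length of order $r$ --- is exponentially unlikely, so a primal circuit is almost forced; symmetrically a free-like outer boundary condition makes a primal radial crossing exponentially unlikely; and the bound $K$ on the number of wired components of $\zeta$ is precisely what rules out a ``mixed'' boundary condition that would sustain both crossings at once. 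I expect this estimate to be the main obstacle, and it is the only place where the hypothesis on the number of wired components is used essentially. (A cheap special case that may help streamline: the contribution to $E_{D}$ coming from configurations in which $\calC_{0}$ avoids every wired component of $\xi$ is at most $e^{-cN}$ --- explore the clusters of all wired components, note that on the complement the boundary condition is free, and combine $\phi_{H}^{0}\leq_{\rm st}\phi_{\bbL}^{0}$ with \eqref{eq:exp_decay} to forbid the cluster of $0$ from having diameter $\geq N$; the dual statement is symmetric.)

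Finally we decouple the events $B_{i}$ by revealing the configuration from outside in. Let $\calG_{i}$ be the $\sigma$-algebra generated by $\omega$ on $\bbL\setminus\La_{R_{2i-1}}$; then $B_{i+1},\dots,B_{m}$ are $\calG_{i+1}$-measurable, and by \eqref{eq:SMP}, conditionally on $\calG_{i+1}$ the restriction of $\omega$ to $\La_{R_{2i+1}}$ is an FK measure with some boundary condition $\eta$ on $\partial\La_{R_{2i+1}}$. Applying Lemma~\ref{lem:Kcross} inside the buffer $\La_{R_{2i+1}}\setminus\La_{R_{2i}}$, the boundary condition induced by $\eta$ on $\partial\La_{R_{2i}}$ has, outside an event of $\phi[\,\cdot\,|\calG_{i+1}]$-probability at most $e^{-cR_{2i}}$, at most $K$ wired components; on that good event the one-annulus bound applies to $A_{i}$ and gives $\phi[B_{i}\,|\,\calG_{i+1}]\leq 1-\delta+e^{-cR_{2i}}$. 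Iterating over $i$,
\[
\phi_{D}^{\xi}[E_{D}]\leq\phi_{D}^{\xi}\Bigl[\bigcap_{i=1}^{m}B_{i}\Bigr]\leq\prod_{i=1}^{m}\bigl(1-\delta+e^{-cR_{2i}}\bigr).
\]
Choosing $R_{0}$ large enough that $e^{-cR_{2}}\leq\delta/2$, then $m$ with $(1-\delta/2)^{m}<\eps$, and setting $N:=R_{2m}$, yields $\phi_{D}^{\xi}[E_{D}]<\eps$ for every $D\supseteq\La_{N}$ and every $\xi$ with at most $K$ wired components, as required.
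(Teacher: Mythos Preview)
Your multi-scale framework is reasonable in spirit, but the proof has a genuine gap at its core: the ``one-annulus bound'' is asserted, not proved. You write that ``the bound $K$ on the number of wired components of $\zeta$ is precisely what rules out a `mixed' boundary condition that would sustain both crossings at once'', and you flag this as ``the main obstacle'' --- but you never resolve it. That sentence is not an argument; it is a restatement of what must be shown. Concretely, take $K=1$ with a single wired arc covering, say, the top half of $\partial\La_{2r}$ and free elsewhere. Nothing in \eqref{eq:exp_decay} or \eqref{eq:CBC} alone prevents this boundary condition from supporting, with probability bounded away from $0$, both a primal radial crossing landing on the wired arc and a dual radial crossing landing on the free arc. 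Ruling this out is exactly the content of the lemma at scale $r$, so your reduction is circular. (Your parenthetical ``cheap special case'' only handles the sub-event where the primal cluster of $0$ misses every wired component; it does not address the generic situation where it hits one.) There is also a smaller imprecision: your one-annulus bound is stated for the annulus $A$ with a two-sided boundary condition $\zeta$, but in the iteration you only condition on $\calG_{i+1}$, which fixes nothing on the inner boundary $\partial\La_{R_{2i-1}}$; what you actually need is a bound for the full box $\La_{R_{2i}}$ with outer boundary condition of bounded wired complexity.

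The paper's proof avoids all of this by a soft, non-quantitative route. For $K=1$ it argues by contradiction and compactness: if the conclusion failed, there would be a sequence $(D_n,\xi_n)$ with $\xi_n$ having at most one wired component and $\phi_{D_n}^{\xi_n}[0\lra\partial D_n,\ (1,0)\xlra{*}\partial D_n]\geq\eps$; pass to a subsequential weak limit $\phi$. Lemma~\ref{lem:every-limit-is-gibbs1} (which uses the single-wired-component hypothesis in an essential way to obtain \eqref{eq:bcbc}) shows that $\phi$ is Gibbs, and then Theorem~\ref{thm:mainFK} forces $\phi=\la\phi^0+(1-\la)\phi^1$, whence $\phi[0\lra\infty,\ (1,0)\xlra{*}\infty]=0$, contradicting the lower bound. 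The general $K$ is then a one-line reduction: wire all wired components of $\xi$ together to obtain $\tilde\xi$ with a single wired component; the Radon--Nikodym derivative between $\phi_D^\xi$ and $\phi_D^{\tilde\xi}$ is bounded by $q^K$, and the case $K=1$ applies. In other words, the lemma is ultimately a corollary of the main structure theorem, not an input to it; your attempt to prove it from first principles would, if completed, amount to an independent proof of (a version of) that theorem.
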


\begin{proof}
	Fix $p$ and $q$ and omit them from the notation. 
	Let us first treat the case $K =1$. We proceed by contradiction, that is, we suppose the existence of $\eps > 0$
	and of a sequence of finite graphs $D_n$ increasing to $\bbL$ and boundary conditions $\xi_n$, each with at most one wired component,
	and such that 
	\begin{align}\label{eq:absasumpt}
		\phi_{D_n}^{\xi_n}[0\xlra{} \partial D_n \text{ and }(1,0)\xlra{*} \partial D_n ] \geq \eps \qquad \text{ for all $n\geq 1$}.
	\end{align}
	The space $\{0,1\}^{E(\bbL)}$ is compact in the product topology, 
	and therefore the sequence of measures $\phi_{D_n}^{\xi_n}$ has subsequential weak limits. 
	If $\phi$ is such a limit, Lemma~\ref{lem:every-limit-is-gibbs1} shows that 
	$\phi[0\xlra{} \infty \text{ and }(1,0)\xlra{*} \infty ] =0$, which contradicts~\eqref{eq:absasumpt}. 
	This concludes the proof for~$K = 1$. 
	\medskip 
	
	Let now $K$ be arbitrary. For a finite graph $D$ and boundary conditions $\xi$, 
	write $\tilde\xi$ for the boundary conditions obtained by wiring together all wired components of $\xi$. 
	If $\xi$ has at most $K$ wired components, then
	\begin{align*}
		\phi_{D}^{\xi}[0\xlra{} \partial D \text{ and }(1,0)\xlra{*} \partial D ] 
		\leq q^K \phi_{D}^{\tilde\xi}[0\xlra{} \partial D \text{ and }(1,0)\xlra{*} \partial D ].
	\end{align*}
	Note that $\tilde \xi$ has at most one wired component, and we may apply the case $K=1$ to the probability on the right-hand side. 
	In conclusion, one may choose $N$ large enough to render the right-hand side of the above arbitrarily small, 
	which concludes the proof of the lemma. 
\end{proof}

\begin{lem}\label{lem:every-limit-is-gibbs3}
		Fix $p\in (0,1)$ and $q\geq 1$ for which $\phi_{p,q}^0\neq \phi_{p,q}^1$.
		For any $\eps >0$, there exists $N = N(\eps)\geq 1$ such that,
		for any domain $D$ containing $\La_N$ and any boundary conditions $\xi$ on $D$,
    	\begin{align*}
    		\phi_{D,p,q}^\xi[0\xlra{} \partial D \text{ and }(1,0)\xlra{*} \partial D ] < \eps.
    	\end{align*}
\end{lem}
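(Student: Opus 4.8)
The plan is to derive Lemma~\ref{lem:every-limit-is-gibbs3} from Lemma~\ref{lem:every-limit-is-gibbs2}, using Lemma~\ref{lem:Kcross} to limit the number of boundary clusters that are genuinely relevant. Throughout, $p,q$ are fixed with $\phi_{p,q}^0 \neq \phi_{p,q}^1$ and omitted from the notation. I would let $K$ and $c$ be as in Lemma~\ref{lem:Kcross}, set $N_1 = N(K,\eps/2)$ using Lemma~\ref{lem:every-limit-is-gibbs2}, pick $n \geq N_1$ with $e^{-cn} < \eps/2$, and show that $N := 2n$ works.

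First I would reduce to box domains. If $D \supseteq \La_{2n}$, then on $\{0 \xlra{} \partial D\} \cap \{(1,0) \xlra{*} \partial D\}$ the cluster of $0$ already reaches $\partial\La_{2n}$ within $\La_{2n}$, and likewise the dual cluster of $(1,0)$; hence this event is contained in $\{0 \xlra{\La_{2n}} \partial\La_{2n}\} \cap \{(1,0) \xlra{\omega^* \cap \La_{2n}} \partial\La_{2n}\}$, which depends only on $\omega$ inside $\La_{2n}$. Conditioning on the configuration outside $\La_{2n}$ and using \eqref{eq:SMP}, I would obtain
\begin{equation*}
	\phi_{D}^{\xi}\big[0 \xlra{} \partial D,\ (1,0) \xlra{*} \partial D\big] \;\leq\; \sup_{\zeta}\, \phi_{\La_{2n}}^{\zeta}\big[0 \xlra{} \partial\La_{2n},\ (1,0) \xlra{*} \partial\La_{2n}\big],
\end{equation*}
the supremum ranging over all boundary conditions $\zeta$ on $\La_{2n}$, so it suffices to bound this supremum by $\eps$.

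Fix $\zeta$ on $\La_{2n}$. The core of the argument is an exploration from $\partial\La_{2n}$ inwards: reveal, one at a time, the open clusters of $\omega$ meeting $\partial\La_{2n}$ (with boundary vertices identified according to $\zeta$) together with the closed edges bounding them, but stop each exploration at $\partial\La_n$, i.e. never reveal an edge of $\La_n$. Let $\calD$ be the subgraph spanned by the unrevealed edges. Then $\calD$ is measurable with respect to the revealed (hence exterior) edges, $\calD \supseteq \La_n$, and on $\{0 \xlra{\La_{2n}} \partial\La_{2n}\} \cap \{(1,0) \xlra{\omega^* \cap \La_{2n}} \partial\La_{2n}\}$ both $0$ and $(1,0)$ are connected (primally, resp.\ dually) inside $\calD$ to $\partial\calD$, since their paths toward $\partial\La_{2n}$ must leave $\calD$. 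Writing $\zeta_\calD$ for the boundary condition induced on $\calD$ by the revealed configuration, the key observation is that a revealed cluster $C$ contributes a multi-vertex wired component to $\zeta_\calD$ only if $|C \cap \partial\La_n| \geq 2$: a revealed cluster contained in the annulus $\La_{2n}\setminus\La_n$ is entirely enclosed by closed revealed edges, hence induces no open connection between unrevealed boundary vertices. Each such $C$ meets both $\partial\La_n$ and $\partial\La_{2n}$, distinct ones are disjoint, so Lemma~\ref{lem:Kcross} gives at most $K-1$ of them outside an event $B$ with $\phi_{\La_{2n}}^{\zeta}[B] \leq e^{-cn}$; on $B^c$, $\zeta_\calD$ has at most $K$ wired components. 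Then by \eqref{eq:SMP} the law inside $\calD$ conditionally on the exploration is $\phi_{\calD}^{\zeta_\calD}$, and on $B^c$ we have $\calD \supseteq \La_n \supseteq \La_{N_1}$ with $\zeta_\calD$ of at most $K$ wired components, so Lemma~\ref{lem:every-limit-is-gibbs2} yields $\phi_{\calD}^{\zeta_\calD}[0 \xlra{} \partial\calD,\ (1,0) \xlra{*} \partial\calD] < \eps/2$; combining, $\phi_{\La_{2n}}^{\zeta}[0 \xlra{} \partial\La_{2n},\ (1,0) \xlra{*} \partial\La_{2n}] \leq \tfrac{\eps}{2} + e^{-cn} < \eps$, uniformly in $\zeta$.

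I expect the delicate point to be the exploration step: one must check that the inward exploration from $\partial\La_{2n}$ is a bona fide sequence of local reveals (so that \eqref{eq:SMP} applies to the random domain $\calD$), and, above all, that revealed clusters which never reach $\partial\La_n$ do not create wired components of $\zeta_\calD$. It is precisely this that makes Lemma~\ref{lem:Kcross} — which bounds only the number of \emph{annulus-crossing} clusters — sufficient to control the complexity of $\zeta_\calD$, even though the boundary condition $\zeta$ itself is completely arbitrary. The care needed to make $\calD$, $\partial\calD$ and $\zeta_\calD$ precise (in particular the treatment of the dual connection of $(1,0)$ to $\partial\calD$, and of possible unexplored pockets of the annulus) is routine but should be carried out explicitly.
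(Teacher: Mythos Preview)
Your proposal is correct and follows essentially the same strategy as the paper: explore the clusters reaching the outer boundary inside the annulus $\La_{2n}\setminus\La_n$, observe that the induced boundary condition on the unexplored domain has at most as many wired components as there are annulus-crossing clusters, bound this number by Lemma~\ref{lem:Kcross}, and conclude via Lemma~\ref{lem:every-limit-is-gibbs2}. The only difference is that you first reduce to the box $\La_{2n}$ by \eqref{eq:SMP}, whereas the paper works directly in $D$ and explores in $D\setminus\La_n$; your extra step is harmless and in fact makes the invocation of Lemma~\ref{lem:Kcross} (which is stated for boxes) marginally cleaner.
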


\begin{proof}
	Fix $p$ and $q$ and omit them from the notation. 
	Let $K,c$ be the constants given by Lemma~\ref{lem:Kcross} so that, for any $n>0$ and any boundary conditions $\xi$ on~$\La_{2n}$,
	\begin{align}\label{eq:KC1}
		\phi^{\xi}_{\La_{2n}}[\exists K \text{ disjoint clusters intersecting both $\La_n$ and $\partial \La_{2n}$}] \leq e^{-cn}.
	\end{align}
	By Lemma~\ref{lem:every-limit-is-gibbs2}, we may  pick $n$ sufficiently large that, 
	for any domain $D$ containing $\La_{n}$ and any boundary conditions $\xi$ on $D$ with at most $K$ wired components,
	\begin{align}\label{eq:KC2}
		\phi^{\xi}_{D}[0\lra \partial D \text{ and } (1,0)\xlra{*} \partial D] < \eps/2.
	\end{align}
	Moreover, we may assume that $e^{-cn}<\eps/2$. 
	
	Fix $N=2n$ and let $D$ be a domain containing $\La_{N}$ and $\xi$ be an arbitrary boundary condition on $\partial D$.
	Let $\calC$ be the union of the connected components in $D \setminus \La_n$ that intersect~$\partial D$.
	For any realisation $C$ of $\calC$, the restriction of $\phi_D^\xi[.|\, \calC = C]$ to $D\setminus C$  is 
	the measure $\phi_{D\setminus C}^\zeta$, where $\zeta$ is a boundary condition with as many wired components as there are clusters that intersect both $\partial D$ and $\La_n$. 
	By the choice of $K$,
	\begin{align*}
    	&\phi_{D}^\xi[0\xlra{} \partial D \text{ and }(1,0)\xlra{*} \partial D ] \\
		&\qquad\leq \sum_{C} \phi_{D\setminus C}^\zeta[0\xlra{} \partial (D\setminus C) \text{ and }(1,0)\xlra{*} \partial (D\setminus C) ] \, \phi_{D}^\xi[\calC = C]\\
		&\qquad\leq \eps/2 +
		\phi^{\xi}_{D}[\zeta \text{ contains more than $K$ wired components}] 
		\leq \eps.
    \end{align*}
	The first inequality is due to~\eqref{eq:KC2}, while the second to~\eqref{eq:KC1}.
\end{proof}

We are finally ready to proceed to the main goal of this section, namely the proof of  Corollary~\ref{cor:every-limit-is-gibbs}.

\begin{proof}[Corollary~\ref{cor:every-limit-is-gibbs}]
	As already mentioned, the proof is immediate when $\phi_{p,q}^{0}=\phi_{p,q}^{1}$.
	Fix $p\in (0,1)$ and $q\geq 1$ for which $\phi_{p,q}^0\neq \phi_{p,q}^1$ and let $\phi_{D_n,p,q}^{\xi_n}$ be a sequence of measures as in the statement, converging to some infinite-volume measure $\phi$. 
	
	The proof is essentially the same as that of Lemma~\ref{lem:every-limit-is-gibbs1}, 
	the only difference is that~\eqref{eq:bcbc} is unavailable when $\xi_n$ has more than a single wired component. 
	However, in light of Lemma~\ref{lem:every-limit-is-gibbs3}, for any $r$ such that $G \subset \La_r$ and $n$ sufficiently large for $D_n$ to contain $\La_{2r}$,
	\begin{align*}
		\phi_{D_n}^{\xi_n}\big[\zeta(\omega^{\xi_n}) \neq\zeta_{2r}(\omega) \big]
		&\leq 
		\sup_{\xi}\phi_{\La_{2r}}^{\xi}\big[G \xlra{} \partial \La_{2r} \text{ and } G \xlra{*} \partial \La_{2r}\big] \\
		&\leq 
		c_0(G)\,\sup_{\xi}\phi_{\La_{2r}}^{\xi}\big[0 \xlra{} \partial \La_{2r} \text{ and } (1,0)\xlra{*} \partial \La_{2r}\big]
		\xrightarrow[r \to \infty]{} 0,
	\end{align*}
	where the supremum is over all boundary conditions $\xi$ on $\partial \La_{2r}$,
	the first inequality is due to~\eqref{eq:SMP} and the second to the finite energy property; $c_0(G)$ is a constant depending on~$G$. 
	The rest of the proof is identical to that of Lemma~\ref{lem:every-limit-is-gibbs1}.
\end{proof}

\section{The Potts model}\label{sec:Potts_proof}

\subsection{Preparations}

\begin{lem}\label{lem:Phi2}
	Let $\mu$ be a tail trivial Gibbs measure for the Potts model on $\bbL$ with parameters $T>0$ and $q\geq 2$ 
	that is different from  $\mu^{\mathrm{free}}$, $\mu^{1}$, \dots, $\mu^{q}$.
	Then $\Phi$ is also tail trivial and $\omega$ contains at least two infinite clusters of different colours. 
\end{lem}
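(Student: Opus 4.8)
The plan is to establish the two assertions in turn; the second requires essentially all of the work.

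\textbf{Tail triviality of $\Phi$.} Recall that $\Phi$ is the law of $(\omega,\sigma)$ with $\sigma\sim\mu$ and, conditionally on $\sigma$, the edges opened independently, each $\omega_e$ being a local function of $\sigma$ (open with probability $p(T)$ if the two endpoint spins agree, closed otherwise). Let $A$ be a tail event of $\Phi$. For fixed $\sigma$, $A$ is a tail event of the independent family $(\omega_e)_e$, so Kolmogorov's $0$--$1$ law gives $\Phi[A\mid\sigma]\in\{0,1\}$ $\mu$-a.s.; moreover, since each $\omega_e$ depends on $\sigma$ only at the endpoints of $e$, the map $\sigma\mapsto\Phi[A\mid\sigma]$ is measurable for the tail $\sigma$-algebra of $\sigma$, hence $\mu$-a.s.\ equal to the constant $\Phi[A]$ by tail triviality of $\mu$. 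Therefore $\Phi[A]\in\{0,1\}$.

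\textbf{Two infinite clusters of distinct colours.} Suppose this fails. Each $\omega$-cluster is monochromatic, so every infinite cluster has a colour, and for each fixed $i$ the events $\{$no infinite $\omega$-cluster$\}$ and $\{\omega$ has an infinite cluster, all of colour $i\}$ are tail events of $\Phi$ (for every $n$, the colours of the infinite clusters coincide with those of the infinite components of $\omega$ in $\La_n^c$). By tail triviality of $\Phi$, either (i) a.s.\ $\omega$ has no infinite cluster, or (ii) there is $i$ with: a.s.\ $\omega$ has an infinite cluster and every infinite cluster has colour $i$. The aim is then to reach $\mu=\mu_{T,q}^{\rm free}$ in case (i) and $\mu=\mu_{T,q}^i$ in case (ii), contradicting the hypotheses. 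Both cases use the observation that, conditionally on $\omega$, each \emph{finite} cluster is coloured uniformly, independently of the other finite clusters and of the colours of the infinite clusters; this follows by applying the DLR property of $\Phi$ (Section~\ref{sec:ES}) to $D=\La_n$ and letting $n\to\infty$, since any fixed finite cluster is eventually inside $\La_n$ and disjoint from $\partial\La_n$.

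\textbf{Case (i).} All clusters of $\omega$ are finite, so $\mu$ is the law obtained from the law $\phi$ of $\omega$ by colouring each cluster uniformly and independently. The key point is that $\phi$ then satisfies \eqref{eq:DLR-FK}: for a finite domain $D$, conditionally on the configuration ($\omega$ and $\sigma$) outside $D$, the $\omega$-marginal of $\Phi$ inside $D$ is the $\omega$-marginal of $\Phi_D^\tau$, namely FK-percolation on $D$ in which the boundary vertices sharing a non-zero value of $\tau$ are wired and these classes are conditioned to be mutually disconnected. Integrating out $\sigma$ outside $D$ --- equivalently, over the colours of the external $\omega$-clusters touching $\partial D$, two of which receive the same colour exactly when they are joined through $D$ --- one checks by a direct computation, based on the identity that on the event that two boundary classes are disconnected in $D$ the effect of wiring them separately versus together is a single factor $q$, that these conditioned specifications sum to the plain FK specification $\phi^\xi_D$, with $\xi$ the partition of $\partial D$ into external clusters (no two wired at infinity, since there is no infinite cluster). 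Hence $\phi$ is an FK-Gibbs measure; being a marginal of the tail-trivial $\Phi$ it is tail trivial, so Theorem~\ref{thm:mainFK} and \eqref{eq:0phi1} force $\phi\in\{\phi^0_{p(T),q},\phi^1_{p(T),q}\}$, and the absence of an infinite cluster gives $\phi=\phi^0_{p(T),q}$. Colouring $\phi^0_{p(T),q}$ uniformly and independently produces exactly $\mu_{T,q}^{\rm free}$ (Section~\ref{sec:ES}).

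\textbf{Case (ii): the main obstacle.} Now every cluster of colour $j\neq i$ is finite, and one would like to repeat case (i) to get $\phi=\phi^1_{p(T),q}$ and that the coupling is the one defining $\mu_{T,q}^i$, whence $\mu=\mu_{T,q}^i$. The obstruction --- and the reason the Edwards--Sokal image of a Potts--Gibbs measure is not automatically FK-Gibbs --- is the convention in \eqref{eq:DLR-FK} that wires all infinite clusters together: the coupling colours each infinite $\omega$-cluster $i$ \emph{separately}, whereas the FK specification would wire them. The plan is therefore to first prove that $\omega$ contains a \emph{unique} infinite cluster a.s.; granting this, the mismatch disappears, the case-(i) computation again shows $\phi$ is FK-Gibbs, hence $\phi=\phi^1_{p(T),q}$ (tail triviality, Theorem~\ref{thm:mainFK}, percolation of $\phi$), and colouring $\phi^1_{p(T),q}$ with the finite clusters uniform and the infinite cluster set to $i$ yields $\mu=\mu_{T,q}^i$. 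The uniqueness of the infinite $\omega$-cluster is the hardest step: $\phi$ does not enjoy the finite-energy property (the conditioned specifications above may forbid edges), so the Burton--Keane argument is not directly available, and I expect one must instead exploit monotonicity of the Potts model --- comparing $\mu$ with $\mu_{T,q}^i$ to constrain the colour-$i$ region --- possibly in combination with the auto-duality used elsewhere in the paper.
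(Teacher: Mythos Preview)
Your tail-triviality argument is the same as the paper's. The gap is in case~(ii): you correctly isolate that the obstacle to proving $\phi$ is FK--Gibbs is the possible multiplicity of infinite clusters, but then you stop, saying only that you ``expect one must instead exploit monotonicity''. That is not a proof, and in fact the paper never proves uniqueness of the infinite cluster as a separate step. Your case~(i) ``direct computation'' is also not carried out; the averaging over colours of external clusters is circular as stated, since the law of those colours given $\omega_{D^c}$ already depends on the connectivity induced by $\omega_D$.

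The paper bypasses both difficulties with a single idea. Assume all infinite clusters have colour~$i$. Fix $n$ and take $N$ large. Sample $(\omega,\sigma)$ outside $\La_N$, then $\omega$ in $\La_N\setminus\La_n$: the conditional law of $\omega$ in $\La_N$ is FK with wired boundary, \emph{conditioned} on not connecting differently-coloured boundary points. Now observe that if no point of $\partial\La_N$ of colour $\neq i$ is connected to $\La_n$ (call this $\calE(n,N)$), then any completion of $\omega$ inside $\La_n$ respects the colour constraint automatically, so the conditional law on $\La_n$ is the \emph{unconditioned} FK measure $\phi_{\La_n}^{\zeta(\omega)}$. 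By the assumption, $\Phi[\calE(n,N)]\to1$ as $N\to\infty$, hence the $\omega$-marginal restricted to $\La_n$ is exactly a mixture of FK measures $\phi_{\La_n}^\zeta$. Letting $n\to\infty$ and invoking Corollary~\ref{cor:every-limit-is-gibbs}, $\phi$ is a linear combination of $\phi^0$ and $\phi^1$; tail triviality then gives $\phi\in\{\phi^0,\phi^1\}$ and $\mu\in\{\mu^{\rm free},\mu^i\}$. This handles your cases~(i) and~(ii) uniformly and never needs to establish uniqueness of the infinite cluster beforehand.
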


\begin{proof}
	Fix $T$ and $q$ and omit them from the notation. 
	The tail triviality of $\Phi$ follows readily from that of $\mu$ by Fubini's theorem because, 
	given the spin configuration, the edges are open or closed independently (with probability that depends on the spins).
	
	We turn to the existence of infinite clusters and proceed by contradiction. 
	Assume that, for $(\omega,\sigma)$ sampled from $\Phi$,
	all infinite clusters of $\omega$ have a.s. the same colour in $\sigma$.
	By the tail triviality of $\Phi$, this colour is constant -- assume it to be~$1$.
	Write $\phi$ for the $\omega$-marginal of $\Phi$. 
	
	Fix $n>0$ and consider $N > n$.
	Let us sample a pair of spin and edge configurations $(\sigma, \omega)$ from $\Phi$ in several steps.
	First we sample both configurations on $\La_{N}^{c}$.
	By the Gibbs property, the conditional distribution of $\omega$ inside $\La_N$ 
	is that of FK-percolation in $\La_N$ with wired boundary conditions, 
	conditioned on that no two points of $\partial \La_N$ that have different colour in $\sigma$ are connected (see Section~\ref{sec:ES}). 
	
	Sample now $\omega$ in $\La_N \setminus \La_n$ according to this distribution. 
	Let $\calE(n,N)$ be the event that no point of $\partial \La_N$ of colour different from $1$ is connected to $\La_n$. 
	Notice that $\calE(n,N)$ depends on $\omega$ in $\La_n^c$ and on $\sigma$ in $\La_N^c$. 
	Moreover, if $\calE(n,N)$ occurs, any completion of $\omega$ in $\La_n$ ensures that
	no two points of $\partial \La_N$ that have different colour in $\sigma$ are connected. 
	
	As in the previous section, write $\zeta(\omega)$ for the boundary condition on $\partial \La_n$ 
	induced by $\omega$, so that two vertices of $\partial \La_n$ are wired if and only if they are connected in $\omega \cap \La_n^c$ or both belong to infinite clusters.
	Then, by the Gibbs property for FK-percolation, the law of $\omega$ in $\La_n$ 
	conditionally on $\sigma$ on $\La_N^c$ and $\omega$ on $\La_n^c$ such that $\calE(n,N)$ occurs is~$\phi_{\La_{n}}^{\zeta(\omega)}$.

	Finally, by our assumption, $\Phi[\calE(n,N)] \xrightarrow[N\to \infty]{}1$.
	We conclude that there exists a sequence of random boundary conditions $\zeta_n$ for FK-percolation on $\La_n$ such that 
	$$ \phi = \lim_{n \to \infty }\phi_{\La_n}^{\zeta_n}.$$
	In light of Corollary~\ref{cor:every-limit-is-gibbs}, $\phi$ is a linear combination of $\phi^{0}$ and $\phi^{1}$.
	Moreover, by tail triviality, it is equal to one of these two measures. 
	It then follows that $\mu$ is either $\mu^{\rm free}$ or $\mu^{1}$, which contradicts our assumption.
\end{proof}

\subsection{Proof of  Theorem~\ref{thm:mainPotts}}

Let $\mu$ be an extremal Gibbs measure for the Potts model with parameters $T > 0$ and $q \ge 2$. 
As already discussed, the only case of interest is when $T = T_{c}(q)$ and $q$ is such that $\phi_{p_{c}(q),q}^0\neq \phi_{p_{c}(q),q}^1$. 
We remind the reader that this occurs if and only if $q >4$. 
For the whole proof $q$, $T$ and $p= p(T)$ are fixed such that $\phi_{p,q}^0\neq \phi_{p,q}^1$; they will be omitted from the notation. 

Let $\Phi$ be the Edwards-Sokal measure associated to $\mu$.
For $i \in \{1,\dots, q\}$, write $\omega^i$ for the edges of colour $i$ in $(\omega,\sigma)$, that is the edges $uv$ of $\omega$ such that $\sigma(u) = \sigma(v) = i$.

\begin{rem}
	It may be tempting to try to apply Theorem~\ref{thm:mainFK} to the $\omega$-marginal of~$\Phi$. 
	Notice however that this measure {\em a priori} might not be Gibbs for FK-percolation: 
	its restriction to finite domains is an FK-percolation measure {\em conditioned} that boundary vertices of different colours are disconnected. 
	
	We will rather consider the configurations $\omega^i$ and attempt to prove
	that at most one of these contains an infinite cluster.  Lemma~\ref{lem:Phi2} allows us then to conclude.
	
	The configurations $\omega^i$ do not have FK-percolation distributions, but will be shown to be dominated by certain 
	FK-percolation mesures, which due to Theorem~\ref{thm:mainFK} are of the form $\la_i \phi^1 + (1- \la_i)\phi^0$. 
	Then, using duality, we will prove that for any $i \neq j$, $\la_i + \la_j < 2$, 
	so that at most one of $\omega^i$ and $\omega^j$ contains a.s. an infinite cluster.  
\end{rem}

Fix $\delta > 0$ a small constant to be chosen below; it will be allowed to depend on $q$, but not on the values $n$ and $N$ appearing hereafter. 
Fix $n$ such that, for any $N \geq n$,
\begin{align}
	& \phi^1[\La_N \lra{} \partial \La_{2N}] > \tfrac12 \text{ and } \label{eq:choicen1}\\
	& \Phi[\exists K \text{ disjoint clusters intersecting $\partial \La_{2N}$ and $\La_N$}]  < \tfrac12, \label{eq:choicen2}
\end{align}
where $K$ is given by Lemma~\ref{lem:Kcross} \footnote{Lemma~\ref{lem:Kcross} does not apply exactly to $\Phi$, but the proof may easily be adapted.}.
It follows from  Corollary~\ref{cor:every-limit-is-gibbs} that we can fix $N = N(n) \geq 2n$ such that, for any boundary condition $\xi$ on $\partial \La_N$, 
\begin{align}\label{eq:choiceN}
	\inf_{\la \in [0,1]} \big\|(\phi_{\La_N}^\xi)_{|\La_{2n}} - ( \la \phi^1 + (1- \la)\phi^0)_{|\La_{2n}} \big\|_{\rm TV} \leq \delta,
\end{align}
where the above refers to the distance in total variation between the restrictions of $\phi_{\La_N}^\xi$ and $\la \phi^1 + (1- \la)\phi^0$ to $\La_{2n}$.

Define the domain $\calD$ explored from the outside in $\Phi$ as follows. 
Explore all interfaces between open and closed edges inside $\La_{2N} \setminus \La_N$ starting on $\partial \La_{2N}$
and ending either on $\partial \La_{2N}$ or on $\partial \La_{N}$; see Figure~\ref{fig:potts_flowers} for an illustration.  
Let $\calD$ be the connected component of $0$ in the set of unexplored edges.

We define $\tau : \partial \calD \to \{0,1,\dots, q\}$ as the boundary conditions induced by the configuration outside of $\calD$. 
More precisely, for each vertex $v \in \partial \calD$ which is on the free side of an exploration, set $\tau_{v} = 0$; 
if $v$ is on the wired side of an exploration, set $\tau_{v}$  for the colour of the primal open edges adjacent to $v$ in the exploration. 
Then, since $\calD$ is explored from the outside, for any realisation $D$ of $\calD$,
\begin{align*}
	\Phi[.\, |\, \calD=D, (\omega, \sigma)  \text{ on } D^{c}] _{|D}= \Phi_{D}^{\tau}.
\end{align*}
We emphasise that the event $\calD = D$ and the boundary conditions $\tau = \tau (\omega,\sigma)$  are determined by $(\omega,\sigma)$  on $D^{c}$.

Assume that $\calD= D$ and fix a realisation of $(\omega,\sigma) \text{ on } D^c$ and thus the boundary conditions $\tau$.
Notice that the boundary of $D$ is naturally split into a sequence of primal and dual arcs, with each of the primal arcs being monochromatic. 
For each colour $i$, write $D^{i}$ for the domain obtained from $D$ by deleting all edges which are adjacent to vertices $u \in \partial D$ with $\tau_{u}\notin \{0,i\}$.

We now define boundary conditions $\xi^{i}$ and $\zeta^{i}$ for FK-percolation on $D^{i}$. 
Notice that $\partial D^i$ shares vertices with $\partial D$, namely those on the primal arcs of $\partial D^i$ of colour $i$, as well those on the free arcs of $\partial D^i$.
In $\xi^{i}$, each primal arc of $\partial D$ of colour $i$ is wired, while the rest of the boundary is free.
In $\zeta^i$, all primal arcs of $\partial D$ of colour $i$ are wired together and the rest of the boundary is free. 
Thus $\xi^i$ and $\zeta^i$ differ only in the wiring between the different arcs of $\partial D$ of colour $i$. 
For simplicity, we will write $\phi_{D}^{\zeta^i}$ for the measure $\phi_{D^i}^{\zeta^i}$, 
where the edges in $D \setminus D^{i}$ are closed a.s.. The same applies to~$\xi^i$.

Below we state a series of results for the fixed realisation $D$ of $\calD$, with the fixed configurations $(\omega,\sigma)$ outside, 
and with the boundary conditions $\tau$, $\xi^i$ and $\zeta^i$ that these induce. 
	
\begin{figure}
    \begin{center}
        \includegraphics[width = 0.32\textwidth, page = 1]{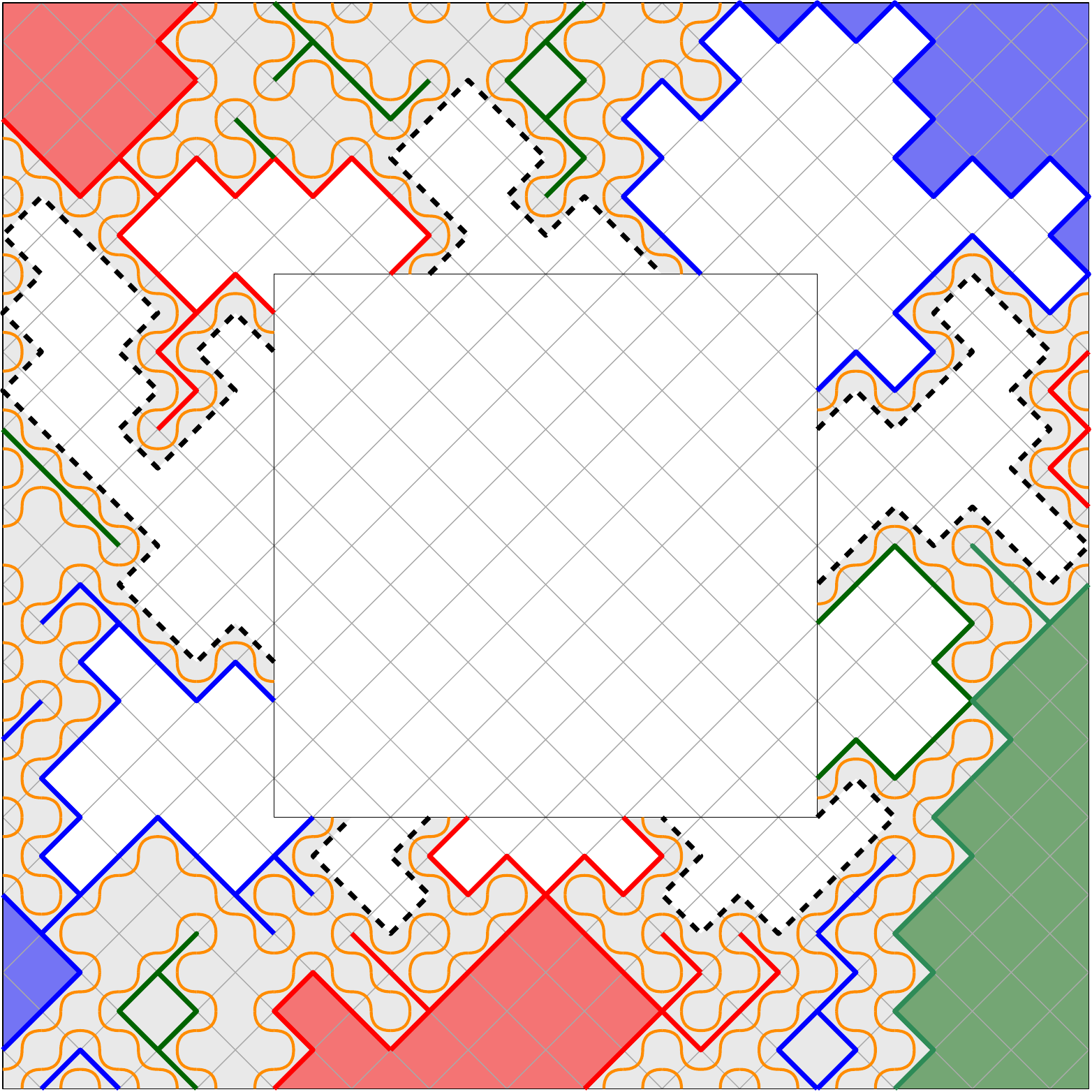}
        \includegraphics[width = 0.32\textwidth, page = 4]{potts_flower.pdf}
        \includegraphics[width = 0.32\textwidth, page = 3]{potts_flower.pdf}
        \caption{{\em Left:} the exploration of the interfaces originating on $\partial \La_{2N}$ reveals the edges in the grey region; 
            $\calD$ is formed of the edges entirely contained in  the white region.
            {\em Middle:}~The boundary conditions 
            $\zeta^i$ and $\xi^i$ induced on $\calD^{i}$ where $i$ corresponds to the blue colour. 
            The difference between them is that in $\zeta^{i}$ the two blue arcs are wired together, while in $\xi^i$ they are not. 
            {\em Right:} The boundary conditions $\xi^j$ induced on $\calD^{j}$ where $j$ corresponds to the red colour. 
            When considering the dual measure, the bold dotted contours correspond to wired boundary conditions. 
            Thus the dual measure dominates that in the middle picture (after shift by~$(1,0)$). }
        \label{fig:potts_flowers}
    \end{center}
\end{figure}

\begin{lem}\label{lem:p1}
	If $(\omega,\sigma)$ is sampled  according to  $\Phi_{D}^{\tau}$, 
	then the law of $\omega^{i}$ is dominated by~$\phi_{D}^{\zeta^{i}}$.
\end{lem}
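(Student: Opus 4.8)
The plan is to use the Edwards--Sokal coupling to express $\omega^{i}$ as a monotone function of the colour-$i$ spin set, and then to compare that set under the two relevant boundary conditions.

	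\emph{Realising $\omega^{i}$.} Sample $(\omega,\sigma)\sim\Phi_{D}^{\tau}$ by first drawing $\sigma$ from the Potts measure $\mu_{D}^{\tau}$ and then, conditionally on $\sigma$, declaring each edge $uv$ open with probability $p$ if $\sigma_{u}=\sigma_{v}$ and closed otherwise, independently. With $V_{i}:=\{v:\sigma_{v}=i\}$ and $E_{W}:=\{uv\in E:u,v\in W\}$ for $W\subseteq V$, the configuration $\omega^{i}$ is then exactly Bernoulli($p$) percolation on $E_{V_{i}}$, all other edges being closed; in particular, \emph{conditionally on $V_{i}$ alone}, $\omega^{i}$ is Bernoulli($p$) on $E_{V_{i}}$. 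Moreover $E_{V_{i}}$ contains no edge incident to a vertex $u\in\partial D$ with $\tau_{u}\notin\{0,i\}$ (such a vertex has $\sigma_{u}=\tau_{u}\neq i$), so $E_{V_{i}}\subseteq E(D^{i})$; and $W\mapsto E_{W}$ is increasing.

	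\emph{Reduction to a comparison of colour-$i$ spin sets.} By the Edwards--Sokal correspondence recalled in Section~\ref{sec:ES}, $\phi_{D}^{\zeta^{i}}$ is the $\omega$-marginal of the Edwards--Sokal measure on $D^{i}$ whose boundary condition colours every vertex of the primal arcs of $\partial D$ of colour $i$ by $i$ and is free elsewhere. Realising a sample $(\bar\omega,\bar\sigma)$ of this measure as above and letting $\bar V_{i}:=\{v:\bar\sigma_{v}=i\}$ and $\bar\omega^{i}$ be the sub-configuration of $\bar\omega$ formed by the edges inside colour-$i$ clusters, one has $\bar\omega^{i}\leq\bar\omega$ pointwise, and conditionally on $\bar V_{i}$ the law of $\bar\omega^{i}$ is Bernoulli($p$) on $E_{\bar V_{i}}\cap E(D^{i})$. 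Hence it suffices to prove $\1_{V_{i}}\leq_{\rm st}\1_{\bar V_{i}}$ in $\{0,1\}^{V}$: a monotone coupling with $V_{i}\subseteq\bar V_{i}$ gives $E_{V_{i}}\subseteq E_{\bar V_{i}}\cap E(D^{i})$ (using $E_{V_{i}}\subseteq E(D^{i})$), and coupling the two Bernoulli($p$) layers monotonically on these nested edge sets yields $\omega^{i}\leq\bar\omega^{i}\leq\bar\omega$, so that $\mathrm{Law}(\omega^{i})\leq_{\rm st}\phi_{D}^{\zeta^{i}}$.

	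\emph{Monotonicity of the colour-$i$ spin set.} Passing from $\mu_{D}^{\tau}$ to the Potts measure on $D^{i}$ with the boundary condition above amounts to: (a) deleting every edge of $D$ incident to a boundary vertex frozen to a colour $j\neq i$; and (b) unfreezing those vertices. After (a) such vertices are isolated, so (b) only enlarges the colour-$i$ set and leaves the rest of the field unchanged; thus the content is (a). Deleting one such edge $uv$, with $\sigma_{u}=j\neq i$ frozen, reweights the Potts measure by the factor $\exp\!\big(\tfrac1T\1\{\sigma_{v}=j\}\big)$, so (a) is precisely the statement that conditioning a ferromagnetic Potts measure on $\{\sigma_{v}=j\}$ can only lower the colour-$i$ field $(\1\{\sigma_{w}=i\})_{w}$, equivalently that $\mathrm{Cov}(f,\1\{\sigma_{v}=j\})\leq0$ for every $f$ increasing in that field. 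I would prove this via the Edwards--Sokal coupling: conditionally on the edge configuration and on the colours of all clusters other than that of $v$, the only remaining randomness is the uniform colour $c$ of $v$'s cluster, and the conditional covariance equals $(m_{1}-m_{0})\,\mathrm{Cov}(\1\{c=i\},\1\{c=j\})\leq0$ with $m_{1}\geq m_{0}$; one then has to control the covariance of the corresponding conditional expectations, which is done by conditioning instead on the coarser $\sigma$-algebra on which $\bbP[\sigma_{v}=j\mid\cdot]$ is constant. Alternatively, one may invoke the positive association and monotonicity in boundary conditions of the fuzzy Potts model with the two colour classes $\{i\}$ and $\{1,\dots,q\}\setminus\{i\}$, which admits an FK-type representation.

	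\emph{Main obstacle.} Everything except the last step is bookkeeping around the Edwards--Sokal coupling; the hard part will be the monotonicity of the colour-$i$ marginal under deletion of an edge to a vertex frozen to another colour. The subtlety is that the $\omega$-marginal of $\mu_{D}^{\tau}$ is a \emph{conditioned} FK measure (different-coloured arcs forbidden from connecting), hence not a priori positively associated, so a naive one-edge covariance computation will not close by itself; I expect to need either the careful choice of conditioning indicated above or a detour through the fuzzy Potts representation.
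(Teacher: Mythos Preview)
Your reduction is sound: realising $\omega^{i}$ as Bernoulli($p$) percolation on $E_{V_i}$ and then reducing the lemma to the stochastic domination $\1_{V_i}\le_{\rm st}\1_{\bar V_i}$ of colour-$i$ spin sets is correct. The difficulty is entirely in the step you flag as hard, and neither of your two proposed routes closes as stated.

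For the fuzzy-Potts route: projecting $\mu_D^\tau$ to the two classes $\{i\}$ and $\{1,\dots,q\}\setminus\{i\}$ does \emph{not} yield the standard fuzzy Potts measure with $+/-$ boundary conditions. The boundary condition $\tau$ distinguishes the different non-$i$ colours, so the $\omega$-marginal is FK conditioned on the various non-$i$ arcs being pairwise disconnected, and this constraint does not disappear after projection. Known positive association and monotonicity results for fuzzy Potts (Häggström and successors) assume boundary data only at the fuzzy level, so they do not directly cover your situation. For the covariance route: the conditional covariance you compute is indeed $\le 0$, but the total-covariance decomposition leaves the term $\mathrm{Cov}\big(\bbE[f\mid\calG],\bbE[\1\{\sigma_v=j\}\mid\calG]\big)$ uncontrolled; you do not exhibit a $\sigma$-algebra $\calG$ on which $\bbP[\sigma_v=j\mid\calG]$ is constant while $\bbE[f\mid\calG]$ remains tractable, and no obvious candidate presents itself (conditioning on $\omega$ alone gives $\bbP[\sigma_v=j\mid\omega]\in\{0,1/q,1\}$ depending on which boundary arc the cluster of $v$ meets).

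The paper bypasses the spin-level comparison entirely by working on the edge side. Let $\calC$ be the union of the $\omega$-clusters of the boundary vertices of colours $\ne i$. Conditionally on $\calC=C$, the Edwards--Sokal DLR property gives that $\omega$ restricted to $D\setminus C$ is FK with the $i$-arcs wired together and free elsewhere --- i.e.\ exactly $\phi_D^{\zeta^i}$ conditioned on all edges in or adjacent to $C$ being closed. By FKG for FK-percolation this conditional law is dominated by $\phi_D^{\zeta^i}$, and since $\omega^{i}\subset\omega\setminus\calC$ the lemma follows in three lines. The point is that the exploration of $\calC$ removes the multi-colour disconnection constraint (the only arcs left are colour $i$), so one lands on an honest FK measure where \eqref{eq:FKG} applies directly; your spin-level approach never escapes that constraint and therefore has to prove a genuinely harder correlation inequality.
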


\begin{proof}
	Let $(\omega,\sigma)$ be sampled according to $\Phi_{D}^{\tau}$ and
	write $\calC$ for the union of all clusters of $\omega$ of colours $j\neq i$ intersecting $\partial D$. 
	Then, for any realisation $C$ of $\calC$, the spatial Markov property for $\Phi$ ensures that the first marginal of 
	$\Phi_D^\tau(.|\calC = C)$ follows the FK-percolation measure $\phi_{D \setminus C}^\chi$, 
	where $\chi$ are the boundary conditions that are wired on the vertices of $\partial  (D \setminus C) \cap \partial D$ of colour $i$ in $\tau$, and free elsewhere. 
	This corresponds to the measure $\phi_{D}^{\zeta^i}$ conditioned on the event that all edges of, or adjacent to $C$ are closed. 
	As the latter is a decreasing event,~\eqref{eq:FKG} implies that the law of $\omega \setminus \calC$ is stochastically dominated by  $\phi_{D}^{\zeta^i}$. 
	Finally, by construction $\omega^i$ is contained in $\omega \setminus \calC$, which concludes the proof. 
\end{proof}

Write $\calK$ for the number of clusters of $\omega \cap D^c$ that intersect both $\partial \La_{2N}$ and $\partial \La_{N}$.

\begin{lem}\label{lem:p2}
	Let $K\in \bbN$ and assume that $(\omega,\sigma)$ outside of $D$ are such that $\calK = K$.
	Then the Radon-Nikodym derivative of $\phi_{D}^{\xi^{i}}$ with respect to $\phi_{D}^{\zeta^{i}}$ is bounded by $q^K$. 
	In particular, for any event $A$ depending only on the edges in $D$, 
	\begin{align*}
		\phi_{D}^{\zeta^{i}}[A] \leq q^K \phi_{D}^{\xi^{i}}[A].
	\end{align*}
\end{lem}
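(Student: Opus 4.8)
The plan is to write both measures explicitly as finite-volume FK-percolation measures on $D^i$ and to reduce the bound to a purely combinatorial estimate on the number $m$ of primal arcs of $\partial D$ of colour $i$.

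By the convention adopted above, $\phi_D^{\zeta^i} = \phi_{D^i}^{\zeta^i}$ and $\phi_D^{\xi^i} = \phi_{D^i}^{\xi^i}$ are FK-percolation measures on the same graph $D^i$, and $\zeta^i$ is obtained from $\xi^i$ by wiring together the $m$ primal arcs of $\partial D$ of colour $i$ (which $\xi^i$ keeps as $m$ separate wired components). Thus, for every configuration $\omega$ on $D^i$,
\[
	\frac{\phi_D^{\xi^i}[\omega]}{\phi_D^{\zeta^i}[\omega]}
	= \frac{Z^{\zeta^i}}{Z^{\xi^i}}\; q^{\,k(\omega^{\xi^i}) - k(\omega^{\zeta^i})},
\]
where $Z^{\zeta^i},Z^{\xi^i}$ are the corresponding partition functions. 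Since $\omega^{\zeta^i}$ is obtained from $\omega^{\xi^i}$ by identifying $m$ vertices into a single one, one has $0 \le k(\omega^{\xi^i}) - k(\omega^{\zeta^i}) \le m - 1$, so the last factor lies in $[1, q^{m-1}]$; comparing the two partition functions configuration by configuration (using $q \ge 1$) gives $q^{-(m-1)} Z^{\xi^i} \le Z^{\zeta^i} \le Z^{\xi^i}$. Hence the ratio above lies in $[q^{-(m-1)}, q^{m-1}]$, and in particular both $d\phi_D^{\xi^i}/d\phi_D^{\zeta^i}$ and $d\phi_D^{\zeta^i}/d\phi_D^{\xi^i}$ are bounded by $q^{m-1}$. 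Once we know $m \le K$, the stated bound $q^K$ and the displayed inequality for events $A \in \calF(D)$ follow at once.

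So the remaining task is to prove $m \le \calK = K$. I would argue that each primal arc $a$ of $\partial D$ of colour $i$ lies on the boundary of a \emph{distinct} cluster $C_a$ of $\omega \cap D^c$ that intersects both $\partial \La_N$ and $\partial \La_{2N}$ (hence is counted by $\calK$). Indeed, on its side away from $D$, the arc $a$ is flanked by one of the interfaces $\Gamma_a$ explored while constructing $\calD$; by construction $\Gamma_a$ issues from $\partial \La_{2N}$, and since it reaches $\partial \calD$ it must, by planarity and the fact that $\calD \supseteq \La_N$, terminate on $\partial \La_N$. The open edges along $a$ all lie in the single open cluster of $\omega \cap D^c$ running alongside $\Gamma_a$; call it $C_a$. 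As $C_a$ accompanies $\Gamma_a$ from $\partial \La_{2N}$ to $\partial \La_N$, it meets both boundaries. Finally $a \mapsto C_a$ is injective: if two distinct colour-$i$ arcs were flanked by the same cluster $C$, then $C$ together with a portion of $\partial \calD$ would enclose a region of $D^c$ inside which an explored interface would have to live without any connection to $\partial \La_{2N}$, contradicting the way $\calD$ was explored.

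The main obstacle is precisely this combinatorial claim: it is the only place where the fine topology of the explored domain $\calD$ enters, and making the ``flanking interface'' and ``injectivity'' arguments rigorous requires a careful planar discussion of which interfaces bound $\partial \calD$ and of the clusters they expose. The partition-function computation in the second paragraph, by contrast, is routine.
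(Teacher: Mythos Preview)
Your proposal is correct and follows essentially the same route as the paper: both arguments observe that $\xi^i$ and $\zeta^i$ differ only in whether the $m$ colour-$i$ primal arcs of $\partial D$ are wired together, bound the resulting change in cluster count by $m-1$, and then bound $m$ by $\calK = K$ via the correspondence between colour-$i$ arcs and crossing clusters in $\La_{2N}\setminus\La_N$. Your treatment is simply more explicit---you write out the Radon--Nikodym derivative and obtain the slightly sharper intermediate bound $q^{m-1}$---and you rightly flag the topological step $m\le K$ as the only non-routine point; the paper handles this in one sentence by asserting the bijection with crossing clusters of colour $i$, which is exactly the content of your ``flanking interface'' and injectivity discussion.
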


\begin{proof}
	As already stated, the only difference between $\xi^i$ and $\zeta^i$ is that the primal arcs of $\partial D$ of colour $i$ are wired together in $\zeta^{i}$ but not in $\xi^{i}$.
	There exist at most $K$ such arcs, as they are in bijection with the primal clusters in $\La_{2N} \setminus \La_{N}$ of colour $i$ which cross from $\partial \La_{2N}$ to $\partial\La_{N}$.
	Thus the weight of a configuration under $\phi^{\zeta^{i}}_{D}$ differs from that under $\phi^{\xi^{i}}_{D}$ by a multiplicative factor between $1$ and $q^{K}$.
	This also holds for the corresponding partition functions, and the statement follows.
\end{proof}

\begin{lem}\label{lem:p3}
	For $i \neq j$, if $\omega^{i}$ is sampled according to $\phi_{D}^{\xi^{i}}$,
	then the law of its dual configuration $(\omega^{i})^*$ translated by $(1,0)$
 	dominates $\phi_{D}^{\xi^{j}}$.
\end{lem}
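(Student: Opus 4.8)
The plan is to use the self-duality of FK-percolation at the self-dual point — recall that $\phi^{0}_{p,q}\ne\phi^{1}_{p,q}$ forces $p=p_{c}(q)$, for which $p^{*}=p$ by~\eqref{eq:pp*} — together with the spatial Markov property~\eqref{eq:SMP} and the comparison between boundary conditions~\eqref{eq:CBC}, after translating the mixed boundary conditions $\xi^{i}$ into dual language. First I would describe the law of $(\omega^{i})^{*}$. Since $\omega^{i}$ has law $\phi^{\xi^{i}}_{D^{i}}$ with all edges of $D\setminus D^{i}$ closed, its dual configuration is defined on the entire dual edge set: on $(D^{i})^{*}$ it is a genuine FK-percolation configuration with parameters $(p_{c}(q),q)$ and boundary conditions $(\xi^{i})^{*}$, while the edges dual to $D\setminus D^{i}$ are automatically open. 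Translating by $(1,0)$ maps the dual lattice $\bbL^{*}=\bbL+(1,0)$ onto $\bbL$, and under this identification $D^{j}$ is isomorphic to a sub-domain of the resulting domain $\hat D$ (this is immediate, since the translated dual edge set is in bijection with $E(D)\supseteq E(D^{j})$). Write $\hat\omega$ for the translated dual configuration and $\hat\zeta$ for the boundary conditions it induces on $\partial D^{j}$ once the always-open edges are folded into the exterior.

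The crux is to check that $\hat\zeta$ dominates $\xi^{j}$. By construction $\xi^{j}$ wires together two vertices of $\partial D^{j}$ only when they lie on a common colour-$j$ primal arc of $\partial D$. Every edge of $D$ incident to such an arc is deleted in passing from $D$ to $D^{i}$ (since $j\ne i$), so its dual is open in $(\omega^{i})^{*}$; hence the region around any colour-$j$ arc is filled with open dual edges, and in $\hat\omega$ the whole arc is connected through the exterior of $D^{j}$, i.e.\ it is wired in $\hat\zeta$. Every other wiring prescribed by $\xi^{j}$ is vacuous, so $\hat\zeta\ge\xi^{j}$. Making this precise amounts to unwinding the standard dictionary for dual boundary conditions (wired arcs dualise to free arcs and conversely, the deleted primal edges around colour-$k$ arcs with $k\ne i$ producing the wired dual regions), and checking that the one-step asymmetry between the primal and dual sides of the slits used to define $\calD$ is absorbed exactly by the shift by $(1,0)$; this is the bookkeeping indicated in Figure~\ref{fig:potts_flowers}. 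I expect this to be the only genuine work in the proof, and it uses nothing beyond planar duality.

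Finally, since $(\omega^{i})^{*}$, with its always-open edges incorporated into the boundary condition, is a bona fide FK-percolation configuration, \eqref{eq:SMP} gives that conditionally on $\hat\omega$ outside $D^{j}$ the restriction of $\hat\omega$ to $D^{j}$ has law $\phi^{\hat\zeta}_{D^{j}}$, and \eqref{eq:CBC} with $\hat\zeta\ge\xi^{j}$ then yields that the law of $\hat\omega$ restricted to $D^{j}$ stochastically dominates $\phi^{\xi^{j}}_{D^{j}}=\phi^{\xi^{j}}_{D}$. Because $\phi^{\xi^{j}}_{D}$ is supported on configurations that are closed on $D\setminus D^{j}$, this domination extends to all of $E(D)$, which is exactly the statement of the lemma.
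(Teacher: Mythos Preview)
Your approach is correct in spirit and close to the paper's, but contains one false intermediate claim. You assert that $D^{j}$ is a sub-domain of $\hat D = D^{*}+(1,0)$, justified by ``the translated dual edge set is in bijection with $E(D)\supseteq E(D^{j})$''. The bijection $e\mapsto e^{*}+(1,0)$ is not the identity on edges (for instance, the edge $(0,0)$--$(1,1)$ is sent to $(0,0)$--$(-1,1)$), so the inclusion $E(D^{j})\subset E(D)$ does not imply $E(D^{j})\subset E(\hat D)$. In general the two domains only overlap, and the paper explicitly restricts the statement to ``the intersection of $(D^{i})^{*}+(1,0)$ and $D^{j}$'', noting that this intersection contains the edges adjacent to the origin, which are the only ones used downstream in Corollary~\ref{cor:lambdas}. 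Once restricted to that intersection, your SMP/CBC argument goes through.

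The paper's own proof sidesteps the domain-matching issue entirely: it realises both $\phi_{D}^{\xi^{i}}$ and $\phi_{D}^{\xi^{j}}$ as the full-plane measure $\phi^{0}$ conditioned on explicit edge-state and connectivity events (open on $E_{i}$, closed on $F_{i}$, the edges of $F_{i}^{*}$ dually connected in $D^{c}$), dualises $\phi^{0}\leftrightarrow\phi^{1}$, verifies the inclusions $E_{i}'\subset F_{j}$ and $E_{j}\subset F_{i}'$ between the conditioning sets, and concludes via monotonicity. This is the same mechanism as yours---duality plus stochastic comparison---but packaged so that no finite-volume domains need to be matched up.
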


In the above, we refer to the restrictions of the two measures to the intersection of $(D^{i})^* + (1,0)$ and $D^j$. 
We will actually only be interested in the effect of these two measures on the edges immediately adjacent to $0$,
 which are indeed contained in this intersection. 
 
\begin{proof}
	Fix $i \neq j$. 
	Write $E_i$ for the set of edges on the primal arcs of $\partial D$ of colour $i$ 
	(these edges are not part of $D$, only their endpoints are).
	Write $F_i$ for the set containing the edges whose duals form the dual arcs of $\partial D$,
	as well as all edges that are adjacent to the primal arcs of $\partial D$ of colours different from $i$. 
	Then $\omega^{i}$ has the law of $\phi^{0}$ restricted to $D$ and conditioned on that all edges of $E_i$ are open and all those of $F_i$ are closed and belong to the same dual cluster in $D^{c}$.
	The law of $\omega^{j}$ can be described in a similar way using the corresponding events $E_{j}$ and $F_{j}$:
	\[
		\omega_{j} \sim \phi^{0}[\cdot \,|\, \text{$\omega \equiv 1$ on $E_{j}$, $\omega \equiv 0$ on $F_{j}$, all edges of $F_{j}^*$ are connected in $\omega^* \cap D^c$}].
	\]
	
	Define the sets of edges $E_{i}'$ and $F_{i}'$ that are obtained from $E_{i}$ and $F_{i}$, respectively, 
	by considering the dual edges and translating these by $(1,0)$. 
	The above, along with the duality between $\phi^0$ and $\phi^1$ 
	imply that $(\omega^{i})^*$ translated by $(1,0)$ has the law of $\phi^{1}$ conditioned on that all edges of $E_{i}'$ are closed and those of $F_{i}'$ are open and belong to the same cluster in $D^{c}$:
	\[
		\omega_{i}^{*}+(1,0) \sim \phi^{1}[\cdot \,|\, \text{$\omega \equiv 0$ on $E_{i}'$, $\omega \equiv 1$ on $F_{i}'$, all edges of $F_{i}'$ are connected in $D^c$}].
	\]
	
	In order to prove that $\omega_{i}^{*}$ translated by $(1,0)$ dominates $\omega_{j}$1 it suffices to study the events on which the measures above are conditioned. 
	First observe that each edge of $E_i'$ may be written $e^* + (1,0)$ with $e \in E_i$. This means that all edges in $E_i'$ are incident to a vertex of $\partial D$ of colour $i$, whence $E_i' \subset F_j$.
	Applying the reverse transformation, we get similarly that $E_i \subset F_j'$.
	Finally, being in the same dual cluster is a decreasing condition and being in the same primal cluster is an increasing condition. The statement follows by the FKG inequality.
\end{proof}

For $i\in \{1,\dots, q\}$, write $\la_i = \la_i(D,\tau)$ for the value $\la \in [0,1]$ that minimises 
\begin{align}\label{eq:lambdasc}
	\big\|(\phi_{\calD}^{\xi^{i}})_{|\La_{2n}} - ( \la \phi^1 + (1- \la)\phi^0)_{|\La_{2n}} \big\|_{\rm TV}.
\end{align}

\begin{cor}\label{cor:lambdas}
	Let $e$ be the edge between $(0,0)$ and $(1,1)$. Then, for any $i,j  \in  \{1,\dots, q\}$ distinct, 
	\begin{align}\label{eq:lambdas}
		\la_i + \la_j \leq 1 + \frac{2\delta}{\phi^1[e \text{ open}] -\phi^0[e \text{ open}]}.
	\end{align}
\end{cor}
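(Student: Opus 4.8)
The idea is to combine Lemmas~\ref{lem:p1}, \ref{lem:p2}, \ref{lem:p3} to compare the probabilities that the edge $e$ (above the origin, on the dual side — so that $e^\ast + (1,0)$ is adjacent to $0$) is open under $\omega^i$ and under $\omega^j$, and then exploit self-duality to turn this into an inequality on $\la_i + \la_j$. First I would fix the realisation $D$ of $\calD$ and the induced boundary conditions, restrict to the event $\calK \le K$ (recall $K$ was chosen in Lemma~\ref{lem:Kcross}, and by \eqref{eq:choicen2} this event has probability at least $1/2$ under $\Phi$; but here we argue on a fixed $D$, so I would simply note that the chain of lemmas requires $\calK \le K$ and handle the exceptional set at the end, or — cleaner — observe that Lemma~\ref{lem:p2} gives the factor $q^{\calK}$ which is what propagates). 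For a fixed such $D$, the chain is: $\omega^i \preceq_{\rm st} \phi_D^{\zeta^i}$ by Lemma~\ref{lem:p1}; $\phi_D^{\zeta^i}$ is dominated by $q^{\calK}\phi_D^{\xi^i}$ in the Radon--Nikodym sense by Lemma~\ref{lem:p2}; and by Lemma~\ref{lem:p3}, sampling $\omega^i$ from $\phi_D^{\xi^i}$, the dual of $\omega^i$ shifted by $(1,0)$ dominates $\phi_D^{\xi^j}$, which in turn dominates $\omega^j$ (by the same Lemma~\ref{lem:p1} applied with $j$). Putting these together on the single edge $e$: the probability that $e$ is closed in $\omega^i$ — equivalently that $e^\ast$ is open in $(\omega^i)^\ast$, equivalently $e^\ast+(1,0)$ is open in the shifted dual — bounds from below the probability that the corresponding edge near $0$ is open under $\phi_D^{\xi^j}$, hence under $\omega^j$.

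Concretely, writing $p_e(\nu) := \nu[e \text{ open}]$ and using that $e$ is self-dual in the sense that $\{e \text{ open in }\omega^i\}$ and $\{e^\ast+(1,0)\text{ open in the shifted dual}\}$ are complementary events, Lemma~\ref{lem:p3} combined with the two applications of Lemma~\ref{lem:p1} gives
\[
	\phi_D^{\xi^j}[e\text{ open}] \le \mathbb{P}[(\omega^i)^\ast + (1,0) \ni e \text{ open}] = 1 - \phi_D^{\xi^i}[\hat e\text{ open}],
\]
where $\hat e$ is the primal edge dual to $e$ shifted appropriately — and by the choice of $e$ as the edge between $(0,0)$ and $(1,1)$ together with $\bbL^\ast = \bbL + (1,0)$, both $e$ and $\hat e$ lie in $\La_{2n}$ and are the "same" edge for the restricted-measure comparison. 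Thus $\phi_D^{\xi^i}[e\text{ open}] + \phi_D^{\xi^j}[e\text{ open}] \le 1$. Now I invoke \eqref{eq:choiceN}/\eqref{eq:choicen2}: by definition of $\la_i$ in \eqref{eq:lambdasc}, we have $|\phi_D^{\xi^i}[e\text{ open}] - (\la_i \phi^1[e\text{ open}] + (1-\la_i)\phi^0[e\text{ open}])| \le \delta$, and likewise for $j$. Substituting,
\[
	\big(\la_i \phi^1[e] + (1-\la_i)\phi^0[e]\big) + \big(\la_j \phi^1[e] + (1-\la_j)\phi^0[e]\big) \le 1 + 2\delta,
\]
which, after subtracting $2\phi^0[e\text{ open}]$ from both sides and dividing by $\phi^1[e\text{ open}] - \phi^0[e\text{ open}] > 0$ (strict because the two infinite-volume measures differ and positive association/finite energy force a strict gap on a single edge, or one may take this as part of the standing hypothesis $\phi^0 \ne \phi^1$ at $p_c$), yields $\la_i + \la_j \le \frac{1 - 2\phi^0[e] + 2\delta}{\phi^1[e] - \phi^0[e]}$. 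A small amount of algebra, using that $\la_i + \la_j \le 2$ trivially and rewriting $1 - 2\phi^0[e] = (\phi^1[e] - \phi^0[e]) + (1 - \phi^1[e] - \phi^0[e])$ with the middle term handled via duality ($\phi^1[e\text{ open}] = 1 - \phi^0[e^\ast \text{ open}]$ at self-duality, so $1 - \phi^1[e] - \phi^0[e] = \phi^0[e^\ast] - \phi^0[e]$, which vanishes by the symmetry between $e$ and its dual under $\phi^0$ — here is where the specific self-dual choice of $e$ matters), collapses the bound to exactly \eqref{eq:lambdas}.

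The main obstacle I anticipate is the bookkeeping in the duality step: making sure that the edge $e$ between $(0,0)$ and $(1,1)$, its dual $e^\ast$, and the translate $e^\ast+(1,0)$ all genuinely coincide as a single edge once we pass to the restricted measures on $\La_{2n}$, and that the event "$e$ open in $\omega^i$" is exactly complementary to "the relevant edge open in the shifted dual of $\omega^i$" — this requires the asymmetric definition of $\bbL^\ast$ and the precise placement of $e$ to line up, and is the reason the statement singles out this particular edge. The second delicate point is that Lemmas~\ref{lem:p1}--\ref{lem:p3} are stated for a fixed $D$ with $\calK$ bounded, so strictly this Corollary should be read as holding for those $D$; the factor $q^{\calK}$ from Lemma~\ref{lem:p2} is benign because on a single edge it only costs us within the same $\delta$-neighbourhood after choosing $\delta$ appropriately — but I would double-check whether the statement as written implicitly conditions on $\calK \le K$, and if so carry that conditioning through. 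Everything else (the triangle-inequality step with $\delta$, the final algebra) is routine.
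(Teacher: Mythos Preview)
Your core argument in the second paragraph is correct and matches the paper's proof exactly: apply Lemma~\ref{lem:p3} to get $\phi_D^{\xi^i}[e\text{ open}]+\phi_D^{\xi^j}[f\text{ open}]\le 1$, replace each term by its $\la$-approximation at cost $\delta$, and use $\phi^1[e\text{ open}]+\phi^0[e\text{ open}]=1$ (self-duality) to finish the algebra.

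Two points deserve correction, though. First, Lemmas~\ref{lem:p1} and~\ref{lem:p2}, the random variable $\calK$, and the Edwards--Sokal configurations $\omega^i$ play \emph{no role} in this corollary. The quantities $\la_i$ are defined directly through the FK measures $\phi_D^{\xi^i}$ (see~\eqref{eq:lambdasc}), and Lemma~\ref{lem:p3} compares $\phi_D^{\xi^i}$ to $\phi_D^{\xi^j}$ directly --- the symbol $\omega^i$ appearing there is merely a dummy name for a sample from $\phi_D^{\xi^i}$, not the coloured configuration from $\Phi$. Your entire first paragraph and your final caveat about conditioning on $\calK\le K$ are therefore unnecessary; the corollary holds for every realisation $D$ of $\calD$ with no restriction. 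Lemmas~\ref{lem:p1} and~\ref{lem:p2} only enter later, in the proof of Theorem~\ref{thm:mainPotts} itself.

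Second, your edge bookkeeping is slightly off: the edge $e$ between $(0,0)$ and $(1,1)$ and the shifted dual edge are \emph{not} the same edge. If $e^*$ is the dual of $e$ (between $(1,0)$ and $(0,1)$), then under the shift by $(1,0)$ it corresponds to the primal edge $f$ between $(0,0)$ and $(-1,1)$. The paper keeps the two edges distinct and writes $\phi_D^{\xi^i}[e\text{ open}]+\phi_D^{\xi^j}[f\text{ open}]\le 1$; the passage to~\eqref{eq:lambdas} then uses $\phi^0[e\text{ open}]=\phi^0[f\text{ open}]$ and $\phi^1[e\text{ open}]=\phi^1[f\text{ open}]$, which hold by the reflection symmetry of the infinite-volume measures. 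Once you make this adjustment, your algebra goes through verbatim.
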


\begin{proof}
	Due to the choice of $N$ specified in \eqref{eq:choiceN}, the infimum of~\eqref{eq:lambdasc} is smaller than $\delta$.
	If $f$ denotes the edge between $(0,0)$ and $(-1,1)$,
	Lemma~\ref{lem:p3} implies that 
	\begin{align*}
    		1 
    		& =  \phi_{\calD}^{\xi^{1}}[e \text{ open}] + \phi_{\calD}^{\xi^{1}}[e \text{ closed}] \\
    		&\geq \phi_{\calD}^{\xi^{1}}[e \text{ open}] + \phi_{\calD}^{\xi^{2}}[f \text{ open}] 
    		\geq (\la_1 + \la_2) \phi^1[e \text{ open}] + (2- \la_1 - \la_2)\phi^0[e \text{ open}] - 2\delta,
	\end{align*}
	where we used that $\phi^0[e \text{ open}] = \phi^0[f \text{ open}]$.
	Moreover, since $\phi^0$ is dual to $\phi^1$,  $\phi^1[e \text{ open}] = 1 -\phi^0[e \text{ open}] >1/2$, and basic algebra yields~\eqref{eq:lambdas}.
\end{proof}

Let us now conclude the proof of the theorem.
	
\begin{proof}[Theorem~\ref{thm:mainPotts}]
	By the choice~\eqref{eq:choicen2} of $n$, for any $i\in \{1,\dots,q\}$,
	\begin{align*}
		\Phi[\La_n \nxlra{~\omega^i\,\,\,~} \La_{2n}^c]
		&\geq \Phi\big[\phi_{\calD}^{\zeta^{i}}[\La_n \nxlra{~\omega^i\,\,\,~} \La_{2n}^c] \big]\\
		&\geq \tfrac12 \Phi\big[\phi_{\calD}^{\zeta^{i}}[\La_n \nxlra{~\omega^i\,\,\,~} \La_{2n}^c] \,\big|\, \calK \leq K \big]\\
		&\geq \tfrac12 q^{-K} 
		\Phi\big[ \phi_{\calD}^{\xi^{i}}[\La_n \nxlra{~\omega^i\,\,\,~} \La_{2n}^c] \,\big|\, \calK \leq K \big] \\
		&\geq \tfrac12 q^{-K} 
		\Phi\big[ \la_i \phi^1[\La_n \nxlra{~\omega^i\,\,\,~} \La_{2n}^c] + (1- \la_i)\phi^0[\La_n \nxlra{} \La_{2n}^c] - \delta \,\big|\, \calK \leq K \big] \\
		&\geq \tfrac12 q^{-K}\Big( \Phi[1 - \la_i\,|\,\calK \leq K ] \phi^0[\La_n \nxlra{} \La_{2n}^c] - \delta\Big),
	\end{align*}
	where $\Phi$ governs the choice of $\calD$ and $\tau$, 
	and hence of $\zeta^{i}$, $\xi^i$ and ultimately $\la_i=\la_i(\calD,\tau)$.
	In the above, the first line uses Lemma \ref{lem:p1}, 
	the second uses~\eqref{eq:choicen2}, the third uses Lemma \ref{lem:p2}, and the fourth uses the definition of $\lambda_{i}$ and the choice \eqref{eq:choiceN} of $N$.
	
	We proceed by contradiction and assume that $\mu$ is different from $\mu^{\mathrm{free}}$ and $\mu^i$ with $i \in \{1,\dots, q\}$.
	Then, due to Lemma~\ref{lem:Phi2}, $\Phi$ contains at least two infinite clusters of different colours. 
	Permute the colours so that there exist infinite clusters of colours $1$ and $2$. 

	Then, due to Corollary~\ref{cor:lambdas}, 
	\begin{align*}
		\Phi[\la_1\,|\,\calK \leq K] +\Phi[\la_2\,|\,\calK \leq K] \leq 1 + \frac{2\delta}{\phi^1[e \text{ open}] -\phi^0[e \text{ open}]} \leq 3/2,
	\end{align*}
	provided that $\delta$ is small enough. 
	Thus, at least one of $\Phi[\la_1|\calK \leq K]$ and $\Phi[\la_2 |\calK \leq K]$ is smaller than $3/4$. 
	Assume with no loss of generality that it is the former.

	Recall that, since there is no percolation under $\phi^0$, $\phi^0[\La_n \nxlra{} \La_{2n}^c] \to 1$ as $n\to1$. 
	Thus, assuming $\delta < 1/4$, we find that $\limsup_{n \to \infty}\Phi[\La_n \xlra{\omega^1} \La_{2n}^c] < 1$.
	This contradicts our assumption that there exists percolation in $\omega^1$ $\Phi$-a.s.. 
\end{proof}

\section{Extensions}\label{sec:extensions}

In this section we describe how the proof of Theorem~\ref{thm:mainFK} may be extended to two other settings, namely FK-percolation on the triangular and hexagonal lattices and the loop $O(n)$ model. Since the argument requires only minor modifications, we only provide sketches. 

\subsection{FK-percolation and Potts models on the triangular and hexagonal lattices}\label{sec:Hex}

Consider the triangular and hexagonal lattices of Figure~\ref{fig:tri} defined in the plane $\bbC$ as follows. 
The triangular lattice, denoted ${\rm Tri}$, has vertex set $\{u + v e^{i\pi/3}: u,v \in \bbZ\}$ and edges between vertices at euclidian distance $1$. 
The hexagonal lattice ${\rm Hex}$ is the dual of ${\rm Tri}$: it has a vertex in the centre of each face of ${\rm Tri}$ and edges between vertices corresponding to neighbouring faces. 

\begin{figure}
    \begin{center}
        \raisebox{-0.5\height}{\includegraphics[width = 0.4\textwidth]{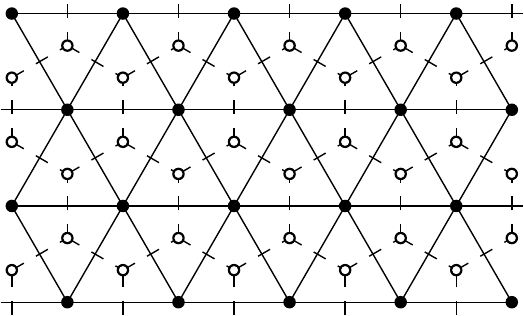}}\qquad
        \raisebox{-0.5\height}{\includegraphics[width = 0.5\textwidth]{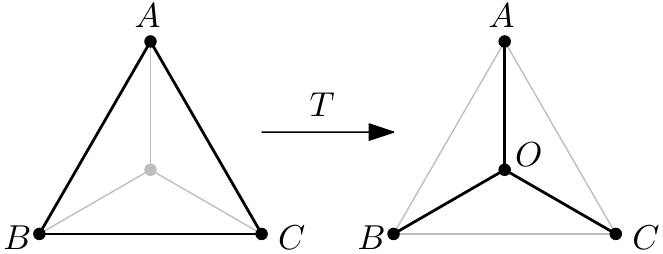}}
	\caption{
        	{\em Left:} The triangular lattice ${\rm Tri}$ and its dual hexagonal lattice ${\rm Hex}$. 
       		{\em Right:} The star-triangle transformation $T$ transforms the triangle $\bf T$ into the star $\bf S$ by adding a central vertex $O$
        		and replacing the edges between $A$, $B$ and $C$ by those between $O$ and  $A$, $B$ and $C$. 
	}
        \label{fig:stt}\label{fig:tri}
    \end{center}
\end{figure}

The FK-percolation models on ${\rm Tri}$ and ${\rm Hex}$ are defined similarly to that on $\bbL$. 
In particular, one may define infinite-volume measures $\phi^{i}_{{\rm Tri}, p,q}$ and $\phi^{i}_{{\rm Hex}, p,q}$ via thermodynamical limits
for $i \in \{0,1\}$, $q \geq 1$ and $p \in (0,1)$. 
Moreover, the duality relation described in Section~\ref{sec:fk-background} adapts to this setting and relates the models on ${\rm Tri}$ and ${\rm Hex}$, 
with the edge-weights related by~\eqref{eq:pp*}.
In spite of the primal and dual models acting on different lattices, a notion of self-duality may be defined in this context using the star-triangle transformation. 

\begin{lem}\label{lem:stt}
	Consider the triangle and star graphs ${\bf T}$ and ${\bf S}$ of Figure~\ref{fig:stt} with $\partial {\bf T} = \partial {\bf S} = \{A,B,C\}$. 
	For any $q \geq 1$, any boundary conditions $\xi$, for $p$ such that 
	\begin{align}\label{eq:p_c_tri}
		\Big(\tfrac{p}{1-p}\Big)^3 + 3\Big(\tfrac{p}{1-p}\Big)^2 = q, 
	\end{align}
	there exists a coupling $\bbP$ of $\omega' \sim \phi_{{\bf T},p,q}^\xi$ and 
	 $\omega \sim \phi_{{\bf S},p^*,q}^\xi$ such that 
	 \begin{align*}
	 	U \xlra{\omega} V  \Leftrightarrow 	 	U \xlra{\omega'} V  \quad \forall U,V \in \{A,B,C\},\, \bbP\text{-a.s.}.
	 \end{align*}
\end{lem}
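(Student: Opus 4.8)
The plan is to verify the star--triangle identity at the level of weighted connectivity patterns on the three boundary vertices $\{A,B,C\}$, and then invoke the fact that the law of the whole configuration on $\bf S$ (resp.\ $\bf T$) is determined, given the boundary partition it induces, in a manner that is identical on the two graphs up to the obvious bijection of non-boundary structure (there is none: $\bf T$ has no interior vertex and $\bf S$ has exactly one, $O$). Concretely, I would first record that for FK-percolation on a graph with a distinguished set of terminals, the marginal law of the partition of the terminals induced by $\omega$, together with the extra weight $q$ per interior cluster and $(\tfrac{p}{1-p})$ per open edge, is all that matters; for $\bf T$ the possible terminal partitions are the five partitions of a $3$-element set, and for $\bf S$ the same five, after summing over the state of the central vertex $O$. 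So the first step is to write down, for each of the five partitions $P$ of $\{A,B,C\}$, the unnormalised weight $w_{\bf T}^\xi(P)$ under $\phi_{{\bf T},p,q}^\xi$ and $w_{\bf S}^\xi(P)$ under $\phi_{{\bf S},p^*,q}^\xi$, where the boundary condition $\xi$ contributes only through how the external wiring merges clusters and hence rescales the power of $q$.

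The second step is the computation itself. Using $x=\tfrac{p}{1-p}$ and $y=\tfrac{p^*}{1-p^*}$, the self-dual relation \eqref{eq:pp*} gives $xy=q$, and \eqref{eq:p_c_tri} gives $x^3+3x^2=q$, equivalently (since $xy=q$) $y = x^2+3x$ — this is the precise numerical input that makes the two weight vectors proportional. One then checks, partition by partition, that $w_{\bf T}^\xi(P) = \kappa(\xi)\, w_{\bf S}^\xi(P)$ for a single constant $\kappa(\xi)>0$ depending only on $\xi$ (and $p,q$), not on $P$. The five partitions split into: the full partition $\{A\}\{B\}\{C\}$ (no two terminals connected), the three "one pair" partitions, and the partition $\{A,B,C\}$ (all three connected). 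On $\bf T$ one sums over the $2^3=8$ edge configurations; on $\bf S$ over $2^3=8$ as well, but now the state of $O$ must be summed, contributing the weight $q$ whenever $O$ forms its own cluster. This is the classical Fortuin--Kasteleyn star--triangle relation; the verification is a short finite check once $y=x^2+3x$ is substituted, and the boundary-condition factor $\kappa(\xi)$ accounts uniformly for the fact that a given terminal partition may, after external wiring, correspond to fewer clusters and thus a different power of $q$ — but this rescaling multiplies every $w^\xi(P)$ by the same amount it multiplies $w^0(P)$ relative to each other only if one is careful; in fact one checks it partition-class by partition-class and the point is that within each class the external merging is the same for $\bf T$ and $\bf S$.

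Having established $w_{\bf T}^\xi(\cdot)\propto w_{\bf S}^\xi(\cdot)$ as functions on the five partitions, the third step is to upgrade this to the claimed coupling. Since the connectivity pattern of $\{A,B,C\}$ has the same law under $\phi_{{\bf T},p,q}^\xi$ and $\phi_{{\bf S},p^*,q}^\xi$, one may sample this common partition first, and then — conditionally on it — sample the remainder of each configuration (the internal edge states on $\bf T$, and the internal edge states plus the state of $O$ on $\bf S$) independently from their respective conditional laws. The resulting $\bbP$ is the desired coupling: by construction the induced partitions of $\{A,B,C\}$ agree $\bbP$-a.s., which is exactly the assertion $U\xleftrightarrow{\omega}V \iff U\xleftrightarrow{\omega'}V$ for all $U,V\in\{A,B,C\}$.

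The main obstacle is purely bookkeeping rather than conceptual: one must be scrupulous about how the boundary condition $\xi$ enters the cluster count on each side, since the same terminal partition can yield different numbers of clusters in $\omega^\xi$ depending on which terminals $\xi$ wires externally, and one needs the proportionality constant to be genuinely independent of the terminal partition. The clean way to handle this is to treat $\xi$ as gluing some of $A,B,C$ together a priori and to observe that this gluing acts identically on $\bf T$ and $\bf S$, so it suffices to prove the proportionality for free boundary conditions and then note that external wiring transports it verbatim. With that reduction in place the remaining finite computation, driven by $xy=q$ and $y=x^2+3x$, is routine.
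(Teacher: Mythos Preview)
Your proposal is correct. The paper does not actually give a proof of this lemma: it simply cites \cite[Prop.~2.2]{GriMan13} and describes one explicit realisation of the coupling, namely a random map $T$ from triangle configurations to star configurations which is deterministic whenever the triangle has at least one open edge and, when the triangle is fully closed, randomises among the four star configurations with at most one open edge according to the weights $(\tfrac{p}{1-p})^3/q$ and $(\tfrac{p}{1-p})^2/q$.

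Your approach --- compute the unnormalised weight of each of the five boundary partitions on both graphs, verify proportionality using $y=x^2+3x$ (equivalently $xy=q$ together with~\eqref{eq:p_c_tri}), and then couple by sampling the common partition first --- is the standard argument behind the cited result and is slightly more general in presentation: you obtain the existence of \emph{some} coupling, whereas the paper singles out a particular Markov kernel. Your reduction to free boundary conditions is the right way to handle $\xi$: the passage from free to $\xi$ multiplies $w^0(P)$ by $q^{|P\vee\xi|-|P|}$ on both graphs, which preserves the ratio $w_{\bf T}^\xi(P)/w_{\bf S}^\xi(P)$. One cosmetic point: when you speak of ``the state of $O$'' you presumably mean the states of the three edges incident to $O$; there is no separate variable attached to the vertex itself.
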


The coupling above may be described as a random map $T$ associating to a configuration $\omega$ the configuration $\omega'$. 
When $\omega$ contains at least one open edge, there exists a unique configuration $\omega'$ that yields the same connections between $A$, $B$ and $C$ as $\omega$, and we set $T(\omega) = \omega'$. 
When $\omega$ contains no open edges, $T(\omega)$ is chosen at random between the configurations on ${\bf S}$ with zero or one open edges: 
the configuration with zero open edges is chosen with probability $(\tfrac{p}{1-p})^3/q$, while each configuration with exactly one open edge is chosen with probability  $(\tfrac{p}{1-p})^2/q$.
Finally, the randomness in $T$ is independent of that in $\omega$. 

The proof of Lemma~\ref{lem:stt}, as well as of the explicit coupling described above appeared in several papers, for instance in~\cite[Prop. 2.2]{GriMan13}.

Using the star-triangle transformation, it was proved in~\cite{BefDum12} that~\eqref{eq:p_c_tri} determines the critical point $p_c({\rm Tri}, q)$ of FK-percolation on the triangular lattice, 
and that the critical point for the hexagonal lattice is given by its dual $p_c({\rm Hex}, q)=p_c({\rm Tri}, q)^*$.
Moreover, the dichotomy of~\cite{DumSidTas17} adapts to this setting. Finally, when $p\in (0,1)$ and $q\geq 1$, it was shown in~\cite{DumLiMan18} that
\begin{align*}
   \phi_{{\rm Tri},p,q}^0\neq \phi_{{\rm Tri}, p,q}^1 &\Longleftrightarrow q > 4 \text{ and } p = p_c({\rm Tri}, q)\qquad  \text{ and }\\
   \phi_{{\rm Hex},p,q}^0\neq \phi_{{\rm Hex},p,q}^1 &\Longleftrightarrow q > 4 \text{ and } p = p_c({\rm Hex}, q).
\end{align*}

The proof of~\cite{BefDum12} is based on the observations that, for $p =  p_c({\rm Tri}, q)$, 
the star-triangle transformation $T$ may be applied to each upward pointing triangle in ${\rm Tri}$
and the resulting graph is ${\rm Hex}$ shifted up by $1/\sqrt3$.
Moreover, if the configuration on ${\rm Tri}$ is chosen according to $\phi^{i}_{{\rm Tri}, p,q}$ for some $i \in \{0,1\}$, 
then the resulting one is the shift of $\phi^{i}_{{\rm Hex}, p^*,q}$ by $(0,-1/\sqrt3)$

As stated in the introduction, Theorem~\ref{thm:mainFK} extends to the triangular and hexagonal lattices. 

\begin{thm}\label{thm:mainFK_tri}
	Let $\phi$ be a Gibbs measure for FK-percolation on ${\rm Tri}$ or ${\rm Hex}$ with edge-intensity $p \in (0,1)$ and cluster-weight $q \geq 1$. 
	Then there exists $\la \in [0,1]$ such that 
	\begin{align*}
		\phi = \la \phi_{{\rm Tri},p,q}^0 + (1-\lambda)  \phi_{{\rm Tri},p,q}^1 \quad \text{ or } \quad \phi = \la \phi_{{\rm Hex},p,q}^0 + (1-\lambda)  \phi_{{\rm Hex},p,q}^1,
	\end{align*}
	depending on the setting. 
\end{thm}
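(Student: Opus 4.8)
The plan is to re-run the argument of Sections~\ref{sec:FK_preparation}--\ref{sec:FK_duplication} on ${\rm Tri}$, and then symmetrically on ${\rm Hex}$, replacing the self-duality of $\bbL$ used there by the self-duality of ${\rm Tri}$ at criticality that the star--triangle transformation provides. First I would reduce to the only nontrivial regime. The analogue of the sandwich~\eqref{eq:0phi1} is a formal consequence of~\eqref{eq:CBC}, \eqref{eq:SMP} and the monotone convergence of finite-volume measures, and so holds on ${\rm Tri}$ and ${\rm Hex}$; hence it suffices to treat extremal $\phi$, and there is nothing to do once the free and wired measures coincide. By the dichotomy theorem of~\cite{DumSidTas17}, in the robust form of~\cite{DumTas19} which needs only positive association and finite energy, together with the input of~\cite{DumLiMan18} recalled above, they differ precisely when $q>4$ and $p=p_c$; I would fix this from now on, along with the evident analogues on ${\rm Tri}$ and ${\rm Hex}$ of boxes, half-planes and slit graphs.

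Next I would check that every ingredient of the $\bbL$-proof \emph{other than self-duality} transfers verbatim. Positive association~\eqref{eq:FKG}, comparison between boundary conditions~\eqref{eq:CBC}, the spatial Markov property~\eqref{eq:SMP}, and finite energy are general features of FK-percolation on any locally finite planar graph; the exponential decay~\eqref{eq:exp_decay} is supplied by the dichotomy just invoked; and the absence of coexisting infinite primal and dual clusters of~\cite[Thm.~1.5]{DumRaoTas19}, used in Proposition~\ref{prop:translation_inv}, holds on ${\rm Tri}$ and ${\rm Hex}$ by the same proof. Self-duality itself is used in exactly three places: the reflection--duality automorphism $\rho$ in Propositions~\ref{prop:translation_inv} and~\ref{prop:no_hp_perco}; the symmetric slit domains $\Slit(M,N)$ underlying Proposition~\ref{prop:hp_perco}; and the shielding arcs of Section~\ref{sec:FK_duplication}, whose boundary alternates between primal edges of $\omega'$ and dual edges of $\omega^*$.

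The substitute for $\rho$ is built from Lemma~\ref{lem:stt}: at $p=p_c({\rm Tri},q)$, applying the randomised, connectivity-preserving map $T$ simultaneously to every upward-pointing triangle of ${\rm Tri}$ couples a sample of $\phi_{{\rm Tri},p,q}$ to a sample of $\phi_{{\rm Hex},p^*,q}$ on a translate of ${\rm Hex}$ so that all vertex-connectivity events are preserved; composing with planar duality on ${\rm Hex}$ --- whose dual lattice is ${\rm Tri}$ --- yields a stochastic transformation $\calR$ sending $\phi_{{\rm Tri},p,q}$ to a translate, and after a reflection symmetry of ${\rm Tri}$ a horizontal reflection, of itself, under which primal connection events become dual ones and conversely. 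With $\calR$ in place of $\rho$, the proofs of Propositions~\ref{prop:translation_inv} and~\ref{prop:no_hp_perco} go through unchanged, the domination $\phi\leq_{\rm st}\phi\circ\calR$ of~\eqref{eq:no-hp-perc-reflection} being produced in the same way from the boundary asymmetry of the Dobrushin half-plane measure; the half-plane crossing estimate of Proposition~\ref{prop:hp_perco} and the good-point lemmas~\ref{lem:perc-hp-some-good}--\ref{lem:perc-hp-many-good} follow because $\calR$ turns primal half-plane crossings into dual ones; and the shielding-arc exploration of Section~\ref{sec:FK_duplication}, with its case analysis (a)--(d) according to the infinite clusters present in each half-plane and its verification that the induced boundary conditions of $\omega'$ dominate those of $\omega$, is phrased entirely in terms of connectivity, finite energy, \eqref{eq:SMP} and $\calR$, hence is lattice-insensitive.

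Running the duplication argument exactly as in the proof of Proposition~\ref{prop:Gibbs=trans_inv} would then show that every extremal Gibbs measure on ${\rm Tri}$ is translation invariant, and Proposition~\ref{prop:translation_inv} in its ${\rm Tri}$ form identifies it with $\phi_{{\rm Tri},p,q}^0$ or $\phi_{{\rm Tri},p,q}^1$. The hexagonal case is obtained by the same scheme with ${\rm Tri}$ and ${\rm Hex}$ interchanged, using the inverse star--triangle move on the degree-three vertices of one of the two sublattices of ${\rm Hex}$ in place of $T$ to construct the corresponding self-duality $\calR$. I expect the main obstacle to be the bookkeeping around $\calR$: verifying that applying $T$ to all upward triangles at once is a legitimate coupling in infinite volume, that the resulting reflected--dual copy of ${\rm Tri}$ lines up with the half-plane and slit constructions, and that the mild primal/dual asymmetries --- the wiring sitting one layer closer than the freeing --- are tracked consistently throughout; each of these is routine once the picture is drawn, but requires care.
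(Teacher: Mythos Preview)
Your proposal is correct and follows essentially the same route as the paper: re-run the $\bbL$-proof, replacing the duality automorphism $\rho$ by a star--triangle--based map, with all other ingredients transferring verbatim. Two minor points: the paper only needs the new $\rho$ in Proposition~\ref{prop:no_hp_perco} and Lemma~\ref{lem:perc-hp-some-good} (not in Proposition~\ref{prop:translation_inv} or the shielding-arc construction, which use ordinary planar duality rather than self-duality), and its $\rho$ is simply $T$ composed with a horizontal reflection and shift rather than your $T$-then-duality-then-reflection; also, the paper obtains the ${\rm Hex}$ case directly by duality from the ${\rm Tri}$ result rather than by rerunning the argument with the inverse star--triangle move.
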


The proof is exactly similar to that of Theorem~\ref{thm:mainFK}, with only one notable difference. We may focus on ${\rm Tri}$, as the result for ${\rm Hex}$ is obtained by duality. Only the setting $p = p_c({\rm Tri})$ requires attention; for all other values of $p$ the Gibbs measure is unique. 

In  Proposition~\ref{prop:no_hp_perco} and Lemma~\ref{lem:perc-hp-some-good}, we used an automorphism $\rho$ of the set of configurations
which was mapping the primal model to an (approximate) vertical symmetry of the dual model. 
In the present case, $\rho$ should be replaced with the composition of the application of $T$ to each upward pointing triangle, 
the symmetry with respect to the horizontal axis $\bbR \times \{0\}$, and the shift by $(0,1/\sqrt 3)$.
Using this transformations, the proofs of both Proposition~\ref{prop:no_hp_perco} and Lemma~\ref{lem:perc-hp-some-good} adapt readily. The rest of the proof is identical. 

In the same way as in Corollary~\ref{cor:every-limit-is-gibbs}, it may be deduced from Theorem~\ref{thm:mainFK_tri}
that any thermodynamical limit of FK-percolation measures on the triangular and hexagonal lattices is Gibbs. 

Finally, the proof of Theorem~\ref{thm:mainPotts} may also be adapted to the triangular and hexagonal lattices, with only minor modifications. In addition to using Theorem~\ref{thm:mainFK_tri} instead of Theorem~\ref{thm:mainFK}, one only needs to adapt the proof of Lemma~\ref{lem:p3}, 
where auto-duality plays a particular role. This may easily be done using duality combined with the star-triangle transformation, as descried above.

\subsection{Proof for loop $O(n)$ model}\label{sec:loop_proof}

Fix for the whole section $n\geq 1$ and $0 < x \leq {1}/{\sqrt n}$. 

We start by considering the structure of Gibbs measures for the spin system (see the discussion of Section~\ref{sec:loop-background} for details). 
As for FK-percolation, using monotonicity, one may define the Gibbs measures ${\sf Spin}_{n,x}^{+}$ and ${\sf Spin}_{n,x}^{-}$ for the spin system 
as limits of finite-volume measures. By construction, these are translation invariant and tail trivial.
Moreover, due to~\eqref{eq:On_mono}, any Gibbs measure for the spin system is contained between these two for the stochastic ordering. 

The relevant result for the spins system is the following

\begin{prop}\label{prop:GibbsSpin}
	Any Gibbs measure for the spin system with parameters $(n,x)$ 
	is a linear combination of ${\sf Spin}_{n,x}^{+}$ and ${\sf Spin}_{n,x}^{-}$.
\end{prop}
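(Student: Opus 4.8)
The plan is to mirror the proof of Theorem~\ref{thm:mainFK} essentially verbatim, replacing FK-percolation configurations by the spin configurations $\sigma \in \{-1,1\}^{F({\rm Hex})}$ and exploiting the fact (recalled in Section~\ref{sec:loop-background}) that the spin representation enjoys positive association, a spatial Markov property, the finite energy property, and a dichotomy theorem giving exponential decay of spin-cluster interfaces when ${\sf Spin}_{n,x}^+ \neq {\sf Spin}_{n,x}^-$. As for FK-percolation, it suffices to prove that every extremal (equivalently, tail-trivial) Gibbs measure for the spin system is translation invariant, since then a translation-invariant, tail-trivial Gibbs measure must be one of the two extremal ones by the analogue of Proposition~\ref{prop:translation_inv}: ergodicity plus finite energy forces the number of infinite plus-clusters and of infinite minus-clusters each to be $0$, $1$, or $\infty$; Burton--Keane (or the analogue of Lemma~\ref{lem:Kcross}) excludes $\infty$; and the planar duality/self-matching between the plus and minus spin percolations on the triangular lattice of faces excludes the simultaneous existence of an infinite plus-cluster and an infinite minus-cluster. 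Here the global sign-flip symmetry $\sigma \mapsto -\sigma$ of the spin representation (which corresponds to the symmetry between $+$ and $-$) plays the role that self-duality at $p_c$ played for FK-percolation, so the half-plane inputs and the reflection argument of Proposition~\ref{prop:no_hp_perco} go through with $\rho$ replaced by the composition of the vertical reflection of ${\rm Hex}$ with the global spin flip.

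Concretely, I would re-run Sections~\ref{sec:FK_preparation} and~\ref{sec:FK_duplication} with the following dictionary: ``primal open edge'' $\leadsto$ ``$+$ spin'' (viewing $\sigma$ as a face-percolation of $+$'s), ``dual open edge'' $\leadsto$ ``$-$ spin''; ``$0\lra\infty$'' $\leadsto$ ``$f_0$ lies in an infinite $+$-cluster''. First I would establish the half-plane no-percolation statement: with ``$+$ on one side of $\partial\bbH$, $-$ on the other'' boundary conditions (the analogue of $\tau_{1/0}$), a.s.\ no infinite $+$-cluster reaches the origin, proved exactly as Proposition~\ref{prop:no_hp_perco} using the flip symmetry in place of duality. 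Then the analogue of Proposition~\ref{prop:hp_perco} --- uniformly positive probability of a half-plane $+$-connection from any point to the wired-side boundary in the slit domain ${\rm Slit}(M,N)$ --- is proved by the same four-lemma scheme (positive proportion of good points via finite-energy surgeries, then boosting the proportion via FKG, then the counting argument), using the exponential decay of the spin-interface lengths furnished by the dichotomy of~\cite{DumGlaPel21}. With these in hand, the shielding-arc construction of Proposition~\ref{prop:shield} and Corollary~\ref{cor:shield} transfers: one explores arcs that are monochromatic $+$ in $\omega'$ or monochromatic $-$ in $\omega$, the domain $\calD$ between two such arcs (in $\bbH$ and $\bbH^-$) is measurable from outside, and the induced boundary conditions on $\partial\calD$ for $\omega'$ dominate those for $\omega$; the duplication argument then gives $\phi \leq_{\rm st} \phi'$ for any translate $\phi'$, hence $\phi = \phi'$.

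The main obstacle I anticipate is purely bookkeeping rather than conceptual: in the spin representation the boundary conditions are determined by \emph{both} the $+$-connectivities and the $-$-connectivities of the outside configuration (a genuinely two-sided, long-range dependence, just as in FK), so one must carefully verify that a ``$+$-wired'' arc in $\omega'$ dominates whatever boundary condition $\omega$ induces, using the convention that all infinite $+$-clusters are wired at infinity (the analogue of Remark~\ref{rem:DLR}); the four-arc case, where both half-planes have coexistence with the $+$-cluster on opposite sides, again relies on this convention exactly as in the proof of Proposition~\ref{prop:Gibbs=trans_inv}. A secondary point requiring care is the lattice geometry: the faces of ${\rm Hex}$ form a triangular lattice, which is self-matching rather than self-dual in the strict planar sense, but the separation statement ``an infinite $+$-cluster and an infinite $-$-cluster cannot coexist'' still holds (it is the analogue of~\cite[Thm.~1.5]{DumRaoTas19}, and in any case can be obtained directly from the RSW/exponential-decay dichotomy), so no new ingredient is needed. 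Once Proposition~\ref{prop:GibbsSpin} is established in this way, Theorem~\ref{thm:loop} follows by transferring the result along the loop-to-spin correspondence and noting that loops are invariant under the global sign flip, so the two extremal spin measures induce the same loop measure, whence uniqueness; the ``every thermodynamic limit is Gibbs'' part follows as in Corollary~\ref{cor:every-limit-is-gibbs} using the analogue of Lemmas~\ref{lem:every-limit-is-gibbs1}--\ref{lem:every-limit-is-gibbs3}.
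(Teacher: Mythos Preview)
Your proposal is correct and follows essentially the same route as the paper: the paper's proof of Proposition~\ref{prop:GibbsSpin} is precisely a sketch indicating that Sections~\ref{sec:FK_preparation} and~\ref{sec:FK_duplication} carry over to the spin system, with the single substantive change being that the transformation $\rho$ in Proposition~\ref{prop:no_hp_perco} and Lemma~\ref{lem:perc-hp-some-good} is replaced by a vertical symmetry composed with a global sign flip --- exactly the substitution you identify. Your discussion of the bookkeeping issues (two-sided boundary dependence, the self-matching triangular lattice of faces, the four-arc case) is more detailed than the paper's own treatment, but entirely in line with it.
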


\begin{proof}
We only sketch this proof as it is very similar to that of Theorem~\ref{thm:mainFK}. 
If the parameters are such that the model is in the macroscopic loops regime, then ${\sf Spin}_{n,x}^{+} = {\sf Spin}_{n,x}^{-}$, 
and this is the only Gibbs measure for the spin system. 

If the model is in the exponential decay regime, then ${\sf Spin}_{n,x}^{+} \neq {\sf Spin}_{n,x}^{-}$, 
but the proofs of Sections~\ref{sec:FK_preparation} and~\ref{sec:FK_duplication} apply with only minor changes. 
Indeed, the only significant alteration is in  the transformation $\rho$ used in the proofs of Proposition~\ref{prop:no_hp_perco} and Lemma~\ref{lem:perc-hp-some-good}:
it should be replaced here with a vertical symmetry composed with a sign flip. 
\end{proof}

\begin{rem}
	The proof for the spin system may appear closer to those for the Ising model from~\cite{Aiz80},~\cite{Hig81} or~\cite{GeoHig00}.
	Note however that the boundary conditions in the DLR equation for the spin system are not determined entirely by the spins on the boundary of the domain, 
	and therefore the more complex argument of the present paper is necessary.
\end{rem}

Next we turn to the Gibbs measures for the loop model. The key is the following lemma. 

\begin{lem}\label{lem:LtoS}
	Let ${\sf Loop}$ be a Gibbs measure for the loop model with parameters $(n,x)$. 
	Consider the measure ${\sf Spin}$ on $ \{-1,1\}^{F({\rm Hex})}$ obtained by sampling a configuration $\omega$ from ${\sf Loop}$, 
	then sampling uniformly one of the two spin configurations coherent with $\omega$. 
	Then ${\sf Spin}$ is a Gibbs measure for the spin system with parameters $(n,x)$.
\end{lem}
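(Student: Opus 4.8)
\textbf{Proof plan for Lemma~\ref{lem:LtoS}.}
The plan is to verify the DLR equation for \textsf{Spin} directly from the DLR equation for \textsf{Loop} together with the deterministic two-to-one correspondence between loop configurations and coherent spin configurations. Fix a finite domain $D$ of ${\rm Hex}$ and a spin boundary condition $\tau$ on the faces outside $D$ for which the conditioning is non-degenerate under \textsf{Spin}. What has to be shown is that, conditionally on $\sigma$ agreeing with $\tau$ outside $D$ and on $\mathcal F(D^c)$, the law of $\sigma$ restricted to $D$ is ${\sf Spin}_{D,n,x}^{\tau}$. The key observation is that $\tau$ outside $D$ determines (and is determined up to a global flip by) a loop configuration $\xi$ on ${\rm Hex}\setminus D$ --- namely the set of edges separating faces of opposite $\tau$-spin; and the spin boundary condition used in \eqref{eq:def-spin-rep} depends on $\tau$ precisely through the spins on faces adjacent to $D$ and through the connectivities that $\tau$ induces on the boundary faces, i.e.\ exactly the data carried by $\xi$ near $\partial D$ (plus one global bit).

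First I would make the correspondence precise: from a sample $(\omega,\sigma)$ of the coupled measure (sample $\omega\sim{\sf Loop}$, then $\sigma$ a uniform coherent spin), the pair $(\omega,s)$ where $s\in\{\pm1\}$ records, say, the spin of a fixed reference face, is a measurable bijective recoding of $\sigma$; and $\omega$ is recovered from $\sigma$ as the edge-set where neighbouring spins disagree. In particular $\mathcal F(D^c)$ for $\sigma$ and $\mathcal F(D^c)$ for $\omega$ together with the reference bit $s$ carry the same information when the reference face lies in $D^c$, which we may assume. Then I would condition in two stages: first on $\omega$ outside $D$ (equivalently, on the loop boundary condition $\xi$) and on the bit $s$, and then check that the resulting conditional law of $\sigma|_D$ is ${\sf Spin}_{D,n,x}^{\tau}$. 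Conditionally on $\omega$ outside $D$, the DLR equation for \textsf{Loop} gives that $\omega|_D$ has law ${\sf Loop}_{D,n,x}^{\xi}$; conditionally on this $\omega|_D$ and on the bit $s$, the spin configuration $\sigma$ is the unique coherent extension, so the push-forward of ${\sf Loop}_{D,n,x}^{\xi}$ (with the global sign pinned by $s$ and $\xi$) is precisely the measure on coherent spin configurations, which by the weight computation recalled in Section~\ref{sec:loop-background} equals ${\sf Spin}_{D,n,x}^{\tau}$: the factor $n^{\#\text{loops}}$ becomes $n^{k_\pm(\sigma)}$ since each counted finite loop is the external boundary of a counted finite spin-cluster, and $x^{\#\text{edges}}$ becomes $x^{\#\{u\sim v:\sigma_u\neq\sigma_v\}}$.

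The only genuine subtlety, and the step I expect to require the most care, is bookkeeping around the \emph{global sign} and the \emph{conventions for infinite clusters}: conditioning on $\sigma|_{D^c}=\tau$ is strictly more than conditioning on the loop configuration outside $D$, because it also fixes the reference bit $s$; one must check that averaging the coupled construction over this extra bit is consistent, i.e.\ that the uniform choice of coherent spin in the definition of \textsf{Spin} correctly reproduces the conditional distribution of $s$ given $\xi$ and given $\mathcal F(D^c)$. Since both $\tau$ outside $D$ and the loop configuration $\xi$ determine the relative spins of all boundary faces, the bit $s$ together with $\xi$ determines $\tau$ on $\partial D$ and the boundary connectivities, which is exactly the input of ${\sf Spin}_{D,n,x}^{\tau}$; and the arbitrary convention for counting infinite spin-clusters (all infinite plus-clusters wired at infinity) was chosen, as in Section~\ref{sec:loop-background}, so as not to affect ${\sf Spin}_{D,n,x}^{\tau}$, so it drops out. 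Once these identifications are in place, non-degeneracy of the \textsf{Spin}-conditioning transfers to non-degeneracy of the \textsf{Loop}-conditioning (both express that the boundary data admit at least one compatible configuration in $D$), and the DLR identity for \textsf{Spin} follows. Finally I would note that this argument is symmetric in the two global signs, so no measurability issue arises from the uniform coin flip.
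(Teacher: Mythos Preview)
Your proposal is correct and follows essentially the same approach as the paper: both arguments reduce the spin-DLR condition to the loop-DLR condition by observing that conditioning on $\sigma$ inducing the spin boundary data $\tau$ amounts to conditioning on $\omega$ inducing the corresponding loop boundary data $\xi=\xi(\tau)$ together with one extra bit fixing the global sign, and that this extra bit is independent of $\omega$. Your treatment is somewhat more explicit in tracking the global sign via a reference face, whereas the paper simply notes that ``the latter choice is independent of $\omega$''; but the content is the same.
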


\begin{proof}
	Consider a domain $D$.
	Recall that a boundary condition $\tau$ for the spin system on $D$ 
	contains an assignment of spins for the faces neighbouring $D$ and connectivity information between these faces in the spin percolation. 
	A boundary condition for the loop model on $D$ is a pairing of certain vertices of $\partial D$, acting as the endpoints of the various segments of loops in $D^c$. 
	
	It is immediate from the correspondence between loops and spins that
	there is a two-to-one function mapping spin boundary conditions $\tau$ on loop boundary conditions $\xi(\tau)$.
	Precisely, for any coherent loop and spin configuration $\omega,\sigma$ on the full lattice, we have
	\begin{align*}
		\omega \text{ induces $\xi(\tau)$ on $D$}\, \Leftrightarrow \, \sigma \text{ induces $\tau$ or $-\tau$ on $D$}.
	\end{align*}

	Fix a spin boundary condition $\tau$ and write $\xi =\xi(\tau)$ for the corresponding loop boundary condition. 
	Let $\omega$ be a random configuration chosen according to ${\sf Loop}$ and $\sigma$ be a spin configuration chosen uniformly between the two configurations coherent with $\omega$; $\sigma$ has law ${\sf Spin}$. 
	Conditioning on that $\sigma$ induces $\tau$ is the same as conditioning that $\omega$ induces $\xi$ and that the choice of the spins is such that $\sigma$ corresponds to $\tau$ on the faces neighbouring $D$. 
	Since the latter choice is independent of $\omega$, 
	we conclude that the law of $\omega$ in $D$ conditioned on $\{\sigma \text{ induces } \tau\}$ 
	is the same as that conditioned on $\{\omega \text{ induces } \xi\}$, 
	and is therefore ${\sf Loop}_{D,n,x}^{\xi}$ due to the Gibbs property for  ${\sf Loop}$. 
	
	Now, due to the properties of the correspondence between loop and spin configurations \cite[Prop. 3]{DumGlaPel21}, 
	we conclude that 
	$${\sf Spin}[.\,|\, \sigma \text{ induces } \tau] = {\sf Spin}_{D,n,x}^{\tau}.$$

	The same holds when conditioning additionally on any event depending only on the spins on $D^c$, which proves the DLR property for ${\sf Spin}$.   
\end{proof}

\begin{proof}[Theorem~\ref{thm:loop}]		
	We start with the existence. 
	A Gibbs measure for the loop model may be obtained by considering the paths separating $+$ from $-$ spins in a configuration chosen according to ${\sf Spin}_{n,x}^{+}$. 
	Since ${\sf Spin}_{n,x}^{+}$ is translation invariant and tail trivial, the same holds for the associated loop measure. 
	
	We turn to the uniqueness. Let ${\sf Loop}$ be a Gibbs measure for the loop model, and construct ${\sf Spin}$ as in Lemma~\ref{lem:LtoS}.
	Then, due to Proposition~\ref{prop:GibbsSpin}, ${\sf Spin} = \la {\sf Spin}_{n,x}^{+} + (1-\lambda) {\sf Spin}_{n,x}^{-}$ for some $\la \in [0,1]$.
	
	Observe now that ${\sf Spin}_{n,x}^{-}$ may be obtained from ${\sf Spin}_{n,x}^{+}$ by a global sign flip. 
	As such, the laws of the interfaces between $+$ and $-$ clusters in spin configurations sampled from 
	${\sf Spin}_{n,x}^{+}$ and ${\sf Spin}_{n,x}^{-}$, respectively, are the same. 
	In conclusion, ${\sf Loop}$ does not depend on $\la$, and is therefore unique. 
\end{proof}

\bibliographystyle{abbrv}
\bibliography{biblicomplete}

\begin{thebibliography}{10}

\bibitem{Aiz80}
M.~Aizenman.
\newblock Translation invariance and instability of phase coexistence in the
  two-dimensional {I}sing system.
\newblock {\em Comm. Math. Phys.}, 73(1):83--94, 1980.

\bibitem{BefDum12}
V.~Beffara and H.~{Duminil-Copin}.
\newblock The self-dual point of the two-dimensional random-cluster model is
  critical for {$q\geq 1$}.
\newblock {\em Probab. Theory Related Fields}, 153(3-4):511--542, 2012.

\bibitem{BefDum12b}
V.~Beffara and H.~Duminil-Copin.
\newblock Critical point and duality in planar lattice models.
\newblock In {\em Probability and statistical physics in {S}t. {P}etersburg},
  volume~91 of {\em Proc. Sympos. Pure Math.}, pages 51--98. Amer. Math. Soc.,
  Providence, RI, 2016.

\bibitem{BloNie89}
H.~W. Bl{\"o}te and B.~Nienhuis.
\newblock The phase diagram of the o(n) model.
\newblock {\em Physica A: Statistical Mechanics and its Applications},
  160(2):121 -- 134, 1989.

\bibitem{BurKea89}
R.~M. Burton and M.~Keane.
\newblock Density and uniqueness in percolation.
\newblock {\em Comm. Math. Phys.}, 121(3):501--505, 1989.

\bibitem{CamIofVel03}
M.~Campanino, D.~Ioffe, and Y.~Velenik.
\newblock Ornstein-{Z}ernike theory for finite range {I}sing models above
  {$T_c$}.
\newblock {\em Probab. Theory Related Fields}, 125(3):305--349, 2003.

\bibitem{CamIofVel08}
M.~Campanino, D.~Ioffe, and Y.~Velenik.
\newblock Fluctuation theory of connectivities for subcritical random cluster
  models.
\newblock {\em Ann. Probab.}, 36(4):1287--1321, 2008.

\bibitem{CoqDumIofVel14}
L.~Coquille, H.~Duminil-Copin, D.~Ioffe, and Y.~Velenik.
\newblock On the {G}ibbs states of the noncritical {P}otts model on
  {$\Bbb{Z}^2$}.
\newblock {\em Probab. Theory Related Fields}, 158(1-2):477--512, 2014.

\bibitem{CoqVel12}
L.~Coquille and Y.~Velenik.
\newblock A finite-volume version of {A}izenman-{H}iguchi theorem for the 2d
  {I}sing model.
\newblock {\em Probab. Theory Related Fields}, 153(1-2):25--44, 2012.

\bibitem{CraGlaHarPel20}
N.~Crawford, A.~Glazman, M.~Harel, and R.~Peled.
\newblock Macroscopic loops in the loop ${O}(n)$ model via the {XOR} trick.
\newblock 2020.
\newblock arXiv:2001.11977.

\bibitem{DobShl85}
R.~Dobrushin and S.~Shlosman.
\newblock The problem of translation-invariance of {G}ibbs states at low
  temperatures.
\newblock {\em Soviet Sci. Rev. Sect. C: Math. Phys. Rev.}, 5:53--195, 1985.

\bibitem{Dob68}
R.~L. Dobrushin.
\newblock The problem of uniqueness of a gibbsian random field and the problem
  of phase transitions.
\newblock {\em Functional Analysis and its Applications}, 2(4):302--312, 1968.

\bibitem{Dob72}
R.~L. Dobrushin.
\newblock Gibbs state, describing the coexistence of phases in the
  three-dimensional {I}sing model.
\newblock {\em Probab. Theory Related Fields}, 17:582--60, 1972.

\bibitem{DomMukNie81}
E.~Domany, D.~Mukamel, B.~Nienhuis, and A.~Schwimmer.
\newblock Duality relations and equivalences for models with {O(N)} and cubic
  symmetry.
\newblock {\em Nuclear Physics B}, 190(2):279--287,
  \href{https://doi.org/10.1016/0550\%2D3213(81)90559\%2D9}{doi:10.1016/0550--3213(81)90559--9},
  1981.

\bibitem{DumGagHar16}
H.~Duminil-Copin, M.~Gagnebin, M.~Harel, I.~Manolescu, and V.~Tassion.
\newblock Discontinuity of the phase transition for the planar random-cluster
  and {P}otts models with $q>4$.
\newblock 2016.
\newblock arXiv:1611.09877.

\bibitem{DumGlaPel21}
H.~Duminil-Copin, A.~Glazman, R.~Peled, and Y.~Spinka.
\newblock Macroscopic loops in the loop ${O}(n)$ model at nienhuis' critical
  point.
\newblock {\em J. Eur. Math. Soc.}, 23(1):315--347, 2021.

\bibitem{DumLiMan18}
H.~Duminil-Copin, J.-H. Li, and I.~Manolescu.
\newblock Universality for the random-cluster model on isoradial graphs.
\newblock {\em Electron. J. Probab.}, 23, 2018.

\bibitem{DumPelSamSpi17}
H.~Duminil-Copin, R.~Peled, W.~Samotij, and Y.~Spinka.
\newblock Exponential decay of loop lengths in the loop {$O(n)$} model with
  large {$n$}.
\newblock {\em Comm. Math. Phys.}, 349(3):777--817, 2017.

\bibitem{DumRaoTas19}
H.~Duminil-Copin, A.~Raoufi, and V.~Tassion.
\newblock Sharp phase transition for the random-cluster and {P}otts models via
  decision trees.
\newblock {\em Ann. of Math. (2)}, 189(1):75--99, 2019.

\bibitem{DumSidTas17}
H.~Duminil-Copin, V.~Sidoravicius, and V.~Tassion.
\newblock Continuity of the phase transition for planar random-cluster and
  {P}otts models with {$1 \leq q \leq 4$}.
\newblock {\em Comm. Math. Phys.}, 349(1):47--107, 2017.

\bibitem{DumTas19}
H.~Duminil-Copin and V.~Tassion.
\newblock Renormalization of crossing probabilities in the planar
  random-cluster model.
\newblock 2019.
\newblock arXiv:1901.08294.

\bibitem{EdwSok88}
R.~G. Edwards and A.~D. Sokal.
\newblock Generalization of the {F}ortuin-{K}asteleyn-{S}wendsen-{W}ang
  representation and {M}onte {C}arlo algorithm.
\newblock {\em Phys. Rev. D (3)}, 38(6):2009--2012, 1988.

\bibitem{GalMir72}
G.~Gallavotti and S.~Miracle-Sol{\'e}.
\newblock Equilibrium states of the {I}sing model in the two-phase region.
\newblock {\em Physical Review B}, 5(7):2555, 1972.

\bibitem{Geo11}
H.-O. Georgii.
\newblock {\em Gibbs measures and phase transitions}, volume~9 of {\em de
  Gruyter Studies in Mathematics}.
\newblock Walter de Gruyter \& Co., Berlin, second edition, 2011.

\bibitem{GeoHagMae01}
H.-O. Georgii, O.~H{{\"a}}ggstr{{\"o}}m, and C.~Maes.
\newblock The random geometry of equilibrium phases.
\newblock In {\em Phase transitions and critical phenomena}, volume~18 of {\em
  Phase Transit. Crit. Phenom.}, pages 1--142. Academic Press, San Diego, CA,
  2001.

\bibitem{GeoHig00}
H.-O. Georgii and Y.~Higuchi.
\newblock Percolation and number of phases in the two-dimensional {I}sing
  model.
\newblock {\em J. Math. Phys.}, 41(3):1153--1169, 2000.

\bibitem{GlaMan18b}
A.~Glazman and I.~Manolescu.
\newblock {\em Exponential decay in the {L}oop~${O}(n)$ model:
  $n\geq1,x<\tfrac{1}{\sqrt{3}}+\varepsilon(n)$}, volume 77; In and Out of
  Equilibrium 3: Special volume in honor of Vladas Sidoravicius of {\em
  Progress in Probability}, pages 455--470.
\newblock Birkh{\"a}user Basel, 2021.

\bibitem{GlaMan18}
A.~Glazman and I.~Manolescu.
\newblock Uniform lipschitz functions on the triangular lattice have
  logarithmic variations.
\newblock {\em Comm. Math. Phys.}, 381(3):1153--1221, 2021.

\bibitem{Gri06}
G.~Grimmett.
\newblock {\em The random-cluster model}, volume 333 of {\em Grundlehren der
  Mathematischen Wissenschaften [Fundamental Principles of Mathematical
  Sciences]}.
\newblock Springer-Verlag, Berlin, 2006.

\bibitem{Gri10}
G.~Grimmett.
\newblock {\em Probability on graphs}, volume~1 of {\em Institute of
  Mathematical Statistics Textbooks}.
\newblock Cambridge University Press, Cambridge, 2010.
\newblock Random processes on graphs and lattices.

\bibitem{GriMan13}
G.~Grimmett and I.~Manolescu.
\newblock Inhomogeneous bond percolation on square, triangular and hexagonal
  lattices.
\newblock {\em Ann. Probab.}, 41(4):2990--3025, 2013.

\bibitem{Hig81}
Y.~Higuchi.
\newblock On the absence of non-translation invariant {G}ibbs states for the
  two-dimensional {I}sing model.
\newblock In {\em Random fields, {V}ol. {I}, {II} ({E}sztergom, 1979)},
  volume~27 of {\em Colloq. Math. Soc. J{\'a}nos Bolyai}, pages 517--534.
  North-Holland, Amsterdam, 1981.

\bibitem{Iof98}
D.~Ioffe.
\newblock Ornstein-{Z}ernike behaviour and analyticity of shapes for
  self-avoiding walks on {${\bf Z}^d$}.
\newblock {\em Markov Process. Related Fields}, 4(3):323--350, 1998.

\bibitem{IofVel10}
D.~Ioffe and Y.~Velenik.
\newblock {The statistical mechanics of stretched polymers}.
\newblock {\em Braz. J. Probab. Stat.}, 24(2):279 -- 299, 2010.

\bibitem{KagNie04}
W.~Kager and B.~Nienhuis.
\newblock A guide to stochastic {L}{\"o}wner evolution and its applications.
\newblock {\em J. Statist. Phys.}, 115(5-6):1149--1229, 2004.

\bibitem{LaaMesMir91}
L.~Laanait, A.~Messager, S.~Miracle-Sol{\'{e}}, J.~Ruiz, and S.~Shlosman.
\newblock Interfaces in the {P}otts model. {I}. {P}irogov-{S}inai theory of the
  {F}ortuin-{K}asteleyn representation.
\newblock {\em Comm. Math. Phys.}, 140(1):81--91, 1991.

\bibitem{LanRue69}
O.~Lanford and D.~Ruelle.
\newblock Observables at infinity and states with short range correlations in
  statistical mechanics.
\newblock {\em Comm. Math. Phys.}, 13(3):194--215, 1969.

\bibitem{MesMir75}
A.~Messager and S.~Miracle-Sole.
\newblock Equilibrium states of the two-dimensional {I}sing model in the
  two-phase region.
\newblock {\em Communications in Mathematical Physics}, 40(2):187--196, 1975.

\bibitem{Nie82}
B.~Nienhuis.
\newblock Exact critical point and critical exponents of {${\rm O}(n)$} models
  in two dimensions.
\newblock {\em Phys. Rev. Lett.}, 49(15):1062--1065, 1982.

\bibitem{Pot52}
R.~B. Potts.
\newblock Some generalized order-disorder transformations.
\newblock {\em Math. Proc. Camb. Philos. Soc.}, 48(2):106--109, 1952.

\bibitem{RaySpi20}
G.~Ray and Y.~Spinka.
\newblock A short proof of the discontinuity of phase transition in the planar
  random-cluster model with $q>4$.
\newblock {\em Comm. Math. Phys.}, 378(3):1977--1988, 2020.

\bibitem{Smi06}
S.~Smirnov.
\newblock Towards conformal invariance of 2{D} lattice models.
\newblock In {\em International {C}ongress of {M}athematicians. {V}ol. {II}},
  pages 1421--1451. Eur. Math. Soc., Z{\"u}rich, 2006.

\bibitem{Bei75}
H.~van Beijeren.
\newblock Interface sharpness in the {I}sing system.
\newblock {\em Comm. Math. Phys.}, 40(1):1--6, 1975.

\end{thebibliography}

\end{document}